\documentclass[11pt]{amsart}
\usepackage{amsthm,amsmath,amssymb}

\usepackage{thmtools}
\usepackage{thm-restate}
\usepackage{longtable}

\usepackage{graphicx}
\usepackage{color}
\usepackage{dsfont}
\usepackage{float}
\usepackage[colorlinks=true,
linkcolor=blue,
urlcolor=blue,
citecolor=blue]{hyperref}
\usepackage{mathtools}
\usepackage{relsize}
\usepackage{tikz}
\usepackage{enumerate}
\usepackage[toc,page]{appendix}
\usepackage{pdflscape}
\usepackage{multirow}
\usepackage{adjustbox,expl3,etoolbox} 
\usepackage[margin = 2.54cm]{geometry}
\usepackage{mathrsfs}
\usepackage{rotating}
\usepackage{times}

\newtheorem{thm}{Theorem}[section]
\newtheorem{lem}[thm]{Lemma}
\newtheorem{defn}[thm]{Definition}
\newtheorem{prop}[thm]{Proposition}
\newtheorem{conj}[thm]{Conjecture}
\newtheorem{cor}[thm]{Corollary}
\theoremstyle{definition}

\newtheorem{quest}[thm]{Question}

\usepackage[foot]{amsaddr}
\newcommand{\pgl}{\mathop{\mathrm{PGL}}}
\newcommand{\psl}{\mathop{\mathrm{PSL}}}
\newcommand{\pg}{\mathop{\mathrm{PG}}}
\newcommand{\cyc}{\mathop{\mathrm{Cyc}}}
\newcommand{\aut}{\mathop{\mathrm{Aut}}}
\newcommand{\sym}{\mathop{\mathrm{Sym}}}

\newcommand{\qbinom}[2]{\genfrac{[}{]}{0pt}{}{#1}{#2}_q}

\newcommand{\Mod}[1]{ (\mathrm{mod} \ #1)}

\usepackage[hang,flushmargin]{footmisc}

\title{Near-factorizations in association schemes}

\author[A. Herman]{Allen Herman$^1$}
\thanks{$^1$Department of Mathematics and Statistics, University of Regina, Regina, SK, Canada S4S 0A2}

\author[A. Lacaze-Masmonteil]{Alice Lacaze-Masmonteil$^{2,*}$}
\thanks{$^2$Department of Mathematics, University of Auckland, Auckland, New Zealand 1010}
\thanks{$^*$ Corresponding author.}

\author[K. Meagher]{Karen Meagher$^1$}

\author[H. Monterde]{Hermie Monterde$^1$}

\begin{document}

\maketitle

\begin{abstract} 
We formally initiate the study of $\lambda$-fold $(s,t)$ near-factorizations in association schemes. Namely, given an association $(X, \mathcal{A})$, we consider the existence of a factorization of $\lambda(J-I)$ into 01-matrices $S$ and $T$ with the constraint that $S$ and $T$ must belong to the adjacency algebra of $(X, \mathcal{A})$. We establish basic properties of $\lambda$-fold $(s,t)$ near-factorizations in association schemes and calculate bounds for $\lambda$, $s$, and $t$ relative to the order of the association scheme. We completely determine all near-factorizations in symmetric and asymmetric 2-class association schemes. Furthermore, we construct near-factorizations in certain Hamming schemes, cyclotomic association schemes, Schurian schemes on small primitive groups, and the folded cube and halved cube association schemes. Finally, we establish that certain Hamming schemes, Johnson schemes, and Grassmann schemes do not admit a $\lambda$-fold near-factorization.
\end{abstract}

\noindent \textbf{Keywords:} Near-factorization, association scheme, strongly-regular graph.\\
	
\noindent \textbf{MSC2010 Classification:}
05E30; 
05B10; 
05C50. 


\section{Introduction}
\label{S:intro}
The question of existence of a factorization of a group $G$ into two subgroups has a rich history. These factorizations have applications in coding theory, design theory, and cryptography.  A variation of this problem is the existence of a \emph{near-factorization of a group $G$}, which is a pair of subsets $S, T \subset G$ such that, for each $h \in G-e$, there exists a unique pair $(g_s, g_t) \in S \times T$ such that $h=g_s g_t$. It is straightforward to see that the pair $S=\{g\}$ and $T=G-\{g^{-1}\}$ is a near-factorization of $G$ for all $g \in G$. Thus, if $|S|=1$ or $|T|=1$, then $(S,T)$ is said to be a \emph{trivial near-factorization}. Consequently, the literature focuses on the question of existence of nontrivial near-factorizations. This question has been considered for various groups including cyclic groups, dihedral groups, and a few sporadic non-abelian and non-dihedral groups. See the introduction of \cite{kreher2025near} for a brief survey of known near-factorizations. There are also no known near-factorizations of non-cyclic abelian groups. In \cite{unique}, Kreher et al.\ give strong evidence that non-cyclic abelian groups do not admit nontrivial near-factorizations by verifying all non-cyclic abelian groups of order up to 200. 

Recently, Kreher et al.~\cite{kreher2025lambda} extended the concept of a near-factorization in a group to a \emph{$\lambda$-fold near-factorization}, which in this case, is a pair of subsets $S, T \subset G$ such that, for each $h \in G-e$, there exists exactly $\lambda$ pairs $(g_s, g_t) \in S \times T$ for which $h=g_s g_t$. Observe that a $1$-fold near-factorization is a near-factorization of $G$. If $(S,T)$ is a $\lambda$-fold near-factorization of $G$, and $M(S)$ and $M(T)$ are the adjacency matrices of the Cayley graphs of $G$ with connecting sets $S$ and $T$, respectively, then \cite[Theorem 1.3]{kreher2025lambda} implies that $M(S)M(T)=\lambda(J-I)$, where $I$ and $J$ are the identity and all-ones matrix of dimension $|G| \times |G|$, respectively. Thus, a $\lambda$-fold near-factorization of a group can also be viewed as a factorization of the matrix $\lambda(J-I)$. In~\cite{kreher2025lambda}, Kreher et al.\ found all realizations of $\lambda$-fold near-factorizations in abelian groups of order at most 35. In most cases, they were able to generalize these examples to obtain infinite families of $\lambda$-fold near-factorizations in abelian groups. Interestingly, if $\lambda> 1$, results of \cite{kreher2025lambda} imply that there exists examples of $\lambda$-fold near-factorizations in non-cyclic abelian groups. 

Motivated by recent results in the group case, we formally initiate the study of $\lambda$-fold near-factorizations in association schemes. 

\begin{defn}
\label{defn:AS}
Let $X$ be a non-empty set with $|X|=n$. An \emph{association scheme} $(X,\mathcal{A})$ is a finite set of $n \times n$ binary matrices $\mathcal{A}=\{A_0, A_1, \ldots, A_{d}\}$ with rows and columns indexed by $X$ such that: 
\begin{enumerate}
\item $A_0=I_{n}$, the $n\times n$ identity matrix; 
\item $\sum_{i=0}^dA_i=J_n$, the $n\times n$ all-ones matrix;
\item $A_i^T \in \mathcal{A}$ for each $i \in \{0,1,\ldots, d\}$, where $A_i^T$ is the transpose of $A_i$;
\item $A_iA_j=A_jA_i$ for all $0\leq i,j\leq d$; and
\item for all $0\leq i,j\leq d$, the product $A_iA_j$ lies in the linear span of $\{A_0, A_1, \ldots, A_{d}\}$ over $\mathbb{C}$. 
\end{enumerate}
Two points $x_1, x_2\in U$ are said to be \emph{$i$-related} if and only if $A_i(x_1, x_2)=1$. The indices in $\{0,1,\ldots,d\}$ are referred to as the \emph{relations} of the scheme. Each matrix $A_i$ is called a \emph{class} or an \emph{associate}. 
\end{defn}

Observe that, for any two elements $x, y \in X$ there is a unique $A_i \in \mathcal{A}$ with $(x,y)$-entry equal to 1. Moreover, by Axiom (2) of Definition \ref{defn:AS}, the classes of the scheme must form a linearly independent set. By Axiom (5), the product of any two elements in $\mathcal{A}$ belongs to $\operatorname{span}(\mathcal{A})$ and thus, in conjunction with the second axiom, we get that $\mathcal{A}$ is a basis for a commutative $d+1$-dimensional sub-algebra of $M_n(\mathbb{C})$ called the \emph{adjacency algebra} or the \emph{Bose-Mesner algebra} of the association scheme $(X,\mathcal{A})$. The row (or column) sum of $A_i$ is the \emph{valency} of the class; this is well-defined since $J$, being the sum of all classes, commutes with each $A_i$.  An association scheme $(X,\mathcal{A})$ is \emph{symmetric} if each class is symmetric, i.e. $A_i^\top=A_i$ for all $i\in\{0,1,\ldots, d\}$. Since each $A_i$ is a 01-matrix, we may view each $A_i$ as the adjacency matrix of a directed graph $\Gamma_i$ with vertex set $X$. This directed graph is a graph if $A_i$ is symmetric. An \emph{automorphism} of an association scheme $(X,\mathcal{A})$ is a permutation matrix that commutes with each $A_i$. In particular, if a class $A_\ell$ has valency one, then $A_\ell$ is an automorphism of $(X,\mathcal{A})$.

The focus of this paper is the question of existence of a factorization of the matrix $\lambda(J-I)$ into the product of 01-matrices $S$ and $T$ that belong to the adjacency algebra of the scheme. In Section \ref{sec:NF}, we will show that an affirmative answer to this question is equivalent to the existence of a pair of subsets of relations satisfying certain properties made precise in Theorem \ref{thm:char1}. Consequently, this factorization can be viewed as the association scheme analogue of a $\lambda$-fold near-factorization in groups.

Our main motivation for studying near-factorizations in association schemes stem from its connections to combinatorial designs with nice automorphisms. Combinatorial designs such as BIBD can be constructed using difference sets and partial difference sets of a finite group $G$. In \cite{kreher2025lambda}, Kreher et al.\ showed that certain nontrivial $\lambda$-fold  near-factorizations in a group $G$ give rise to difference sets or partial difference sets of $G$. Additionally, Li and Momihara \cite{li-momihara2025} observed that specific $\lambda$-fold near-factorizations in additive groups of finite fields give rise to strong external difference family. We provide several examples of near-factorizations in fusions of thin association schemes that correspond to difference sets and partial difference sets in the corresponding group. 

Although we are the first to formally study $\lambda$-fold near-factorizations in association scheme, we point out that these factorizations have previously been considered in specific schemes under the name of \emph{central difference sets}~\cite{MR2216464} and \emph{central partial difference sets}~\cite{MR4903728}, respectively.  These near-factorizations arise in certain conjugacy class schemes of non-abelian groups and are only known to exist in certain $p$-groups of Suzuki type. We have been able to verify that the smallest ones are the $16$-fold $(28,36)$ difference sets found previously in Suzuki $2$-groups of order $64$ by Gow and Quinlan~\cite{MR2216464}. In searching for near-factorizations in conjugacy class association schemes, the second smallest order we found is $16$-fold $(28,68)$ near-factorization in the conjugacy class scheme of the group $C_2 \times A_5$ of order $120$ (see Subsection~\ref{subsec:CCscheme}).

\subsection{Structure of the paper}

We organize this paper as follows. First, we develop the theory of $\lambda$-fold $(s, t)$ near-factorizations in association schemes in Section~\ref{sec:NF}. In particular, we establish basic properties, provide characterizations in terms of the intersection numbers and eigenvalues of the scheme, and calculate bounds for the parameters $s$, $t$ and $\lambda$. 
In Section~\ref{sec:2class}, we completely characterize near-factorizations in symmetric and asymmetric 2-class schemes. We devote Section~\ref{sec:genhex}, to showing that the four class $P$-polynomial association scheme derived from a generalized hexagon admits a near-factorization. We proceed to Hamming schemes $H(n, q)$ in Section~\ref{sec:Hamming}, where we establish the existence of near-factorizations in $H(n,2)$ for even $n$, and in $H(n, q)$ for $q \in \{3,4\}$ and all $n\geqslant 2$. Additionally, we show that $H(n, q)$ has no nontrivial near-factorizations when $q >n$. While it is still open whether $H(n,2)$ admits a near-factorization when $n$ is odd, we show in Section~\ref{sec:FFHamming} that the folded cube and halved cube association schemes, which are schemes derived from $H(n,2)$, admit near-factorizations for all odd $n\geq 3$. 
In Sections~\ref{sec:Johnson} and \ref{sec:grassman}, we establish that the Johnson scheme $J(n, k)$ and the Grassmann scheme $J_q(n, d)$ do not admit nontrivial near-factorizations when $n$ is sufficiently large relative to $k$ and $d$, respectively. We also investigate near-factorizations in cyclotomic schemes and orbit fusions of thin schemes in Section~\ref{sec:cyclo}. In particular, we establish a connection between the existence of near-factorizations in cyclotomic schemes and certain identities satisfied by Gaussian periods. We also provide $1$-fold near-factorizations in symmetrization schemes of cyclic groups of odd order. In Section~\ref{sec:schurian}, we discuss near-factorizations occurring in small (non-thin) Schurian schemes. These near-factorizations are summarized in Table~\ref{Table:Schurian}. Finally, we conclude with some open questions in Section~\ref{sec:further}.


\section{The near-factorization condition for association schemes}\label{sec:NF}

Let $(X,\mathcal{A})$ be an association scheme. Since the $d$ classes of $(X,\mathcal{A})$ pairwise commute and the product of any two elements in $\mathcal{A}$ belongs to the adjacency algebra of $(X,\mathcal{A})$, there exists a set of nonnegative integers $\{p_{i,j}(k):0\leqslant i,j,k\leqslant d\}$ called \emph{intersection numbers} satisfying 

\begin{equation}
\label{intnumbers}
A_iA_j=A_jA_i=\sum_{k=0}^dp_{i,j}(k)A_k.
\end{equation}
The \emph{intersection matrices} $B_0,B_1,\ldots,B_d$ of $(X,\mathcal{A})$ satisfy
\begin{center}
$(B_i)_{k,j}:=p_{i,j}(k).$
\end{center}

The set of intersection matrices $\{B_0,B_1,\ldots,B_d\}$ generates a commutative algebra of $(d+1)\times (d+1)$ matrices called the \emph{intersection algebra}.

Let $(X,\mathcal{A})$ be an association scheme and $J$ be the all-ones matrix of the appropriate size. Since the elements of $\mathcal{A}$ are
all symmetric and pairwise commuting matrices, it follows that the classes are simultaneously diagonalizable. That is, there exist orthogonal projection matrices $E_0:=\frac{1}{n}J,\ldots,E_d$ and scalars $\{p_i(j):0\leqslant i,j\leqslant d\}$ such that for all $i\in \{0,1,\ldots,d\}$, we have
\begin{equation}
\label{evals}
A_i=\displaystyle\sum_{r=0}^dp_i(r)E_r.
\end{equation}
The scalars $p_i(j)$ are called the \emph{eigenvalues} of the association scheme. In particular, we say that $p_i(r)$ is an eigenvalue of $A_i$ with associated orthogonal projection matrix $E_r$. Note that$\{E_0,\ldots,E_d\}$ also form a basis for the adjacency algebra. By Axiom (2) of Definition \ref{defn:AS}, the eigenvalue of $A_i$ realized by the trivial eigenspace is the valency of $A_i$. We generally denote this eigenvalue as $p_i(0)$. Otherwise, also by Axiom (2), the eigenvalues realized by a non-trivial eigenspace must sum to zero. 

Let $(X_1,\mathcal{A}_1)$ and $(X_2,\mathcal{A}_2)$ be association schemes. We say that $(X_2,\mathcal{A}_2)$ is a \emph{fusion} of $(X_1,\mathcal{A}_1)$ if the adjacency algebra of $(X_2,\mathcal{A}_2)$ is a subspace of that of $(X_1,\mathcal{A}_1)$. If $(X_2,\mathcal{A}_2)$ is a fusion of $(X_1,\mathcal{A}_1)$, then $(X_1,\mathcal{A}_1)$ is a \emph{fission} of $(X_2,\mathcal{A}_2)$. A fusion $(X_2,\mathcal{A}_2)$ of an association scheme $(X_1,\mathcal{A}_1)$ arises from $A_0=I$ and a partition of $\mathcal{A}_1\backslash\{A_0\}=\{A_1,\dots,A_d\}$, where the sums of the matrices in each part of the partition satisfy the definition of an association scheme. That is, a fusion scheme is obtained by merging certain relations of a base scheme, whereas a fission scheme is a refinement of a coarser (fusion) scheme. For more about association schemes, we refer the reader to \cite{bannai2021algebraic}. 

We now define the $\lambda$-fold near-factorization analogue for association scheme. 

\begin{defn}
Let $\lambda, s, t \in \mathbb{Z}^+$. A \emph{$\lambda$-fold $(s,t)$ near-factorization} in an association scheme $(X,\mathcal{A})$ is pair of $01$-matrices $S$ and $T$ in the adjacency algebra with valencies $s$ and $t$, respectively, such that
\begin{equation}\label{eq:NF}
ST = \lambda(J-I) = \lambda(A_1 + \dots + A_d).
\end{equation}  
\end{defn}

For brevity, if $\lambda$, $s$ and $t$ are not needed, then we simply say \emph{near-factorization}. 

Any association scheme with a class $A_i$ of valency one admits a $1$-fold $(1,n-1)$ near-factorization realized by $S=A_i$ and $T=J-A_i$; a near-factorization with either $s=1$ or $t=1$ will be called \emph{trivial}. In particular, since $A_0=I$, every association scheme has a trivial near-factorization with $S=I$ and $T=J-I$.

\subsection{Basic properties of near-factorizations}

We now discuss some basic properties of $\lambda$-fold $(s,t)$ near-factorizations in association schemes.

\begin{prop}
\label{prop:aut}
Suppose $(X,\mathcal{A})$ has a $\lambda$-fold $(s,t)$ near-factorization as in (\ref{eq:NF}). If $P$ is an automorphism of $(X,\mathcal{A})$, then $ST=(SP)(P^\top T)=P(ST)P^\top=\lambda(J-I)$.
\end{prop}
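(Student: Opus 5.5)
The plan is to verify the chain of equalities one link at a time, using only three facts. First, a permutation matrix $P$ is orthogonal, so $PP^\top = P^\top P = I$. Second, by definition an automorphism $P$ commutes with every class $A_i$, and hence with every element of the adjacency algebra, since that algebra is the span of $\mathcal{A}$. Third, any permutation matrix fixes $J$ under conjugation, because $PJ = J = JP$.

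The first equality, $ST = (SP)(P^\top T)$, follows from associativity and $PP^\top = I$. For the second equality, note that $S$ and $T$ lie in the adjacency algebra, so each commutes with $P$. Commuting $S$ past $P$ gives $SP = PS$. Commuting $T$ past $P$ gives $TP = PT$; multiplying this on both sides by $P^\top$ shows that $T$ also commutes with $P^\top$, so $P^\top T = TP^\top$. Substituting these, $(SP)(P^\top T) = (PS)(TP^\top) = P(ST)P^\top$. For the final equality, apply (\ref{eq:NF}) to get
\[
P(ST)P^\top = \lambda P(J-I)P^\top = \lambda\left(PJP^\top - PP^\top\right) = \lambda(J-I).
\]

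I would then point out the content of the statement. The matrices $SP$ and $P^\top T$ are again $01$-matrices, since they are permutations of the columns and rows of $01$-matrices. They lie in the adjacency algebra, provided $P$ does, as is the case when $P$ is a valency-one class. Their valencies are still $s$ and $t$. So the pair $(SP, P^\top T)$ is another $\lambda$-fold $(s,t)$ near-factorization.

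There is no real obstacle. The only point needing care is that commuting with each $A_i$ extends by linearity to all of $S$ and $T$, and that commuting with $P$ implies commuting with $P^\top = P^{-1}$.
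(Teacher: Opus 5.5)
Your proof is correct and follows the paper's argument, which says only that the result is immediate from $P$ commuting with $S$ and $T$ together with $PP^\top = P^\top P = I$. For the last equality you use $PJP^\top = J$, whereas commutation alone already gives $P(ST)P^\top = ST\,PP^\top = ST$; either works, and your caveat that $(SP, P^\top T)$ lies in the adjacency algebra only when $P$ does is accurate.
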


\begin{proof}
This is immediate from the fact that $P$ commutes with $S$ and $T$ and $P^\top P=PP^\top=I$.
\end{proof}

By Proposition~\ref{prop:aut}, near-factorizations are equivalent under the action of automorphisms.  Next we show 
a type of uniqueness for $\lambda$-fold $(s,t)$ near-factorizations in association schemes. The following result is analogous to \cite[Theorem 1.6]{kreher2025lambda}, and uses a result analogous to~\cite[Theorem 1.3]{kreher2025lambda}. 

\begin{thm}
\label{thm:uniqueness}
If $(X,\mathcal{A})$ admits a $\lambda$-fold $(s,t)$ near-factorization realized by the pair $(S,T)$ and a $\lambda'$-fold $(s,t')$ near-factorization realized by the pair $(S,T')$, then $\lambda=\lambda'$, $T=T'$ and $t=t'$.
\end{thm}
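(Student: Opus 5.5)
The plan is to first prove the analogue of \cite[Theorem 1.3]{kreher2025lambda}: if $ST=\lambda(J-I)$ with $S,T$ in the adjacency algebra, then $S$ and $T$ are nonsingular, and $TS=ST$ (this is automatic here, since the adjacency algebra is commutative). Then we use invertibility of $S$ to cancel.

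For the first step, the eigenvalues of $\lambda(J-I)$ are $\lambda(n-1)$ on $E_0$ and $-\lambda$ on every $E_r$ with $r\ge 1$. All of these are nonzero, since $n\ge 2$ in any nontrivial situation; if $n=1$ the statement is vacuous. Writing $S=\sum_r \sigma_r E_r$ and $T=\sum_r \tau_r E_r$, which is possible because both lie in the Bose--Mesner algebra, we get $\sigma_r\tau_r\neq 0$ for all $r$. Hence $S$ is invertible, and its inverse is again in the adjacency algebra.

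Next, compare the valencies. From $ST=\lambda(J-I)$ and $ST'=\lambda'(J-I)$, multiplying by the all-ones vector gives $st=\lambda(n-1)$ and $st'=\lambda'(n-1)$.

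Now compute
\[
\lambda' ST=\lambda'\lambda(J-I)=\lambda ST'.
\]
Multiplying on the left by $S^{-1}$ gives $\lambda' T=\lambda T'$. Both $T$ and $T'$ are nonzero $01$-matrices. Choose an entry where $T$ equals $1$; there $\lambda' =\lambda T'_{x,y}$, so $T'_{x,y}=1$ because $\lambda'>0$. Therefore $\lambda'=\lambda$, and cancelling gives $T=T'$, hence $t=t'$.

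The only potential obstacle is justifying the invertibility of $S$. The spectral decomposition in the adjacency algebra handles this cleanly: a product of two elements of the algebra acts on each $E_r$ by the product of scalars, so it suffices that $\lambda(J-I)$ has no zero eigenvalue. In the non-symmetric case the algebra is still commutative and semisimple, being closed under conjugate transpose, so primitive idempotents $E_r$ still exist and the same argument applies.
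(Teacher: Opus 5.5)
Your proof is correct and follows essentially the same route as the paper: nonsingularity of $\lambda(J-I)$ forces $S$ to be invertible, you cancel $S$, and a $01$-entry comparison gives $\lambda=\lambda'$ and $T=T'$. The paper does the cancellation through the explicit formula $T=\frac{\lambda}{s}J-\lambda S^{-1}$, and your spectral argument for invertibility could be shortened to $\det(S)\det(T)=\det(\lambda(J-I))\neq 0$.
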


\begin{proof}
Since $ST=\lambda(J-I)$, and $J-I$ is nonsingular, it must be that $S$ and $T$ are both nonsingular. Since $SJ=sJ$, we get that $S^{-1}J=\frac{1}{s}J$. Hence, $T=\lambda S^{-1}(J-I)=\frac{\lambda}{s}J-S^{-1}$ thereby implying that $T=\frac{\lambda}{s}J-\lambda S^{-1}$ and $T'=\frac{\lambda'}{s}J-\lambda' S^{-1}$. Consequently, $\frac{1}{\lambda}T=\frac{1}{\lambda'}T'$. Since $T$ and $T'$ are 01-matrices, it must be that $T=T'$ and $\lambda=\lambda'$, and so $t=t'$.
\end{proof}

The above proof also yields a formula for $T$ in terms of $S$, which also provides further constraints on the 01-matrices forming a near-factorization.

\begin{cor}
\label{cor:prop}
Suppose $(X,\mathcal{A})$ admits a $\lambda$-fold $(s,t)$ near-factorization realized by the pair $(S,T)$. Then 
\begin{enumerate}[(i)]
\item $T = \frac{\lambda}{s} J - \lambda S^{-1}$; and
\item if $(X,\mathcal{A})$ also admits a $\lambda$-fold $(s,t)$ near-factorization realized by $(S,T')$, then $T=T'$;
\item $S$ is symmetric if and only if $T$ is symmetric. \label{symmetry}
\end{enumerate}
\end{cor}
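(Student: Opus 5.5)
Parts (i) and (ii) are immediate from the proof of Theorem~\ref{thm:uniqueness}, so the plan is to extract them from that argument and then derive (iii) from the formula in (i).

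For (i), the identity $ST=\lambda(J-I)$ with $J-I$ nonsingular (its eigenvalues are $n-1$ and $-1$) forces $S$ to be invertible. Since $SJ=sJ$ we get $S^{-1}J=\frac{1}{s}J$. Multiplying $ST=\lambda(J-I)$ on the left by $S^{-1}$ gives
\[
T=\lambda S^{-1}J-\lambda S^{-1}=\tfrac{\lambda}{s}J-\lambda S^{-1}.
\]
Part (ii) is the special case $t=t'$, $\lambda=\lambda'$ of Theorem~\ref{thm:uniqueness}. Equivalently, it follows because the formula in (i) determines $T$ from $S$, $\lambda$ and $s$ alone.

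For (iii), I will use (i) together with the elementary fact that the transpose of an inverse is the inverse of the transpose.
\begin{itemize}
\item Suppose $S^\top=S$. Then $(S^{-1})^\top=(S^\top)^{-1}=S^{-1}$. Since $J^\top=J$, transposing the formula in (i) gives $T^\top=\frac{\lambda}{s}J-\lambda S^{-1}=T$.
\item For the converse, I need a symmetric formula expressing $S$ in terms of $T$. Here I use that $S$ and $T$ lie in the commutative adjacency algebra, so $TS=ST=\lambda(J-I)$. Also $TJ=tJ$, and $T$ is invertible for the same reason as $S$. The argument of (i) with the roles of $S$ and $T$ swapped then gives $S=\frac{\lambda}{t}J-\lambda T^{-1}$. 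So if $T$ is symmetric, the same transpose computation shows $S$ is symmetric.
\end{itemize}

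The only step needing care is this converse. One must invoke commutativity so that $T$ can be placed on the left. Alternatively, transposing $ST=\lambda(J-I)$ gives $T^\top S^\top=\lambda(J-I)$, which also supplies the needed symmetric relation. No real obstacle is expected.
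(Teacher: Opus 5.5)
Your proposal is correct and follows the paper's route: (i) is extracted from the proof of Theorem~\ref{thm:uniqueness}, (ii) holds because that formula determines $T$ from $S$, $\lambda$ and $s$ alone, and (iii) is left in the paper as an immediate consequence of (i). For the converse in (iii) you do not need commutativity, since (i) rearranges to $S^{-1}=\frac{1}{s}J-\frac{1}{\lambda}T$, so if $T$ is symmetric then $S^{-1}$ is symmetric, and hence so is $S$.
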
 

If $(X,\mathcal{A})$ is symmetric, then $S$ and $T$ are also symmetric. For an infinite family of schemes admitting nontrivial near-factorizations, where $S$ and $T$ are not symmetric, see Theorem~\ref{thm:Asym2class}. 

\subsection{Characterizations of near-factorizations}

We now provide two characterizations for the existence of $\lambda$-fold $(s,t)$ near-factorization in association schemes. The first one involves the intersection numbers $p_{i,j}(k)$ of the association scheme.

\begin{thm}
\label{thm:char1}
An association scheme with $d\geqslant 2$ classes admits a $\lambda$-fold $(s,t)$ near-factorization $(S,T)$
if and only if there exist nonempty sets of indices $U,V \subseteq \{0,1,\dots,d\}$ such that

\begin{enumerate}
\item
\begin{equation*}
S=S_U:=\sum_{i \in U} A_i   \quad \text{and} \quad T=T_V:=\sum_{j \in V} A_j.
\end{equation*}

\item For all $i\in U$ and $j\in V$, we have $p_{i,j}(0)=0$.

\item For all $k \in \{1,\ldots, d\}$,
$$
\displaystyle\sum_{j\in V}\sum_{i\in U}\sum_{k\geqslant 1}^dp_{i,j}(k) =\lambda.
$$
\end{enumerate}
\end{thm}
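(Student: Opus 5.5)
The plan is to prove both directions by expanding $ST$ in the basis $\mathcal{A}$ and comparing coefficients, reading condition (3) in its evidently intended form: for each fixed $k\in\{1,\dots,d\}$,
\[
\sum_{i\in U}\sum_{j\in V}p_{i,j}(k)=\lambda .
\]

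For the forward direction, suppose $(S,T)$ is a $\lambda$-fold $(s,t)$ near-factorization. Since $S$ lies in the adjacency algebra, $S=\sum_{i}c_iA_i$ for some scalars $c_i$. The classes $A_i$ have pairwise disjoint supports that cover every entry (Axiom (2)), so each entry of $S$ equals $c_i$ for the unique class containing it. Because $S$ is a $01$-matrix, every $c_i$ lies in $\{0,1\}$. Setting $U=\{i:c_i=1\}$ gives $S=S_U$, and $U$ is nonempty since $S\neq 0$ (it is nonsingular, or simply $s\geqslant 1$). The same argument produces $V$ with $T=T_V$, which is (1).

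Next, expand using (\ref{intnumbers}):
\[
ST=\sum_{i\in U}\sum_{j\in V}A_iA_j=\sum_{k=0}^d\Big(\sum_{i\in U}\sum_{j\in V}p_{i,j}(k)\Big)A_k .
\]
Compare this with $\lambda(J-I)=\lambda\sum_{k\geqslant1}A_k$, using the linear independence of $\mathcal{A}$. The coefficient of $A_0$ gives $\sum_{i\in U,j\in V}p_{i,j}(0)=0$. Each $p_{i,j}(0)$ is a nonnegative integer, so every term vanishes, which is (2). The coefficient of $A_k$ for $k\geqslant1$ equals $\lambda$, which is (3).

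For the converse, start from sets $U$ and $V$ satisfying (2) and (3) and define $S=S_U$ and $T=T_V$. These are $01$-matrices because the supports of the classes are disjoint, and they lie in the adjacency algebra. Their valencies are $s=\sum_{i\in U}p_i(0)$ and $t=\sum_{j\in V}p_j(0)$, both positive since $U$ and $V$ are nonempty. The same expansion, together with (2) and (3), gives $ST=\lambda\sum_{k\geqslant1}A_k=\lambda(J-I)$.

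There is no real obstacle here. The only points needing care are the deduction that a $01$-matrix in the algebra is a sum of classes, which comes from the disjoint supports, and interpreting condition (3) per $k$ as above. The hypothesis $d\geqslant2$ plays no role in the argument, and $\lambda$ is automatically a positive integer because of (3).
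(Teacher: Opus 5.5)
Your proposal is correct and follows the paper's proof essentially step for step. You write $S$ and $T$ as sums of classes, expand $ST$ via the intersection numbers, and compare coefficients with $\lambda(A_1+\cdots+A_d)$, using nonnegativity of the $p_{i,j}(0)$ for (2) and reading (3) per $k$ as the paper intends; your appeal to the disjoint supports of the classes, rather than just their linear independence, to force the coefficients of $S$ and $T$ into $\{0,1\}$ is a slightly more careful justification of the same step.
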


\begin{proof}
Suppose $(X,\mathcal{A})$ admits a $\lambda$-fold $(s, t)$ near-factorization $ST=\lambda(J-I)$. Since $S$ and $T$ are $01$-matrices in the adjacency algebra of the association scheme and the classes form a linearly independent set, it must be that $S=\sum_{i \in U} A_i$ and $T=\sum_{j \in V} A_j$ for nonempty subsets $U,V \subseteq \{0,1,\dots,d\}$. 

Using Statement (1), we obtain 
\begin{equation}
\label{22}
ST=\bigg(\sum_{i \in U} A_i\bigg)\bigg(\sum_{j \in V} A_j\bigg)=\sum_{i \in U} \sum_{j \in V} A_iA_j=\sum_{i \in U} \sum_{j \in V}\bigg(p_{i,j}(0)I+\sum_{k\geqslant 1}^dp_{i,j}(k)A_k\bigg).
\end{equation}
Since $ST=\lambda(A_1+\cdots+A_d)$, the classes form a linearly independent set, and the parameters $p_{i,j}(k)$ are all nonnegative, comparing corresponding classes in (\ref{22}) implies statements (2) and (3). For the converse, it is easy to see that if conditions (1), (2) and (3) above are satisfied, then $ST=\lambda(J-I)$.
\end{proof}

Since the classes in an association scheme commute, the two matrices in a near-factorization must also commute by Theorem~\ref{thm:char1}(1). Thus, $(X,\mathcal{A})$ admits a nontrivial $\lambda$-fold $(s,t)$ near-factorization if and only if it admits a nontrivial $\lambda$-fold $(t,s)$ near-factorization, in which case the near-factorizations are realized by the same pair of matrices $S$ and $T$.

\begin{cor}
\label{symmetry2}
Suppose $(X,\mathcal{A})$ admits a $\lambda$-fold $(s,t)$ near-factorization realized by the pair $(S,T)$. If $S$ is not symmetric, then $(S^\top,T^\top)$ is another $\lambda$-fold $(s,t)$ near-factorization in $(X,\mathcal{A})$.
\end{cor}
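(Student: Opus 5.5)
The plan is to transpose the defining equation and use commutativity of the adjacency algebra. Start from $ST=\lambda(J-I)$ and take transposes of both sides. Since $J$ and $I$ are symmetric, this gives $T^\top S^\top=\lambda(J-I)$.

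By Axiom (3) of Definition~\ref{defn:AS}, the transpose of each class is again a class. So, writing $S=S_U$ and $T=T_V$ as in Theorem~\ref{thm:char1}(1), both $S^\top=\sum_{i\in U}A_i^\top$ and $T^\top=\sum_{j\in V}A_j^\top$ are sums of distinct classes. Transposition is a bijection on $\mathcal{A}$, so no class appears twice in either sum. Hence $S^\top$ and $T^\top$ are $01$-matrices lying in the adjacency algebra. The adjacency algebra is commutative, so $S^\top T^\top=T^\top S^\top=\lambda(J-I)$.

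Next I check the valencies. The row sums of $S^\top$ are the column sums of $S$, and each class $A_i^\top$ has the same valency as $A_i$: the all-ones vector satisfies $\mathbf{1}^\top A_i=(A_i^\top\mathbf{1})^\top$, and $A_i^\top\in\mathcal{A}$ has constant row sums equal to its valency. Comparing $\mathbf{1}^\top A_i\mathbf{1}=n\cdot(\text{valency of }A_i)=n\cdot(\text{valency of }A_i^\top)$ shows the two valencies agree. Therefore $S^\top$ has valency $s$ and $T^\top$ has valency $t$. So $(S^\top,T^\top)$ is a $\lambda$-fold $(s,t)$ near-factorization.

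Finally, the statement implicitly asks that this pair be \emph{another} near-factorization, meaning distinct from $(S,T)$. This holds because $S$ is not symmetric, so $S^\top\neq S$. By Corollary~\ref{cor:prop}(\ref{symmetry}), $T$ is not symmetric either, so $T^\top\neq T$ as well. The only mildly delicate point is the valency equality for transposed classes, and that is handled by the double-counting above. There is no real obstacle.
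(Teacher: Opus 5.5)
Your proof is correct and matches the paper's implicit argument (stated just before the corollary): transpose $ST=\lambda(J-I)$, then use closure of $\mathcal{A}$ under transposition and commutativity of the adjacency algebra to get $S^\top T^\top=\lambda(J-I)$ with the same valencies. Your double-counting check of the valencies is fine; it could be shortened to $S^\top J=(JS)^\top=(sJ)^\top=sJ$.
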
 

\begin{thm}
\label{thm:UV}
Suppose $(X,\mathcal{A})$ admits a $\lambda$-fold $(s,t)$ near-factorization and consider the subsets $U$ and $V$ in Theorem~\ref{thm:char1}(1). If $U'=\{i' : i \in U \ \textrm{and} \ A_{i'} := A_i^{\top}\}$, then $U' \cap V=\varnothing.$ In particular, if $S$ is symmetric, then $U\cap V=\varnothing$.
\end{thm}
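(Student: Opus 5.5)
The plan is to use condition (2) of Theorem~\ref{thm:char1} and identify when $p_{i,j}(0)$ can be nonzero. By definition, $p_{i,j}(0)$ is the coefficient of $A_0=I$ in $A_iA_j$. Equivalently, it is the $(x,x)$-entry of $A_iA_j$ for any $x\in X$, and that entry equals $\sum_{y}A_i(x,y)A_j(y,x)$. This sum counts the $y$ that are $i$-related to $x$ and for which $x$ is $j$-related to $y$.

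The key step is to show that $p_{i,j}(0)\neq 0$ exactly when $A_j=A_i^{\top}$. Each ordered pair $(y,x)$ lies in exactly one class. So $A_j(y,x)=1$ and $A_i^{\top}(y,x)=A_i(x,y)=1$ can hold together only if $A_j=A_i^{\top}$, using Axiom (3) and the fact that the classes partition $J$. This gives $p_{i,j}(0)=0$ unless $j=i'$. When $j=i'$, we get $p_{i,i'}(0)=\sum_y A_i(x,y)^2$, which is the valency of $A_i$. That valency is positive, since each class is a nonzero 01-matrix and the row sums are constant.

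Now suppose some index lies in $U'\cap V$. Then there is an $i\in U$ with $i'\in V$, and taking $j=i'$ gives $p_{i,j}(0)$ equal to the valency of $A_i$, which is positive. This contradicts Theorem~\ref{thm:char1}(2), so $U'\cap V=\varnothing$.

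For the final assertion, if $S$ is symmetric then $S^{\top}=\sum_{i\in U}A_i^{\top}=\sum_{i\in U'}A_i$ equals $S=\sum_{i\in U}A_i$. By linear independence of the classes, $U'=U$, and hence $U\cap V=\varnothing$.

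There is no real obstacle here. The only point needing care is the identification $p_{i,i'}(0)=$ valency, together with the vanishing of $p_{i,j}(0)$ for $j\neq i'$. Both follow directly from the partition property of the classes.
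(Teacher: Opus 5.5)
Your proof is correct and takes essentially the same route as the paper: a class $A_i$ in $S$ paired with $A_i^{\top}$ in $T$ produces $A_iA_i^{\top}$, whose diagonal entries (the valency of $A_i$) are positive, so an $I$-term appears in $ST$; you phrase this through Theorem~\ref{thm:char1}(2) as $p_{i,i'}(0)>0$. Your extra claim that $p_{i,j}(0)=0$ for $j\neq i'$ is correct but not needed, and your linear-independence argument for $U'=U$ when $S$ is symmetric fills in a step the paper leaves implicit.
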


\begin{proof}
By way of contradiction, suppose $\ell' \in U'\cap V \neq \varnothing$. Then $A_\ell$ is a class in $S$ while $A_\ell^\top$ in $T$, and therefore $A_\ell A_\ell^\top$ appears as a summand in the product $ST=S_U T_V$. However, $A_\ell A_\ell^\top$ has positive diagonal, and thus, $A_0=I$ appears as a summand in the product $ST$, a contradiction. This shows that $U' \cap V=\varnothing.$ In particular, if $S$ is symmetric, then $U'=U$, and so $U\cap V=\varnothing$.
\end{proof}

The following is immediate from Theorem \ref{thm:char1}(1).

\begin{prop}
\label{prop:fuse}
If $(X,\mathcal{A})$ admits a nontrivial $\lambda$-fold $(s, t)$ near-factorization, then any fission scheme of $(X,\mathcal{A})$ also admits  a nontrivial $\lambda$-fold $(s, t)$ near-factorization realized by the same matrices.
\end{prop}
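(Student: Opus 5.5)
The plan is to use the fact that a fission scheme contains the adjacency algebra of the original scheme, so the same pair of matrices still works. Let $(X,\mathcal{A}')$ be a fission of $(X,\mathcal{A})$, and let $(S,T)$ be a nontrivial $\lambda$-fold $(s,t)$ near-factorization of $(X,\mathcal{A})$. By definition of fission, the adjacency algebra of $(X,\mathcal{A})$ is a subspace of the adjacency algebra of $(X,\mathcal{A}')$. Hence $S$ and $T$, which lie in the former by Theorem~\ref{thm:char1}(1), also lie in the latter.

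Next I check each requirement of the definition of a near-factorization in $(X,\mathcal{A}')$.
\begin{enumerate}[(i)]
\item $S$ and $T$ are still $01$-matrices.
\item Their row sums are still $s$ and $t$; valency is a property of the matrix, not of the scheme.
\item The identity $ST=\lambda(J-I)$ is a matrix equation and does not depend on the ambient scheme.
\end{enumerate}
Since $J-I=\sum_{A\in\mathcal{A}'\setminus\{I\}}A$, the equation (\ref{eq:NF}) holds verbatim in the fission. Nontriviality (that is, $s\neq 1$ and $t\neq 1$) is also unchanged.

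If one wants to see it at the level of Theorem~\ref{thm:char1}, each class $A_i$ of $\mathcal{A}$ is a sum of classes of $\mathcal{A}'$. This happens because the fusion arises from a partition of the nonidentity classes of the fission. Consequently $S=S_{U}$ and $T=T_{V}$ rewrite as $S_{\tilde U}$ and $T_{\tilde V}$ for the corresponding unions $\tilde U,\tilde V$ of parts. Then the converse direction of Theorem~\ref{thm:char1} applies, because conditions (2) and (3) are just the coefficient comparison of the same product.

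There is no real obstacle. The only point needing care is confirming that membership in the adjacency algebra transfers, and that is exactly the definition of fission.
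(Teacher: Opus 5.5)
Your proposal is correct and matches the paper, which simply says the result is immediate from Theorem~\ref{thm:char1}(1): $S$ and $T$ are sums of classes of $\mathcal{A}$, hence also sums of classes of any fission. So the same $01$-matrices, with the same valencies, still satisfy $ST=\lambda(J-I)$ in the finer adjacency algebra, and your check via inclusion of adjacency algebras is the definitional form of that same observation.
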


By virtue of Proposition~\ref{prop:fuse}, we will present our near-factorization results, where it is reasonable to do so, in terms of the minimal fusion where the near-factorization is realized. 

Next, we give our second characterization, this time using the eigenvalues of the scheme. 

\begin{thm}
\label{thm:char2}
An association scheme with $d\geqslant 2$ classes admits a $\lambda$-fold $(s,t)$ near-factorization if and only if there exists a pair of non-empty sets $U, V \subseteq \{1, \ldots,d\}$ such that
\begin{enumerate}
\item 
\begin{equation*}
\sum_{i\in U}\sum_{j\in V}p_i(0)p_j(0) = \lambda(n-1), \ \textrm{ and }
\end{equation*}

\item for all $r\geq 1$, \label{char2:cond3}
$$\displaystyle\sum_{i\in U}\displaystyle\sum_{j\in V}p_i(r)p_j(r) = -\lambda. $$
\end{enumerate}
\end{thm}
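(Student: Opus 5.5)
The approach is to pass between the matrix identity $ST=\lambda(J-I)$ and its image under the spectral decomposition (\ref{evals}). The primitive idempotents $E_0=\frac1nJ,E_1,\dots,E_d$ form a basis of the adjacency algebra and satisfy $E_rE_{r'}=\delta_{r,r'}E_r$. Hence for $S=\sum_{i\in U}A_i$ and $T=\sum_{j\in V}A_j$ we get
\[
ST=\sum_{r=0}^d\Big(\sum_{i\in U}\sum_{j\in V}p_i(r)p_j(r)\Big)E_r .
\]
On the other side, $J-I=nE_0-\sum_{r=0}^dE_r=(n-1)E_0-\sum_{r\ge1}E_r$. Comparing coefficients of the linearly independent $E_r$ shows that $ST=\lambda(J-I)$ holds if and only if conditions (1) and (2) hold. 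For (1) we use that $p_i(0)$ is the valency of $A_i$.

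For the forward direction, Theorem~\ref{thm:char1}(1) gives $U,V\subseteq\{0,\dots,d\}$ with $S=S_U$ and $T=T_V$. I need $0\notin U\cup V$. The case $U=\{0\}$ is the trivial near-factorization $S=I$, $T=J-I$. More precisely, $T=J-I$ is forced by Corollary~\ref{cor:prop}, and then $\lambda=1$.

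If $0\in U$ but $U\neq\{0\}$, then some $A_0A_j$ with $j\in V$ occurs in $ST$. This gives $p_{0,j}(k)=\delta_{j,k}$, so the term $A_j$ appears. By Theorem~\ref{thm:UV} (with $A_0^\top=A_0$), $0\notin V$. Still, $0\in U$ is not excluded by Theorem~\ref{thm:char1} alone.

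So the statement as written, with $U,V\subseteq\{1,\dots,d\}$, seems to require either reading the index sets as allowing $0$ or checking that $0$ cannot occur. I will handle this by letting $U,V$ range over the index sets provided by Theorem~\ref{thm:char1}. The eigenvalue identities hold verbatim for them, since $p_0(r)=1$ for all $r$. I will note that in the trivial case one may take $U=\{0\}$.

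If instead we insist on subsets of $\{1,\dots,d\}$, then the $S$ and $T$ produced are sums of nonidentity classes. In that case the comparison above still gives the equivalence. The converse direction is exactly the same computation read backwards: define $S=S_U$ and $T=T_V$, which are $01$-matrices because distinct classes have disjoint supports. Then (1) and (2) give $ST=\lambda(J-I)$, and $s=\sum_{i\in U}p_i(0)$, $t=\sum_{j\in V}p_j(0)$.

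The only step with any real content is the justification that comparing $E_r$-coefficients is legitimate. This needs the $E_r$ to be linearly independent, and for nonsymmetric schemes it needs $A_i=\sum_rp_i(r)E_r$ to hold with commuting idempotents. The latter follows from commutativity (Axiom (4)), even without symmetry, as assumed in the paper. I expect the bookkeeping about whether $0$ lies in the index sets to be the main nuisance, rather than any genuine obstacle.
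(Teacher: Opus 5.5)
Your proposal is correct and follows the paper's proof. Like the paper, you expand $S$ and $T$ in the idempotents $E_r$ and read off the $E_0$-coefficient to get $st=\lambda(n-1)$ and the $E_r$-coefficients for $r\geq 1$ to get $-\lambda$; for nonsymmetric schemes this diagonalization is justified because Axioms (3) and (4) together make each $A_i$ normal and the $A_i$ pairwise commuting (commutativity alone is not enough). Your worry about the index sets is justified: the statement must be read with $U,V\subseteq\{0,1,\dots,d\}$, as in Theorem~\ref{thm:char1}, whose sets the paper's proof invokes. For example, in the asymmetric $2$-class scheme of order $7$ no pair $U,V\subseteq\{1,2\}$ works, even though $A_1(A_0+A_1)=2(J-I)$.
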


\begin{proof}
Suppose $(X,\mathcal{A})$ admits a $\lambda$-fold $(s,t)$ near-factorization $ST=\lambda(J-I)$. From Theorem~\ref{thm:char1}~(1), (1) is a necessary condition. From the form of $S$ and $T$, we deduce from (\ref{evals}) that 
\begin{equation}
\label{STspec}
S=\displaystyle\sum_{i\in U}A_i=\displaystyle\sum_{r=0}^d\bigg(\displaystyle\sum_{i\in U}p_i(r)\bigg)E_r\quad \text{and}\quad T=\displaystyle\sum_{j\in V}A_i=\displaystyle\sum_{r=0}^d\bigg(\displaystyle\sum_{j\in V}p_j(r)\bigg)E_r.
\end{equation}

Thus, for each $r\geq 0$, $\sum_{i\in U}p_i(r)$ and $\sum_{j\in V}p_i(r)$ are eigenvalues of $S$ and $T$, respectively, with associated orthogonal projection matrix $E_r$. In particular, $\sum_{i\in U}p_i(0)$ and $\sum_{j\in V}p_j(0)$ are the valencies of $S$ and $T$, respectively. Making use of Equation (\ref{STspec}), we obtain

\begin{equation}
\label{STspec1}
ST=\displaystyle\sum_{r=0}^d\bigg(\displaystyle\sum_{i\in U}\displaystyle\sum_{j\in V}p_i(r)p_j(r)\bigg)E_r=\lambda(J-I).
\end{equation}
Since $\frac{1}{n}J$ is an orthogonal projection matrix for $S$, $T$ and $J-I$ associated with the eigenvalues $s=\sum_{i\in U}p_i(0)$, $t=\sum_{j\in V}p_j(0)$ and $n-1$ respectively, we get 

\begin{center}
$st(\frac{1}{n}J)=ST(\frac{1}{n}J)=\lambda(J-I)\frac{1}{n}J=\lambda(n-1)\frac{1}{n}J$.
\end{center}
This proves Statement (2). Now, if $r\geq 1$, then multiplying (\ref{STspec1}) by $E_r$ yields 3. The converse is immediate.
\end{proof}

The following is a straightforward consequence of Theorem~\ref{thm:char2} Statements (2) and (3).

\begin{cor}
\label{cor:2ndeval}
If $ST=\lambda(J-I)$, then the largest eigenvalue of $ST$ is $\lambda(n-1)$ and all other eigenvalues of $ST$ are equal to $-\lambda$.
\end{cor}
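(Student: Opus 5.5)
The plan is to read the spectrum of $ST$ off the spectrum of $\lambda(J-I)$, or equivalently off the eigenvalue form of the product $ST$ already computed in the proof of Theorem~\ref{thm:char2}. By Theorem~\ref{thm:char1}(1) we write $S=\sum_{i\in U}A_i$ and $T=\sum_{j\in V}A_j$. Expanding each class as in (\ref{evals}) gives the spectral decomposition (\ref{STspec1}):
\[
ST=\sum_{r=0}^d\Bigl(\sum_{i\in U}\sum_{j\in V}p_i(r)p_j(r)\Bigr)E_r .
\]
The $E_r$ are mutually orthogonal idempotents summing to $I$, so the eigenvalues of $ST$ are exactly the scalars $\theta_r:=\sum_{i\in U}\sum_{j\in V}p_i(r)p_j(r)$, for those $r$ with $E_r\neq 0$.

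Step one handles the trivial eigenspace. Multiplying by $E_0=\frac1nJ$ and using $SJ=sJ$ and $TJ=tJ$ gives $\theta_0=st$. On the other hand, $\lambda(J-I)\frac1nJ=\lambda(n-1)\frac1nJ$, so $\theta_0=\lambda(n-1)$. This is condition (1) of Theorem~\ref{thm:char2}.

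Step two handles the nontrivial eigenspaces. For $r\ge1$ we have $JE_r=nE_0E_r=0$, so multiplying $ST=\lambda(J-I)$ by $E_r$ yields $\theta_rE_r=-\lambda E_r$. Hence $\theta_r=-\lambda$, which is condition (2) of Theorem~\ref{thm:char2}.

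It remains to justify the word ``largest''. We have $\lambda>0$, and $n\ge2$ because $d\ge2$ classes exist. Therefore $\lambda(n-1)>0>-\lambda$, so $\lambda(n-1)$ is the largest eigenvalue, it has multiplicity one (the trivial eigenspace), and every other eigenvalue equals $-\lambda$.

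There is no real obstacle. The only point needing care is that the eigenvalues of $ST$ are read off the common eigenspaces, which holds because $S$ and $T$ lie in the commutative Bose--Mesner algebra and are simultaneously diagonalized by the $E_r$.
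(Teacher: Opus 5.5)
Your proof is correct and takes the same route as the paper. The paper states the corollary as a direct consequence of Theorem~\ref{thm:char2}, that is, of the spectral decomposition (\ref{STspec1}) of $ST$ multiplied by $E_0$ and by $E_r$ for $r\ge 1$, which is what you reproduce (since $ST=\lambda(J-I)$ is assumed, the conclusion can also be read off the spectrum of $J-I$).
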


\subsection{Bounds for $\lambda$, $s$ and $t$}

If $(S,T)$ is a trivial $\lambda$-fold $(s,t)$ near-factorization, then $S = A_i$ for some $A_i$ with valency 1. Therefore $A_i$ is an automorphism and using Proposition~\ref{prop:aut}, we have

$$
T = A_i( A_i T) = A_i \lambda (J-I) = J - A_i.
$$
Thus, in the trivial case, we have $s = \lambda =1$ and $s+t = n$. Next, we establish certain constraints on $s$, $t$ and $s+t$ in terms of $n$ in a nontrivial $(s,t)$ near-factorization.

\begin{thm}
\label{thm:stbound}
Suppose $(X,\mathcal{A})$ admits a nontrivial $\lambda$-fold $(s,t)$ near-factorization. Then
\begin{equation}
\label{st}
\lambda< s, t \leqslant n-2\quad \text{and}\quad 4 \leqslant s+t \leqslant n.
\end{equation}
\end{thm}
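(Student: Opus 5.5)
Write $S=S_U$ and $T=T_V$ as in Theorem~\ref{thm:char1}, and use the counting identity $st=\lambda(n-1)$ from Theorem~\ref{thm:char2}(1). The plan is to bound $s+t$ first, then deduce the individual bounds on $s$ and $t$.

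\emph{Upper bound $s+t\leqslant n$.} Let $U'$ be the index set of the transposes of the classes in $U$. By Theorem~\ref{thm:UV}, $U'\cap V=\varnothing$, so $S^\top$ and $T$ are $01$-matrices with disjoint supports. Hence $S^\top+T$ is a $01$-matrix in the adjacency algebra, and its valency is $s+t$. Therefore $s+t\leqslant n$.

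\emph{The bound $s,t\leqslant n-2$.} Since the near-factorization is nontrivial, $s,t\geqslant 2$. Combined with $s+t\leqslant n$, this gives $s\leqslant n-t\leqslant n-2$, and symmetrically $t\leqslant n-2$.

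\emph{The bound $\lambda<s,t$.} I would argue entrywise. Fix $x\neq y$. The $(x,y)$-entry of $ST$ is $\lambda$, and it equals the number of $z$ with $S(x,z)=T(z,y)=1$. This is at most the number of $z$ with $S(x,z)=1$, which is $s$. Hence $\lambda\leqslant s$, and likewise $\lambda\leqslant t$ by counting over the column of $T$.

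Now suppose $\lambda=s$. Then for every $y\neq x$, every out-neighbour $z$ of $x$ in $S$ satisfies $T(z,y)=1$. Thus $T(z,y)=1$ for all $y\neq x$. Since $T$ has zero diagonal (Theorem~\ref{thm:char1}(2) together with $0\notin U\cup V$, which holds because $p_{0,j}(0)=0$ forces $j\neq 0$), row $z$ of $T$ contains the set $X\setminus\{x,z\}$. This gives $t\geqslant n-2$.

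Then $st=\lambda(n-1)=s(n-1)$ yields $t=n-1$. So row $z$ of $T$ is all of $X\setminus\{z\}$, and in particular $T(z,x)=1$. Consequently $T=J-I$, and $s=\lambda(n-1)/t=\lambda$. But then $ST=s(J-I)$ forces $S(J-I)=sJ-S$ to equal $s(J-I)$, so $S=sI$. This contradicts $S$ being a $01$-matrix with $s\geqslant2$. Hence $\lambda<s$, and symmetrically $\lambda<t$.

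There is a subtlety here: I should double check that $0\notin U$ in the nontrivial case. If $0\in U$, then condition (2) of Theorem~\ref{thm:char1} forces $p_{0,j}(0)=0$, i.e.\ $0\notin V$. A cleaner route avoids this issue altogether: if $T=J-I$, then $s=\lambda$, and the identity $ST=s(J-I)$ gives $sJ-S=sJ-sI$, so $S=sI$, which forces $s=1$. That case is trivial.

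\emph{Lower bound $s+t\geqslant 4$.} This is immediate from $s,t\geqslant 2$.

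\emph{Main obstacle.} The key step is the strictness $\lambda<s$. The plan is the row-counting argument above: equality forces $T$ to be nearly $J-I$, and then the relation $st=\lambda(n-1)$ pins $T$ down to exactly $J-I$.
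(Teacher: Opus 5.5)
Your proof of $s+t\leqslant n$ via Theorem~\ref{thm:UV} is the paper's argument. Your derivation of $s,t\leqslant n-2$ from $s,t\geqslant 2$ and $s+t\leqslant n$ is more direct than the paper's, which rules out $t=n$ and $t=n-1$ separately.

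\textbf{The flaw.} The problem is in your proof of $\lambda<s$. You claim $T$ has zero diagonal because ``$0\notin U\cup V$''. But $p_{0,j}(0)=0$ for all $j\in V$ only shows that $0$ cannot lie in \emph{both} $U$ and $V$. The identity class does occur in $T$ in the paper's own examples: $T=A_0+A_2$ in Theorem~\ref{thm:SRGs}(ii), and $T=A_0+A_1$ in Theorem~\ref{thm:Asym2class}. So the step from $t=n-1$ to $T=J-I$ is unjustified as written. Your ``cleaner route'' does not avoid the issue, since it still starts from $T=J-I$.

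\textbf{The fix.} You already have what you need. Once $\lambda=s$ and $st=\lambda(n-1)$ give $t=n-1$, this contradicts the bound $t\leqslant n-2$ you proved in the previous paragraph. The whole entrywise discussion is in fact unnecessary. From $st=\lambda(n-1)$ and $t\leqslant n-2$ you get $\lambda=st/(n-1)\leqslant s(n-2)/(n-1)<s$, and symmetrically $\lambda<t$. This is exactly how the paper concludes. Your row argument giving $t\geqslant n-2$ also plays no role, because $t=n-1$ follows from $\lambda=s$ and $st=\lambda(n-1)$ alone.
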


\begin{proof}
Suppose $(X,\mathcal{A})$ admits a nontrivial $\lambda$-fold $(s,t)$ near-factorization. Thus, $s,t\geq 2$. Since the sum of all classes is $J$, Theorem~\ref{thm:UV} yields $ 4\leqslant s+t \leqslant n$. If $t=n$, then $T = A_0+\dots+A_d = J$, which gives us $ST = sJ$, a contradiction. Thus, $t\leqslant n-1$, and similarly, $s\leqslant n-1$. However, if $t=n-1$, then the fact that $s+t\leqslant n$ implies that $s=1$, a contradiction. Thus, $t\leqslant n-2$. Similarly, $s\leqslant n-2$. Finally, since $s,t<n-1$, it must be that $\lambda<s, t $. Otherwise, we get $s t<\lambda(n-1)$, a contradiction to Statement (1) of Theorem \ref{thm:char1}.
\end{proof}

Note that the upper bound in Theorem \ref{thm:stbound} for $s+t$ is tight. See Theorems~\ref{thm:SRGs}(1,2) and~\ref{thm:Asym2class} for infinite families of symmetric and asymmetric 2-class schemes that meet this upper bound.

We now rule out certain values of $\lambda$, $s$ and $t$ in a near-factorization.  Proposition \ref{prop:gcd} below is immediate from Statement (1) of Theorem \ref{thm:char2}. Note that the roles of $s$ and $t$ can be switched.

\begin{prop}
\label{prop:gcd}
Suppose $(X,\mathcal{A})$ admits a $\lambda$-fold $(s, t)$ near-factorization. 
If $\operatorname{gcd}(s,\lambda)=1$, then $\lambda$ divides $t$ and $s$ divides $n-1$. 
If $\lambda=1$, then $s$ and $t$ both divide $n-1$ and if $\lambda\geq 2$, then $s$ and $t$ cannot be both prime.
Whenever $s$ is prime, we have $\operatorname{gcd}(t,\lambda)\geq 2$.
\end{prop}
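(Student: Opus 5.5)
The whole argument rests on the single identity $st=\lambda(n-1)$. This comes from Statement (1) of Theorem~\ref{thm:char2}: the sum there factors as $\bigl(\sum_{i\in U}p_i(0)\bigr)\bigl(\sum_{j\in V}p_j(0)\bigr)=st$. Equivalently, it comes from applying both sides of $ST=\lambda(J-I)$ to the all-ones vector. Every claim in Proposition~\ref{prop:gcd} will be read off from this identity by elementary divisibility arguments. Since the identity is symmetric in $s$ and $t$, so is each conclusion, which justifies the remark that the roles of $s$ and $t$ can be switched.

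First I would treat the case $\gcd(s,\lambda)=1$. Because $\lambda\mid st$ and $\lambda$ is coprime to $s$, we get $\lambda\mid t$. Writing $t=\lambda t_0$, the identity becomes $s t_0=n-1$, so $s\mid n-1$. Taking $\lambda=1$ makes $\gcd(s,1)=\gcd(t,1)=1$ automatic. Applying the first claim to $(s,t)$ and to $(t,s)$ then shows that both $s$ and $t$ divide $n-1$. Directly, $st=n-1$ gives the same conclusion.

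Next, the claim about primes in the case $\lambda\ge 2$. Suppose $s=p$ and $t=q$ are both prime. Then $\lambda(n-1)=pq$, and the divisors of $pq$ are $1,p,q,pq$. So $\lambda\ge 2$ forces $\lambda\in\{p,q,pq\}$, and in particular $\lambda\ge\min(s,t)$. This contradicts the bound $\lambda<s,t$ from Theorem~\ref{thm:stbound}. This is the one step where the identity alone is not enough and the earlier inequality must be used.

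Finally, suppose $s$ is prime and, for contradiction, $\gcd(t,\lambda)=1$. Then $\lambda\mid st$ gives $\lambda\mid s$, so $\lambda\in\{1,s\}$. The bound $\lambda<s$ from Theorem~\ref{thm:stbound} leaves only $\lambda=1$.

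Here lies the obstacle: when $\lambda=1$ we have $\gcd(t,\lambda)=1$ trivially, so the statement cannot hold as written for $1$-fold factorizations. I would therefore state it under the standing hypothesis $\lambda\ge 2$, which I read as intended from the preceding sentence. Under that hypothesis the argument gives the contradiction. I would also note that nontriviality is needed for Theorem~\ref{thm:stbound} to apply, and mention that hypothesis in the proof as well.
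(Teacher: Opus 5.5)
Your proposal is correct and follows the paper's route. The paper just says the proposition is immediate from $st=\lambda(n-1)$ (Theorem~\ref{thm:char2}(1)); your divisibility arguments make that explicit, and you correctly note that the two prime claims also need $\lambda<s,t$ from Theorem~\ref{thm:stbound}, since the identity alone permits $\lambda=s$, $t=n-1$. You are also right that the last claim fails for $\lambda=1$ (e.g.\ the $1$-fold $(2,3)$ near-factorization in Table~\ref{Table:syms}) and must be read with $\lambda\ge 2$; once that is assumed, no separate nontriviality hypothesis is needed, because $s=1$ or $t=1$ would force $\lambda(n-1)\le n$, i.e.\ $\lambda=1$.
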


By Proposition~\ref{prop:gcd}, if $(X,\mathcal{A})$ admits a nontrivial $\lambda$-fold near-factorization (so that $s, t>\lambda$ by Theorem~\ref{thm:stbound}) and $s, t$ are both prime, then it must be that $\lambda=1$.

The fact that $s t=\lambda(n-1)$, with $s, t \leqslant n-2$, rules out nontrivial near-factorizations when $n-1$ is prime. 

\begin{cor}
\label{cor:n-1}
If $\operatorname{gcd}(s, n-1)=1$ or $\operatorname{gcd}(t, n-1)=1$, then $(X,\mathcal{A})$ has no nontrivial near-factorization. In particular, if $n-1$ is prime, then $(X,\mathcal{A})$ has no nontrivial near-factorization. 
\end{cor}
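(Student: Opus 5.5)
The plan is to argue by contradiction, working entirely with the relation $st=\lambda(n-1)$ from Statement (1) of Theorem~\ref{thm:char2} and the bounds of Theorem~\ref{thm:stbound}. So suppose $(X,\mathcal{A})$ admits a nontrivial $\lambda$-fold $(s,t)$ near-factorization. Then Theorem~\ref{thm:stbound} gives $\lambda<s,t\leqslant n-2$.

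First I treat the case $\gcd(s,n-1)=1$. Since $s$ divides $\lambda(n-1)$ and is coprime to $n-1$, it follows that $s$ divides $\lambda$. As $\lambda\geqslant 1$, this forces $s\leqslant\lambda$, which contradicts $\lambda<s$. The case $\gcd(t,n-1)=1$ is identical after swapping the roles of $s$ and $t$, which is permitted by the remark following Theorem~\ref{thm:char1}. So in both cases no nontrivial near-factorization exists.

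For the second claim, assume $n-1$ is prime. A nontrivial near-factorization has $2\leqslant s\leqslant n-2$, so $s$ is strictly between $1$ and the prime $n-1$. Hence $\gcd(s,n-1)=1$, and the first part gives the contradiction.

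There is no real obstacle here; the only point needing care is remembering that nontriviality is what supplies the strict inequality $\lambda<s$ (and $s<n-1$) through Theorem~\ref{thm:stbound}.
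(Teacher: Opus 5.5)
Your proof is correct and is essentially the paper's argument: both combine $st=\lambda(n-1)$ with the bounds of Theorem~\ref{thm:stbound}. The paper pushes the divisibility onto the other factor, since $\gcd(s,n-1)=1$ forces $(n-1)\mid t$, which is impossible because $1\leqslant t\leqslant n-2$; you instead deduce $s\mid\lambda$ and contradict $\lambda<s$, a bound that Theorem~\ref{thm:stbound} itself derives from $t<n-1$.
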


By definition, $\lambda \geqslant 1$. In fact, it is possible to obtain nontrivial $1$-fold $(s, t)$ near-factorizations in association schemes (see Subsection~\ref{subsec:SymofThin} for infinite families of examples). Additionally, since $\lambda \geqslant 1$, Theorem~\ref{thm:stbound} implies that $s, t \geqslant 2$. The infinite families in Subsection~\ref{subsec:SymofThin} satisfy $s=2$. Thus, the lower bound 2 for both $s$ and $t$ is tight.

For upper bounds on $\lambda$ in terms of $s$ and $t$, we have the following result from~\cite[Theorem 1.15]{kreher2025lambda} which applies to $\lambda$-fold $(s, t)$ near-factorizations in association schemes.

\begin{thm}
\label{thm:lambdast}
If $(X,\mathcal{A})$ is an association scheme on $n$ points that admits a nontrivial $\lambda$-fold $(s, t)$ near-factorization, then
\begin{center}
$\lambda \leqslant \dfrac{st}{s+t-1}$
\end{center}
with equality if and only if $s+t=n$.
\end{thm}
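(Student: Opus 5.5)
We need to prove $\lambda \le st/(s+t-1)$, with equality iff $s+t=n$. Since $st=\lambda(n-1)$ by Theorem~\ref{thm:char2}(1), we have $\lambda = st/(n-1)$. The inequality is then equivalent to $n-1 \geqslant s+t-1$, that is, $s+t\leqslant n$. This is exactly the upper bound in Theorem~\ref{thm:stbound}, obtained there from Theorem~\ref{thm:UV}.

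The plan is as follows. First, write $S=S_U$ and $T=T_V$ as in Theorem~\ref{thm:char1}, and recall the count $st=\lambda(n-1)$. That count comes from applying $ST=\lambda(J-I)$ to the all-ones vector, or from Theorem~\ref{thm:char2}(1).

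Next, justify $s+t\leqslant n$ in a way that does not need $S$ symmetric. Let $U'$ be the index set of the transposes of the classes in $U$. Theorem~\ref{thm:UV} gives $U'\cap V=\varnothing$. Transposition preserves valency, so $S^\top=\sum_{i\in U'}A_i$ has valency $s$. Hence $s+t=\sum_{i\in U'\cup V}p_i(0)\leqslant \sum_{i=0}^d p_i(0)=n$.

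Then substitute $n-1\geqslant s+t-1>0$ into $\lambda=st/(n-1)$ to get
\[
\lambda=\frac{st}{n-1}\leqslant \frac{st}{s+t-1}.
\]
Equality holds precisely when $n-1=s+t-1$, i.e.\ $s+t=n$, because $st>0$.

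There is no serious obstacle here. The only point needing care is the non-symmetric case in the second step, where one must use $U'$ rather than $U$ to get disjointness from $V$. Nontriviality is not even needed beyond guaranteeing $s+t-1>0$. Equivalently, $s+t=n$ means $U'\cup V=\{0,\dots,d\}$, so $T=J-S^\top$.
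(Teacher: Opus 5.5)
Your argument is correct and complete. The paper gives no proof of its own here. It imports the bound from \cite[Theorem 1.15]{kreher2025lambda}, which is stated for groups, and only remarks that it applies to association schemes.

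Your proposal makes that transfer explicit using the paper's own ingredients: $st=\lambda(n-1)$ from Theorem~\ref{thm:char2}(1), and $s+t\leqslant n$. Once you have these, the inequality and its equality case follow in one line. This mirrors the natural group-theoretic count, where $e\notin ST$ forces $S^{-1}\cap T=\varnothing$. So the underlying idea is presumably the same as in the cited source; your version gains self-containment and a careful treatment of the nonsymmetric case.

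Your use of $U'$ rather than $U$ is genuinely needed. For the near-factorization $A_1(A_0+A_1)$ of Theorem~\ref{thm:Asym2class} one has $U\cap V\neq\varnothing$. What gives $s+t\leqslant n$ is $U'\cap V=\varnothing$ together with the fact that $A_i$ and $A_i^\top$ have equal valency. The paper's one-line justification in Theorem~\ref{thm:stbound} leaves this step implicit.

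Your remark that nontriviality is not needed is also right. A trivial near-factorization has $s=\lambda=1$ and $t=n-1$, so it attains equality with $s+t=n$.
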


In Theorem~\ref{thm:lambdast}, the upper bound $\frac{s t}{s+t-1}$ for $\lambda$ is attained by infinite families of symmetric 2-class schemes in Theorem~\ref{thm:SRGs}(1,2). For upper bounds on $\lambda$ in terms of $n$, we have the next result.

\begin{thm}
\label{thm:maxlambda}
If $(X,\mathcal{A})$ is an association scheme on $n$ points that admits a nontrivial $\lambda$-fold $(s,t)$ near-factorization, then
\[
\lambda\leqslant 
  \begin{cases}
    \frac{n}{4}, & \text{if $n\equiv 0 \pmod{4}$},\\
    \frac{n-1}{4}, & \text{if $n\equiv 1 \pmod{4}$},\\
    \frac{n-2}{4}, & \text{if $n\equiv 2 \pmod{4}$},\\
    \frac{n+1}{4}, & \text{if $n\equiv 3 \pmod{4}$}.
  \end{cases}
\]
\end{thm}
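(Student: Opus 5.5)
We combine the identity $st=\lambda(n-1)$ from Theorem~\ref{thm:char2}(1) with the bound $\lambda\leqslant \frac{st}{s+t-1}$ of Theorem~\ref{thm:lambdast}. Dividing $st=\lambda(n-1)$ by $\lambda\leqslant \frac{st}{s+t-1}$ gives $n-1\geqslant s+t-1$, which is just $s+t\leqslant n$. So the real content is to maximize $\lambda=\frac{st}{n-1}$ subject to the integrality constraints and $s+t\leqslant n$ from Theorem~\ref{thm:stbound}.

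\textbf{Step 1 (real bound).} By AM--GM, $st\leqslant \lfloor (s+t)^2/4\rfloor\leqslant \lfloor n^2/4\rfloor$. Hence
\[
\lambda\leqslant \frac{\lfloor n^2/4\rfloor}{n-1}.
\]
For $n$ even this is $\frac{n^2}{4(n-1)}=\frac{n+1}{4}+\frac{1}{4(n-1)}$. For $n$ odd it is $\frac{n^2-1}{4(n-1)}=\frac{n+1}{4}$.

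\textbf{Step 2 (integrality).} Since $\lambda$ is an integer, $\lambda\leqslant\lfloor \lfloor n^2/4\rfloor/(n-1)\rfloor$. We evaluate this in each residue class of $n$ modulo $4$.
\begin{itemize}
\item[] For $n\equiv 3 \pmod 4$: the bound is exactly $\frac{n+1}{4}$, an integer, as claimed.
\item[] For $n\equiv 1\pmod 4$: $\frac{n+1}{4}$ is not an integer, so $\lambda\leqslant \lfloor\frac{n+1}{4}\rfloor=\frac{n-1}{4}$.
\item[] For $n$ even: as long as $n>2$ we have $\frac{1}{4(n-1)}<\frac14$, so $\lambda\leqslant\lfloor \frac{n+1}{4}+\epsilon\rfloor$ with $0<\epsilon<\frac14$. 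If $n\equiv 0\pmod 4$ this gives $\frac n4$. If $n\equiv 2\pmod 4$, then $\frac{n+1}{4}=\frac{n-2}{4}+\frac34$, and adding $\epsilon<\frac14$ keeps the floor at $\frac{n-2}{4}$.
\end{itemize}
Small $n$ are harmless: a nontrivial near-factorization needs $s,t\geqslant 2$ and $s+t\leqslant n$, so $n\geqslant 4$.

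\textbf{Main obstacle.} The only delicate point is the borderline case $n\equiv 2 \pmod 4$, where the real-valued bound $\frac{n+1}{4}+\frac{1}{4(n-1)}$ lies strictly between $\frac{n-2}{4}$ and $\frac{n+2}{4}$. We check directly that $\frac{n+2}{4}(n-1)>\frac{n^2}{4}$, i.e. $n>2$. The same check $\frac{n+4}{4}(n-1)>\frac{n^2}{4}$ handles $n\equiv 0\pmod 4$. Each case then reduces to a one-line inequality, and no further structural input from the scheme is needed.
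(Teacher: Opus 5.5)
Your proposal is correct and follows essentially the same route as the paper: combine $st=\lambda(n-1)$ with $s+t\leqslant n$ from Theorem~\ref{thm:stbound}, bound $st\leqslant \lfloor n^2/4\rfloor$ to get $\lambda\leqslant \lfloor n^2/4\rfloor/(n-1)$, and then use integrality of $\lambda$ in each residue class modulo $4$. Your explicit check that $\frac{1}{4(n-1)}<\frac14$ for $n>2$ (with $n\geqslant 4$ guaranteed) spells out the floor step in the even case that the paper leaves implicit. The detour through Theorem~\ref{thm:lambdast} is harmless but unnecessary, as you note yourself.
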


\begin{proof}
By Theorem~\ref{thm:stbound}, $2\leqslant s,t\leqslant n-2$ and $2\leqslant s+t \leqslant n$. If $n$ is even, then because $s,t$ are integers such that $2\leqslant s+t \leqslant n$, the largest value of $st=\lambda(n-1)$ is $\frac{n^2}{4}$, and so $\lambda\leqslant \frac{n^2}{4(n-1)}$. Since $\lambda$ is an integer, $\lambda\leqslant \frac{n}{4}$ if $n\equiv 0 \pmod{4}$ and $\lambda\leqslant \frac{n-2}{4}$ otherwise. If $n$ is odd, then the same argument yields $\frac{n^2-1}{4}$ as the maximum value of $st=\lambda(n-1)$. Thus, $\lambda\leqslant \frac{n^2-1}{4(n-1)}=\frac{n+1}{4}$. Since $\lambda$ is an integer, we have $\lambda\leqslant \frac{n+1}{4}$ if $n\equiv 3 \pmod{4}$ and $\lambda\leqslant \frac{n-1}{4}$ otherwise.
\end{proof}

In Theorem~\ref{thm:maxlambda},  the upper bound $\frac{n}{4}$ for $\lambda$ whenever $n\equiv 0 \pmod{4}$ is tight for the infinite family of Hamming schemes $H(n,2)$ and $H(n,4)$ (see Theorems~\ref{thm:Hamming,q=2} and~\ref{thm:Hamming,q=4}, respectively). Moreover, the upper bound $\frac{n-1}{4}$ for $\lambda$ whenever $n\equiv 1 \pmod{4}$ is tight for the infinite family of 2-class schemes corresponding to conference graphs (see Theorem~\ref{thm:SRGs}(3)), while the upper bound $\frac{n+1}{4}$ for $\lambda$ whenever $n\equiv 3 \pmod{4}$ is tight for all asymmetric 2-class schemes (see Theorem~\ref{thm:Asym2class}).

The following result in \cite[Theorem 1.21]{kreher2025lambda} also applies to association schemes, which improves the upper bound on $\lambda$ in Theorem~\ref{thm:maxlambda} depending on the factorization of $n-1$, $\lambda$, $s$ and $t$.

\begin{thm}
\label{thm:lambda}
Suppose $(X,\mathcal{A})$ is an association scheme on $n$ points that admits a nontrivial $\lambda$-fold $(s,t)$ near-factorization, where $n-1=n_1n_2$, $\lambda=\lambda_1\lambda_2$, $s=\lambda_1n_1$ and $t=\lambda_2n_2$. Then
\[
\lambda\leqslant 
  \begin{cases}
    \frac{n+1}{4}, & \text{if $n_1$ and $n_2$ are even},\\
    \frac{(n_1+1)(n-n_2+1)}{4n_1}, & \text{if $n_1$ is odd and $n_2$ is even},\\
    \frac{(n_2+1)(n-n_1+1)}{4n_2}, & \text{if $n_1$ is even and $n_2$ is odd},\\
    \min\big\{\frac{(n_1+1)(n-n_2+1)}{4n_1},\frac{(n_2+1)(n-n_1+1)}{4n_2}\big\}, & \text{if $n_1$ and $n_2$ are odd}.
  \end{cases}
\]
\end{thm}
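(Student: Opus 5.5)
The plan is to bound $\lambda=\lambda_1\lambda_2$ by a concave quadratic in one integer variable, using only $s+t\leqslant n$ (Theorem~\ref{thm:stbound}) and the factorizations $s=\lambda_1 n_1$, $t=\lambda_2 n_2$, $n-1=n_1n_2$. Integrality then pins the maximizer to an integer, and the parity of $n_1$ (respectively $n_2$) decides which integer.

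\emph{Reduction to a quadratic.} From $\lambda_1 n_1+\lambda_2 n_2=s+t\leqslant n$ we get $\lambda_1\leqslant (n-\lambda_2 n_2)/n_1$. Hence
\[
\lambda=\lambda_1\lambda_2\leqslant g(\lambda_2),\qquad g(x):=\frac{x(n-xn_2)}{n_1}.
\]
The function $g$ is a concave quadratic in $x$, with vertex at
\[
x_0=\frac{n}{2n_2}=\frac{n_1n_2+1}{2n_2}=\frac{n_1}{2}+\frac{1}{2n_2}.
\]
Since $\lambda_2$ is a positive integer, $\lambda\leqslant \max_{x\in\mathbb{Z}} g(x)$. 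By concavity, this maximum is attained at an integer nearest to $x_0$. Symmetrically, $\lambda\leqslant h(\lambda_1)$ with $h(y):=y(n-yn_1)/n_2$, whose vertex is at $n_2/2+1/(2n_1)$.

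\emph{Odd case.} Suppose $n_1$ is odd. Since $0<1/(2n_2)\leqslant 1/2$, the integer nearest to $x_0$ is $(n_1+1)/2$. Substituting,
\[
g\!\left(\tfrac{n_1+1}{2}\right)=\frac{(n_1+1)\bigl(2n-(n_1+1)n_2\bigr)}{4n_1}=\frac{(n_1+1)(n-n_2+1)}{4n_1},
\]
where we used $n_1n_2=n-1$. This is exactly the bound in the case ``$n_1$ odd, $n_2$ even''. The same computation with $h$ and $n_2$ odd gives $\frac{(n_2+1)(n-n_1+1)}{4n_2}$, which is the case ``$n_1$ even, $n_2$ odd''. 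When both $n_1$ and $n_2$ are odd, both estimates hold, so $\lambda$ is at most their minimum.

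\emph{Even case.} When $n_1$ and $n_2$ are both even, we simply use the real maximum of $g$:
\[
\lambda\leqslant g(x_0)=\frac{n^2}{4n_1n_2}=\frac{n^2}{4(n-1)}.
\]
It remains to check $\frac{n^2}{4(n-1)}\leqslant \frac{n+1}{4}$, which is equivalent to $n^2\leqslant n^2-1+\ldots$; precisely, it amounts to $n^2\leqslant (n+1)(n-1)+1$, i.e.\ $n^2\leqslant n^2$, with the integrality of $\lambda$ absorbing the slack. Alternatively, AM--GM gives $st\leqslant (s+t)^2/4\leqslant n^2/4$; here $n$ is odd (since $n-1=n_1n_2$ is even) and $s,t$ are integers, so in fact $st\leqslant (n^2-1)/4$, which yields $\lambda\leqslant (n+1)/4$ directly.

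\emph{Main obstacle.} The only delicate step is locating the integer maximizer of $g$. This needs $1/(2n_2)\leqslant 1/2$, so that $(n_1+1)/2$ is indeed the nearest integer to $x_0$ when $n_1$ is odd. The rest is direct substitution.
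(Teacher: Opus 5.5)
Your argument is correct, but it is not how the paper handles this statement. The paper gives no proof: it imports \cite[Theorem 1.21]{kreher2025lambda} from the group setting and asserts that it ``also applies to association schemes.'' Your route is self-contained. You use only $s+t\leqslant n$ (Theorem~\ref{thm:stbound}), the factorization $s=\lambda_1n_1$, $t=\lambda_2n_2$, $n-1=n_1n_2$, and the integrality of $\lambda_1,\lambda_2$ (implicit in the statement). Bounding $\lambda\leqslant g(\lambda_2)$ and maximizing the concave quadratic over the integers is exactly what is needed. It also has the advantage of showing why the group bound transfers: nothing beyond these arithmetic constraints is used, and they hold verbatim for schemes. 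The nearest-integer analysis for odd $n_1$ (resp.\ odd $n_2$), using $0<\frac{1}{2n_2}\leqslant\frac12$, is right, and so is the simplification $2n-(n_1+1)n_2=n-n_2+1$.

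One step needs repair: your first computation in the case where $n_1,n_2$ are both even is wrong as written. The inequality $\frac{n^2}{4(n-1)}\leqslant\frac{n+1}{4}$ is false, since it is equivalent to $n^2\leqslant n^2-1$. The remark that ``integrality absorbs the slack'' is not an argument as stated. It can be made one: here $n\equiv 1 \pmod 4$, so $\frac{n+1}{4}$ is a half-integer and $\frac{n+1}{4}+\frac{1}{4(n-1)}$ lies below the next integer. It is cleaner, though, to delete that paragraph and keep your AM--GM argument, which is correct. Since $n$ is odd, integers with $s+t\leqslant n$ satisfy $st\leqslant\frac{n^2-1}{4}$, hence $\lambda\leqslant\frac{n+1}{4}$. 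Alternatively, your own method handles this case uniformly. When $n_1$ is even, $x_0=\frac{n_1}{2}+\frac{1}{2n_2}$ has $\frac{n_1}{2}$ as a nearest integer, and $g(n_1/2)=\frac{n+1}{4}$ exactly.
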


If $n_1,n_2\geq 2$, then the assumption in Theorem~\ref{thm:lambda} implies $\operatorname{gcd}(s,n-1) >1$ and $\operatorname{gcd}(s,n-1) >1$, this also follows from Corollary~\ref{cor:n-1} and the fact that the near-factorization is nontrivial. 
Additionally, we remark that if $n_1,n_2\geq 2$, then the upper bounds for $\lambda$ when at least one of $n_1$ and $n_2$ is odd is always less than the upper bound $\frac{n+1}{4}$ when $n_1$ and $n_2$ are both even. This implies that $\lfloor\frac{n+1}{4}\rfloor$ is a general upper bound for $\lambda$ for any $n$ (this is also clear from Theorem~\ref{thm:maxlambda}).

We now derive improved upper bounds for $\lambda$ when $n\equiv k+1 \pmod{2k}$ for odd $k$.

\begin{cor}
\label{cor:lambdaimp}
Suppose $(X,\mathcal{A})$ is an association scheme on $n$ points that admits a nontrivial $\lambda$-fold $(s,t)$ near-factorization. If $n\equiv k+1 \pmod{2k}$ for some odd $k$, then $\lambda\leqslant \big\lfloor\frac{(k^2-1)n+(k+1)^2}{4k^2}\big\rfloor$.
\end{cor}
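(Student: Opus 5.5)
The plan is to read Corollary~\ref{cor:lambdaimp} as the case of Theorem~\ref{thm:lambda} in which $n_1=k$ and both $n_1,n_2$ are odd. The hypothesis $n\equiv k+1 \pmod{2k}$ gives $n-1=k+2km=k(2m+1)$ for some integer $m\geqslant 0$. So $n-1$ is odd, and $k$ divides $n-1$ with odd cofactor $n_2:=(n-1)/k=2m+1$. Since $n-1$ is odd, every divisor of $n-1$ is odd. Hence in any factorization $n-1=n_1n_2$ as in Theorem~\ref{thm:lambda} we are automatically in the last case, where $\lambda$ is at most the minimum of the two expressions.

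\emph{Realizing the hypothesis of Theorem~\ref{thm:lambda}.} By Theorem~\ref{thm:char2}(1), $st=\lambda(n-1)$. I would put $\lambda_1=\gcd(s,\lambda)$, $n_1=s/\lambda_1$ and $\lambda_2=\lambda/\lambda_1$. Then $\gcd(n_1,\lambda_2)=1$, and $n_1\lambda_1 t=\lambda_1\lambda_2(n-1)$ gives $n_1\mid \lambda_2(n-1)$, hence $n_1\mid n-1$. Setting $n_2=(n-1)/n_1$ yields $t=\lambda_2n_2$. So a decomposition of the required shape always exists. The main obstacle is that this decomposition is dictated by $s$, so it need not have $n_1=k$. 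I would handle this by working with the function
\[
f(x)=\frac{(x+1)\bigl(n+1-\tfrac{n-1}{x}\bigr)}{4x},
\]
so that the bound of Theorem~\ref{thm:lambda} reads $\min\{f(n_1),f(n_2)\}$. A short derivative computation should show that $f$ is increasing for $1\leqslant x\leqslant n-1$. Then $\min\{f(n_1),f(n_2)\}=f(\min\{n_1,n_2\})$, and it remains to compare $\min\{n_1,n_2\}$ with $k$. This is the point I am least sure of. The intended reading is presumably that $k$ plays the role of the relevant (smaller) odd factor, i.e.\ that one takes $n_1=k$, $n_2=(n-1)/k$. I would first try to justify $\min\{n_1,n_2\}\leqslant k$, or else state the corollary for that choice of factorization.

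\emph{The computation.} With $n_1=k$ and $n_2=(n-1)/k$, the expression $\frac{(n_1+1)(n-n_2+1)}{4n_1}$ becomes
\[
\frac{(k+1)\bigl(n+1-\tfrac{n-1}{k}\bigr)}{4k}=\frac{(k+1)\bigl((k-1)n+k+1\bigr)}{4k^2}=\frac{(k^2-1)n+(k+1)^2}{4k^2}.
\]
This is exactly the claimed quantity, so $\lambda$ is bounded by it. Since $\lambda$ is an integer, we may take the floor, which gives the statement.
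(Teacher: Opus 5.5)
The step you flagged, obtaining $\min\{n_1,n_2\}\leqslant k$ (equivalently, being allowed to take $n_1=k$), is a genuine gap. It cannot be filled, because the statement as written is false.

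For odd $k$, the hypothesis $n\equiv k+1 \pmod{2k}$ says only that $n$ is even and $k\mid n-1$.
\begin{itemize}
\item Take $k=1$. The claimed bound is $\lfloor 4/4\rfloor=1$ for every even $n$. Yet $H(4,2)$, on $16$ points, has the $4$-fold $(6,10)$ near-factorization of Theorem~\ref{thm:Hamming,q=2}. In fact $\lambda<s,t$ forces $n_1,n_2\geqslant 2$, so $\min\{n_1,n_2\}\geqslant 3$ here, and $k=1$ can never be the right factor.
\item A less degenerate counterexample: $n=64$, $k=3$ satisfies the hypothesis and gives the bound $\lfloor 528/36\rfloor=14$. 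But $H(6,2)$ has a $16$-fold $(28,36)$ near-factorization. Your $\gcd$ decomposition gives $\lambda_1=\gcd(28,16)=4$ and $\{n_1,n_2\}=\{7,9\}$, and Theorem~\ref{thm:lambda} then gives exactly $f(7)=16$.
\end{itemize}
The paper's proof makes precisely the move you distrusted. It sets $n_1=k$, $n_2=(n-1)/k$ and invokes Theorem~\ref{thm:lambda}. It never checks that $s=\lambda_1 k$ and $t=\lambda_2 (n-1)/k$ for some factorization $\lambda=\lambda_1\lambda_2$, which that theorem requires.

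Your ingredients are sound, and they yield a correct replacement.
\begin{itemize}
\item The $\gcd$ construction of $(\lambda_1,\lambda_2,n_1,n_2)$ is right.
\item The monotonicity is right: $f(x)=\tfrac14\bigl(n+1+\tfrac{2}{x}-\tfrac{n-1}{x^2}\bigr)$ has $f'(x)=\tfrac{n-1-x}{2x^3}>0$ for $1\leqslant x<n-1$.
\item $\min\{n_1,n_2\}$ is a divisor of $n-1$ that is at most $\sqrt{n-1}$.
\end{itemize}
Together these prove $\lambda\leqslant\bigl\lfloor\frac{(k^2-1)n+(k+1)^2}{4k^2}\bigr\rfloor$ for even $n$ in either of two cases. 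The first is when $k$ is the largest divisor of $n-1$ not exceeding $\sqrt{n-1}$, or more generally whenever $\min\{n_1,n_2\}\leqslant k\leqslant n-1$. The second is when one assumes $n_1=k$ in the decomposition. That is the provable form of the corollary; no argument can give it for an arbitrary odd $k$ satisfying the congruence.
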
 

\begin{proof}
Let $n\equiv k+1 \pmod{2k}$ for some odd $k$. Then we may write $n-1=2rk+k=p(2k+1)$ for some $r$. That is, $n-1=n_1n_2$ where $n_1=k$ and $n_2=\frac{n-1}{k}$ are both odd. Note that 
\begin{center}
$\displaystyle\frac{(n_1+1)(n-n_2+1)}{4n_1}=\frac{(k^2-1)n+(k+1)^2}{4k^2}\quad\text{and} \quad \frac{(n_2+1)(n-n_1+1)}{4n_2}=\frac{n^2-(p-1)^2}{4(n-1)}.$
\end{center}
Since $\frac{n^2-(k-1)^2}{4(n-1)}>\frac{n(k^2-1)+(k+1)^2}{4k^2}$ for all $k\geq 1$, applying Theorem~\ref{thm:lambda} yields the desired upper bound. 
\end{proof}

\subsection{Finding near-factorizations}

Theorems~\ref{thm:char1} and~\ref{thm:char2} gives two basic approaches in finding $\lambda$-fold $(s,t)$ near-factorization in association schemes. Assume $U,V \subseteq \{0,1,\dots,d\}$ are two subsets and set $S$ and $T$ as in Theorem~\ref{thm:char1} Condition~(1). 
 
The first approach involves the intersection algebra, one can check whether the corresponding sums of intersection matrices $B_U = \sum_{i \in U} B_i$ and $B_V = \sum_{j \in V} B_j$ satisfy 
\begin{equation}\label{eq:NFinIntAlg} 
B_U B_V = \lambda( B_1 + \dots + B_d ).
\end{equation}
This can be done by checking if the intersection numbers $\{p_{i,j}^k : 0 \leqslant i,j,k \leqslant d\}$ of the association scheme satisfy the conditions of Theorem~\ref{thm:char1}.

The second approach involves eigenvalues. Since the eigenvalues of $\lambda(J-I)$ are $\lambda(n-1)$ with multiplicity $1$, and $-\lambda$ with multiplicity $n-1$, we need to check if the valencies $s$ and $t$ of $S=S_U$ and $T=T_V$, respectively, will satisfy $s t = \lambda(n-1),$ while all other irreducible characters $\theta$ of $\mathcal{A}$ satisfy $\theta(S) \theta(T) = - \lambda$.    

In the remainder of the paper, we will use these approaches to find near-factorization in many different association schemes. 

\section{Near-factorizations in $2$-class association schemes}\label{sec:2class}

In this section, we give a characterization of near-factorizations in the smallest nontrivial association schemes. We have two cases: when the classes are symmetric (these correspond to \emph{strongly-regular graphs}), and when they are not symmetric (i.e. the two classes are each other's transpose).

\subsection{Symmetric $2$-class association schemes} 

A \emph{strongly-regular graph $\Gamma$ with parameters $(n, k; a, c)$} is a graph with $n$ vertices and valency $k$, where any two adjacent vertices have exactly $a$ common neighbours, and any two non-adjacent vertices have exactly $c$ common neighbours. If $\Gamma$ is a strongly-regular graph with parameters $(n,k; a,c)$, then its complement $\overline{\Gamma}$ is also strongly regular with parameters $(n, n-k-1; n-2k+c-2, n-2k+a)$.  Not all parameters $(n, k; a, c)$ are feasible. For example, a simple edge counting shows that the parameters $a, c$ must satisfy the compatibility condition $(n-k-1) c = (k-a-1)k$. A strongly-regular graph $\Gamma$ is \emph{primitive} if $\Gamma$ and its complement are both connected.  Primitive strongly-regular graphs correspond 
precisely to symmetric 2-class association schemes. For more on strongly-regular graphs see~\cite{BCN}.  

If $\Gamma$ is a strongly-regular graph with parameters $(n, k; a , c)$, then the a symmetric $2$-class association scheme of order $n$ are 
\begin{center}
$\big\{A_0=I, A_1 = A(\Gamma), A_2 = A(\overline{\Gamma})  \big\},$
\end{center}
\noindent where $A(\Gamma)$ and $A(\overline{\Gamma})$ are the adjacency matrices of $\Gamma$ and its complement, respectively. Note that $A_1^2 = k A_0 + a A_1 + c A_2$. Using this relation and the structure constant identities, which hold for all association schemes, we can completely determine the three intersection matrices for any strongly-regular graph $\Gamma$ (hence any symmetric $2$-class association) with parameters $(n, k ; a, c)$. These are:  
$$
B_0=I,
\quad 
B_1 = 
\begin{bmatrix}
 0 & k & 0 \\ 
 1 & a& k-a-1 \\ 
 0 & c & k-c 
\end{bmatrix}
\quad \textrm{and} \quad
B_2 = \begin{bmatrix} 
0 & 0 & \ell  \\ 
0 & k-a-1 & \ell - k + a+1 \\ 
1 & k - c & \ell - k + c 
\end{bmatrix},
$$ 
where $\ell = n-k-1$ since $A_2=A(\overline{\Gamma})$. As a consequence, $\ell c = (k-a-1)k$ and thus,
\begin{equation}
\label{eqsrgB}
B_1 B_2 = (k-a-1)B_1 + (k-c)B_2.
\end{equation}
The eigenvalues of a strongly-regular graph $\Gamma$ can be determined directly from the graph parameters. These are the eigenvalues of the intersection matrix $B_1$, i.e. the roots of
\begin{center}
$\det(xB_0-B_1)=(x-k)(x^2+(c-a)x-(k-c))$.
\end{center}
Clearly, the degree $k$ is an eigenvalue with multiplicity 1. Setting $\Delta = \sqrt{(c-a)^2+4(k-c)}$, the quadratic formula yields  the other distinct eigenvalues:
\begin{center}
$\theta  = \dfrac{ a-c +  \Delta}{2}  \quad\text{and}\quad \tau  = \dfrac{ a-c - \Delta }{2}. $
\end{center}
Consequently, there are precisely three distinct eigenvalues and they satisfy $k \geqslant \theta \geqslant 0 > -1 \geqslant \tau$.

There are only two cases of 2-class association schemes that admit near-factorizations.  We state them in terms of the parameters of the corresponding strongly-regular graph. Note that each case is stated twice, once for the graph $\Gamma$ in the association scheme and again for the other graph in the association scheme, $\overline{\Gamma}$.

\begin{thm}\label{thm:SRGs}
Let $\Gamma$ be a primitive strongly-regular graph with parameters $(n, k ; a, c)$. 
Then the symmetric $2$-class association scheme corresponding to $\Gamma$ admits a nontrivial $\lambda$-fold $(s,t)$ near-factorization if and only if one of the following conditions hold:
\begin{enumerate}[(i)]
\item  $c=a+ 2$, with $\lambda = k-a-1 = (1+ \theta)^2$,  and $(s, t ) = ( k+1, n-k-1)$; \label{SRGcase1}
\item  $c=a$, with $\lambda=k-a=\theta^2$, and
  $( s, t ) = ( k, n-k) $;
  \label{SRGcase2}
\item $c=a+1$, with $\lambda = k-a-1 =\theta^2+ \theta $ and $(s, t ) =( k, n-k-1)$.   \label{SRGcase3}
\end{enumerate}
\end{thm}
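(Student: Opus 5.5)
By Theorem~\ref{thm:char1}, any near-factorization in the scheme $\{A_0=I,A_1,A_2\}$ has the form $S=S_U$, $T=T_V$ with $U,V\subseteq\{0,1,2\}$. I would prove the statement by listing all admissible pairs $(U,V)$, computing $S_UT_V$ in the basis $\{I,A_1,A_2\}$, and reading off exactly when it equals $\lambda(A_1+A_2)$.

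First I cut the list down. The scheme is symmetric, so Theorem~\ref{thm:UV} gives $U\cap V=\varnothing$. Nontriviality means $s,t\geqslant 2$, so neither $U$ nor $V$ is $\{0\}$. Primitivity gives $k\geqslant 2$ and $n-k-1\geqslant 2$. Since $S$ and $T$ commute (Theorem~\ref{thm:char1}(1)), the order of $U$ and $V$ does not matter. Disjointness and nonemptiness then leave exactly these unordered pairs of disjoint subsets of $\{0,1,2\}$ containing no $\{0\}$:
\[
\{1\},\{2\};\qquad \{0,1\},\{2\};\qquad \{1\},\{0,2\}.
\]
The remaining possibilities such as $\{0,1\},\{0,2\}$ or $\{1,2\},\{0\}$ are excluded by disjointness or by triviality.

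Next I compute each product using \eqref{eqsrgB}, namely $A_1A_2=(k-a-1)A_1+(k-c)A_2$.
\begin{enumerate}[(a)]
\item For $U=\{1\}$, $V=\{2\}$: the product is $A_1A_2$ itself. It equals $\lambda(J-I)$ if and only if $k-a-1=k-c$, i.e.\ $c=a+1$. Then $\lambda=k-a-1$ and $(s,t)=(k,n-k-1)$, which is case (iii).
\item For $U=\{0,1\}$, $V=\{2\}$: $(I+A_1)A_2=(k-a-1)A_1+(k-c+1)A_2$. This is $\lambda(J-I)$ if and only if $c=a+2$, giving $\lambda=k-a-1$ and $(s,t)=(k+1,n-k-1)$, which is case (i).
\item For $U=\{1\}$, $V=\{0,2\}$: $A_1(I+A_2)=(k-a)A_1+(k-c)A_2$. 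This is $\lambda(J-I)$ if and only if $c=a$, giving $\lambda=k-a$ and $(s,t)=(k,n-k)$, which is case (ii).
\end{enumerate}
The converse direction is immediate, since each displayed product is then literally $\lambda(J-I)$. In each case the identity coefficient is $0$, so condition (2) of Theorem~\ref{thm:char1} holds automatically.

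Finally I convert $\lambda$ into the eigenvalue forms. The quadratic $x^2+(c-a)x-(k-c)$ gives $\theta+\tau=a-c$ and $\theta\tau=-(k-c)$.
\begin{enumerate}[(a)]
\item If $c=a+1$, then $\tau=-1-\theta$, so $k-a-1=k-c=\theta(\theta+1)$.
\item If $c=a+2$, then $\tau=-2-\theta$, so $k-a-2=\theta(\theta+2)$ and hence $\lambda=k-a-1=(1+\theta)^2$.
\item If $c=a$, then $\tau=-\theta$, so $\lambda=k-a=\theta^2$.
\end{enumerate}
The enumeration is the step that needs the most care. I must make sure that every admissible $(U,V)$ has been listed exactly once up to the symmetry $S\leftrightarrow T$, and that the cases in which $I$ is absorbed into $A_2$ rather than $A_1$ are covered. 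Those cases are precisely the same three conditions read for the complement $\overline{\Gamma}$, which is why each case appears twice: condition (i) for $\Gamma$ corresponds to (ii) for $\overline{\Gamma}$, via the complement's parameters $a'=n-2k+c-2$ and $c'=n-2k+a$. Apart from this bookkeeping, the argument is a direct computation.
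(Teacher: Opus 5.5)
Your proposal is correct and follows the paper's proof. It uses the same three cases $(I+A_1)A_2$, $A_1(I+A_2)$ and $A_1A_2$, each settled by the relation $A_1A_2=(k-a-1)A_1+(k-c)A_2$ (the paper writes this with the intersection matrices $B_i$), and then rewrites $\lambda$ in terms of $\theta$. Your explicit appeal to Theorem~\ref{thm:UV} and nontriviality to show these are the only admissible cases, and your use of Vieta's formulas instead of the explicit root formula, are minor refinements of the same argument.
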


\begin{proof} 
Let $(X,\mathcal{A})$ be a symmetric $2$-class association scheme whose basis matrix $A_1$ is the adjacency matrix of a strongly-regular graph with parameters $(n, k ; a, c)$.  If $(X,\mathcal{A})$ admits a nontrivial near-factorization, then, assuming $A_1$ is involved in $S$, there are three cases. The first is when $S$ contains two classes and $T$ contains one class, the second is when $S$ contains one class and $T$ contains two classes, and the last one is when both $S$ and $T$ only have one class. 

\noindent {\bf Case (i):}  $S = A_0 + A_1$ and $T=A_2$. Translating the condition that $( A_0 + A_1) A_2 = \lambda( J-I)$ in terms of intersection matrices gives the identity 
$(B_0 + B_1)B_2 = \lambda(B_1 + B_2).$
Expanding the left-hand side of this equation using Equation (\ref{eqsrgB}) yields $B_2 + B_1B_2 = (k-a-1) B_1 + (k-c+1)B_2,$
which shows $\lambda= k-a-1=k-c+1$. Consequently,
such an  association scheme will admit a near-factorization with $\lambda = k-a-1$ whenever $c=a+2$.  In this case $(s, t )=(k+1, \ell)$ and, since $\theta = - 1 + \sqrt{(1+k-c)}$, we get $\lambda = (1+ \theta)^2$.

\noindent {\bf Case  (ii):} This is equivalent to Case (i), but with $\Gamma$ and $\overline{\Gamma}$ switched. Thus, the association schemes are the same. Using Equation (\ref{eqsrgB}), we obtain $B_1(B_0 + B_2) = (k-a) B_1 + (k-c) B_2.$ 
Therefore, Equation~(\ref{eq:NF}) has a solution when $\lambda=k-a=k-c$, and $( s, t ) =( k, 1+\ell)$. Since $a=c$ and $\theta = \sqrt{(k-c)}$, we get $\lambda = \theta^2$. 

\noindent {\bf Case  (iii):} $S = A_1$ and $T=A_2$. We can write this in terms of intersection matrices as $B_1 B_2 = \lambda(B_1 + B_2).$ In this case,
$B_1B_2 = (k-a-1)B_1 + (k-c)B_2$. Consequently, we need $\lambda=k-a-1=k-c$. Every $(n, k; a, c)$ with $c=a+1$ admits a solution with $\lambda=k-a-1=k-c$, and $(s,t ) =( k, \ell)$. Since $c=a+1$, we get 
$\theta = \frac{-1 + \sqrt{1 + 4(k-c)}}{2}$. As a result $(2 \theta +1)^2 = 1 + 4(k-c)$, and thus, $\lambda = k-c = \theta^2+ \theta$. 
\end{proof} 

Theorem~\ref{thm:SRGs}(\ref{SRGcase3}) includes the family of conference graphs. These are the strongly-regular graphs with parameters $( 4a+5,2 (a+1) ; a, a+1)$, where $a \geqslant 0$ with $\lambda =a+1$. In Appendix~\ref{app-SRG}, we give a table of the strongly-regular graphs of order up to $100$ that admit a near-factorization.
  
Apart from conference graphs, we find one more infinite family of examples in~\cite[page 386]{BCN}, provided by certain collinearity graphs of generalized quadrangles. The generalized quadrangle $GQ(u, v)$ is a strongly-regular graph with parameters $( (u+1)(uv+1), \, u(v+1) ; u-1, \, v+1)$. In this case, $\theta = u-1$. For $u, v \geqslant 2$, these are known to exist whenever 
\begin{center}
$(u,v) \in \left\{ (u,1), \, (1,v), \, (q,q), \, (q,q^2), \, (q^2,q), \, (q^2,q^3), \, (q-1,q+1), \,(r+1,r-1) \right \},$
\end{center} 
where $q$ a prime power or $r$ a power of $2$. Apart from some small examples, $GQ(2^m+1,2^m-1)$ satisfies Case~(\ref{SRGcase1}) with $\lambda=2^{2m}$, while $GQ(q, q)$ satisfies Case~(\ref{SRGcase2}) with $\lambda=q^2$ for any prime power $q$.

\subsection{Asymmetric $2$-class association schemes} 

We now investigate near-factorizations in asymmetric $2$-class association schemes, whose set of adjacency matrices is $\{A_0=I, A_1,  A_1^{\top}\}$. The intersection numbers of these association schemes only depend on the order $n$, which must be congruent to $3$ modulo $4$.

\begin{thm}\label{thm:Asym2class}
Let $(X,\mathcal{A})$ be an asymmetric $2$-class association scheme of order $n=4u+3\geqslant 7$.  Then $(X,\mathcal{A})$ admits the nontrivial non-symmetric $(u+1)$-fold $(2u+1, 2u+2)$ near-factorizations: 
\begin{equation*}
A_1(A_0+A_1) = (u+1)(J-I)\quad \text{and}\quad A_1^\top(A_0+A_1^\top) = (u+1)(J-I). 
\end{equation*}
\end{thm}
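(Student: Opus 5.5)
We compute $A_1^2$ directly in the adjacency algebra by determining the intersection numbers of an asymmetric $2$-class scheme; the near-factorization identity will then follow from Theorem~\ref{thm:char1} (or simply by expanding). Write $A_2 = A_1^\top$, so $A_0+A_1+A_2=J$. Since $A_1$ and $A_2$ have equal valency $k$ and $1+2k=n$, we get $k=(n-1)/2=2u+1$. Because $A_1^2$ lies in the algebra, write $A_1^2 = \alpha A_0+\beta A_1+\gamma A_2$. The diagonal coefficient is $\alpha=p_{1,1}(0)=0$, since $A_1(x,y)=A_1(y,x)=1$ is impossible ($A_1$ and $A_1^\top$ are distinct classes, hence have disjoint supports).

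The key step is to pin down $\beta$ and $\gamma$. Taking transposes gives $A_2^2=\beta A_2+\gamma A_1$. The product $A_1A_2=A_1A_1^\top$ has diagonal $k$. Using $A_1 J = kJ$ and $A_1(A_0+A_1+A_2)=kJ$, we obtain
\[
A_1A_2 = kJ - A_1 - A_1^2 = kA_0 + (k-1-\beta)A_1 + (k-\gamma)A_2 .
\]
Since $A_1A_2$ is symmetric, the coefficients of $A_1$ and $A_2$ agree, so $k-1-\beta=k-\gamma$, i.e. $\gamma=\beta+1$. Counting row sums of $A_1^2$ gives $k^2=\beta k+\gamma k$, hence $\beta+\gamma=k$. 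Solving, $\beta=(k-1)/2=u$ and $\gamma=u+1$; in passing this also recovers $n\equiv 3\pmod 4$. This bookkeeping is the only real content, and I expect the symmetry of $A_1A_1^\top$ to be the point needing care.

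Then
\[
A_1(A_0+A_1)=A_1+uA_1+(u+1)A_2=(u+1)(A_1+A_2)=(u+1)(J-I),
\]
with valencies $s=k=2u+1$ and $t=k+1=2u+2$. As a check, $st=(2u+1)(2u+2)=(u+1)(n-1)$. Transposing the identity, and using commutativity, gives $A_1^\top(A_0+A_1^\top)=(u+1)(J-I)$, in line with Corollary~\ref{symmetry2}. Nontriviality holds since $s,t\ge 2$ when $u\ge1$, i.e. $n\ge 7$. The factorization is non-symmetric because $A_1$ is not symmetric, consistent with Corollary~\ref{cor:prop}(\ref{symmetry}).
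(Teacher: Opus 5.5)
Your proposal is correct and follows the same route as the paper, which quotes $A_1^2 = uA_1+(u+1)A_1^\top$ and adds $A_1$ to both sides (and argues the same way for the transpose). The only difference is that you derive these intersection numbers yourself, from the symmetry of $A_1A_1^\top$ and a row-sum count; the paper takes them as known.
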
 

\begin{proof} 
The adjacency matrices of an asymmetric $2$-class scheme with order $n = 4u+3$ satisfies $A_1^2 = uA_1 + (u+1)A_1^{\top}$ and $(A_1^{\top})^2 = uA_1^{\top} + (u+1)A_1.$ As $A_1+A_1^{\top}=J-I$, adding $A_1$ and $A_1^\top$ respectively to the preceding identities, and then simplifying gives the desired near-factorizations.
\end{proof}

Any cyclic group of prime order $p=4u+3$ decomposes as the set of non-identity quadratic residues, and the set of non-identity quadratic non-residues.  Since $-1$ is a non-residue, this decomposition produces an asymmetric $2$-class fusion of the thin scheme of this group, which, therefore, has a $(u+1)$-fold $(2u+1,2u+2)$ near-factorization indicated in Theorem~\ref{thm:Asym2class}.

\subsection{Half-set near-factorizations}
\label{subsec:halflings}

Several examples of strongly-regular graphs with near-factorizations have an additional property that allows us to construct larger association schemes that also have a near-factorization. A \emph{half-set near-factorization} is a near-factorization with parameters satisfying $s = 2\lambda$ and $t= \frac{n-1}{2}$. By~\cite[Theorem 2.19]{kreher2025lambda}, half-set near-factorizations in groups are preserved by direct products. The analogous statement holds for the tensor product of association schemes.

\begin{thm}
\label{thm:halfsetProds}
Assume for $i =1,2$, that $\mathcal{A}_i$ is an association schemes of order $n_i$ admitting a $\lambda_i$-fold $(s_i, t_i)$ near-factorization. Then the association scheme $\mathcal{A}_1 \otimes \mathcal{A}_2$ admits a nontrivial half-set $2\lambda_1 \lambda_2$-fold $( s_1 s_2, \frac{n_1 n_2 - 1}{2} )$ near-factorization. 
\end{thm}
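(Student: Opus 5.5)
The statement is evidently meant under the hypothesis that both given near-factorizations are half-set, i.e. $s_i=2\lambda_i$ and $t_i=\frac{n_i-1}{2}$, and I will assume this throughout. The plan is to write down an explicit pair $(S,T)$ in the adjacency algebra of $\mathcal{A}_1\otimes\mathcal{A}_2$ (which contains $M_1\otimes M_2$ for $M_i$ in the algebra of $\mathcal{A}_i$). I will then verify $ST=2\lambda_1\lambda_2(J-I)$ directly, using only $S_iT_i=\lambda_i(J-I)$ and $S_iJ=s_iJ$.

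\textbf{Auxiliary identity.} Set $W_i:=J-T_i$. This is a $01$-matrix in the adjacency algebra of $\mathcal{A}_i$ with valency $n_i-t_i=\frac{n_i+1}{2}$. Since $s_i=2\lambda_i$,
\[
S_iW_i=s_iJ-\lambda_i(J-I)=\lambda_i(J+I).
\]
This is the key point: the half-set condition makes $S_i$ times the complement of $T_i$ again a combination of $I$ and $J$, now with the opposite sign on $I$.

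\textbf{The pair.} Define
\[
S:=S_1\otimes S_2,\qquad T:=T_1\otimes W_2+W_1\otimes T_2 .
\]
Using $(A\otimes B)(C\otimes D)=AC\otimes BD$, the product is
\[
ST=\lambda_1\lambda_2\big[(J-I)\otimes(J+I)+(J+I)\otimes(J-I)\big].
\]
Expanding, the mixed terms $\pm J\otimes I$ and $\pm I\otimes J$ cancel, leaving
\[
ST=2\lambda_1\lambda_2\,(J\otimes J-I\otimes I)=2\lambda_1\lambda_2(J-I).
\]
$S$ is clearly a $01$-matrix with valency $s_1s_2$. $T$ is a $01$-matrix because $T_1$ and $W_1$ have disjoint supports, so the two summands have disjoint supports. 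Its valency is
\[
t_1\tfrac{n_2+1}{2}+\tfrac{n_1+1}{2}t_2=\tfrac{(n_1-1)(n_2+1)+(n_1+1)(n_2-1)}{4}=\tfrac{n_1n_2-1}{2}.
\]

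\textbf{Conclusion.} We have $s=s_1s_2=4\lambda_1\lambda_2=2\lambda$ with $\lambda=2\lambda_1\lambda_2$, and $t=\frac{n_1n_2-1}{2}$, so the near-factorization is half-set. It is nontrivial since $s\geq 4$.

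The only real obstacle is finding the right form of $T$. The naive choice $T_1\otimes T_2$ produces $(J-I)\otimes(J-I)$, which has the wrong shape. The fix is the complementary identity $S_iW_i=\lambda_i(J+I)$, whose symmetric combination above cancels the cross terms.
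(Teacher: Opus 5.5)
Your proposal is correct and follows the paper's proof: you rightly supply the half-set hypothesis on both factors that the statement omits (the paper's proof assumes it too), and your pair $S_1\otimes S_2$ and $T_1\otimes(J-T_2)+(J-T_1)\otimes T_2$ is exactly the paper's construction. Your identity $S_i(J-T_i)=\lambda_i(J+I)$ and your $01$-matrix and valency checks fill in what the paper calls ``straightforward.'' For completeness, nontriviality also requires $t\neq 1$, which is immediate from $t=\frac{n_1n_2-1}{2}\geq 4$.
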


\begin{proof}
Assume that $S_1 T_1 = \mu_1(J_1-I_1)$ is a nontrivial half-set near-factorization of $\mathcal{A}_1$ with parameters 
$(s_1, t_1) = (2\lambda_1,   \frac{n_1-1}{2} )$. Further, let $S_2 T_2 = \lambda_2(J_2-I_2)$ be a nontrivial half-set near-factorization of $\mathcal{A}_2$ is  with parameters $(s_2, t_2) = ( 2\lambda_2, \frac{n_2-1}{2})$.
It is straightforward to show that 
\begin{center}
$ (S_1 \otimes S_2) \left(  ( T_1 \otimes (J_{n_2}-T_2) ) +  (  (J_{n_1}-T_1) \otimes T_2 ) \right)  = 
      2 \lambda_1 \lambda_2 ( ( J_{n_1} \otimes J_{n_2} ) -  ( I_{n_1} \otimes I_{n_2})  ).$
\end{center}
Thus $S_1 \otimes S_2$ and $( T_1 \otimes (J_{n_2}-T_2) )  + (  (J_{n_1}-T_1) \otimes T_2 )$ is a nontrivial half-set near-factorization of $\mathcal{A}_1 \otimes \mathcal{A}_2$ with $(s, t) = ( s_1 s_2, \frac{n_1 n_2 - 1}{2} )$ and $\lambda =  2 \lambda_1 \lambda_2$.  
\end{proof}

In Case~(\ref{SRGcase1}) of Theorem~\ref{thm:SRGs}, the strongly-regular graphs with parameters $(n, k ; \mu,\mu+2)$ admit nontrivial half-set near-factorizations of the form $(I+A_1)A_2=\lambda(J-I)$ with parameters $s=k+1$, $t=2\lambda = k+2$.
Several examples of such graphs are recorded in Table~\ref{Table:SRGcase1} given in Appendix~\ref{app-SRG}. The strongly-regular graphs in Case~(\ref{SRGcase3}) of Theorem~\ref{thm:SRGs} always lead to nontrivial half-set near-factorizations of the form $A_1A_2=\lambda(J-I)$ and parameters $s=k = 2 \lambda$ and $t=k$. 


\subsection{Paley graphs and tournaments}
\label{subsec:Paley}

We introduce an infinite family of strongly-regular graphs that all have nontrivial half-set near-factorizations.
Given a finite field $\mathds{F}_q$, its set of \emph{quadratic residues} is the set $QR=\{x^2 \mid x \in \mathds{F}_q\}$ and the set of \emph{non-quadratic residue} is $NR=\mathds{F}_q-QR$. There are $\frac{q-1}{2}$ non-zero quadratic residues and thus, there are $\frac{q-1}{2}$ non-quadratic residues. If $q \equiv 1 \pmod{4}$, then the \emph{Paley graph of order $q$}, denoted $P(q)$, is the Cayley graph of $\mathds{F}_q$ with generating set $QR$. The Paley graph $P(q)$ is a strongly-regular graph with parameters $(q, \frac{q-1}{2} ; \frac{q-5}{4}, \frac{q-1}{4})$ that is also self-complimentary. An analogous definition applied to $q \equiv 3 \pmod{4}$ gives rise to the \emph{Paley tournament of order $q$}, denoted $T(q)$. The graph $T(q)$ is a directed graph by virtue of the fact that $-1$ is not a quadratic residue. Since $T(q)$ is a directed graph, its adjacency matrix is not symmetric.  

If $q \equiv 1 \pmod{4}$ then the Paley graph $P(q)$ falls under Case~(\ref{SRGcase3}) of Theorem~\ref{thm:SRGs}, so it admits a nontrivial half-set near-factorization with $A_1$ and $A_2$ with $\lambda = \frac{q-1}{4}$. By Theorem~\ref{thm:Asym2class}, if $q \equiv 3 \pmod{4}$ then the Paley tournament $T(q)$  has a nontrivial half-set near-factorization with $I+A_1$ and $A_2$. These lead to product association schemes that have nontrivial half-set near-factorizations, which are discussed in Subsection~\ref{subsec:schurianProduct}.

\section{Generalized hexagons}
\label{sec:genhex}

In this short section, we establish the existence of $\lambda$-fold near-factorizations in a family of $P$-polynomial association schemes that arise from a particular distance-regular graph. A connected graph of diameter $D$ is \emph{distance regular} if there exists constants $c_i, a_i$, and $b_i$ such that for all $i\in \{0,1,\ldots, D\}$ and all pairs of vertices $x$ and $y$ at distance $i$, amongst the neighbours of $y$ there are $c_i$ vertices at distance $i-1$ from $x$, $a_i$ vertices at distance $i$ from $x$, and $b_i$ vertices at distance $i+1$ from $x$. The \emph{intersection array} of a distance-regular graph records the values of $b_i$ for $i \in \{0,1,\ldots, D-1\}$, followed by the values of $c_i$ for $i \in \{1,\ldots, D\}$.  
The \emph{generalized hexagon of order $(q, q)$} for $q>1$, is the distance-regular graph of order $\frac{q^6-1}{q-1}$ with intersection array 
$$[q^2+q, q^2, q^2 ; 1,1,q+1].$$ 

Generalized hexagons are known to exists when $q$ is a prime power. This is a distance-regular graph of diameter four and its distance matrices form a four class $P$-polynomial association scheme. Using the intersection array, we derive the intersection matrices of the corresponding association schemes as follows:
$$ 
B_0=I,\quad B_1 = \begin{bmatrix} 
0 & q^2+q & 0 & 0 \\ 
1 & q-1 & q^2 & 0 \\ 
0 & 1 & q-1 & q^2 \\ 
0 & 0 & q+1 & q^2-1 
\end{bmatrix}, 
\quad
B_2 = \begin{bmatrix} 
0 & 0 & q^4+q^3 & 0 \\ 
0 & q^2 & q^3-q^2 & q^4 \\ 
1 & q-1 & q^3+q^2-q & q^4-q^2 \\ 
0 & q+1 & q^3+q^2-q-1 & q^4-q^2 
\end{bmatrix},
$$ 
and 
$$ B_3 = \begin{bmatrix} 
0 & 0 & 0 & q^5 \\ 
0 & 0 & q^4 & q^5-q^4 \\ 
0 & q^2 & q^4-q^2 & q^5-q^4 \\ 
1 & q^2-1 & q^4-q^2 & q^5-q^4 
\end{bmatrix}. 
$$
The scheme that gives rise to these intersection array admits a $\lambda$-fold near-factorization resulting from the identity $B_3 (B_0+B_1+B_2) = q^4(B_1+B_2+B_3),$
with parameters $\lambda=q^4$ and $(s, t) = (q^5, q^4+q^3+q^2+q+1)$.


\section{Near-factorizations in Hamming association schemes}
\label{sec:Hamming}

In this section, we investigate the question of existence for near-factorizations in Hamming schemes which we now define. For positive integers $n$ and $q$, we denote the set of words of length $n$ over alphabet $Q$, where $|Q|=q$, by $Q^n$. The \emph{Hamming scheme} $H(n, q) = \{A_0=I, A_1, \dots, A_n\}$ is an association scheme on point set $Q^n$. In this association scheme, two words are $i$-related if they differ in exactly $i$ coordinates. The eigenvalues of $A_i$ in the Hamming scheme $H(n, q)$ are given by the values of \emph{Krawtchouk polynomials}. This polynomial implies an ordering on the eigenspaces, called the \emph{standard ordering}, that we use throughout this section (indeed, the Hamming scheme is both P-polynomial and Q-polynomial under the standard ordering). See~\cite[Chapter 10]{MR3497070} for more details on the Hamming association scheme.

\begin{thm}\label{thm:Kraw}
The $j^{th}$ eigenvalue of $A_i$ is
\[ 
	p_i(j)=\sum_{r=0}^j (-1)^r (q-1)^{i-r}\binom{n-j}{i-r}\binom{j}{r}
\]
with multiplicity $(q-1)^j\binom{n}{j}$ for $j=0, \dots, n$.
\end{thm}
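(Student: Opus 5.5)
The plan is to use the fact that the Hamming scheme $H(n,q)$ is the symmetrization of a thin scheme. Identify $Q$ with the additive group $\mathbb{Z}_q$, so $Q^n$ becomes the abelian group $G=\mathbb{Z}_q^n$. Then $A_i$ is the adjacency matrix of the Cayley graph $\mathrm{Cay}(G,W_i)$, where $W_i$ is the set of words of Hamming weight exactly $i$. For abelian $G$ these Cayley matrices are simultaneously diagonalized by the characters $\chi_y(x)=\omega^{x\cdot y}$, with $\omega=e^{2\pi i/q}$ and $y\in G$. The eigenvalue of $A_i$ on $\chi_y$ is the character sum $\sum_{x\in W_i}\chi_y(x)$. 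The proof will show that this sum depends only on the weight $j$ of $y$ and equals the stated expression.

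The key computation is as follows. Fix $y$ of weight $j$, and let $D$ be its support, so $|D|=j$. A word $x$ of weight $i$ is determined by three choices:
\begin{itemize}
\item its support $R$, which meets $D$ in a set of size $r$ and meets the complement of $D$ in a set of size $i-r$;
\item nonzero entries on the coordinates of $R$ lying off $D$;
\item nonzero entries on the coordinates of $R\cap D$.
\end{itemize}
The character factors over coordinates, $\chi_y(x)=\prod_k \omega^{x_ky_k}$. We then use two one-coordinate facts:
\begin{itemize}
\item if $y_k=0$, the sum of $\omega^{x_k y_k}$ over nonzero $x_k$ is $q-1$;
\item if $y_k\neq 0$, the same sum is $\sum_{x_k\neq0}\omega^{x_ky_k}=-1$, because the full sum over $\mathbb{Z}_q$ vanishes.
\end{itemize}
Summing over all choices gives
\[
p_i(j)=\sum_{r=0}^{j}\binom{j}{r}\binom{n-j}{i-r}(-1)^r(q-1)^{i-r},
\]
which is the claimed formula.

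It remains to justify the indexing and the multiplicities. The common eigenspace indexed by $j$ is spanned by the characters $\chi_y$ with $\mathrm{wt}(y)=j$. There are $\binom{n}{j}(q-1)^j$ such $y$, which gives the multiplicity. These eigenspaces are distinct for different $j$, since the eigenvalue vectors $(p_i(j))_i$ differ. One way to see this is through $p_1(j)=(q-1)(n-j)-j=n(q-1)-qj$, which is strictly decreasing in $j$. Hence the $n+1$ eigenspaces are exactly the $E_j$ of the scheme, in the standard ordering.

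The main obstacle is only bookkeeping: handling the terms where $\binom{n-j}{i-r}$ vanishes, and noting that the result does not depend on the chosen group structure on $Q$. The latter is immediate, since the matrices $A_i$ are defined purely combinatorially.
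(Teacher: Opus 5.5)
The paper does not prove this statement. It quotes the Krawtchouk eigenvalue formula as a standard fact and refers to \cite[Chapter 10]{MR3497070}, so there is no paper proof to follow step by step.

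Your argument is a correct, self-contained proof, and it is the standard character-sum derivation. Each $A_i$ is the Cayley matrix of $\mathbb{Z}_q^n$ whose connection set is the weight-$i$ words. The character sum factors over coordinates, with $\sum_{x\neq 0}\omega^{xy}=-1$ for $y\neq 0$. There are $\binom{n}{j}(q-1)^j$ characters of weight $j$. Finally, $p_1(j)=n(q-1)-qj$ shows that the $n+1$ weight classes are exactly the distinct maximal common eigenspaces.

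One wording correction: $H(n,q)$ is not the \emph{symmetrization} of the thin scheme on $\mathbb{Z}_q^n$ in the sense of Subsection~\ref{subsec:SymofThin}. That fusion only merges $g$ with $g^{-1}$ and has far more classes; $H(n,q)$ is a much coarser fusion of the thin scheme. Your computation only uses that each $A_i$ is a sum of thin-scheme elements, so nothing breaks.

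If you want to avoid putting a group structure on $Q$, the same bookkeeping works with tensor products. Write $A_i$ as the sum, over $i$-subsets $S$ of coordinates, of the tensor product with $J_q-I_q$ in the positions of $S$ and $I_q$ elsewhere. Then use that $J_q-I_q$ has eigenvalues $q-1$ (multiplicity $1$) and $-1$ (multiplicity $q-1$). The character version has the advantage of giving explicit eigenvectors and the multiplicity count at once.
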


We now introduce notation that will be used throughout this section as well as Sections~\ref{sec:Johnson} and \ref{sec:grassman}. Given a subset of the relation $U \subseteq \{0,1,\dots,n\}$, we let $S_U = \sum_{i \in U} A_i$. 
Since $S_U$ belongs to the adjacency algebra of the Hamming scheme, its eigenvalues are
$$
	\mu_j( S_U ) = \sum_{i \in U} p_i(j).
$$ 
Using Theorem~\ref{thm:char2}, we will use the eigenvalues of the Hamming scheme to determine when a near-factorization exists.

We organize this section as follows. First, we establish the existence of $2^{n-2}$-fold near-factorizations of $H(n,2)$ when $n$ is odd. Then, we show that $H(n, q)$ admits a nontrivial $\lambda$-fold near-factorizations for $q \in \{3,4\}$ and all $n\geqslant 2$. We conclude by showing that there does not exists a nontrivial $\lambda$-fold near-factorization of $H(n,q)$ when $q >n$. The  $\lambda$-fold near-factorizations found in this section were previously known to exist, as they arise from $\lambda$-fold near-factorizations of the abelian group $\mathbb{Z}^n_q$ as shown in~\cite{kreher2025lambda}. Using different proof techniques, we show that these $\lambda$-fold near-factorizations of $\mathbb{Z}^n_q$ occur as fusion of classes of the Hamming scheme.  

\subsection{Hamming schemes with $q=2$}   
\label{subsec:Hamming2}

In this subsection, we show that $H(n,2)$ admits a nontrivial $2^{n-2}$-fold near-factorization realized by partitioning the relations of the scheme according to their congruency modulo 4. Namely, we will show that
\begin{center}
$S_U  T_V = 2^{n-2} ( J - I )$,
\end{center}
where
$$ 
U=\{~i \mid i \equiv 0,3 \pmod{4}\}\quad \text{and} \quad V=\{~i \mid i \equiv 1,2 \pmod{4}\}. 
$$ 
The two corresponding graphs are
$$
	S_U = \sum_{i \in U } A_i,  \qquad T_V = \sum_{j \in V} A_j.
$$

In order to show that $(S_U, T_V)$ is a near-factorization, we will compute the spectra of $S_U$ and $T_V$. By substituting $q=2$ into the formula of Theorem~\ref{thm:Kraw}, we obtain the eigenvalue of $A_i$ realized by the $j^{th}$ eigenspace
\begin{equation}\label{equ:kratou}
	p_i(j)=\sum_{r=0}^j (-1)^r  \binom{n-j}{i-r} \binom{j}{r}
\end{equation}
with multiplicity $\binom{n}{j}$. 
To make use of Equation~(\ref{equ:kratou}) to derive the spectra of $S_U$ and $T_V$, as done in Lemmas~\ref{lem:val} and~\ref{lem:jeig}, we will require several technical lemmas, namely Lemmas~\ref{lem:sums}-\ref{lem:jeig}, on certain sums of binomial coefficients. Central to our computations are the two well-known binomial identities given below.

\begin{lem}\label{lem:sum0} Let $n$ be a positive integer. If $r\in \{0,2\}$, then
$$
\sum_{i \equiv r \pmod{4}} \binom{n}{i} 
=2^{n-2}+(-1)^{\frac{r}{2}}(2^{\frac{n}{2}-1})\cos\bigg(\frac{n\pi}{4}\bigg).
$$
If $r \in \{1,3\}$, then
$$
\sum_{i ~ \equiv ~ r \pmod{4}} \binom{n}{i} 
=2^{n-2}+(-1)^{\frac{r-1}{2}}(2^{\frac{n}{2}-1})\sin\bigg(\frac{n\pi}{4}\bigg).
$$
\end{lem}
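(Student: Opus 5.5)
We will use the roots-of-unity filter with the fourth roots of unity $\{1,i,-1,-i\}$. For any integer $r$,
\[
\sum_{k \equiv r \pmod{4}} \binom{n}{k} = \frac{1}{4}\sum_{m=0}^{3} i^{-mr}(1+i^m)^n ,
\]
which holds because $\sum_{m=0}^{3} i^{m(k-r)}$ equals $4$ when $k\equiv r \pmod 4$ and $0$ otherwise. Expanding $(1+i^m)^n=\sum_k \binom{n}{k} i^{mk}$ and exchanging the order of summation gives the identity.

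Next we evaluate the four terms.
\begin{itemize}
\item The $m=0$ term is $2^n$.
\item The $m=2$ term is $(-1)^r\cdot 0^n=0$, since $n\geqslant 1$.
\item For $m=1$ and $m=3$ we use $1\pm i=\sqrt{2}\,e^{\pm i\pi/4}$, so $(1\pm i)^n = 2^{n/2}e^{\pm in\pi/4}$.
\end{itemize}
These two terms are complex conjugates, so their sum is real:
\[
i^{-r}(1+i)^n + i^{r}(1-i)^n = 2\,\mathrm{Re}\big(2^{n/2}e^{i(n-2r)\pi/4}\big) = 2^{n/2+1}\cos\Big(\frac{(n-2r)\pi}{4}\Big).
\]
Dividing by $4$, we obtain
\[
\sum_{k\equiv r \pmod 4}\binom{n}{k} = 2^{n-2} + 2^{\frac n2 -1}\cos\Big(\frac{n\pi}{4}-\frac{r\pi}{2}\Big).
\]

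The final step specializes $r$ using the angle-subtraction formula:
\begin{itemize}
\item $r=0$: $\cos(n\pi/4)$.
\item $r=2$: $\cos(n\pi/4-\pi)=-\cos(n\pi/4)$. Together with $r=0$ this gives the factor $(-1)^{r/2}\cos(n\pi/4)$.
\item $r=1$: $\cos(n\pi/4-\pi/2)=\sin(n\pi/4)$.
\item $r=3$: $\cos(n\pi/4-3\pi/2)=-\sin(n\pi/4)$. Together with $r=1$ this gives the factor $(-1)^{(r-1)/2}\sin(n\pi/4)$.
\end{itemize}
These are exactly the two formulas claimed in the lemma.

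There is no real obstacle. The only points needing care are the argument $\pi/4$ of $1+i$, which comes with modulus $\sqrt2$, and the conjugate pairing that makes the sum real. The vanishing of the $m=2$ term uses $n\geqslant 1$, which is given by hypothesis.
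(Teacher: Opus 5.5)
Your proof is correct and complete. The roots-of-unity filter, the vanishing of the $m=2$ term for $n\geqslant 1$, the conjugate pairing of the $m=1$ and $m=3$ terms (using $i^{-3r}=i^{r}$), and the four angle-subtraction specializations all check out.

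The paper gives no proof of this lemma; it cites the formulas as ``well-known binomial identities.'' Your argument is the standard derivation behind them. Your intermediate unified form $2^{n-2}+2^{\frac n2-1}\cos\bigl(\frac{(n-2r)\pi}{4}\bigr)$ is also a slightly cleaner statement, since it covers all four residues at once.
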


We use Lemma~\ref{lem:sum0} to obtain Lemmas~\ref{lem:sums} and~\ref{lem:sums2}. The proof of Lemma~\ref{lem:sums} can be found in Appendix~\ref{app:calc}.

\begin{restatable}{lem}{sums}
\label{lem:sums}
Let $n$ be even. Then
$$ 
\sum_{i~\equiv ~ 1,2 \pmod{4}} \binom{n}{i}
-\sum_{i~\equiv 0, 3 \pmod{4}} \binom{n}{i}
 = 
 \begin{cases}
  2^{\frac{n}{2}}   &  \textrm{if}~~ n \equiv 2,4 \pmod{8}, \\
  -2^{\frac{n}{2}}&  \textrm{if}~~ n \equiv 0,6 \pmod{8}. \\
\end{cases} 
$$
\end{restatable}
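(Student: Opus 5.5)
We will derive Lemma~\ref{lem:sums} directly from Lemma~\ref{lem:sum0}, writing each of the four residue-class sums in closed form and combining them. Put $C=2^{\frac{n}{2}-1}\cos(\frac{n\pi}{4})$ and $D=2^{\frac{n}{2}-1}\sin(\frac{n\pi}{4})$. Lemma~\ref{lem:sum0} gives
\begin{align*}
\sum_{i\equiv 0 \Mod{4}}\binom{n}{i}&=2^{n-2}+C, & \sum_{i\equiv 2 \Mod{4}}\binom{n}{i}&=2^{n-2}-C,\\
\sum_{i\equiv 1 \Mod{4}}\binom{n}{i}&=2^{n-2}+D, & \sum_{i\equiv 3 \Mod{4}}\binom{n}{i}&=2^{n-2}-D.
\end{align*}
Forming the sum over $i\equiv 1,2$ minus the sum over $i\equiv 0,3$, the four copies of $2^{n-2}$ cancel. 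We are left with
\[
\sum_{i\equiv 1,2 \Mod{4}}\binom{n}{i}-\sum_{i\equiv 0,3 \Mod{4}}\binom{n}{i}=(D-C)-(C-D)=2(D-C)=2^{\frac{n}{2}}\Big(\sin\tfrac{n\pi}{4}-\cos\tfrac{n\pi}{4}\Big).
\]

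The remaining step is to evaluate $\sin(\frac{n\pi}{4})-\cos(\frac{n\pi}{4})$ for even $n$ by cases on $n \bmod 8$. Since $n$ is even, $\frac{n\pi}{4}$ is a multiple of $\frac{\pi}{2}$, so exactly one of the sine and cosine equals $\pm1$ and the other is $0$. The four cases are:
\begin{itemize}
\item $n\equiv 0 \Mod{8}$: $(\sin,\cos)=(0,1)$, so the difference is $-1$.
\item $n\equiv 2 \Mod{8}$: $(\sin,\cos)=(1,0)$, so the difference is $1$.
\item $n\equiv 4 \Mod{8}$: $(\sin,\cos)=(0,-1)$, so the difference is $1$.
\item $n\equiv 6 \Mod{8}$: $(\sin,\cos)=(-1,0)$, so the difference is $-1$.
\end{itemize}
Hence the expression equals $2^{\frac{n}{2}}$ when $n\equiv 2,4 \Mod{8}$ and $-2^{\frac{n}{2}}$ when $n\equiv 0,6 \Mod{8}$, which is exactly the claimed formula.

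There is no real obstacle. The only care needed is with the signs $(-1)^{r/2}$ and $(-1)^{(r-1)/2}$ in Lemma~\ref{lem:sum0}, which give the $+C,-C,+D,-D$ pattern in the table above. If one wanted an independent derivation of Lemma~\ref{lem:sum0}, the standard route is the roots-of-unity filter applied to $(1+\omega)^n$ with $\omega\in\{1,i,-1,-i\}$, using $(1\pm i)^n=2^{n/2}e^{\pm in\pi/4}$.
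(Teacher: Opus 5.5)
Your proof is correct and follows the paper's route: substitute the closed forms from Lemma~\ref{lem:sum0} and evaluate by $n \bmod 8$. The only difference is that you first combine the four sums into $2^{n/2}\bigl(\sin\frac{n\pi}{4}-\cos\frac{n\pi}{4}\bigr)$ and then split into cases, which settles all four residues at once, whereas the paper writes out only $n\equiv 0 \pmod{8}$ and says the remaining cases are similar.
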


In Lemma~\ref{lem:sums2} below, we derive two identities stated relative to $j$, the index of the eigenspaces. We omit this proof since it is very similar to Lemma~\ref{lem:sums}.

\begin{lem} \label{lem:sums2}
Let $j$ be a positive integer such that $0 \leqslant i \leqslant j$. Then
$$ 
\sum_{i \equiv 0,1 \pmod{4}} \binom{j}{i}  -  \sum_{i \equiv 2,3 \pmod{4}} \binom{j}{i}
= 
\begin{cases}
2^{\frac{j}{2}} & j \equiv 0, 2 \pmod{8}, \\
-2^{\frac{j}{2}} &  j \equiv 4,6\pmod{8},\\
2^{\frac{j+1}{2}} &  j \equiv 1 \pmod{8},\\
-2^{\frac{j+1}{2}} &  j \equiv 5 \pmod{8},\\
0 &  j \equiv 3, 7 \pmod{8},\\
\end{cases}
$$
and
$$ 
\sum_{i~\equiv ~ 0,2~\pmod{4}} \binom{j}{i}-\sum_{i~\equiv~1,3~\pmod{4}} \binom{j}{i}=0.
$$
\end{lem}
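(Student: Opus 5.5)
The plan is to derive both identities in Lemma~\ref{lem:sums2} from Lemma~\ref{lem:sum0} with $n$ replaced by $j$, just as for Lemma~\ref{lem:sums}. Here the index $i$ runs over $0\le i\le j$ (binomial coefficients vanish outside this range). For the first identity, Lemma~\ref{lem:sum0} gives
\[
\sum_{i\equiv 0,1}\binom{j}{i}-\sum_{i\equiv 2,3}\binom{j}{i}
=2\cdot 2^{\frac{j}{2}-1}\Big(\cos\tfrac{j\pi}{4}+\sin\tfrac{j\pi}{4}\Big)
=2^{\frac{j}{2}}\Big(\cos\tfrac{j\pi}{4}+\sin\tfrac{j\pi}{4}\Big).
\]
The $2^{j-2}$ terms cancel, since two residue classes enter with each sign. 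The $r=2$ cosine term carries the sign $-1$, and the $r=3$ sine term also carries $-1$, so subtracting them produces the $+$ signs shown.

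The next step is to evaluate $\cos\theta+\sin\theta=\sqrt{2}\sin(\theta+\pi/4)$ at $\theta=j\pi/4$, running over $j \bmod 8$. The values for $j\equiv 0,1,\dots,7$ are $1,\sqrt2,1,0,-1,-\sqrt2,-1,0$. Multiplying by $2^{j/2}$ gives $2^{j/2}$ for $j\equiv0,2$, then $2^{(j+1)/2}$ for $j\equiv1$, then $0$ for $j\equiv3,7$, then $-2^{j/2}$ for $j\equiv4,6$, and finally $-2^{(j+1)/2}$ for $j\equiv5$. This matches the stated table. The only care needed is the sign bookkeeping from Lemma~\ref{lem:sum0}; the rest is routine.

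For the second identity, I would avoid Lemma~\ref{lem:sum0} entirely. The left side is $\sum_i(-1)^i\binom{j}{i}=(1-1)^j=0$ for $j\ge1$, which the hypothesis that $j$ is positive guarantees. (Equivalently, the $\cos$ and $\sin$ terms in Lemma~\ref{lem:sum0} cancel in pairs.)

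I expect no real obstacle. The only step needing attention is the sign conventions $(-1)^{r/2}$ and $(-1)^{(r-1)/2}$ in Lemma~\ref{lem:sum0}, which I would confirm by a sanity check at small $j$. For example, at $j=3$ the left side is $(1+3)-(3+1)=0$, and at $j=1$ it is $2=2^{1}$.
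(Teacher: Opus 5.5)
Your proof is correct and follows essentially the paper's route. The paper omits the proof as ``very similar to Lemma~\ref{lem:sums}'': substitute Lemma~\ref{lem:sum0} (with $n=j$) into the signed sum and evaluate the trigonometric values over $j \bmod 8$, which is what you do, and your combination $2^{j/2}\bigl(\cos\frac{j\pi}{4}+\sin\frac{j\pi}{4}\bigr)$ and its eight values check out. Your use of $(1-1)^j=0$ for the second identity is a cleaner form of the same cancellation, and it correctly relies on $j\geq 1$.
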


We now use Lemma~\ref{lem:sum0} to compute $\mu_0(S_U)$ and $\mu_0(T_V)$. 

\begin{lem} \label{lem:val}
Let $n$ be an even integer. If $U= \{i \mid i\equiv  0,3 \pmod{4} \}$, and $S_U=\sum_{i \in U } A_i$, then the valency of $S_U$ is 
$$
\mu_0(S_U)= \sum_{i~ \equiv~ 0,3 \pmod{4}} \binom{n}{i}
=
\begin{cases}
  2^{n-1}  +  {2}^{\frac{n}{2} -1 }   &  \textrm{if}~ n \equiv 0,6 \pmod{8}, \\
  2^{n-1}  -  2^{\frac{n}{2} - 1} &  \textrm{if}~ n \equiv 2,4 \pmod{8}. \\
\end{cases} 
$$
If $V=\{i \mid i\equiv  1,2 \pmod{4} \}$ and $T_V = \sum_{j \in V} A_j$, then the valency of $T_V$ is 
$$
\mu_0(T_V) = \sum_{j~\equiv~ 1,2\pmod{4}} \binom{n}{j}
=
\begin{cases}
  2^{n-1}  -  {2}^{\frac{n}{2} -1 }   &  \textrm{if}~ n \equiv 0,6 \pmod{8}, \\
  2^{n-1}  +  2^{ \frac{n}{2} - 1} &  \textrm{if}~ n \equiv 2, 4 \pmod{8}. \\
\end{cases} 
$$
\end{lem}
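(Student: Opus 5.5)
The valency $\mu_0(S_U)=\sum_{i\in U}p_i(0)$ is simply $\sum_{i\in U}\binom{n}{i}$, since $p_i(0)=\binom{n}{i}$ by Theorem~\ref{thm:Kraw} with $q=2$ and $j=0$ (only the $r=0$ term survives). So I will evaluate the two residue-class sums $\sum_{i\equiv 0,3}\binom{n}{i}$ and $\sum_{i\equiv 1,2}\binom{n}{i}$ for even $n$. I will not compute each quarter-sum separately. Instead I will use two relations: their total, and their difference.

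\textbf{The total.} The two sets $U$ and $V$ partition $\{0,1,\dots,n\}$, so
\[
\sum_{i\equiv 0,3 \pmod{4}}\binom{n}{i}+\sum_{i\equiv 1,2 \pmod{4}}\binom{n}{i}=2^n .
\]

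\textbf{The difference.} Lemma~\ref{lem:sums} gives
\[
\sum_{i\equiv 1,2 \pmod{4}}\binom{n}{i}-\sum_{i\equiv 0,3 \pmod{4}}\binom{n}{i}=\varepsilon\, 2^{n/2},
\]
where $\varepsilon=1$ if $n\equiv 2,4 \pmod 8$ and $\varepsilon=-1$ if $n\equiv 0,6\pmod 8$.

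\textbf{Solving.} Solving this $2\times 2$ system gives
\[
\mu_0(T_V)=2^{n-1}+\varepsilon\,2^{n/2-1}, \qquad \mu_0(S_U)=2^{n-1}-\varepsilon\,2^{n/2-1}.
\]
Substituting the value of $\varepsilon$ in each congruence class mod $8$ yields exactly the four stated cases.

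As a sanity check, or if one prefers not to rely on Lemma~\ref{lem:sums}, the difference can be obtained directly from Lemma~\ref{lem:sum0}. For $r\in\{0,3\}$ the four-term combination is $-(2^{n/2-1})(\cos\frac{n\pi}{4}+\sin\frac{n\pi}{4})\cdot 2$ up to sign. For even $n$, the quantity $\cos\frac{n\pi}{4}+\sin\frac{n\pi}{4}$ takes the values $1$, $1$, $-1$, $-1$ for $n\equiv 0,2,4,6 \pmod 8$. Tracking these signs is the only spot needing care; the rest is bookkeeping.
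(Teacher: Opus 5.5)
Your main argument is correct and is essentially the paper's route. The paper obtains these valencies directly from Lemma~\ref{lem:sum0}, and its appendix proof of Lemma~\ref{lem:sums} computes exactly the same four residue sums. Pairing Lemma~\ref{lem:sums} with the total $2^n$ is therefore just a repackaging of that computation, and it is not circular, since Lemma~\ref{lem:sums} is proved independently of Lemma~\ref{lem:val}.

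Your optional ``sanity check'', however, contains a sign error. By Lemma~\ref{lem:sum0}, the class $r=3$ contributes $-2^{\frac{n}{2}-1}\sin\frac{n\pi}{4}$. Hence
\[
\sum_{i\equiv 0,3 \pmod{4}}\binom{n}{i}=2^{n-1}+2^{\frac{n}{2}-1}\bigl(\cos\tfrac{n\pi}{4}-\sin\tfrac{n\pi}{4}\bigr),
\]
and the difference in Lemma~\ref{lem:sums} equals $2^{\frac{n}{2}}\bigl(\sin\frac{n\pi}{4}-\cos\frac{n\pi}{4}\bigr)$. The relevant quantity is $\cos\frac{n\pi}{4}-\sin\frac{n\pi}{4}$, which takes the values $1,-1,-1,1$ for $n\equiv 0,2,4,6 \pmod 8$.

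The combination $\cos\frac{n\pi}{4}+\sin\frac{n\pi}{4}$ that you wrote takes the values $1,1,-1,-1$ instead. No choice of overall sign makes this match the pattern of Lemma~\ref{lem:sums}, so the check as written would contradict your main computation in two of the four residue classes.
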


As a corollary to Lemma~\ref{lem:val}, we have the following expression for $\mu_0(S_U T_V)$. 

\begin{cor}
    
\label{cor:valXY,q=2}
For all even $n$, $\mu_0(S_U T_U)=2^{n-2} (2^n-1).$
\end{cor}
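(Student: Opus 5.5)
The plan is to multiply the two valencies computed in Lemma~\ref{lem:val}. Here $\mu_0(S_UT_V)$ is the eigenvalue of $S_UT_V$ on the trivial eigenspace, and since $S_U$ and $T_V$ commute and share the projection $E_0=\frac{1}{n}J$ (here with $2^n$ points), we have
\[
\mu_0(S_UT_V)=\mu_0(S_U)\,\mu_0(T_V),
\]
as in the proof of Theorem~\ref{thm:char2}. So it suffices to multiply the two case expressions from Lemma~\ref{lem:val}. (The statement writes $T_U$, but the intended matrix is clearly $T_V$.)

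The key observation is that, for every even $n$, the two valencies are the two numbers $2^{n-1}\pm 2^{\frac{n}{2}-1}$ in some order. If $n\equiv 0,6 \pmod 8$, then $\mu_0(S_U)=2^{n-1}+2^{\frac n2-1}$ and $\mu_0(T_V)=2^{n-1}-2^{\frac n2-1}$. If $n\equiv 2,4\pmod 8$, the signs are swapped. In either case, a difference of squares gives
\[
\mu_0(S_U)\mu_0(T_V)=\left(2^{n-1}\right)^2-\left(2^{\frac n2-1}\right)^2=2^{2n-2}-2^{n-2}=2^{n-2}(2^n-1).
\]

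There is no real obstacle, since the work is already done in Lemma~\ref{lem:val}. The only points to check are these.

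First, the case split in Lemma~\ref{lem:val} covers all even $n$: even residues mod $8$ are exactly $0,2,4,6$.

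Second, the two valencies sum to $2^n$, as they must, because $U\cup V=\{0,\dots,n\}$ is a disjoint union.

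As a sanity check, the result equals $\lambda(2^n-1)$ with $\lambda=2^{n-2}$, consistent with condition (1) of Theorem~\ref{thm:char2} for the near-factorization being constructed.
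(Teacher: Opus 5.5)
Your proposal is correct and matches the paper's proof: both read off from Lemma~\ref{lem:val} that the two valencies are $2^{n-1}\pm 2^{\frac{n}{2}-1}$ in some order, and multiply them as a difference of squares to get $2^{n-2}(2^n-1)$. You are also right that the $T_U$ in the statement is a typo for $T_V$.
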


\begin{proof}
In all cases
$\mu_0(S_U) \mu_0(T_V) = (2^{n-1}  +  {2}^{ \frac{n}{2} -1 } ) ( 2^{n-1}  -  {2}^{  \frac{n}{2} -1 }  ) = 2^{n-2} (2^n-1)$.
\end{proof}

To show that $S_U$ and $T_V$ form a $2^{n-2}$-fold near-factorization of $H(n,2)$, it suffices to show that the spectrum of $S_U T_V$ is equal to the spectrum of $2^{n-2}(J-I)$. Thus, we we now compute the remaining eigenvalues of $S_U$ and $T_V$. If $j\neq 0$, then Equation~(\ref{equ:kratou}) yields

\begin{equation}
\label{equ:eigS}
\mu_j(S_U)=\sum_{i \equiv 0,3 \pmod{4}} p_i(j) =  \sum_{i  \equiv 0,3\pmod{4}} \mbox{ } \sum_{r=0}^j  (-1)^r \binom{n-j}{i-r} \binom{j}{r};
\end{equation}

\begin{equation}
\label{equ:eigT}
\mu_j(T_V)=\sum_{i \equiv 1,2  \pmod{4}} p_i(j) =  \sum_{i \equiv 1,2\pmod{4}} \mbox{ } \sum_{r=0}^j  (-1)^r \binom{n-j}{i-r} \binom{j}{r}.
\end{equation}

We simplify Equations~(\ref{equ:eigS}) and~(\ref{equ:eigT}) in the following lemma (the proofs can be found in Appendix~\ref{app:calc}).

\begin{restatable}{lem}{jeig}
\label{lem:jeig}
Let $U= \{i \mid i \equiv  0,3 \pmod{4} \}$ and $V= \{i \mid i\equiv  1,2 \pmod{4} \}$. 
If $S_U=\sum_{i \in U } A_i$ and $T_V=\sum_{i \in V } A_i$, then $\mu_j(S_U)= \pm 2^{n/2-1}$ and $\mu_j(T_V) =\mp 2 ^{\frac{n}{2}-1}$ for all $j>0$.
\end{restatable}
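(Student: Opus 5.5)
The plan is to compute both eigenvalues with generating functions and a roots-of-unity filter, rather than manipulating the double sums in (\ref{equ:eigS}) and (\ref{equ:eigT}) directly. By Equation~(\ref{equ:kratou}), $p_i(j)$ is the coefficient of $x^i$ in
\[
F_j(x)=(1+x)^{n-j}(1-x)^j .
\]
Hence $\mu_j(S_U)$ and $\mu_j(T_V)$ are sums of coefficients of $F_j$ over residue classes of $i$ modulo $4$. Throughout, write $\omega$ for a primitive fourth root of unity, with $\omega^2=-1$.

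\textbf{Step 1 (the two eigenvalues are negatives).} Since $U\cup V=\{0,1,\dots,n\}$, we have
\[
\mu_j(S_U)+\mu_j(T_V)=\sum_{i=0}^n p_i(j)=F_j(1)=0 \qquad (j>0).
\]
This is just the fact that nontrivial eigenvalues of $J$ vanish. So $\mu_j(S_U)=-\mu_j(T_V)$, and it suffices to show
\[
D_j:=\mu_j(T_V)-\mu_j(S_U)=\pm 2^{n/2}.
\]

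\textbf{Step 2 (express $D_j$ through $F_j(\omega)$).} Define $c(i)=+1$ for $i\equiv 1,2 \pmod 4$ and $c(i)=-1$ for $i\equiv 0,3\pmod 4$, so that $D_j=\sum_i c(i)p_i(j)$. Expand $c$ in characters of $\mathbb{Z}_4$ as $c(i)=\sum_{k=0}^{3}a_k\omega^{ik}$. A direct computation gives
\[
a_0=a_2=0,\qquad a_1=-\tfrac{1+\omega}{2},\qquad a_3=\overline{a_1}.
\]
Therefore
\[
D_j=a_1F_j(\omega)+\overline{a_1F_j(\omega)}=2\,\mathrm{Re}\big(a_1F_j(\omega)\big).
\]

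\textbf{Step 3 (evaluate).} Use $1-\omega=-\omega(1+\omega)$ to get $F_j(\omega)=(-\omega)^j(1+\omega)^n$. Then
\[
D_j=-\mathrm{Re}\big((-\omega)^j(1+\omega)^{n+1}\big)=-2^{\frac{n+1}{2}}\cos\!\Big(\tfrac{(n+1)\pi}{4}-\tfrac{j\pi}{2}\Big),
\]
since $1+\omega=\sqrt2\,e^{i\pi/4}$ and $-\omega=e^{-i\pi/2}$. Because $n$ is even, the angle is an odd multiple of $\pi/4$, so the cosine equals $\pm\frac{1}{\sqrt2}$. This gives $D_j=\pm 2^{n/2}$, hence $\mu_j(T_V)=\pm 2^{n/2-1}$ and $\mu_j(S_U)=\mp2^{n/2-1}$.

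The same computation with $j=0$ recovers Lemma~\ref{lem:val} and Lemma~\ref{lem:sums}, which serves as a consistency check. The only delicate point is the bookkeeping of the Fourier coefficients $a_k$ and the sign conventions. This is also the only place where the parity of $n$ enters: for odd $n$ the cosine can vanish or equal $\pm1$, and the claim fails. The explicit sign, depending on $n-2j$ modulo $8$, can be read off from the cosine if it is needed later.
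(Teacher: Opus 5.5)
Your argument is correct, and it reaches the result by a different route from the paper's. The paper swaps the order of summation in (\ref{equ:eigS}) and rewrites each inner sum as a residue-class sum $\sum_{k\equiv a,b \pmod 4}\binom{n-j}{k}$. It evaluates these with Lemma~\ref{lem:sum0}, then collapses the resulting signed sums of $\binom{j}{r}$ with Lemma~\ref{lem:sums2}. This requires a case split on $n-j \pmod 8$, of which only one case is written out.

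You instead read $p_i(j)$ off as the coefficient of $x^i$ in $(1+x)^{n-j}(1-x)^j$ and apply the mod-$4$ roots-of-unity filter once. The identity $1-\omega=-\omega(1+\omega)$ then gives the uniform closed form $\mu_j(S_U)=-\mu_j(T_V)=2^{\frac{n-1}{2}}\cos\bigl(\frac{(n+1-2j)\pi}{4}\bigr)$ for $j>0$.

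Both proofs rest on the same mechanism, since Lemma~\ref{lem:sum0} is itself a roots-of-unity filter. Yours, however, avoids all case analysis and pins down the sign exactly. The signs in the paper's worked case do not quite match your formula: for $n=10$, $j=2$ one has $n-j\equiv 0 \pmod 8$, and the paper's display predicts a negative value, whereas the direct computation gives $\mu_2(S_U)=16$. This is harmless, because the lemma only needs $|\mu_j(S_U)|=2^{n/2-1}$ and $\mu_j(T_V)=-\mu_j(S_U)$.

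Your computation also recovers Lemmas~\ref{lem:val} and~\ref{lem:sums} at $j=0$, and it makes clear why this partition breaks down for odd $n$. The paper's version has only the modest advantage of reusing identities it had already stated for the valency computation.
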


\begin{thm}\label{thm:Hamming,q=2}
If $n$ is even, then $H(n,2)$ has a nontrivial $2^{n-2}$-fold $(2^{n-1}-2^{n/2 -1},2^{n-1}+2^{n/2 -1})$ near-factorization realized by $S_U=\sum_{i \in U } A_i$ and $T_V=\sum_{i \in V } A_i$ where 
$U=\{i \mid i \equiv 0,3 \pmod{4} \}$ and 
$V=\{i \mid i \equiv 1,2 \pmod{4} \}.$
\end{thm}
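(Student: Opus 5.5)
The plan is to verify the two conditions of Theorem~\ref{thm:char2} for the index sets $U=\{i \mid i\equiv 0,3 \pmod 4\}$ and $V=\{i\mid i\equiv 1,2\pmod 4\}$. These sets partition $\{0,1,\dots,n\}$, so $S_U+T_V=J$ and $U\cap V=\varnothing$, as Theorem~\ref{thm:UV} requires. Although Theorem~\ref{thm:char2} is phrased with $U,V\subseteq\{1,\dots,d\}$, its proof only uses the spectral decomposition~(\ref{STspec}). The same argument therefore applies when $0\in U$, since $A_0=I$ has eigenvalue $p_0(r)=1$ for every $r$. Concretely, I will check two things: first, that $\mu_0(S_U)\mu_0(T_V)=2^{n-2}(2^n-1)$; second, that $\mu_j(S_U)\mu_j(T_V)=-2^{n-2}$ for every $1\le j\le n$. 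Since $E_0,\dots,E_n$ form a basis of the adjacency algebra, these two facts give $S_UT_V=2^{n-2}(n'-1)E_0-2^{n-2}(I-E_0)=2^{n-2}(J-I)$, where $n'=2^n$ is the number of points.

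For the trivial eigenspace, I will invoke Lemma~\ref{lem:val} and Corollary~\ref{cor:valXY,q=2}. Together they give the valencies $\{2^{n-1}\pm 2^{n/2-1}\}$ and show that their product is exactly $2^{n-2}(2^n-1)$, which is condition~(1). The valency assigned to $S_U$ versus $T_V$ depends on $n \bmod 8$. Because the roles of $s$ and $t$ may be interchanged (as remarked after Theorem~\ref{thm:char1}, the same pair realizes both a $(s,t)$ and a $(t,s)$ near-factorization), the stated parameters $(2^{n-1}-2^{n/2-1},2^{n-1}+2^{n/2-1})$ hold up to this swap.

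For the nontrivial eigenspaces $j\ge 1$, I will use Lemma~\ref{lem:jeig}, which gives $\mu_j(S_U)=\pm 2^{n/2-1}$ and $\mu_j(T_V)=\mp 2^{n/2-1}$ with opposite signs. Their product is then $-2^{n-2}=-\lambda$, which is condition~(2). The opposite signs can also be seen directly: $\mu_j(S_U)+\mu_j(T_V)=\mu_j(J)=0$ for $j\ge1$. So the only substantive input is that $|\mu_j(S_U)|=2^{n/2-1}$. That fact is the content of Lemma~\ref{lem:jeig}, and it is the main obstacle if one tries to prove it from scratch. To prove it, I would rewrite $\mu_j(S_U)$ using~(\ref{equ:eigS}) as a sum, over residues of $i$ modulo $4$, of products of the $\binom{n-j}{\cdot}$ residue-class sums from Lemma~\ref{lem:sum0} with the signed $\binom{j}{\cdot}$ sums from Lemma~\ref{lem:sums2}. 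The cleanest route is to observe that $\sum_i p_i(j)\,x^i=(1+x)^{n-j}(1-x)^j$ and to extract coefficients of residues mod $4$ using the roots $x\in\{1,-1,\mathrm{i},-\mathrm{i}\}$. At $x=\pm 1$ the expression vanishes for $0<j<n$. At $x=\pm \mathrm{i}$ it has modulus $2^{n/2}$. The indicator of $U$ is a combination of these characters whose $\mathrm{i}$-parts combine into $\pm 2^{n/2-1}$. The edge case $j=n$ needs a separate check at $x=-1$.

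Finally, nontriviality follows because $s,t\ge 2^{n-1}-2^{n/2-1}>1$ for $n\ge 4$ even. The case $n=2$ needs a separate remark, since there $s=1$ and $t=3$.
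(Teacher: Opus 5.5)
Your proposal is correct and follows the paper's proof. The paper likewise takes the valency product $2^{n-2}(2^n-1)$ from Lemma~\ref{lem:val} and Corollary~\ref{cor:valXY,q=2}, and the opposite-sign eigenvalues $\pm 2^{n/2-1}$ for $j\ge 1$ from Lemma~\ref{lem:jeig}, which together give the spectrum of $2^{n-2}(J-I)$. Your side remarks are also accurate: when $n\equiv 0,6 \pmod 8$ the valency of $S_U$ is the larger value, so $s$ and $t$ are swapped relative to the stated order; for $n=2$ one gets only the trivial $S_U=I$; and in your roots-of-unity argument the $x=-1$ term has weight zero for $U$ (it contains one even and one odd residue class), so $j=n$ needs no separate check.
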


\begin{proof} 
By Corollary~\ref{cor:valXY,q=2}, the valency of $S_U T_V$ is $2^{n-2}(2^n-1)$.
If $j \neq 0$, then 
$$ 
\bigg(\sum_{i~ \equiv 0,3 \pmod{4}} p_i(j) \bigg) \bigg(\sum_{i ~ \equiv 1,2 \pmod{4}} p_i(j)\bigg)=-2^{n-2}.
$$ 
Therefore, $-2^{n-2}$ is an eigenvalue of $S_U T_V$ with multiplicity $2^n-1$.  Thus, when $n$ is even, $S_U$ and $T_V$ is a $2^{n-2}$-fold near-factorization in $H(n,2)$.  
\end{proof}

By a similar reasoning, it can be shown that the pair $(S_U, T_V)$ defined below is also a $2^{n-2}$-fold $(2^{n-1}-2^{n/2 -1},2^{n-1}+2^{n/2 -1})$ near-factorization of $H(n,2)$ when $n$ is even:
\begin{equation*}
U= \{i \mid i\equiv  2,3 \pmod{4} \} \quad \textrm{and} \quad V=\{i \mid i\equiv  0,1 \pmod{4} \}.
\end{equation*}

When $n$ is odd, we verified that no $\lambda$-fold near-factorizations of $H(n,2)$ exists whenever $n \leqslant 15$. 
However, we were unable to prove non-existence for all odd $n$, and so we make the following conjecture. 

\begin{conj}
For $n$ odd and $\lambda \geqslant 1$, $H(n,2)$ does not admit any near-factorizations.
\end{conj}

In Section~\ref{subsec:HalvedCube}, we give a nontrivial near-factorization of a scheme derived from $H(n,2)$ when $n$ is odd.


\subsection{Hamming schemes with $q \in \{3, 4\}$}

Next, we establish the existence of a near-factorization in $H(n,3)$ and $H(n,4)$. Before we do so, we prove two lemmas that apply to all $q$. The purpose of these two lemmas is to derive simple formulae for the eigenvalues of matrices that arise as the sum of matrices indexed by relations of the same parity. The proof of the these two lemmas can be found in Appendix~\ref{app:calc}.

In this subsection, we let $S_{\textit{even}}=\sum_{i \ \textrm{even}} A_i$ and $S_{\textit{odd}}=\sum_{i \ \textrm{odd}} A_i$. 

\begin{restatable}{lem}{valXY}
\label{lem:valXY}
If $n$ is even, 
$$
\mu_0(S_{\textit{even}}) = \frac{1}{2} ( q^n + (q-2)^n); \quad 
\mu_0(S_{\textit{odd}}) =  \frac{1}{2} ( q^n - (q-2)^n).
$$ 
If $n$ is odd, 
$$
\mu_0(S_{\textit{even}}) =  \frac{1}{2} (q^n - (q-2)^n); \quad 
\mu_0(S_{\textit{odd}}) =  \frac{1}{2} (q^n + (q-2)^n).
$$ 
\end{restatable}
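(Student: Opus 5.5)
The valency $\mu_0(S_U)$ is the row sum of $S_U$, which equals $\sum_{i\in U} p_i(0)$. From Theorem~\ref{thm:Kraw} with $j=0$, the valency of $A_i$ in $H(n,q)$ is $p_i(0)=\binom{n}{i}(q-1)^i$; directly, this counts the words at Hamming distance $i$ from a fixed word. So the plan is to evaluate
$$
\mu_0(S_{\textit{even}})=\sum_{i\ \textrm{even}}\binom{n}{i}(q-1)^i,\qquad \mu_0(S_{\textit{odd}})=\sum_{i\ \textrm{odd}}\binom{n}{i}(q-1)^i
$$
in closed form. We use the standard trick of separating even and odd terms of a binomial expansion.

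First, by the binomial theorem,
$$
\sum_{i=0}^n\binom{n}{i}(q-1)^i=q^n \quad\text{and}\quad \sum_{i=0}^n\binom{n}{i}(-(q-1))^i=(2-q)^n=(-1)^n(q-2)^n .
$$
The first sum is $\mu_0(S_{\textit{even}})+\mu_0(S_{\textit{odd}})$, which is also clear because $S_{\textit{even}}+S_{\textit{odd}}=J$. The second sum is $\mu_0(S_{\textit{even}})-\mu_0(S_{\textit{odd}})$.

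Adding and subtracting these two identities and halving gives
$$
\mu_0(S_{\textit{even}})=\tfrac12\big(q^n+(-1)^n(q-2)^n\big),\qquad \mu_0(S_{\textit{odd}})=\tfrac12\big(q^n-(-1)^n(q-2)^n\big).
$$
Specializing to $n$ even and $n$ odd yields the four stated formulas. There is no real obstacle here. The only point needing care is the sign $(2-q)^n=(-1)^n(q-2)^n$, since this is exactly what swaps the roles of $S_{\textit{even}}$ and $S_{\textit{odd}}$ when $n$ is odd.
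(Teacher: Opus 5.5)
Your proof is correct and follows essentially the same route as the paper. Both write the valencies as $\sum \binom{n}{i}(q-1)^i$ over even or odd $i$, then add and subtract the binomial expansions of $q^n$ and $\big((q-1)-1\big)^n$. Your version, which expands $(-(q-1))^i$ and keeps the factor $(-1)^n$ explicit, tracks the even/odd swap for odd $n$ more cleanly than the paper. The paper expands with $(-1)^{n-i}$ and says summing isolates the even-index terms, which is literally true only when $n$ is even.
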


\begin{restatable}{lem}{otherXYeigs}
\label{lem:otherXYeigs}
 If  $j \in \{1,2, \ldots, n\}$, then 
 $$
 \mu_j(S_{\textit{even}}) =  (-1)^{n-j}  2^{j-1} (q-2)^{n-j}; \quad 
\mu_j(S_{\textit{odd}}) =  (-1)^{n-j+1} 2^{j-1} (q-2)^{n-j}.
$$
\end{restatable}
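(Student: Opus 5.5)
The plan is to avoid summing Krawtchouk values term by term and instead compute through the Hamming scheme as a fusion of the thin scheme of $\mathbb{Z}_q^n$. The eigenspaces of $H(n,q)$ are spanned by the characters $\chi$ of $\mathbb{Z}_q^n$. The $j$-th eigenspace corresponds to the characters $\chi=\chi_1\otimes\cdots\otimes\chi_n$ with exactly $j$ nontrivial factors $\chi_\ell$. The eigenvalue of $A_i$ on such a character is
\[
p_i(j)=\sum_{\operatorname{wt}(x)=i}\chi(x).
\]
Equivalently, I will use the generating function
\[
\sum_{i=0}^n p_i(j)\,z^i=(1+(q-1)z)^{n-j}(1-z)^j.
\]
This follows directly from Theorem~\ref{thm:Kraw}, since the coefficient of $z^i$ on the right is exactly $\sum_r (-1)^r (q-1)^{i-r}\binom{n-j}{i-r}\binom{j}{r}$. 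For each coordinate, the trivial character contributes $1+(q-1)z$, and a nontrivial one contributes $1+z\sum_{a\neq 0}\chi_\ell(a)=1-z$.

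The parity sums are then extracted by evaluating at $z=\pm1$:
\[
\mu_j(S_{\textit{even}})=\tfrac12\big(F_j(1)+F_j(-1)\big),\qquad \mu_j(S_{\textit{odd}})=\tfrac12\big(F_j(1)-F_j(-1)\big),
\]
where $F_j(z)=(1+(q-1)z)^{n-j}(1-z)^j$.

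For $j\geqslant 1$ the factor $(1-z)^j$ vanishes at $z=1$, so $F_j(1)=0$. At $z=-1$ we get
\[
F_j(-1)=(2-q)^{n-j}\,2^j=(-1)^{n-j}(q-2)^{n-j}2^j.
\]
Hence
\[
\mu_j(S_{\textit{even}})=(-1)^{n-j}2^{j-1}(q-2)^{n-j},\qquad \mu_j(S_{\textit{odd}})=-\mu_j(S_{\textit{even}})=(-1)^{n-j+1}2^{j-1}(q-2)^{n-j},
\]
which is the claimed formula. As a sanity check, the case $j=0$ gives $F_0(1)=q^n$ and $F_0(-1)=(2-q)^n$, recovering Lemma~\ref{lem:valXY}.

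The only point needing care is the convention $0^0=1$ when $q=2$ and $j=n$. Here $F_n(-1)=2^n$ directly, so the formula gives $\mu_n(S_{\textit{even}})=2^{n-1}$, consistent with the statement read with $(q-2)^0=1$. For $q=2$ and $j<n$ both eigenvalues are $0$, which is also consistent. I expect no real obstacle beyond justifying the generating-function identity, and that is immediate from the Vandermonde-type expansion of the product.
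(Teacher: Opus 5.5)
Your proof is correct, and it is essentially the paper's computation written with a generating function. The paper substitutes $k=i-r$ in the Krawtchouk sum and swaps the order of summation. It then uses that the coefficients $\binom{j}{m}$ with $m$ of a fixed parity sum to $2^{j-1}$ when $j\geqslant 1$; this is your observation $F_j(1)=0$. It finishes with the binomial theorem $\sum_k\binom{n-j}{k}(1-q)^k=(2-q)^{n-j}$, which is your evaluation at $z=-1$. Your version also gives the odd case and the valencies of Lemma~\ref{lem:valXY} from the same two evaluations, whereas the paper writes out only the even case and proves the valencies separately.
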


We now consider the case $q=3$ for which we establish the existence of a $2^{n-1}$-fold near-factorization of $H(n, 3)$ for all $n \geqslant 3$. This near-factorization is realized by the pair of subsets of relations: $S= S_{\textit{even}}$ and $T= A_n$ when $n$ is odd, and $S= S_{\textit{odd}}$ and $T= A_n$ when $n$ is even. Similar to the $q=2$ case, we will use the spectra of $S$ and $T$ to prove that these form a near-factorization. Lemmas~\ref{lem:valXY} and \ref{lem:otherXYeigs} give the spectrum of $S$. Since $T=A_n$, we have $\mu_j(T)=p_j(n)$ from Theorem \ref{thm:Kraw}. 

\begin{lem}\label{lem:eigsXn}
Let $j \in \{0,1, \ldots, n\}$, then $\mu_j(T)=(-1)^j 2^{n-j}$.  
\end{lem}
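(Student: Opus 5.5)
We compute $\mu_j(T)=p_n(j)$ directly from Theorem~\ref{thm:Kraw}, specialised to $q=3$ and $i=n$:
\[
p_n(j)=\sum_{r=0}^{j}(-1)^r 2^{n-r}\binom{n-j}{n-r}\binom{j}{r}.
\]

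The key observation is that $\binom{n-j}{n-r}$ vanishes unless $n-r\leqslant n-j$, that is, unless $r\geqslant j$. Since the sum runs only over $0\leqslant r\leqslant j$, the single surviving term is $r=j$. For that term $\binom{n-j}{n-j}=1$ and $\binom{j}{j}=1$, so
\[
p_n(j)=(-1)^j 2^{n-j}.
\]

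As a sanity check, $j=0$ gives $2^n$, which is the valency of $A_n$ in $H(n,3)$: a word has $2^n$ words differing from it in every coordinate.

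Alternatively, the same formula follows combinatorially. $A_n$ is the $n$-fold tensor power of $J_3-I_3$, whose eigenvalues are $2$ on the all-ones vector and $-1$ on its orthogonal complement. The $j$-th eigenspace of $H(n,3)$ is spanned by tensor products with exactly $j$ factors taken from the nontrivial part. Hence the eigenvalue there is $2^{n-j}(-1)^j$.

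There is no real obstacle here. The only point needing care is checking the vanishing convention for $\binom{n-j}{n-r}$ when $r<j$.
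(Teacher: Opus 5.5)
Your proof is correct and is essentially the paper's own argument: the paper gives no separate proof and simply reads $\mu_j(T)$ off the Krawtchouk formula of Theorem~\ref{thm:Kraw} with $i=n$, $q=3$, where, as you note, only the $r=j$ term survives. Your indexing $p_n(j)$ is the right one. The paper's text writes $p_j(n)$, which by (\ref{eq:eigenn}) would instead give $(-1)^j\binom{n}{j}$, so that is a transposition slip on the paper's side.
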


\begin{thm}\label{thm:Hamming,q=3}
The Hamming scheme $H(n,3)$, with $n \ge 2$, admits a nontrivial $2^{n-1}$-fold $(2^n, \frac{3^n-1}{2})$ near-factorization with
$$
S = \begin{cases} 
 S_{\textit{even}}  & \mbox{ if $n$ is odd}, \\ 
 S_{\textit{odd}}& \mbox{ if $n$ is even,} 
\end{cases} 
\quad \mbox{ and }\quad 
T=A_n.
$$  
\end{thm}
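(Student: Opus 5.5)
We apply the eigenvalue characterization of Theorem~\ref{thm:char2}. It suffices to check two things. First, the valencies satisfy $st=2^{n-1}(3^n-1)$. Second, for every $j\in\{1,\ldots,n\}$ the eigenvalues on the $j$th eigenspace satisfy $\mu_j(S)\mu_j(T)=-2^{n-1}$. All the needed spectral data is already available from Lemmas~\ref{lem:valXY}, \ref{lem:otherXYeigs} and~\ref{lem:eigsXn} with $q=3$, so that $q-2=1$.

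\textbf{Valencies.} By Lemma~\ref{lem:eigsXn} with $j=0$, $\mu_0(T)=2^n$; this is the valency $(q-1)^n$ of $A_n$. For $S$, Lemma~\ref{lem:valXY} with $q=3$ gives the following. If $n$ is odd, then $\mu_0(S_{\textit{even}})=\frac12(3^n-1)$. If $n$ is even, then $\mu_0(S_{\textit{odd}})=\frac12(3^n-1)$. In both cases the chosen $S$ has valency $\frac{3^n-1}{2}$, and hence $st=2^n\cdot\frac{3^n-1}{2}=2^{n-1}(3^n-1)$, as required with $\lambda=2^{n-1}$. Note that the roles of $s$ and $t$ in the statement are interchanged relative to this labelling. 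This is harmless: $S$ and $T$ commute, and the text after Theorem~\ref{thm:char1} notes that an $(s,t)$ near-factorization is also a $(t,s)$ one.

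\textbf{Nontrivial eigenvalues.} For $j\geqslant1$, Lemma~\ref{lem:otherXYeigs} with $q=3$ gives
\[
\mu_j(S_{\textit{even}})=(-1)^{n-j}2^{j-1},\qquad \mu_j(S_{\textit{odd}})=(-1)^{n-j+1}2^{j-1}.
\]
Lemma~\ref{lem:eigsXn} gives $\mu_j(T)=(-1)^j2^{n-j}$. We multiply case by case.
\begin{itemize}
\item If $n$ is odd, then $\mu_j(S_{\textit{even}})\mu_j(T)=(-1)^{n}2^{n-1}=-2^{n-1}$.
\item If $n$ is even, then $\mu_j(S_{\textit{odd}})\mu_j(T)=(-1)^{n+1}2^{n-1}=-2^{n-1}$.
\end{itemize}
Thus condition~(2) of Theorem~\ref{thm:char2} holds for every $r\geqslant1$. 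Together with the valency computation, this shows that $ST=2^{n-1}(J-I)$.

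\textbf{Nontriviality and the main obstacle.} Both valencies, $2^n$ and $\frac{3^n-1}{2}$, are at least $2$ once $n\geqslant2$, so the near-factorization is nontrivial. The substantive step is the correctness of Lemma~\ref{lem:otherXYeigs} and Lemma~\ref{lem:eigsXn}. If they were to be verified here, I would proceed via generating functions. The Krawtchouk polynomials satisfy $\sum_i p_i(j)z^i=(1+(q-1)z)^{n-j}(1-z)^j$. Setting $z=-1$, together with the value at $z=1$, extracts the even and odd parts. At $z=1$ the generating function vanishes for $j\geqslant 1$. At $z=-1$ it equals $(2-q)^{n-j}2^j$. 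Halving gives the stated formulas for $\mu_j(S_{\textit{even}})$ and $\mu_j(S_{\textit{odd}})$. The coefficient of $z^n$ gives $p_n(j)=(q-1)^{n-j}(-1)^j$, which is $(-1)^j2^{n-j}$ when $q=3$. Once these lemmas are in hand, the theorem reduces to the sign bookkeeping above.
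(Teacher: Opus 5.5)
Your proposal is correct and is essentially the paper's proof. Both apply the spectral criterion of Theorem~\ref{thm:char2} with $q=3$: the valencies come from Lemmas~\ref{lem:valXY} and~\ref{lem:eigsXn}, the nontrivial eigenvalues from Lemmas~\ref{lem:otherXYeigs} and~\ref{lem:eigsXn}, and one checks that the products are $2^{n-1}(3^n-1)$ and $-2^{n-1}$. Your generating-function check $\sum_i p_i(j)z^i=(1+(q-1)z)^{n-j}(1-z)^j$, evaluated at $z=\pm1$ and read off at $z^n$, is a cleaner verification of those two lemmas than the rearrangement in the appendix. Your remark that the stated $(s,t)$ order is swapped relative to the valencies of $S$ and $T$ is accurate and harmless.
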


\begin{proof}
First, assume $n$ is odd. Since $q=3$, the valency of $\mu_0(S)=  \frac{1}{2} ( 3^n - 1)$ and $\mu_0(T)=2^n$. Thus,  $\mu_0(S T)= 2^{n-1} (3^n-1)$. 
If $j\neq 0$, then $\mu_j(S)=(-1)^{n-j} 2^{j-1} $ (by Lemma~\ref{lem:eigsXn})  and 
$\mu_j(T)=(-1)^j 2^{n-j}$ (by Lemma~\ref{lem:otherXYeigs}). Therefore, $\mu_j(S T)=(-1)^n 2^{n-1}$. 
Thus $(S,T)$ gives the desired near-factorization of $H(n,3)$. A similar reasoning applies to the case $n$ even.
 \end{proof}

Note that the near-factorization in Theorem~\ref{thm:Hamming,q=3} is also a half-set near-factorization.


Applying a similar reasoning for $q=4$, we have the following result. 

\begin{thm}\label{thm:Hamming,q=4}
For all $n \geqslant 2$, $H(n,4)$ has a $4^{n-1}$-fold $(2^{n-1}(2^n-1), 2^{n-1}(2^n+1))$ near-factorization realized by 
$S=S_{\textit{even}}$ and $T=S_{\textit{odd}}$.
\end{thm}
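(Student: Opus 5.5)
We will verify the spectral criterion of Theorem~\ref{thm:char2}, just as in the proof of Theorem~\ref{thm:Hamming,q=3}, with $U$ the set of even indices in $\{1,\dots,n\}$ together with $0$ and $V$ the set of odd indices. Here $S=S_{\textit{even}}$ contains $A_0=I$. Since $S+T=J$ and $U\cap V=\varnothing$, Theorem~\ref{thm:UV} is respected automatically. So it suffices to show that $\mu_0(S)\mu_0(T)=4^{n-1}(4^n-1)$ and that $\mu_j(S)\mu_j(T)=-4^{n-1}$ for every $j\geqslant 1$.

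For the trivial eigenspace we substitute $q=4$ into Lemma~\ref{lem:valXY}, which gives $\mu_0(S_{\textit{even}}),\mu_0(S_{\textit{odd}})\in\{\tfrac12(4^n\pm 2^n)\}$. The two valencies are therefore $2^{n-1}(2^n+1)$ and $2^{n-1}(2^n-1)$, in an order that depends on the parity of $n$. Since the statement only lists the unordered pair $(s,t)$, we note the order but do not need to track it further. Their product is $4^{n-1}(4^n-1)=\lambda(n'-1)$, where $n'=4^n$ is the order of the scheme and $\lambda=4^{n-1}$, so condition (1) of Theorem~\ref{thm:char2} holds.

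For $j\geqslant 1$ we use Lemma~\ref{lem:otherXYeigs} with $q-2=2$. This gives $\mu_j(S_{\textit{even}})=(-1)^{n-j}2^{j-1}2^{n-j}=(-1)^{n-j}2^{n-1}$ and $\mu_j(S_{\textit{odd}})=(-1)^{n-j+1}2^{n-1}$. The product is $-(2^{n-1})^2=-4^{n-1}$, independent of $j$, so condition (2) of Theorem~\ref{thm:char2} holds. It follows that $S_{\textit{even}}S_{\textit{odd}}=4^{n-1}(J-I)$.

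Finally, the factorization is nontrivial: both valencies are at least $2^{n-1}(2^n-1)\geqslant 6$ for $n\geqslant 2$. Given Lemmas~\ref{lem:valXY} and~\ref{lem:otherXYeigs}, there is no substantial obstacle; the only care needed is to confirm that those lemmas include the index $i=0$ in the even sum, so that $A_0\in S_{\textit{even}}$, and to handle the sign bookkeeping correctly.
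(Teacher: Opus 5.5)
Your proposal is correct and follows the same route as the paper: substitute $q=4$ into Lemma~\ref{lem:valXY} and Lemma~\ref{lem:otherXYeigs} to get the valency product $4^{n-1}(4^n-1)$ and the product $-4^{n-1}$ on every nontrivial eigenspace, then invoke the spectral criterion of Theorem~\ref{thm:char2}. Your remark that the stated order of $(s,t)$ matches $S=S_{\textit{even}}$ only for odd $n$ is accurate and harmless. The paper's own proof also swaps the signs of $\mu_j(S)$ and $\mu_j(T)$, which likewise does not affect the product.
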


\begin{proof} 
If $q=4$, then $\mu_j(T) = (-1)^{n-j} 2^{n-1}$ and $\mu_j(S) = (-1)^{n-j+1}2^{n-1}$ for all $j>0$. 
Hence, $\mu_j ( S T )  = - 2^{2n-2} = -4^{n-1}$ for any $j > 0$.  
By Lemma~\ref{lem:valXY}, $\mu_0(S) \mu_{T_U}=4^{n-1}(4^n-1)$.
\end{proof} 

\subsection{Hamming schemes with $q>n$} 
\label{SS:qnHam}

We now rule out the existence of $\lambda$-fold near-factorizations in the Hamming scheme when $q>n$ and $n\geqslant 5$. To prove that no nontrivial near-factorization exists, we show that when $q>n$, $|\mu_1(S_U T_V)|>|\mu_n(S_U T_U)|$ for all pairs of subsets $U, V \subset \{0,1,\dots, n\}$. By Condition~\ref{char2:cond3} of Theorem~\ref{thm:char2}, the desired contradiction will follow.  

Since we must consider all pairs of disjoint subsets of relations, we introduce the following definition. A pair $(U, V)$ of subsets of $\{0,1,\dots, n\}$ is called \emph{admissible} if $U \cap V=\varnothing$ and $U, V \neq \{0\}$. As a consequence of the last condition, if $(U,V)$ is admissible, then $S_U, T_V$ does not correspond to a trivial near-factorization. 

The eigenvalue of $A_i$ in $H(n, q)$ realized by the $1^{st}$-eigenspace can be computed using Theorem~\ref{thm:Kraw}, which in this case is the following two-term sum:
\begin{equation} 
\label{eq:Hammingeigenv1}
p_i(1)=(q-1)^i  \binom{n-1}{i} - (q-1)^{i-1} \binom{n-1}{i-1}
          =(q-1)^{i-1} \binom{n-1}{i-1} \left( \frac{n-i}{i}(q-1)-1 \right).
\end{equation}
The eigenvalue of $A_i$ realized by the $n^{th}$-eigenspace is
\begin{equation}
\label{eq:eigenn}
p_i(n)=(-1)^i\binom{n}{i}.
\end{equation} 
We establish an upper bound on $| \mu_n(S_U T_V) |$ over all admissible pairs $(U,V)$ of $\{0,1,\dots, n\}$. 

\begin{lem}
\label{lem:maxn}
Let $P$ be the set of all admissible pairs of $\{0,1,\dots, n\}$. Then
$$
	\max \left( \{|\mu_n(S_U T_V)| \, \mid \, (U,V) \in P\} \right) = 2^{2(n-1)}.
$$
\end{lem}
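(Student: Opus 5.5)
We use \eqref{eq:eigenn}, $p_i(n)=(-1)^i\binom{n}{i}$, and write $\mu_n(S_U)=\sum_{i\in U}(-1)^i\binom{n}{i}$ and $\mu_n(T_V)=\sum_{j\in V}(-1)^j\binom{n}{j}$. The eigenvalue is multiplicative, $\mu_n(S_UT_V)=\mu_n(S_U)\mu_n(T_V)$, so the task is to maximize $|xy|$ over pairs of disjoint index sets. Here $x$ is a signed subset sum of the binomial coefficients over $U$ and $y$ is the corresponding sum over $V$.

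\textbf{Upper bound.} Set $E=\sum_{i\text{ even}}\binom{n}{i}=2^{n-1}$ and $O=\sum_{i\text{ odd}}\binom{n}{i}=2^{n-1}$. Write $x=a-b$, where $a$ is the sum of $\binom{n}{i}$ over even $i\in U$ and $b$ is the sum over odd $i\in U$. Similarly write $y=c-d$. Disjointness gives $a+c\le E$ and $b+d\le O$. Then
\[
|x|\le \max(a,b),\qquad |y|\le \max(c,d).
\]
We now check the four sign cases. If $|x|\le a$ and $|y|\le c$, then $|xy|\le ac\le (E/2)^2=2^{2n-4}$ by AM--GM. If $|x|\le a$ and $|y|\le d$, then $|xy|\le ad\le EO=2^{2n-2}$. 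The remaining two cases are symmetric to these. Hence $|\mu_n(S_UT_V)|\le 2^{2(n-1)}$.

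\textbf{Attainment.} Take $U$ to be the set of even indices and $V$ the set of odd indices. They are disjoint, and $U\ne\{0\}$ as long as $n\ge 2$, which holds since the section assumes $n\ge 5$; also $V\ne\{0\}$. This pair is admissible and gives $\mu_n(S_U)=2^{n-1}$ and $\mu_n(T_V)=-2^{n-1}$, so the product has absolute value exactly $2^{2(n-1)}$.

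\textbf{Main obstacle.} The only delicate point is handling the signs cleanly. Bounding $|a-b|$ by $\max(a,b)$ for nonnegative $a,b$ is what lets us reduce to the disjointness constraints. Beyond that, we need $E=O=2^{n-1}$, which follows from $(1-1)^n=0$.
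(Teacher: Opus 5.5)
Your proof is correct and takes the same approach as the paper: the paper asserts that the maximum is attained by splitting the indices according to the sign of $p_i(n)=(-1)^i\binom{n}{i}$ (odd versus even), evaluates both sums as $2^{n-1}$, and leaves the upper bound implicit, which your argument supplies. Your four-case analysis is more than needed, since $|\mu_n(S_U)|\le\max(a,b)\le 2^{n-1}$ and likewise $|\mu_n(T_V)|\le 2^{n-1}$ hold for arbitrary $U$ and $V$, so the bound does not even use disjointness.
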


\begin{proof}
Note that 
$\max\{ | \mu_n(S_U T_V) |  \mid (U,V) \in P \}$ is achieved by $(U, V)$, where 
 $U=\{i \mid p_i(n)<0 \}$ and $V=\{i \mid p_i(n) \geqslant 0\}$. As per Equation~(\ref{eq:eigenn}), the maximum is achieved by the pair 
 $U=\{i \mid \textrm{$i$ is odd}\}$ and $V=\{i \mid \textrm{$i$ is even}\}$. Lemma~\ref{lem:sum0} then implies that 
 $| \mu_n(S_U) |=2^{n-1}=\mu_n( T_V )$ and the claim follows. 
\end{proof}

Our next objective is to derive a lower bound on $|\mu_1(S_U)|$ assuming that $\mu_1(S_U)<0$. To do so, we will exploit a sign pattern for the $1^{st}$-eigenspace that was first established in \cite{MR3862932}. 

\begin{prop}
Let $q \geqslant 2$ and $0 \leqslant i \leqslant n$. Then $p_i(1) <0$ if and only if $i \geqslant n-\frac{n-1}{q}$. 
\end{prop}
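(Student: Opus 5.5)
The plan is to work directly from the factored form of $p_i(1)$ in Equation~(\ref{eq:Hammingeigenv1}). For $i=0$ we have $p_0(1)=1>0$, and $i=0$ does satisfy $i<n-\frac{n-1}{q}$, since $n-\frac{n-1}{q}>0$. So we may assume $1\leqslant i\leqslant n$.

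For $1\leqslant i\leqslant n$, consider
\[
p_i(1)=(q-1)^{i-1}\binom{n-1}{i-1}\left(\frac{n-i}{i}(q-1)-1\right).
\]
The prefactor $(q-1)^{i-1}\binom{n-1}{i-1}$ is strictly positive, because $q\geqslant 2$ and $0\leqslant i-1\leqslant n-1$. Hence the sign of $p_i(1)$ is the sign of $\frac{n-i}{i}(q-1)-1$. Multiplying by $i>0$, this is the sign of $(n-i)(q-1)-i = n(q-1)-iq$.

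It follows that $p_i(1)<0$ exactly when $iq>n(q-1)$, that is, when $i>n-\frac{n}{q}$. The remaining step is to reconcile this with the stated threshold $n-\frac{n-1}{q}$. Since $i$ and $n$ are integers, the condition $iq>nq-n$ is equivalent to $iq\geqslant nq-n+1$, which is $i\geqslant n-\frac{n-1}{q}$. This gives the claimed equivalence.

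The only delicate point is this integrality step, and one should also check the boundary case $p_i(1)=0$. That case occurs exactly when $iq=n(q-1)$, and such $i$ fall on the nonnegative side ($i<n-\frac{n-1}{q}$), so they are consistent with the statement. No other obstacles are expected.
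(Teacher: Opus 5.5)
Your proof is correct. The paper does not prove this proposition itself: it states it and attributes the sign pattern to \cite{MR3862932}. So your argument is a self-contained alternative rather than a reconstruction of an in-paper proof.

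You work directly from the factored form in Equation~(\ref{eq:Hammingeigenv1}), which the paper displays just before the statement. For $1\leqslant i\leqslant n$ the prefactor $(q-1)^{i-1}\binom{n-1}{i-1}$ is positive, so the sign of $p_i(1)$ is the sign of $n(q-1)-iq$. Integrality then converts the natural threshold $i>n-\frac{n}{q}$ into the stated one, $i\geqslant n-\frac{n-1}{q}$. Your separate treatment of $i=0$ (where the factored form is undefined) and of the zero case $iq=n(q-1)$ covers the remaining cases.

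Your route has two advantages. It does not depend on the external reference, and it says exactly when $p_i(1)=0$. It also makes the paper's later use for $q>n$ transparent: then $n-\frac{n-1}{q}>n-1$, so $i=n$ is the only index with a negative value. The citation instead places the Hamming case within a uniform treatment that the paper also relies on for the Grassmann scheme (Lemma~\ref{lem:grassmann}). For this particular statement, though, nothing beyond your two-line computation is needed.
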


Clearly, if $q>n$, then $\frac{n-1}{q}<1$ and thus, we have the following corollary. 

\begin{cor}
If $q>n$, then $ p_i(1)>0$ if and only if $i=n$.  
\end{cor}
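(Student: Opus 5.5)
The plan is to read the Corollary off the preceding Proposition, and then check it independently from the closed form (\ref{eq:Hammingeigenv1}) for $p_i(1)$. The point of the Corollary is that once $q>n$, the index $i=n$ becomes the unique index at which the sign of $p_i(1)$ differs from that of all the other relations. I will establish this dichotomy precisely. As I explain at the end, the inequality sign attached to $i=n$ in the statement as printed has to be read against the Proposition it is derived from.

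\textbf{Step 1 (from the Proposition).} The Proposition says that $p_i(1)<0$ exactly when $i\geqslant n-\frac{n-1}{q}$. If $q>n$, then $0\leqslant \frac{n-1}{q}<1$, so the threshold $n-\frac{n-1}{q}$ lies in the half-open interval $(n-1,n]$. The only integer $i\in\{0,\dots,n\}$ meeting the threshold is therefore $i=n$. Hence the sign condition of the Proposition singles out $i=n$ and no other index.

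\textbf{Step 2 (direct verification, to pin down the signs).} Rather than relying only on the cited sign pattern, I will use (\ref{eq:Hammingeigenv1}).
\begin{itemize}
\item For $i=0$, we have $p_0(1)=1$.
\item For $1\leqslant i\leqslant n-1$, the prefactor $(q-1)^{i-1}\binom{n-1}{i-1}$ is positive. The bracket satisfies
\[
\frac{n-i}{i}(q-1)-1\;\geqslant\;\frac{q-1}{n-1}-1\;>\;0,
\]
because $q-1\geqslant n>n-1$. So $p_i(1)$ is a positive integer for every $i\neq n$.
\item For $i=n$, the first term of the two-term Krawtchouk sum vanishes, which gives $p_n(1)=-(q-1)^{n-1}$.
\end{itemize}
Thus $p_i(1)$ has one sign for all $i\neq n$ and the opposite sign exactly at $i=n$. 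This is the ``if and only if $i=n$'' content of the Corollary.

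\textbf{Step 3 (the expected obstacle: matching the printed sign).} Steps 1 and 2 show that the exceptional index $i=n$ is the one with $p_n(1)<0$. So the biconditional ``$p_i(1)\ne$ the common sign of the rest $\iff i=n$'' holds. The version with ``$>0$'' attached to $i=n$ cannot hold literally, since $p_0(1)=1>0$. I therefore expect the only real obstacle to be this sign bookkeeping.

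In the write-up I will present the proof of the dichotomy from Steps 1 and 2. I will state explicitly that it is the negative eigenvalue that is attained only at $i=n$, which is exactly the form needed later to bound $|\mu_1(S_U)|$ when $\mu_1(S_U)<0$. In that application, $\mu_1(S_U)<0$ forces $n\in U$.
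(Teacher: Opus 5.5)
Your proof is correct, and Step~1 is exactly the paper's argument: the paper observes that $\frac{n-1}{q}<1$ when $q>n$ and reads the Corollary off the preceding Proposition. Your Step~2 is a valid self-contained check from (\ref{eq:Hammingeigenv1}), giving $p_n(1)=-(q-1)^{n-1}$ and $p_i(1)>0$ for $i\neq n$, and you are right that the printed ``$p_i(1)>0$'' is a typo for ``$p_i(1)<0$'', as the paper's next sentence (``precisely one negative eigenvalue \dots\ realized by $A_n$'') confirms.
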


Thus, when $q>n$, there is precisely one negative eigenvalue on the $1^{st}$-eigenspace, which is realized by $A_n$. If $(U,V)$ gives rise to a $\lambda$-fold near-factorization of $H(n, q)$, then $\mu_1(S_U T_V)<0$. Therefore, $n$ must appear in one of $U$ or $V$. Without loss of generality, we will henceforth assume that $n \in U$ and thus, that $\mu_1(S_U)<0$. Lastly, by axiom (2) of Definition \ref{defn:AS}, $\sum_i p_i(1)=0$. Consequently, if $i \not \in U$, then $| \mu_1(S_U) | \geqslant p_i(1)$. 

\begin{cor}
\label{cor:lower1}
 Let $(U,V)$ be an admissible pair of $\{0,1,\dots, n\}$ such that $n \in U$.
  If $| \mu_1(S_U) | \leqslant p_i(1)$, then $i \in U$. 
\end{cor}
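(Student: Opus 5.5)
The claim is a short contrapositive argument based on the vanishing row sum $\sum_{i=0}^n p_i(1)=0$ together with the sign pattern on the first eigenspace. By the preceding corollary, when $q>n$ the index $n$ is the only one with $p_n(1)<0$, and $p_i(1)\geqslant 0$ for every $i\neq n$. (The corollary as printed reads ``$p_i(1)>0$ iff $i=n$''; the intended content, from the proposition, is that $p_i(1)<0$ exactly when $i=n$.) So I will argue the contrapositive: assume $i\notin U$ and show $|\mu_1(S_U)|\geqslant p_i(1)$. The hypothesis $|\mu_1(S_U)|\leqslant p_i(1)$ then forces $i\in U$, at least when $|\mu_1(S_U)|<p_i(1)$ strictly. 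The equality case needs a bit more care, discussed at the end.

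The steps, in order, are as follows.

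First, since $n\in U$ and all other terms are nonnegative, we can write
$$\mu_1(S_U)=p_n(1)+\sum_{k\in U\setminus\{n\}}p_k(1).$$
Using $\sum_{k=0}^n p_k(1)=0$, this gives
$$\mu_1(S_U)=-\sum_{k\notin U}p_k(1).$$

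Second, the standing assumption $\mu_1(S_U)<0$ gives
$$|\mu_1(S_U)|=\sum_{k\notin U}p_k(1).$$
Every summand here is nonnegative, because $n\in U$ excludes the only negative term.

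Third, if $i\notin U$, then $p_i(1)$ is one of these summands, so $|\mu_1(S_U)|\geqslant p_i(1)$. This inequality is strict unless every other $k\notin U$ has $p_k(1)=0$.

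The main obstacle is this equality case. To handle it, I will use equation (\ref{eq:Hammingeigenv1}). It shows $p_k(1)=0$ only if $(n-k)(q-1)=k$, and for $q>n$ that equation has no integer solution $0\leqslant k<n$. Indeed, $k=0$ gives $p_0(1)=1$, and for $1\leqslant k\leqslant n-1$ we have $(n-k)(q-1)\geqslant q-1\geqslant n>k$. Hence all $p_k(1)$ with $k\neq n$ are strictly positive. So if some other $k\notin U$ exists, the inequality is strict and the contrapositive is clean.

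The only remaining configuration is $\{0,\dots,n\}\setminus U=\{i\}$. Then $|\mu_1(S_U)|=p_i(1)$ exactly, and I must rule it out using admissibility. In that case $V\subseteq\{i\}$ is nonempty, so $V=\{i\}$, and then $U\cup V$ is everything. I will dismiss this either by reading the lemma's conclusion as intended for $|\mu_1(S_U)|<p_i(1)$, or by observing that $S_U=J-A_i$ can only pair with $T=A_i$ when $A_i$ has valency one (the trivial case excluded earlier). I would state whichever version is needed downstream.
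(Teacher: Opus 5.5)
Your Steps 1--3 are the paper's own justification, written out more carefully. That justification is the sentence just before the corollary: since $\sum_k p_k(1)=0$ and $A_n$ carries the only negative entry, $i\notin U$ forces $|\mu_1(S_U)|\geqslant p_i(1)$. Your use of (\ref{eq:Hammingeigenv1}) to show $p_k(1)>0$ for every $k\neq n$ when $q>n$ is also correct.

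The gap is in the equality case, and option (b) does not close it, for two reasons.
\begin{enumerate}
\item The corollary does not assume that $(U,V)$ realizes a near-factorization, so no property of $S_UT_V$ is available to you.
\item The claim itself is false. A product $(J-A_i)A_i=\lambda(J-I)$ with $A_i$ of valency larger than one occurs in Theorem~\ref{thm:SRGs}(i), where $S=A_0+A_1=J-A_2$ and $T=A_2$. It also occurs in Section~\ref{sec:genhex}, where $(A_0+A_1+A_2)A_3=q^4(J-I)$. The paper's remark on trivial near-factorizations only goes one way: if one factor has valency one, the other is $J$ minus that class. It says nothing about the converse you need.
\end{enumerate}

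In fact no argument can exclude the equality case. Take $1\leqslant i\leqslant n-1$, $U=\{0,\dots,n\}\setminus\{i\}$ and $V=\{i\}$. This pair is admissible, $n\in U$, and $\mu_1(S_U)=-p_i(1)$, so $|\mu_1(S_U)|=p_i(1)$, yet $i\notin U$. So the corollary as printed, with $\leqslant$, is false.

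What the paper's reasoning and yours actually prove is the strict version: $|\mu_1(S_U)|<p_i(1)$ implies $i\in U$. That is all that is needed downstream. Corollary~\ref{cor:set} combines it with the strict bound $p_i(1)>2^{2(n-1)}$ of Lemma~\ref{lem:lower1}.

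Commit to option (a): prove the strict version, which your Steps 1--3 already do, and cite the counterexample above to justify the change. Drop option (b) rather than leaving the choice open.
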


To show that $|\mu_1(S_U T_V)|>|\mu_n(S_U T_V)|$ for all admissible pairs $(U,V)$, we will also require the following very rough lower bound on certain eigenvalues realized by the $1^{st}$-eigenspace. 

\begin{lem}
\label{lem:lower1}
If $q>n$, $q \geqslant 17$, and $i \geqslant \frac{n}{2}$, then $p_i(1) > 2^{2(n-1)}$. 
\end{lem}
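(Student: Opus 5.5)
The plan is to bound the closed form (\ref{eq:Hammingeigenv1}) term by term,
$$p_i(1)=(q-1)^{i-1}\binom{n-1}{i-1}\left(\frac{(n-i)(q-1)-i}{i}\right).$$
At $i=n$ this quantity equals $-(q-1)^{n-1}<0$, so the statement can only concern $\frac{n}{2}\leqslant i\leqslant n-1$. This is the range where $A_i$ can lie outside $U$, which is the only place the lemma is used. I will also work in the setting $n\geqslant 5$ of this subsection, so that $i\geqslant 3$ and $n-1\geqslant 4$. The two facts used throughout are $q-1\geqslant n$, which follows from $q>n$, and $q-1\geqslant 16$.

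\emph{Case $i=n-1$.} Here the formula collapses to
$$p_{n-1}(1)=(q-1)^{n-2}\bigl((q-1)-(n-1)\bigr)=(q-1)^{n-2}(q-n).$$
Since $q-n\geqslant 1$ and $q-1\geqslant 16$, this gives $p_{n-1}(1)\geqslant 2^{4(n-2)}$. The inequality $2^{4n-8}>2^{2n-2}$ holds as soon as $n\geqslant 4$, so this case is done.

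\emph{Case $\frac{n}{2}\leqslant i\leqslant n-2$.} First bound the last factor. Because $n-i\geqslant 2$ and $i<n\leqslant q-1$, the numerator satisfies
$$(n-i)(q-1)-i\geqslant 2(q-1)-i> q-1,$$
so the last factor exceeds $\frac{q-1}{i}$. Next, since $2\leqslant i\leqslant n-1$, we have $\binom{n-1}{i-1}\geqslant n-1$. Combining these gives
$$p_i(1)>(q-1)^{i-1}\cdot\frac{(n-1)(q-1)}{i}\geqslant 16^{\,i-1}\cdot\frac{(n-1)n}{n-2}.$$
Using $i\geqslant \frac{n}{2}$, we get $16^{i-1}\geqslant 2^{2n-4}$. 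Also $\frac{n(n-1)}{n-2}>4$ for every $n\geqslant 3$. Together these yield $p_i(1)>2^{2n-2}$. When $n$ is odd, $i\geqslant\frac{n+1}{2}$ and the margin is even larger.

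\emph{Main obstacle.} The exponent bookkeeping is the delicate step. The crude estimate $(q-1)^{i-1}\geqslant 2^{2n-4}$ falls short of the target $2^{2n-2}$ by exactly a factor of $4$. That factor has to be recovered from the binomial coefficient and the last factor together, which is why the case $i=n-1$ must be separated. In that case the last factor can be as small as $\frac{1}{n-1}$, but the exponent $i-1=n-2$ is then large enough to compensate. If small $n$ were also needed, I would simply check the finitely many pairs $(n,i)$ directly.
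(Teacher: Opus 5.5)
Your proof is correct. Your restriction to $\frac{n}{2}\leqslant i\leqslant n-1$ is also necessary: as you note, $p_n(1)=-(q-1)^{n-1}<0$, and the paper's own proof likewise only concludes for $\frac{n}{2}\leqslant i<n$.

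Your route differs from the paper's in where the missing factor of $4$ over the crude estimate $(q-1)^{i-1}\geqslant 2^{2n-4}$ comes from.

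\emph{The paper's route.} It splits into two cases.
\begin{itemize}
\item At $i=\frac{n}{2}$, the last factor equals $q-2\geqslant 15$.
\item For $i>\frac{n+1}{2}$, a larger exponent of $q-1$ is meant to supply the margin. Here it tacitly treats $\binom{n-1}{i-1}\bigl(\frac{n-i}{i}(q-1)-1\bigr)$ as at least $1$.
\end{itemize}

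\emph{Your route.} You split off $i=n-1$ instead. There the exact form $(q-1)^{n-2}(q-n)$ lets the large exponent do the work. For $i\leqslant n-2$ you extract the factor uniformly from $\binom{n-1}{i-1}\cdot\frac{q-1}{i}\geqslant\frac{n(n-1)}{n-2}>4$.

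Your version buys rigor in exactly the places where the paper's written argument is loose:
\begin{itemize}
\item The paper's displayed estimate $16^{\frac{n+1}{2}-1}>2^{2n+1}$ is false as written, since the left side equals $2^{2n-2}$.
\item For odd $n$, the case $i=\frac{n+1}{2}$ falls between the paper's two cases.
\item The paper never justifies that the binomial coefficient times the last factor is at least $1$. This needs an argument, because at $i=n-1$ the last factor is $\frac{q-n}{n-1}$, which can be as small as $\frac{1}{n-1}$. Your separate treatment of $i=n-1$ is precisely what handles this.
\end{itemize}

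The only loose end on your side is cosmetic. Your $i=n-1$ case needs $n\geqslant 4$, since at $n=3$ your bound gives only $\geqslant 2^4$. But $n\leqslant 3$ follows at once from the exact values $p_1(1)=q-2$ for $n=2$ and $p_2(1)=(q-1)(q-3)$ for $n=3$. In any case the lemma is only applied with $n\geqslant 5$.
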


\begin{proof}
If $i>\frac{n+1}{2}$, then, under the given hypothesis, we have
$$
p_i(1) =(q-1)^{i-1} \binom{n-1}{i-1} \left( \frac{n-i}{i}(q-1)-1 \right)
	\geqslant
	(16)^{\frac{n+1}{2}-1}>  2^{2n+1};
$$
if $i = \frac{n}{2}$, then
$$
p_i(1) =(q-1)^{i-1} \binom{n-1}{i-1} \left( \frac{n-i}{i}(q-1)-1 \right)
	\geqslant
	(16)^{\frac{n}{2}-1}(15)
 >  2^{2n-1},
$$
Consequently, if $ \frac{n}{2} \leqslant i < n$, then $|p_i(1) |>2^{2n-2}$. 
\end{proof}

If $(U,V)$ is a $\lambda$-fold near-factorizations, then by Theorem \ref{thm:char2}, $\mu_1(S_U T_V) = \mu_n(S_U T_V)$. The following corollary is an immediate consequence of Corollary~\ref{cor:lower1} and Lemmas~\ref{lem:maxn} and~\ref{lem:lower1}.

\begin{cor}
\label{cor:set}
Let $q>n$, $q\geqslant 17$ and $(U, V)$ be an admissible pair of $\{0,1,\dots, n \}$ which gives rise to a $\lambda$-fold near-factorization of $H(n, q)$ with $\mu_1(S_U) <0$. If $i \geqslant \frac{n}{2}$, then $i \in U$. 
\end{cor}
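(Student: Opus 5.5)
We argue by contradiction, combining Corollary~\ref{cor:lower1}, Lemma~\ref{lem:maxn} and Lemma~\ref{lem:lower1}. Fix $i$ with $\frac{n}{2}\leqslant i$. The case $i=n$ is immediate, since we assumed $n\in U$ (this is how the normalization $\mu_1(S_U)<0$ arose). So assume $\frac{n}{2}\leqslant i<n$ and, for contradiction, that $i\notin U$.

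First I bound $|\mu_1(S_U)|$ from below. The first-eigenspace eigenvalues satisfy $\sum_{\ell=0}^n p_\ell(1)=0$. Moreover, since $q>n$, the only negative one is $p_n(1)$, and $n\in U$. Hence
\[
|\mu_1(S_U)| \;=\; -\sum_{\ell\in U}p_\ell(1) \;=\; \sum_{\ell\notin U}p_\ell(1) \;\geqslant\; p_i(1),
\]
where the last step uses that every term with $\ell\notin U$ is nonnegative. Lemma~\ref{lem:lower1} then gives $|\mu_1(S_U)|>2^{2(n-1)}$. (This is exactly the contrapositive content of Corollary~\ref{cor:lower1}.)

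Next I need $|\mu_1(T_V)|\geqslant 1$, so that $|\mu_1(S_UT_V)|=|\mu_1(S_U)|\,|\mu_1(T_V)|>2^{2(n-1)}$. This holds because $\mu_1(T_V)$ is an integer, being a sum of Krawtchouk values. It is also nonzero: Theorem~\ref{thm:char2} requires $\mu_1(S_U)\mu_1(T_V)=-\lambda\neq 0$.

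The same theorem forces $\mu_1(S_UT_V)=\mu_n(S_UT_V)=-\lambda$. Since $(U,V)$ is admissible, Lemma~\ref{lem:maxn} gives $|\mu_n(S_UT_V)|\leqslant 2^{2(n-1)}$. This contradicts $|\mu_1(S_UT_V)|>2^{2(n-1)}$, so $i\in U$.

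The main delicate point is the integrality and nonvanishing of $\mu_1(T_V)$, which we take from Theorem~\ref{thm:char2}. A secondary point is confirming that Lemma~\ref{lem:lower1} really covers every $i$ in the range $\frac{n}{2}\leqslant i<n$; its proof treats $i=\frac{n}{2}$ and $i>\frac{n+1}{2}$, and the remaining value $i=\frac{n+1}{2}$ for odd $n$ would need the same estimate checked. All other steps are direct substitutions.
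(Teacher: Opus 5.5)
Your argument is correct and is exactly the combination of Corollary~\ref{cor:lower1}, Lemma~\ref{lem:maxn} and Lemma~\ref{lem:lower1} that the paper means when it calls this statement immediate. You also usefully make two steps explicit: that $|\mu_1(T_V)|\geqslant 1$ (integrality plus $\lambda\neq 0$), and the correctly oriented inequality $|\mu_1(S_U)|\geqslant p_i(1)$ for $i\notin U$. The case $i=\frac{n+1}{2}$ you flag is only a gap in the paper's proof of Lemma~\ref{lem:lower1}, not in the lemma itself: there $(q-1)^{i-1}\geqslant 16^{(n-1)/2}=2^{2(n-1)}$, and the remaining factor $\binom{n-1}{i-1}\big(\frac{n-i}{i}(q-1)-1\big)$ exceeds $1$, so the strict bound still holds.
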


We will also require the following lower bound on some of the eigenvalues realized by the $1^{st}$-eigenspace. 

\begin{lem}
\label{lem:n2}
If $q >n$ and $1\leqslant i \leqslant  \frac{n}{2}$, then $p_i(1)\geqslant (n-2)^2$.
\end{lem}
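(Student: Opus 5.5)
We prove Lemma~\ref{lem:n2} directly from the factored form of the first eigenvalue in Equation~(\ref{eq:Hammingeigenv1}):
\[
p_i(1)=(q-1)^{i-1}\binom{n-1}{i-1}\left(\frac{n-i}{i}(q-1)-1\right).
\]
Since $q>n$, we have $q-1\geqslant n$. For $1\leqslant i\leqslant \frac{n}{2}$ we have $n-i\geqslant i$, so $\frac{n-i}{i}\geqslant 1$. Hence the last factor is at least $(q-1)-1\geqslant n-1$, and in particular it is positive. All three factors are therefore positive, and we bound each from below.

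We split into the cases $i=1$ and $i\geqslant 2$.

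\emph{Case $i=1$.} The formula gives $p_1(1)=(n-1)(q-1)-1$. Using $q-1\geqslant n$,
\[
p_1(1)\geqslant n(n-1)-1 = n^2-n-1 .
\]
Now $n^2-n-1\geqslant (n-2)^2=n^2-4n+4$ is equivalent to $3n\geqslant 5$, which holds for every $n\geqslant 2$.

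\emph{Case $2\leqslant i\leqslant \frac{n}{2}$.} Here $n\geqslant 4$. We use the bounds
\[
(q-1)^{i-1}\geqslant q-1\geqslant n,\qquad \binom{n-1}{i-1}\geqslant 1,\qquad \frac{n-i}{i}(q-1)-1\geqslant n-1 .
\]
Multiplying, $p_i(1)\geqslant n(n-1)\geqslant (n-2)^2$.

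The only point needing care is the sign and size of the bracketed factor $\frac{n-i}{i}(q-1)-1$. The restriction $i\leqslant n/2$ is exactly what keeps $\frac{n-i}{i}\geqslant 1$; for larger $i$ this factor can become small or negative, which is consistent with the sign pattern of the preceding proposition. Everything else is an elementary comparison of quadratics, and the edge case $n=1$ is vacuous.
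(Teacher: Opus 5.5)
Your proof is correct and follows essentially the same route as the paper: both start from the factored form of $p_i(1)$ in Equation~(\ref{eq:Hammingeigenv1}), treat $i=1$ separately, and for $i\geqslant 2$ bound each factor from below using $q-1\geqslant n$ and $\frac{n-i}{i}\geqslant 1$. The only cosmetic difference is that the paper extracts $(n-1)^2$ from $(q-1)^{i-1}\binom{n-1}{i-1}$, whereas you use $\binom{n-1}{i-1}\geqslant 1$ and take the factor $n-1$ from the bracket. Your write-up also makes the $i=1$ and small-$n$ cases more explicit than the paper does.
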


\begin{proof}
If $i=1$, then by Equation~(\ref{eq:Hammingeigenv1}), we have
\begin{align*}
p_i(1)= \left((n-1)(q-1)-1 \right)  \geqslant (n-2)^2.
\end{align*}
The result clearly holds for $n \leqslant 3$. Then, for $n \geqslant 4$, we have
\begin{align*}
p_i(1)=(q-1)^{i-1} \binom{n-1}{i-1} \left( \frac{n-i}{i}(q-1)-1 \right)
\geqslant (n-1)^2 \left( \frac{n-i}{i}(n)-1 \right) \geqslant  (n-2)^2.\ 
\end{align*}
\end{proof}

The next step is to show that, given an admissible pair $(U,V)$ with $\{ i \mid i\geqslant r\} \subseteq U$ and $r \leqslant \frac{n}{2}$, the eigenvalue of $S_U T_V$ on the $n^{th}$-eigenspace is strictly smaller than $p_r(1)$. 

In the proof of Proposition~\ref{prop:uppern}, we will require the following two standard binomial identities:

\begin{equation}
\label{eq:partial}
\sum_{i\leqslant r} (-1)^i\binom{n}{i}=(-1)^r\binom{n-1}{r};
\quad 
\binom{n}{r}=\frac{n}{r}\binom{n-1}{r-1}.
\end{equation}

\begin{prop}
\label{prop:uppern}
Assume $q>n$. Let $(U, V)$ be an admissible pair of $\{0,1,\dots, n\}$, with $\mu_1(S_U) <0$.
If $\{ i \mid i> r \}\subseteq U$, where $r=\max \{j \mid j \not\in U\}$ and $r \neq 1$, 
then $\mu_n (S_U  T_V) \leqslant (n-2)^2p_r(1)$.  
\end{prop}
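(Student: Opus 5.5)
We bound $|\mu_n(S_U)|$ and $|\mu_n(T_V)|$ separately on the $n$-th eigenspace, where $p_i(n)=(-1)^i\binom{n}{i}$ by (\ref{eq:eigenn}). The comparison with $p_r(1)$ then uses (\ref{eq:Hammingeigenv1}).

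By hypothesis, $r\notin U$ and every $i>r$ lies in $U$. Since the pair is admissible, $U\cap V=\varnothing$, so $V\subseteq\{0,1,\dots,r\}$. We first bound $T_V$ crudely:
$$|\mu_n(T_V)|\le \sum_{i\le r}\binom{n}{i}.$$

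For $S_U$ we split $U=\{i>r\}\cup U_0$ with $U_0\subseteq\{0,\dots,r-1\}$. Since $\sum_{i=0}^n p_i(n)=0$, the first identity in (\ref{eq:partial}) gives
$$\sum_{i>r}(-1)^i\binom{n}{i}=-\sum_{i\le r}(-1)^i\binom{n}{i}=(-1)^{r+1}\binom{n-1}{r}.$$
Hence
$$|\mu_n(S_U)|\le \binom{n-1}{r}+\sum_{i<r}\binom{n}{i}.$$
Because $r\le n/2$ is the relevant range (larger $r$ is ruled out by Corollary~\ref{cor:set}, and I will restrict to it), both sums $\sum_{i\le r}\binom{n}{i}$ are dominated by a constant multiple of $\binom{n}{r}$. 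I plan to use the crude bound $\sum_{i\le r}\binom{n}{i}\le (r+1)\binom{n}{r}$, which gives
$$|\mu_n(S_UT_V)|\le (r+1)^2\binom{n}{r}^2.$$

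Next I lower-bound $(n-2)^2p_r(1)$ via (\ref{eq:Hammingeigenv1}):
$$p_r(1)=(q-1)^{r-1}\binom{n-1}{r-1}\left(\tfrac{n-r}{r}(q-1)-1\right).$$
With $q-1\ge n$ this is at least $n^{r-1}\binom{n-1}{r-1}\left(\tfrac{n(n-r)}{r}-1\right)$. The second identity in (\ref{eq:partial}) rewrites $\binom{n}{r}=\frac{n}{r}\binom{n-1}{r-1}$. So it suffices to show
$$(r+1)^2\,\frac{n}{r^2}\,\binom{n}{r}\le (n-2)^2\, n^{r-2}\left(\tfrac{n(n-r)}{r}-1\right).$$
For $2\le r\le n/2$ I will use $\binom{n}{r}\le n^r/r!$. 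The left side is then at most $(r+1)^2n^{r+1}/(r^2\,r!)$, while the right side is roughly $(n-2)^2n^{r-1}(n-r)/r\ge (n-2)^2n^{r}/(2r)$. This reduces the claim to an elementary polynomial inequality of the shape $2(r+1)^2 n\le r\,r!\,(n-2)^2$. I expect this to hold for $n\ge5$ and $r\ge2$, and I would check the case $r=2$ directly.

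The main obstacle is this final bookkeeping. The crude bounds on partial binomial sums may be too weak for small $n$ or for $r=2$. There I would use the exact values of $\binom{n-1}{r}$ and the partial sums rather than $(r+1)\binom{n}{r}$, together with $q\ge n+1$. I also need to handle the case $r=0$, that is, $V=\{0\}$. This should be excluded by admissibility and nontriviality, and I would dispose of it separately.
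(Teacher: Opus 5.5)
This is essentially the paper's argument. The identity in (\ref{eq:partial}) together with $\sum_i p_i(n)=0$ gives $\bigl|\sum_{i>r}p_i(n)\bigr|=\binom{n-1}{r}$, and the remaining contributions to $\mu_n(S_U)$ and $\mu_n(T_V)$ are bounded by partial binomial sums using $r\le n/2$ from Corollary~\ref{cor:set}, exactly as the paper does. The product is then compared with the lower bound on $p_r(1)$ from (\ref{eq:Hammingeigenv1}) with $q-1\ge n$; your $(r+1)\binom{n}{r}$ bounds merely replace the paper's $\frac{r}{2}\binom{n}{r}$ and $n\binom{n-1}{r}$.

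There is one harmless slip. After substituting $\binom{n-1}{r-1}=\frac{r}{n}\binom{n}{r}$, the left side of your sufficient condition should carry $\frac{1}{r}$, not $\frac{n}{r^2}$; the error only makes your target stronger. The few small cases your crude polynomial inequality misses (chiefly $r=2$, $n\le 7$) hold when checked directly, e.g.\ $(r+1)^2\binom{n}{r}^2=900\le 1170\le (n-2)^2p_2(1)$ for $n=5$.
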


\begin{proof}
For $\mu_n(T_V)$, we see that the maximum absolute value is achieved by taking the sum of all eigenvalues of the same sign which corresponds to one of the two sums below
$$
   \sum_{\stackrel{k \leqslant r}{k \ \textrm{is even} }} (-1)^{k} \binom{n}{k} \leqslant \frac{r}{2}\binom{n}{r};
\quad
  \sum_{\stackrel{k \leqslant r}{k \ \textrm{is odd}}} (-1)^{k} \binom{n}{k} \leqslant \frac{r}{2}\binom{n}{r}.
$$
Both inequalities follow from the fact that Corollary~\ref{cor:set} implies $r \leqslant \frac{n}{2}.$

If $\{ i \mid i>r\} \subseteq U$, then an upper bound on the eigenvalue of $\mu_n( S_U )$ is achieved by first summing all $p_i(n)$ where $i>r$. Then, this sum is added to the sum of all $p_i(n)$, with $i \leqslant r$, of the same sign. Using Equation~(\ref{eq:partial}), we have
\begin{align*}
  | \mu_n( S_U ) | &\leqslant  |\sum_{i>r}p_i(n)| + |\sum_{\stackrel{i \leqslant r}{i \ \textrm{is even} }}p_i(n)|\\
 &\leqslant | -\sum_{i\leqslant r} p_i(n) | + \frac{r}{2} \binom{n}{r}
 = \left| \sum_{i\leqslant r} (-1)^i\binom{n}{i} \right| + \frac{r}{2} \binom{n}{r}
 =  \left| (-1)^r\binom{n-1}{r} \right| + \frac{r}{2} \binom{n}{r}\\
 &\leqslant n\binom{n-1}{r}.
\end{align*}

Consequently, under the given hypothesis, $|\mu_n(S_U T_V) | = |\mu_n(S_U) \ \mu_n(T_V) | \leqslant \frac{nr}{2}\binom{n}{r}\binom{n-1}{r}$.  

We get a lower bound on $p_r(1)$ by simplifying the final factor in Equation~(\ref{eq:Hammingeigenv1}) for $i=r$ using the fact that $r \leqslant \frac{n}{2}$ and $q>n$:
$$
\frac{(n-r)}{r}(q-1)-1  \geqslant   \frac{(n-r)}{r}(n-1)-1  \geqslant  2(n-r)-2  \geqslant   2(n-r-1).
$$

To prove the statement, we show the ratio of $(n-2)^2p_r(1)$ and $| \mu_n(S_U T_V)|$ is at least $1$:
\begin{align*}
\frac{(n-2)^2p_r(1)}{| \mu_n(S_U T_V) | }
&\geqslant
\frac{(n-2)^2(q-1)^{r-1}\binom{n-1}{r-1} \left( (\frac{n-r}{r})(q-1)-1 \right)}{ \frac{nr}{2} \binom{n}{r} \binom{n-1}{r}} \\
&\geqslant 
\frac{2 \ (n-2)^2(q-1)^{r-1}\binom{n-1}{r-1} (n-r-1)  }{ \frac{nr}{2}(\frac{n}{r})\binom{n-1}{r-1}\binom{n-1}{r}} \\
& \geqslant 
\frac{4  (n-2)^2(q-1)^{r-1}  (n-r-1)  \ r! }{n^2(n-1) \cdots (n-r)} >1.
 \end{align*}

In all cases, we have $| \mu_n(S_U) \mu_n(T_V) | < (n-2)^2| p_r(1) |$.
\end{proof}

\begin{thm} \label{thm:Hamminnon}
If $q>n$ and $n\geqslant 5$, then $H( n, q)$ does not admit a $\lambda$-fold near-factorization. 
\end{thm}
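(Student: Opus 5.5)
Suppose for contradiction that $(S_U,T_V)$ is a nontrivial $\lambda$-fold near-factorization of $H(n,q)$ with $q>n\geqslant 5$. By Theorem~\ref{thm:UV} (the scheme is symmetric), $U\cap V=\varnothing$, and nontriviality gives $U,V\neq\{0\}$, so $(U,V)$ is admissible. By Theorem~\ref{thm:char2}, $\mu_1(S_U)\mu_1(T_V)=\mu_n(S_U)\mu_n(T_V)=-\lambda$. Since $\mu_1(S_UT_V)<0$ and $A_n$ is the only class with negative eigenvalue on the first eigenspace, $n$ lies in exactly one of $U,V$; we may assume $n\in U$, so $\mu_1(S_U)<0<\mu_1(T_V)$.

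The plan is to show $|\mu_1(S_U)\mu_1(T_V)|>|\mu_n(S_U)\mu_n(T_V)|$. Let $r=\max\{j : j\notin U\}$; since $U\neq\{0,\dots,n\}$ (because $V$ is nonempty and disjoint from $U$), $r$ exists and $\{i : i>r\}\subseteq U$. Because the first-eigenspace eigenvalues sum to $0$, we have $|\mu_1(S_U)|=\sum_{i\notin U}p_i(1)-\sum_{i\in U,\,i<n}p_i(1)$ adjusted appropriately. More directly, $|\mu_1(S_U)|=\big|\sum_{i\notin U}p_i(1)\big|\geqslant p_r(1)$, since all $p_i(1)$ with $i<n$ are positive. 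Also $\mu_1(T_V)\geqslant \min_{1\le j<n}p_j(1)$: $V$ contains some $j\neq 0$ (if $V=\{0\}$ the pair is not admissible; and $0\in V$ alone contributes $1$), so $\mu_1(T_V)\geqslant p_j(1)\geqslant (n-2)^2$ by Lemma~\ref{lem:n2} when $j\le n/2$. To handle $j>n/2$, I would first treat $q\geqslant 17$: there Corollary~\ref{cor:set} forces every $i\geqslant n/2$ into $U$, so $r<n/2$ and $V\subseteq\{0,\dots,r\}$, which gives $\mu_1(T_V)\geqslant (n-2)^2$ as well. Hence $|\mu_1(S_UT_V)|\geqslant (n-2)^2p_r(1)$.

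When $r\neq 1$, Proposition~\ref{prop:uppern} gives the strict bound $|\mu_n(S_UT_V)|<(n-2)^2p_r(1)$, which yields the contradiction. The case $r=1$ means $U\supseteq\{2,\dots,n\}$, so $V\subseteq\{0,1\}$. In this case I would compute directly: $\mu_n(T_V)\in\{1-n,-n\}$, so $|\mu_n(T_V)|\leqslant n$; and since $\mu_n(S_U)=-\sum_{i\notin U}p_i(n)$, we get $|\mu_n(S_U)|\le n+1$. On the other hand, $|\mu_1(S_U)|\geqslant p_1(1)=(n-1)(q-1)-1$ and $\mu_1(T_V)\geqslant p_1(1)$, giving a product of order $n^2q^2$, which exceeds $n(n+1)$. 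This contradicts equality.

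The main obstacle is the range $n<q<17$ (so $6\le q\le 16$ with $n<q$), where Lemma~\ref{lem:lower1} and Corollary~\ref{cor:set} are unavailable. There I would try to redo Lemma~\ref{lem:lower1} with the sharper bound $(q-1)^{i-1}\binom{n-1}{i-1}$, using $q-1\geqslant n$ and $\binom{n-1}{i-1}$ large for $i\approx n/2$, to still obtain $p_i(1)>2^{2(n-1)}$ for $n/2\le i<n$. Failing that, since only finitely many pairs $(n,q)$ remain, I would fall back on a finite verification of the conditions of Theorem~\ref{thm:char2} over all admissible pairs.
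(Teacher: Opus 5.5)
Your proposal follows the paper's proof essentially step for step. You reduce to an admissible pair with $n\in U$ and set $r=\max\{j\notin U\}$. For $q\geqslant 17$ and $r\geqslant 2$ you use Corollary~\ref{cor:set}, Lemma~\ref{lem:n2} and Proposition~\ref{prop:uppern}. You handle $r=1$ by a direct estimate, and you close the range $n<q<17$ with a finite computer check, exactly as the paper does. Your $r=1$ bound $|\mu_n(S_UT_V)|\leqslant n(n+1)$ is in fact more accurate than the paper's $\frac{n(n-1)}{2}$, which fails for $U=\{0,2,3,\dots,n\}$, $V=\{1\}$.

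Your first idea for $n<q<17$ (sharpening Lemma~\ref{lem:lower1} to still give $p_i(1)>2^{2(n-1)}$) would not work as stated. For $(n,q)=(5,6)$ one has $p_4(1)=(q-1)^{n-2}(q-n)=125<2^{8}$, so in that range it really is the finite verification that closes the argument.
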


\begin{proof}
With the aid of a computer, one can verify that $\mu_1(S_U T_V) \neq \mu_n(S_U T_V)$ for all admissible pairs over all $(q,n)$ such that  $q<17$ and $q > n \geqslant 5$. 

We will assume $\mu_n(S_U)<0$. If $q>17$, then Corollary~\ref{cor:set} implies that, if $(S_U,T_V)$ is a $\lambda$-fold near-factorization of $H(n, q)$, then $\{\frac{n}{2}, \frac{n}{2}+1, \ldots, n\} \subseteq U$. Since $U$ and $V$ are disjoint, and $|V|\geqslant 2$, then $j \in V$ for some $j \in \{1,2,\ldots, \frac{n}{2}\}$. Therefore, by Lemma~\ref{lem:n2}, 
$|\mu_1(T_V)|\geqslant (n-2)^2$.  

Let $r=\max\{i \mid i \not \in U\}$ and $r \neq 1$. By Proposition~\ref{prop:uppern}, if $\{ i \mid i> r\}\subseteq U$ and $1<r \leqslant \frac{n}{2}$, then $|\mu_n(S_U)\mu_n(T_V)|< (n-2)^2 p_r(1)$. By Corollary~\ref{cor:lower1}, $p_r(1)<|\mu_1(S_U)|$ and thus, since $|\mu_1(T_V)| \geqslant (n-2)^2$, $|\mu_1(S_U T_V)|>(n-2)^2|\mu_n(S_U)|\geqslant |\mu_n(S_UT_V)|$. Therefore, $\mu_1(S_U T_V)\neq \mu_n(S_U T_V)$, a contradiction. Thus, $r \in U$ for each $1 < r \leqslant \frac{n}{2}$. As a result,  $S_U=J-I$ and $T_V=I$ which means that $(U,V)$ is not admissible.

Finally, consider the case $\max\{i \mid i \not \in U\}=1$. Here $| \mu_n(S_U) \mu_n(T_V) |\leqslant \frac{n(n-1)}{2}$ and $p_1(1)=(n-1)(q-1)-1$ and thus, $(U, V)$ is also not a near-factorization. 
\end{proof}


\section{Near-factorizations in folded and halved cubes}\label{sec:FFHamming}

In this section, we consider the existence of near-factorizations in two association schemes related to $H(n,2)$, namely the \emph{folded cube} and the \emph{halved cube}. Both schemes can be defined relative to the $H(n,2)$. 

\subsection{The folded cube}
\label{subsec:FoldedCube}

In Subsection~\ref{subsec:Hamming2}, it was shown that, when $n$ is even, the scheme $H(n,2)$ admits a near-factorization. However, when $n$ is odd, we conjecture that $H(n,2)$ does not admit a near-factorization. This conjecture has been verified for all odd $n$ such that $n \leqslant 15$. When $n$ is odd, we can, however, establish the existence of a near-factorization in the $n$-folded cube association scheme which is is an association scheme derived from $H(n,2)$. 

We rely on the definition of the folded Hamming scheme given in~\cite[Section 9.2 D]{BCN}.  First, we define the set of points of the folded Hamming scheme. The \emph{complement} of a point $v$ of $H(n,2)$ to be the vertex formed by switching the ones and zeros in $v$. The pair $v$ and $v^c$ form a pair of \emph{antipodal points}. 

\begin{defn}
Let $V$ be the set of words of length $n$ over an alphabet of order $2$ and let $H(n,2)$ be the Hamming scheme over $V$. The \emph{$n$-folded cube} association scheme, $H'(n,2)=\{A_0, A_1, A_2, \ldots, $ $ A_{\lfloor\frac{n}{2}\rfloor}\}$, is the association scheme on set of points $\{\{ v, v^c \} \mid v \in V \} $. Two points $\{v, v^c\}$ and $\{w, w^c\}$ are $i$-related, with $i \in \{0, 1,2,\ldots,\lfloor \frac{n}{2} \rfloor \}$, if and only if $v$ is $i$-related to $w$ or $w^c$ in $H(n,2)$. 
\end{defn}

The spectrum of the folded cube association scheme can be computed from the spectrum of the Hamming scheme by simply summing the eigenvalues of $i^{th}$ and $(n-i)^{th}$ matrices on even eigenspaces. This statement implies the following lemma. 

\begin{lem}
Let $j \in \{ 0, \ldots, \frac{n-1}{2}\}$ and $n$ be odd. The eigenvalue of $A_i$ on the $j^{th}$-eigenspace of the folded cube association scheme is
$$
\mu_{j}(i) = \frac12	
\left( p_i(2j) + p_{n-i}(2j) \right) = 
	\frac12 \sum_{r=0}^{2j} (-1)^r 
	\left( \binom{n-2j}{i-r} + \binom{n-2j}{n-i-r} \right) 
	 \binom{2j}{r}.
$$

\end{lem}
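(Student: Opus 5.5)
The plan is to realize the folded cube as a quotient of $H(n,2)$ and read its eigenvalues off Theorem~\ref{thm:Kraw}. Write $\pi:V\to \overline V=\{\{v,v^c\}\}$ for the quotient map and let $P$ be the permutation matrix of complementation $v\mapsto v^c$ on $V$. Since complementation sends words at distance $i$ to words at distance $n-i$, we have $P=A_n^{H}$ and $P A_i^{H}=A_{n-i}^{H}$ in $H(n,2)$. Let $N$ be the $|V|\times|\overline V|$ incidence matrix of $\pi$. Two classes $\{v,v^c\},\{w,w^c\}$ are $i$-related in $H'(n,2)$ exactly when $w$ or $w^c$ is at distance $i$ from $v$. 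Because $n$ is odd, exactly one of $i$ and $n-i$ lies in $\{0,\dots,\frac{n-1}{2}\}$, so this happens for exactly one of the two representatives. Hence
\[
N A_i N^\top = A_i^{H}+A_{n-i}^{H} \quad\text{and}\quad N^\top N=2I.
\]

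Next I would show that the eigenspaces of $H'(n,2)$ are the lifts of the even Hamming eigenspaces. For a Hamming eigenvector $f$ in the $j$-th eigenspace, built from characters $\chi_a(x)=(-1)^{a\cdot x}$ with $|a|=j$, we have $f(v^c)=(-1)^{j}f(v)$. So $Pf=(-1)^j f$, and $f$ is constant on antipodal pairs precisely when $j$ is even. Such an $f$ equals $Ng$ for a function $g$ on $\overline V$. Then
\[
2A_i g = N^\top N A_i g = N^\top (A_i^{H}+A_{n-i}^{H}) N g = \bigl(p_i(2j)+p_{n-i}(2j)\bigr) N^\top f = 2\bigl(p_i(2j)+p_{n-i}(2j)\bigr) g.
\]
This already suggests the stated formula, but the normalisation needs care. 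The factor $\frac12$ in the statement must be reconciled with the computation just done, which gives eigenvalue $p_i(2j)+p_{n-i}(2j)$ without any $\tfrac12$.

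I would settle this first, and I expect it to be the main obstacle. A check at $j=0$ gives $p_i(0)+p_{n-i}(0)=2\binom{n}{i}$, while the valency of $A_i$ in $H'(n,2)$ is $\binom{n}{i}$. So the correct eigenvalue is $\frac12$ of the sum only if the lemma's formula is meant to divide out this double count. Equivalently, the relation is set up so that the sum of $\binom{n-2j}{i-r}$ and $\binom{n-2j}{n-i-r}$ pairs each word with its complement twice. I would also recheck the incidence computation $NA_iN^\top$ against this: each pair $\{w,w^c\}$ contributes its entries twice, once from $w$ and once from $w^c$. I would then state precisely which normalisation makes the $j=0$ eigenvalue equal to the valency, and adopt it.

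Once this is fixed, the final step is to note that the lifted spaces, for $j=0,\dots,\frac{n-1}{2}$, have dimensions $\binom{n}{2j}$. These sum to $2^{n-1}=|\overline V|$, so they exhaust the eigenspaces of the folded scheme. Finally, I would substitute Theorem~\ref{thm:Kraw} with $q=2$, which expresses $p_i(2j)$ and $p_{n-i}(2j)$ as the displayed binomial sums, giving the explicit formula.
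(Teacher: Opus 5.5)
Your strategy is the right one, and it is what the paper's one-line justification has in mind. You realize $H'(n,2)$ as the quotient of $H(n,2)$ by complementation, identify functions on $\overline V$ with the even Hamming eigenspaces via $N$, and combine the eigenvalues of $A_i^H$ and $A_{n-i}^H$. However, the central computation as you wrote it is wrong, and the proposal does not repair it.

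The step $N^\top N A_i g = N^\top(A_i^H+A_{n-i}^H)Ng$ silently uses $NA_i=(A_i^H+A_{n-i}^H)N$, which is false. The two identities you proved, $NA_iN^\top=A_i^H+A_{n-i}^H$ and $N^\top N=2I$, give instead $(A_i^H+A_{n-i}^H)N=NA_iN^\top N=2NA_i$. Hence $N^\top(A_i^H+A_{n-i}^H)N=4A_i$, not $2A_i$. That lost factor is exactly why you arrive at $p_i(2j)+p_{n-i}(2j)$, which contradicts the valency at $j=0$.

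Your proposed remedy does not close the gap. You suggest rechecking $NA_iN^\top$, but that identity is correct. You then propose to adopt whichever normalisation makes the $j=0$ eigenvalue equal the valency. That is not an argument: the eigenvalue of a fixed $01$-matrix on a fixed eigenspace admits no choice of normalisation, and a check at $j=0$ cannot certify the value for $j\geq 1$.

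The fix is short. Applying $(A_i^H+A_{n-i}^H)N=2NA_i$ to $g$ gives $\bigl(p_i(2j)+p_{n-i}(2j)\bigr)Ng=2NA_ig$. Since $N^\top N=2I$ makes $N$ injective, $A_ig=\tfrac12\bigl(p_i(2j)+p_{n-i}(2j)\bigr)g$.

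Combinatorially, summing $f$ over all $w$ with $d(v,w)\in\{i,n-i\}$ visits each $i$-related antipodal pair twice, once via $w$ and once via $w^c$. The paper's phrase ``simply summing'' glosses over the same double count, but the $\tfrac12$ in the statement is correct. You can also note that $A_{n-i}^H=A_n^HA_i^H$, and $A_n^H$ acts as $(-1)^{2j}=1$ on the $2j$-th eigenspace, so the value is just $p_i(2j)$.

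With this correction, your dimension count $\sum_{j}\binom{n}{2j}=2^{n-1}$ and the substitution of Theorem~\ref{thm:Kraw} with $q=2$ complete the proof.
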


Below, we define the pairs of subsets of relations that give rise to a near-factorization in the $n$-folded cube association scheme. If $n \equiv 1 \pmod{4}$:
\[
U_1 = \{ i \ | \ i \equiv 0,1 \ (\textrm{mod}\ 4)\}; 
\quad
V_1=\{i \ | \ i \equiv 2,3\ (\textrm{mod}\ 4)\}.
\]
\noindent If $n \equiv 3 \pmod{4}$:
$$
U_2 = \{ i \ | \ i \equiv 0,3 \ (\textrm{mod}\ 4) \};
\quad 
V_2=\{i \ | \ i \equiv 1,2 \ (\textrm{mod}\ 4)\}.
$$
Matrices $S_1= S_{U_1}$, $T_1=T_{V_1}$, $S_2 = S_{U_2}$, and $T_2 = T_{V_2}$ are defined in the same way as in Section~\ref{sec:FFHamming}.

\begin{thm}
The folded cube association scheme admits a nontrivial $2^{n-3}$-fold $(2^{n-1}, 2^{n-1}-1)$ near-factorization realized by $S_1$ and $T_1$ when $n \equiv 1 \pmod{4}$, and by $S_2$ and $T_2$ when $n \equiv 3 \pmod{4}$.
\end{thm}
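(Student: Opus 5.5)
We would verify the statement with the eigenvalue criterion of Theorem~\ref{thm:char2}, computing the eigenvalues of the folded cube by lifting the relevant sums back to $H(n,2)$, where they can be evaluated with roots of unity. The order of the folded cube is $N=2^{n-1}$, so we must show two things. The valencies $s,t$ of $S_\ell,T_\ell$ should satisfy $st=2^{n-3}(2^{n-1}-1)$. On each nontrivial eigenspace $j\in\{1,\dots,\frac{n-1}{2}\}$, the product of the eigenvalues of $S_\ell$ and $T_\ell$ should equal $-2^{n-3}$.

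\emph{Step 1 (lifting).} A folded relation $i\leqslant\lfloor n/2\rfloor$ corresponds to the pair of Hamming relations $\{i,n-i\}$. If $n\equiv 1\pmod 4$, then $i\equiv 0,1$ implies $n-i\equiv 1,0\pmod 4$. If $n\equiv 3\pmod 4$, then $i\equiv 0,3$ implies $n-i\equiv 3,0\pmod 4$. Hence $U_\ell$ lifts to the complement-invariant set $\widehat U_\ell\subseteq\{0,\dots,n\}$ of all Hamming indices in the same residue classes, and similarly for $V_\ell$, with $\widehat U_\ell\cup\widehat V_\ell=\{0,\dots,n\}$ and $\widehat U_\ell\cap\widehat V_\ell=\varnothing$. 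By the preceding lemma, the eigenvalue of $S_\ell$ on the $j$-th folded eigenspace is $\frac12\sum_{i\in\widehat U_\ell}p_i(2j)$, with $p_i$ the Hamming eigenvalues in (\ref{equ:kratou}); likewise for $T_\ell$. We will check that this normalization agrees with the valency $\mu_0$, so that the valency is $\frac12\sum_{i\in\widehat U_\ell}\binom ni$.

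\emph{Step 2 (root-of-unity filter).} Write $\omega=\sqrt{-1}$. For a residue $r$ and an even index $m=2j$,
$$\sum_{i\equiv r\ (4)}p_i(m)=\frac14\sum_{k=0}^{3}\omega^{-rk}(1+\omega^k)^{n-m}(1-\omega^k)^{m}.$$
The $k=0$ term is $2^n\delta_{m,0}$. The $k=2$ term vanishes, because $(1-1)^{n-m}=0$ when $m<n$, and $m$ is even while $n$ is odd. The $k=1,3$ terms are complex conjugates, so the sum equals $\frac12\operatorname{Re}\bigl(\omega^{-r}(1+\omega)^{n-m}(1-\omega)^m\bigr)$ plus the $k=0$ term. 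Since $(1\pm\omega)=\sqrt2\,e^{\pm\omega\pi/4}$, the complex number inside has modulus $2^{n/2}$ and argument $(n-2m)\pi/4-r\pi/2$.

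\emph{Step 3 (summing residues).} Summing over the two residues in $\widehat U_\ell$ gives $2^{n/2}$ times a sum of two cosines whose arguments differ by $\pi/2$. This reduces to $\pm2^{(n-1)/2}$ times a single cosine at an odd multiple of $\pi/4$, so the magnitude is $\pm 2^{(n-1)/2}\cdot\frac{\sqrt2}{2}\cdot\ldots$, a power of two; concretely each folded eigenvalue should be $\pm2^{(n-3)/2}$ after the factor $\frac12$. For $\widehat V_\ell$ the residues are shifted by $2$, so its value is exactly the negative. Moreover, since $\sum_{i=0}^{n}p_i(m)=0$ for $m\neq0$, the eigenvalue of $T_\ell$ is the negative of that of $S_\ell$, so the product is $-2^{n-3}$. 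For $m=0$ the same computation gives valencies $2^{n-2}\pm 2^{(n-3)/2}$, and their product is $2^{n-3}(2^{n-1}-1)$ as required. Theorem~\ref{thm:char2} then finishes the proof; nontriviality follows because both valencies exceed $1$.

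The main obstacle is Step 3. We must track the argument $(n-2m)\pi/4-r\pi/2$ modulo $2\pi$ carefully enough that, for the residues chosen in each congruence class of $n$, the cosine sum never lands on a zero. A zero would make both eigenvalues vanish, and the product could not equal $-2^{n-3}$. We must also confirm that the factor-$\frac12$ normalization from the folding lemma gives the stated valency parameters.
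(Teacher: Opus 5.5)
Your strategy matches the paper's: lift the folded eigenvalue of $S_\ell$ to $\frac12\sum_{i\in\widehat U_\ell}p_i(2j)$ and apply Theorem~\ref{thm:char2}. The difference is in how the lifted sums are evaluated. The paper expands the Krawtchouk sum, shifts residues, applies Lemma~\ref{lem:sum0} to each inner sum of $\binom{n-2j}{k}$ over a residue class, and then evaluates a signed sum of the $\binom{2j}{r}$. This forces a split into the four cases $n-2j\equiv 1,3,5,7\pmod 8$, only one of which is written out. Your mod-$4$ root-of-unity filter on $(1+x)^{n-2j}(1-x)^{2j}$ instead gives the closed form $\sum_{i\equiv r\,(4)}p_i(2j)=2^{n-2}\delta_{j,0}+2^{n/2-1}\cos\phi_r$, with $\phi_r=\frac{(n-4j)\pi}{4}-\frac{r\pi}{2}$. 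This holds uniformly in $j$, covers the valencies, and needs no mod-$8$ bookkeeping. The paper's route only buys staying inside the binomial identities it already set up for $H(n,2)$.

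Step 3 as written is wrong and should be replaced. Merging the two cosines does not give a cosine at an odd multiple of $\pi/4$; indeed $2^{(n-1)/2}\cdot\frac{\sqrt2}{2}$ is irrational for odd $n$. It is each individual $\phi_r$ that is an odd multiple of $\pi/4$, since $n-4j$ is odd. So for $j\geq 1$ each residue sum is $\pm 2^{(n-3)/2}$, and your ``main obstacle'' is only whether the two residues of $\widehat U_\ell$ have the same sign.

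They always do, by your own Step 1. The map $i\mapsto n-i$ exchanges the two residue classes of $\widehat U_\ell$, and the generating function gives $p_{n-i}(m)=(-1)^m p_i(m)$. Hence for $m=2j$ the two residue sums are equal. Concretely, the angles are $(u-j)\pi\pm\frac{\pi}{4}$ when $n=4u+1$ and $(u-j)\pi\pm\frac{3\pi}{4}$ when $n=4u+3$. Therefore $\sum_{i\in\widehat U_\ell}p_i(2j)=\pm 2^{(n-1)/2}\neq 0$. The folded eigenvalues of $S_\ell$ and $T_\ell$ are $\pm 2^{(n-3)/2}$ and $\mp 2^{(n-3)/2}$, and their product is $-2^{n-3}$, as needed.

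Your valencies $2^{n-2}\pm 2^{(n-3)/2}$ are the correct ones. The $(s,t)$ printed in the statement cannot be right, since $s=2^{n-1}$ is the order of the scheme. However, these valencies exceed $1$ only for $n\geq 5$. For $n=3$ you get $S_2=I$ and $T_2=J-I$, which is trivial, so your nontriviality claim needs $n\geq 5$.
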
   

\begin{proof}
Let $n \equiv 1 \pmod{4}$, then $i \in U_1$ if and only if $n-i \in U_i$; likewise for $V_i$. Similarly, if $n  \equiv 3 \pmod{4}$, then the same statements hold for $U_2$ and $V_2$. Below, we consider the case $n \equiv 1 \pmod{4}$ (the proof for $n \equiv 3 \pmod{4}$ is identical, so it is omitted).

Recall that $i \leqslant \frac{n}{2}$. Therefore,
\begin{align*}
\mu_j(S_1)=\sum_{i \equiv 0,1 \Mod{4} ) } \mu_{j}(i) 
&=
 \sum_{\stackrel{ i \equiv 0,1 \Mod{4} )}{i \leqslant n/2} }\left( \frac12 \sum_{r=0}^{2j} 
     (-1)^r   \left( \binom{n-2j}{i-r} + \binom{n-2j}{n-i-r} \right)   \binom{2j}{r} \right)   \\
&=
\frac12  \sum_{ i \equiv 0,1\Mod{4} )} \mbox{}  \sum_{r=0}^{2j}  (-1)^r   \binom{n-2j}{i-r}    \binom{2j}{r}   
\end{align*}
and, similarly,
\begin{align*}
\mu_j( T_1)=\sum_{i \equiv 2,3 \Mod{4} ) } \mu_{j}(i) 
&=
\frac12  \sum_{ i \equiv 2,3\Mod{4} ) }  \mbox{}  \sum_{r=0}^{2j}  (-1)^r   \binom{n-2j}{i-r}    \binom{2j}{r}  .
\end{align*}
By Lemma~\ref{lem:sum0}, when $j=0$,
\begin{align*}
\mu_0( S_1 )\mu_0( T_1 )&=\frac14  \left(\sum_{i \equiv 0,1 \Mod{4} } \binom{n}{i} \right)
 \left( \sum_{i \equiv 2,3 \Mod{4} ) }  \binom{n}{i} \right)
\\ 
& = 2^{ n-3 } (2^{n-1} - 1) .   
\end{align*}
Next, we compute the product of the eigenvalues on the $j$-eigenspace for $j>0$, still with the assumption that $n \equiv 1 \pmod{4}$:
 \begin{align}  
 \label{eq:prod}\mu_j(S_1 T_1) &=
   \frac14  
\left(  \sum_{r=0}^{2j}  (-1)^r \binom{2j}{r}  \sum_{ i \equiv 0,1 \Mod{4} }  \binom{n-2j}{i-r} \right)    
\left(  \sum_{r=0}^{2j}  (-1)^r \binom{2j}{r} \sum_{ i \equiv 2,3 \Mod{4} }  \binom{n-2j}{i-r}  \right).
 \end{align}

Our next step is to simplify each term in Equation~(\ref{eq:prod}). To do so, one must consider four cases, one for each $n-2j \equiv 1,3,5,7 \pmod{8}$. Below, we shall assume that $n-2j \equiv 1 \pmod{8}$.  If $n \equiv 1, 5 \pmod{8}$, then $j$ is always even. Therefore
 \begin{align}
\sum_{r=0}^{2j} \left( (-1)^r \binom{2j}{r} \sum_{ i \equiv 0,1 \Mod{4}}  \binom{n-2j}{i-r} \right) 
&=
\sum_{r=0}^{2j} \left( (-1)^r   \binom{2j}{r}  \sum_{ k \equiv 4-r,1-r \ (\textrm{mod}\ 4)}     \binom{n-2j}{k} \right) \\ \label{eq:step1}
&=2^{n-2j-1}  \sum_{r= 0}^{2j}   (-1)^r \binom{2j}{r} + 2^{\frac{n-2j-1}{2}} \sum_{r= 0}^{2j}  (-1)^r \binom{2j}{2r} \\ \label{eq:step2}
&=(-1)^{j/2 }  2^{(n-1)/2}.\nonumber
\end{align}
(To go from (eq:step1) to (eq:step2) first expand the second summation using Lemma~\ref{lem:sum0} and evaluate the trigonometric functions.)
Similarly, we expand the second term ({eq:prod}) to get
 \begin{align*} 
  \sum_{r=0}^{2j}  (-1)^r \binom{2j}{r}  \sum_{ i \equiv 2,3 \Mod{4} }  \binom{n-2j}{i-r}
  & = - 2^{(n-2j-1)/2} \sum_{r= 0}^{j}  (-1)^r \binom{2j}{2r} = (-1)^{j/2+1}  2^{(n-1)/2}.
 \end{align*}
Now we can easily compute the product of the two terms in Equation~(\ref{eq:prod}), which is $\mu_j(X_{S_1}X_{T_1})=-2^{n-3}$.

A similar reasoning applies to $n-2j \equiv 3,5,7 \pmod{8}$.
\end{proof}


\subsection{The halved cube}
\label{subsec:HalvedCube}

In the Hamming scheme $H(n,2)$ the matrix $A_i$ is the adjacency matrix of the graph $H(n,2,i)$ in which two words are adjacent if they differ in exactly $i$ positions. If $i$ is even, then this graph is not connected---one connected component contains all the sequences with even weight, and the other contains all with odd weight. The component of $H(n,2,2)$ containing all sequences with even weight is called the \emph{halved cube graph}, it is denoted $\frac12Q_n$. The halved cube is a distance-regular graph (this follows from the fact that it is the bipartite half of a distance-regular graph). Consequently, the distance graphs of $\frac12Q_n$ form an association scheme, $\{A_0,A_1, \dots, A_{\lfloor \frac{n}{2} \rfloor} \}$---we call this the \emph{halved cube association scheme}, and each $A_i$ is the adjacency matrix of the component of $H(n, 2, 2i)$ in which all the sequences have even weight. For more about halved cube graphs, see \cite[Section 9.2 D]{BCN}.

\begin{cor}\label{cor:Kraw2}
The eigenvalues of the $A_i$ in the halved cube association scheme are
\[ 
	p_i(j)=\sum_{r=0}^j (-1)^r \binom{n-j}{2i-r}\binom{j}{r},\quad j \in \{0, \dots, \lfloor \frac{n+1}{2} \rfloor \}.
\]
\end{cor}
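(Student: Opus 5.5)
The plan is to realize the halved cube scheme as a fusion-plus-restriction of $H(n,2)$, and then read its eigenvalues off the Krawtchouk eigenvalues of Theorem~\ref{thm:Kraw} with $q=2$.

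\textbf{Step 1 (the matrices).} Order the words of $Q^n$ so that the even-weight words come first. For every even $2i$, the graph $H(n,2,2i)$ never joins words of different parity. Hence the Hamming class $A_{2i}$ of $H(n,2)$ is block diagonal, $A_{2i}=\mathrm{diag}(A_i^{e},A_i^{o})$. Here $A_i^{e}$ is exactly the class $A_i$ of the halved cube scheme, because two even words lie at distance $i$ in $\frac12Q_n$ precisely when they differ in $2i$ coordinates.

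\textbf{Step 2 (the eigenvectors).} Use the characters of $\mathbb{Z}_2^n$: for $a\in\mathbb{Z}_2^n$ of weight $j$, set $\chi_a(x)=(-1)^{a\cdot x}$. By the standard derivation behind Theorem~\ref{thm:Kraw}, $\chi_a$ is an eigenvector of $A_{2i}$ with eigenvalue $p_{2i}(j)=\sum_{r}(-1)^r\binom{n-j}{2i-r}\binom{j}{r}$. Since $A_{2i}$ preserves the support splitting, the restriction of $\chi_a$ to the even-weight words is an eigenvector of $A_i^{e}$ with the same eigenvalue. The restriction is nonzero, as it takes the value $1$ at $0$.

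\textbf{Step 3 (spanning and the index range).} Let $\mathbf{1}$ be the all-ones vector. On even words, $\chi_a$ and $\chi_{a+\mathbf{1}}$ coincide, because $\mathbf{1}\cdot x=0$ for even $x$. So the restrictions are indexed by cosets $\{a,a+\mathbf{1}\}$, and there are $2^{n-1}$ of them. These are exactly the characters of the subgroup of even words, so they are linearly independent and form a basis of $\mathbb{C}^{2^{n-1}}$. Since $a+\mathbf{1}$ has weight $n-j$, every eigenvalue of $A_i$ already arises from some $j\le \lfloor n/2\rfloor$; one can choose the coset representative of smaller weight. As a consistency check, $p_{2i}(n-j)=\pm p_{2i}(j)$ by the usual Krawtchouk symmetry, and the sign is $+$ for even $2i$. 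This symmetry is the small point to verify, and it comes from substituting $r\mapsto j'-r$ in the sum.

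\textbf{Main obstacle (the stated range).} The statement allows $j$ up to $\lfloor\frac{n+1}{2}\rfloor$. For odd $n$ this includes $j=\frac{n+1}{2}$, which by the symmetry just duplicates $j=\frac{n-1}{2}$, so it adds no new eigenvalues. Likewise for even $n$ the value $j=n/2$ is self-paired. With this, the list of eigenvalues given by the formula for $j\in\{0,\dots,\lfloor\frac{n+1}{2}\rfloor\}$ is exactly the list of eigenvalues of $A_i$, which completes the proof.
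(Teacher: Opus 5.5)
Your proposal is correct and follows the route the paper intends: the halved-cube class $A_i$ is the even-weight diagonal block of the Hamming class $A_{2i}$ of $H(n,2)$, so its eigenvalues are the Krawtchouk values $p_{2i}(j)$ of Theorem~\ref{thm:Kraw} with $q=2$. Your character-restriction argument makes explicit what the paper leaves unstated, namely the spanning, the redundancy $j\leftrightarrow n-j$, and the extra index $j=\frac{n+1}{2}$ for odd $n$. One small correction: the substitution that gives $p_{2i}(n-j)=p_{2i}(j)$ is $r\mapsto 2i-r$, but you do not need it, because $\chi_a$ and $\chi_{a+\mathbf{1}}$ restrict to the same nonzero vector on even-weight words and so must have the same eigenvalue.
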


For $n$ odd, the halved cube scheme has a near-factorization that is similar to the near-factorization for $H(n,2)$ with $n$ even. In this case, we let $U= \{i \mid i\equiv  0 \pmod{4} \}$ and $V= \{i \mid i\equiv  2 \pmod{4} \}$ and $S_U$ and $T_V$ be the two corresponding  matrices. In Theorem~\ref{thm:hfcb}, we show that $(S, T)$ is a near-factorization of the half cube association scheme. To prove this, we need a formula for the eigenvalues similar to the ones in Section~\ref{subsec:Hamming2}. See Appendix \ref{app:calc} for computations.

\begin{restatable}{lem}{foldedEvalues}
\label{lem:foldedEvalues}
Let $n$ be odd. For $j \in \{1, \ldots, \frac{n+1}{2}\}$, $\mu_j(S_U)= \pm 2^{ (n-3)/2 }$, and $\mu_j(T_V) =\mp 2^{ (n-3)/2 }.$
\end{restatable}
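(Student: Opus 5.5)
The plan is to rewrite the eigenvalues of $S_U$ and $T_V$ as Krawtchouk sums of $H(n,2)$ and then evaluate them with a roots-of-unity filter. The formula for $p_i(j)$ in Corollary~\ref{cor:Kraw2} is exactly the $H(n,2)$ Krawtchouk value $K_{2i}(j)$, that is, the coefficient of $x^{2i}$ in $(1+x)^{n-j}(1-x)^j$. I will read the index sets as selecting Hamming distances $2i\equiv 0 \pmod 4$ for $S_U$ and $2i\equiv 2\pmod 4$ for $T_V$. This is the reading under which the stated values come out, and it is the direct analogue of the mod-$4$ partition used for $H(n,2)$ in Subsection~\ref{subsec:Hamming2}. (If $i\equiv 0,2 \pmod 4$ is meant literally in the halved-cube index, i.e.\ distances $\equiv 0,4\pmod 8$, the computation below gives $\mu_j(S_U)+\mu_j(T_V)=\pm 2^{(n-3)/2}\neq 0$, so the statement would fail under that reading.) Then
\[
\mu_j(S_U)=\sum_{k\equiv 0 \ (\mathrm{mod}\ 4)} [x^k](1+x)^{n-j}(1-x)^j,\qquad
\mu_j(T_V)=\sum_{k\equiv 2 \ (\mathrm{mod}\ 4)} [x^k](1+x)^{n-j}(1-x)^j .
\]
Writing $f(x)=(1+x)^{n-j}(1-x)^j$, I will evaluate these with the filter $\sum_{k\equiv a\ (4)}[x^k]f=\frac14\sum_{\omega^4=1}\omega^{-a}f(\omega)$, in place of the trigonometric Lemma~\ref{lem:sum0}.

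Next I will dispose of the real roots. For $1\le j\le \frac{n+1}{2}$ we have $f(1)=0$ because $j\ge1$. We also have $f(-1)=0$ because $n-j\ge \frac{n-1}{2}\ge 1$ for odd $n\ge3$. Only $\omega=\pm i$ survive. Since $1+i=\sqrt2\,e^{i\pi/4}$ and $1-i=\sqrt2\,e^{-i\pi/4}$, we get $f(i)=2^{n/2}e^{i\pi(n-2j)/4}$, and $f(-i)=\overline{f(i)}$. Hence
\[
\mu_j(S_U)=\tfrac14\big(f(i)+f(-i)\big)=2^{n/2-1}\cos\!\Big(\tfrac{(n-2j)\pi}{4}\Big),
\]
\[
\mu_j(T_V)=\tfrac14\big(i^{-2}f(i)+(-i)^{-2}f(-i)\big)=-\mu_j(S_U).
\]

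Finally, $n$ odd forces $n-2j$ to be odd, so the cosine equals $\pm\frac{\sqrt2}{2}$. This gives $\mu_j(S_U)=\pm 2^{n/2-1}\cdot 2^{-1/2}=\pm 2^{(n-3)/2}$ and $\mu_j(T_V)=\mp 2^{(n-3)/2}$. If desired, the sign is determined by $n-2j \bmod 8$: it is $+$ for $n-2j\equiv \pm1 \pmod 8$ and $-$ otherwise.

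The only real obstacle is bookkeeping. First, one must confirm that the index sets select exactly the Hamming distances $\equiv 0$ and $\equiv 2 \pmod 4$, as assumed above. Second, one must check that the real-root terms $f(\pm1)$ vanish over the whole range $1\le j\le\frac{n+1}{2}$; the endpoint $j=\frac{n+1}{2}$ is the case that needs $n\ge 3$. Once these are settled, the computation is two lines.
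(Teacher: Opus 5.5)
Your proof is correct. Your reading of $U$ and $V$ (Hamming distances $2i\equiv 0$, resp.\ $2\pmod 4$, i.e.\ $i$ even, resp.\ odd) is the one the paper's arguments actually use. Its proof of this lemma sums $\binom{n-j}{i-r}$ over Hamming distances $i\equiv 0\pmod 4$. The valency computation in Theorem~\ref{thm:hfcb} pairs $\sum_{i\text{ even}}\binom{n}{2i}$ with $\sum_{i\text{ odd}}\binom{n}{2i}$.

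Your route differs from the paper's. The paper proceeds as follows:
\begin{itemize}
\item It expands termwise in $r$ and rewrites $\mu_j(S_U)$ as $\sum_r(-1)^r\binom{j}{r}\sum_{k\equiv -r\ (\mathrm{mod}\ 4)}\binom{n-j}{k}$.
\item It substitutes the trigonometric closed forms of Lemma~\ref{lem:sum0} for the inner sums, so only the factor $(1+x)^{n-j}$ is filtered.
\item The $2^{n-j-2}$ main terms cancel via $\sum_r(-1)^r\binom{j}{r}=0$.
\item The remaining signed mod-$4$ sums of $\binom{j}{r}$ are evaluated in the manner of Lemma~\ref{lem:sums2}, with a case split on residues mod $8$.
\end{itemize}
You instead filter the whole product $(1+x)^{n-j}(1-x)^j$ at once. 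Then $f(\pm 1)=0$ does the job of the main-term cancellation, and the single value $f(i)=2^{n/2}e^{i\pi(n-2j)/4}$ replaces the case analysis. The relation $\mu_j(T_V)=-\mu_j(S_U)$ drops out of the same filter.

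The paper's version buys parallelism with Lemma~\ref{lem:jeig} and reuses lemmas already proved. Yours is shorter, shows exactly where $j\ge 1$ and $n\ge 3$ enter, and gets the sign right. Your formula shows the sign is governed by $n-2j \bmod 8$, not by $n-j$ alone. The paper's closing case display only covers $n-j$ even, and it assigns $-2^{(n-3)/2}$ when $n-j\equiv 4\pmod 8$. Yet for $n=7$, $j=3$ one gets $\mu_3(S_U)=p_0(3)+p_2(3)=1+3=4>0$, as $\cos(\pi/4)>0$ predicts. This does not affect the lemma, which only asserts $\pm 2^{(n-3)/2}$, nor its use in Theorem~\ref{thm:hfcb}.
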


\begin{thm}
\label{thm:hfcb}
If $n$ is odd, then the halved cube association scheme admits a $2^{n-3}$-fold 
$( 2^{n-2} -   2^{\frac{n-3}{2} } , \, 2^{n-2}+  2^{\frac{n-3}{2} } )$
near-factorization realized by $(S_U,T_V)$.
\end{thm}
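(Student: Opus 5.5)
We will verify the two conditions of Theorem~\ref{thm:char2} with $U=\{i \mid i\equiv 0 \pmod 4\}$ and $V=\{i \mid i\equiv 2 \pmod 4\}$, viewed as sets of halved-cube relations (so $A_i$ corresponds to Hamming distance $2i$). The order of the scheme is $N=2^{n-1}$. The identity relation $0$ lies in $U$, and $U\cap V=\varnothing$. The target is therefore
\[
\mu_0(S_U)\mu_0(T_V)=2^{n-3}(2^{n-1}-1), \qquad \mu_j(S_U)\mu_j(T_V)=-2^{n-3}\ \text{ for all } j\ge 1.
\]

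\textbf{Valencies.} The valency of $A_i$ is $\binom{n}{2i}$ (set $j=0$ in Corollary~\ref{cor:Kraw2}). Hence
\[
\mu_0(S_U)=\sum_{m\equiv 0 \pmod 8}\binom{n}{m}, \qquad \mu_0(T_V)=\sum_{m\equiv 4 \pmod 8}\binom{n}{m}.
\]
Their sum is $\sum_{m\equiv 0 \pmod 4}\binom{n}{m}$, which Lemma~\ref{lem:sum0} evaluates as $2^{n-2}+2^{n/2-1}\cos(n\pi/4)$. Since $n$ is odd, this equals $2^{n-2}\pm 2^{(n-3)/2}$. For the difference I would use an eighth-root-of-unity filter. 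Let $\omega=e^{2\pi i/8}$. Then
\[
\sum_{m\equiv 0 \pmod 8}\binom{n}{m}-\sum_{m\equiv 4 \pmod 8}\binom{n}{m}=\frac14\sum_{k\ \mathrm{odd}}(1+\omega^k)^n=\frac12\,\mathrm{Re}\big[(1+\omega)^n+(1+\omega^3)^n\big].
\]
Its magnitude is of order $(2+\sqrt2)^{n/2}$, which is far from $\pm 2^{(n-3)/2}$.

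At this point a check is needed. Rather than the difference, I would compute the product $st$ directly and compare it with $2^{n-3}(2^{n-1}-1)$, noting that the valencies stated in the theorem are exactly $2^{n-2}\mp 2^{(n-3)/2}$. This requires $s+t=2^{n-1}=N$. In particular, the relations not lying in $U\cup V$ must contribute nothing.

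This exposes an issue. $U\cup V$ consists only of the even indices $i$, so the odd indices $i$ (Hamming distances $\equiv 2 \pmod 4$) are missing. The sums $\sum_{m\equiv 0 \pmod 8}$ and $\sum_{m\equiv 4 \pmod 8}$ therefore do not total $2^{n-1}$. I will thus read the statement's $U,V$ as conditions on the Hamming distance $2i$, in analogy with Section~\ref{subsec:Hamming2}. Concretely, I take $U=\{i \mid 2i\equiv 0,\,?\}$, chosen so that $U$ and $V$ together cover all relations and the valency sums become $\sum_{m\equiv 0 \pmod 4}\binom nm$ and $\sum_{m\equiv 2 \pmod 4}\binom nm$. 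Lemma~\ref{lem:sum0} then gives these as $2^{n-2}\pm 2^{(n-3)/2}$ and $2^{n-2}\mp 2^{(n-3)/2}$, with sign determined by $\cos(n\pi/4)$. Their product is $2^{2n-4}-2^{n-3}=2^{n-3}(2^{n-1}-1)$, as required.

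\textbf{Nontrivial eigenspaces.} For $j\ge 1$, Lemma~\ref{lem:foldedEvalues} gives $\mu_j(S_U)=\pm 2^{(n-3)/2}$ and $\mu_j(T_V)=\mp 2^{(n-3)/2}$ with opposite signs. Their product is $-2^{n-3}$ for every $j\ge1$. Condition~(2) of Theorem~\ref{thm:char2} therefore holds with $\lambda=2^{n-3}$.

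\textbf{Conclusion and main obstacle.} Together with the valency computation, Theorem~\ref{thm:char2} yields the near-factorization. It is nontrivial because both valencies exceed $1$ for $n\ge 5$. The main obstacle is the valency bookkeeping, namely pinning down exactly which relations belong to $U$ and $V$ so that the valencies match $2^{n-2}\mp 2^{(n-3)/2}$. Once that indexing is fixed consistently with Lemma~\ref{lem:foldedEvalues}, both conditions follow from Lemma~\ref{lem:sum0} and Lemma~\ref{lem:foldedEvalues}.
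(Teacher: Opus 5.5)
Your argument is correct and is essentially the paper's proof: the valency product $2^{n-3}(2^{n-1}-1)$ comes from Lemma~\ref{lem:sum0} using $\cos(n\pi/4)=\pm 1/\sqrt{2}$ for odd $n$, the condition $\mu_j(S_U)\mu_j(T_V)=-2^{n-3}$ on the nontrivial eigenspaces comes from Lemma~\ref{lem:foldedEvalues}, and Theorem~\ref{thm:char2} combines the two. You read the congruences as conditions on the Hamming distance $2i$, so that $U$ and $V$ are the even and odd halved-cube indices; this is exactly the reading the paper's own proof uses when it writes $\sum_{i\ \mathrm{even}}\binom{n}{2i}$ and $\sum_{i\ \mathrm{odd}}\binom{n}{2i}$, so just state $U$ and $V$ cleanly instead of leaving a placeholder in your definition of $U$.
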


\begin{proof}
First, observe that
\begin{align*}
\mu_0(S_U) \mu_0(T_V)&=\left(\sum_{i  \textrm{ even} } \binom{n}{2i}\right)\left(
\sum_{i  \textrm{ odd} }  \binom{n}{2i}\right)\\
&=
\sum_{i=0}^{\lfloor \frac{n}{2}\rfloor} \binom{n}{4i}
\sum_{i=0}^{\lfloor \frac{n}{2}\rfloor} \binom{n}{4i+2} \\
&=
\left( 2^{n-2}+ (2^{\frac{n}{2}-1})\cos\bigg(\frac{n\pi}{4}\bigg) \right)
\left( 2^{n-2}- (2^{\frac{n}{2}-1})\cos\bigg(\frac{n\pi}{4}\bigg) \right) \\
&= 2^{ n-3 } (2^{n-1} - 1).
\end{align*}
The penultimate equality follows from Lemma~\ref{lem:sum0}, and the last one follows from $\cos(\frac{n\pi}{4}) = \pm \frac{1}{\sqrt{2}}$ for $n$ odd.
For $j \neq 0$, from Lemma~\ref{lem:foldedEvalues} we have 
$\mu_j(S_U T_V) =  \pm 2^{ (n-3)/2 } \mp 2^{ (n-3)/2 } = - 2^{n-3}$.
Therefore, $(S, T)$ is the desired near-factorization. 
\end{proof}


\section{Non-existence of $\lambda$-fold near-factorizations in Johnson schemes}
\label{sec:Johnson}

In this section, we establish the non-existence of a $\lambda$-fold near-factorization for certain cases of the Johnson scheme by employing the same approach as that for the Hamming scheme in Subsection~\ref{SS:qnHam}.  

Let $n \geqslant k$. The \emph{Johnson scheme}, denoted $J(n, k)$, is the association scheme whose point sets are the $k$-subsets of an $n$-element set. Two subsets are $i$-related if the size of the intersection of the two corresponding subsets is $k-i$. Thus, $J(n,k)$ has $k$ nontrivial classes and $A_0$ is the identity and $A_k$ is the adjacency matrix of the Kneser graph.  We will make the standard assumption that $2k \leqslant n$. This association scheme is both $P$-polynomial and $Q$-polynomial, so there is an ordering of both the matrices in the association scheme and the idempotents. This gives rise to an ordering of the eigenspaces called the \emph{standard ordering} that we will use in this section.

The spectrum of $J(n, k)$ can be derived by using the \emph{Eberlein polynomials}. Namely, the eigenvalue of $A_i$ realized by the $j^{th}$-eigenspace is

\begin{equation}
\label{eq:eber}
  p_i(j)=  \sum_{r=0}^{i} (-1)^{i-r}  \binom{k-r}{i-r}  \binom{k-j}{r}  \binom{n-k-j+r}{r}.
\end{equation}
     
In this section, we will employ the same notation as that used in Section~\ref{sec:Hamming} to refer to the eigenvalues and eigenspaces of matrices $S_U$ and $T_V$. An \emph{admissible pair of $J(n,k)$} of subsets of relations is defined analogously. Similar to the proof of Theorem~\ref{thm:Hamminnon}, we consider all admissible pairs $(U,V)$ and show that 
$\mu_1(S_U T_V) \neq\ \mu_k(S_U T_V)$. Corollary~\ref{cor:2ndeval} and Theorem~\ref{thm:char2} will jointly imply non-existence of a nontrivial near-factorization. 

We will use an expression for the eigenvalues realized by the $1^{st}$-eigenspace from Section~13.9 of~\cite{MR3497070}. Using a quotient matrix argument, Godsil and Meagher~\cite{MR3497070} derived the following expression for $p_i(1)$:
\begin{equation}
\label{eq:eigen1}
    p_i(1)
      = \binom{n-k-1}{i} \binom{k}{k-i} - \binom{k-1}{k-i}\binom{n-k}{i}
      = \binom{k}{k-i}\binom{n-k}{i}\left(\frac{n-k-i}{n-k}-\frac{i}{k}\right).
\end{equation}

As for the eigenvalues realized by the $k^{th}$-eigenspace, there is precisely one non-zero term in Equation~(\ref{eq:eber}) when $j=k$. Therefore 
\begin{equation}
\label{eq:eigen2}
    p_i(k)=(-1)^i \binom{k}{i}.
\end{equation}

When $n$ is sufficiently large relative to $k$, it can be shown that the $1^{st}$-eigenspace exhibits the same sign pattern as in the Hamming scheme. 

\begin{lem}
\label{lem:pos}
If $n \geqslant k^2$, then $p_i(1)<0$ if and only if $i=k$.     
\end{lem}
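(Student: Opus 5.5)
The plan is to read off the sign of $p_i(1)$ directly from the factored form in Equation~(\ref{eq:eigen1}):
\[
p_i(1)=\binom{k}{k-i}\binom{n-k}{i}\left(\frac{n-k-i}{n-k}-\frac{i}{k}\right).
\]
For $0\leqslant i\leqslant k$ and $2k\leqslant n$, both binomial coefficients are strictly positive, since $i\leqslant k\leqslant n-k$. So the sign of $p_i(1)$ equals the sign of
\[
f(i):=\frac{n-k-i}{n-k}-\frac{i}{k}=1-i\left(\frac{1}{n-k}+\frac{1}{k}\right)=1-\frac{in}{k(n-k)}.
\]
Hence $p_i(1)<0$ exactly when $in>k(n-k)$, that is, when $i>k-\frac{k^2}{n}$. (Likewise $p_i(1)=0$ exactly when $i=k-\frac{k^2}{n}$.)

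The key step is to compare this threshold with the integers $k-1$ and $k$ under the hypothesis $n\geqslant k^2$.
\begin{itemize}
\item For $i=k$: $f(k)=1-\frac{n}{n-k}=-\frac{k}{n-k}<0$. So $p_k(1)<0$ unconditionally, consistent with the Kneser graph having negative eigenvalue $-\binom{n-k-1}{k-1}$ on the first eigenspace.
\item For $i\leqslant k-1$: it suffices to show $(k-1)n\leqslant k(n-k)$, i.e.\ $kn-n\leqslant kn-k^2$, i.e.\ $n\geqslant k^2$, which is exactly the hypothesis. Then $f(i)\geqslant f(k-1)\geqslant 0$ for all $0\leqslant i\leqslant k-1$, because $f$ is decreasing in $i$, and so $p_i(1)\geqslant 0$.
\end{itemize}
Together these give: $p_i(1)<0$ if and only if $i=k$.

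The only subtlety, and the main thing to check carefully, is the boundary case $n=k^2$. There $f(k-1)=0$, so $p_{k-1}(1)=0$, which is non-negative but not positive. The statement only asserts the characterization of \emph{negative} eigenvalues, so this case is harmless, and the argument above already handles it. If a strictly positive version were needed later (as in the Hamming analogue), one would require $n>k^2$; I would note this in a remark. I would also sanity-check the second expression in Equation~(\ref{eq:eigen1}) against the first by using $\binom{n-k-1}{i}=\frac{n-k-i}{n-k}\binom{n-k}{i}$ and $\binom{k-1}{k-i}=\frac{i}{k}\binom{k}{k-i}$, so that the factorization relied on is justified.
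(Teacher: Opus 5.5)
Your proof is correct and follows essentially the same route as the paper: both read the sign of $p_i(1)$ off the bracketed factor in the factored form of Equation~(\ref{eq:eigen1}). In both, $n\geqslant k^2$ is exactly the condition giving $\frac{n-k-i}{n-k}\geqslant\frac{i}{k}$ for $i<k$. Your version spells out the paper's one-line argument: the monotonicity of $f$, the check at $i=k-1$, and the observation that $p_{k-1}(1)=0$ when $n=k^2$.
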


\begin{proof}
From Equation~(\ref{eq:eigen1}), if $i = k$
 $$
 p_k(1) =  \binom{n-k-1}{k}   - \binom{n-k}{k} < 0.
$$ 
Since the assumption $n \geqslant k^2$ implies $\frac{n-k-i}{n-k} \geqslant \frac{i}{k}$ when $i<k$, the lemma follows. 
\end{proof}

By Corollary~\ref{cor:2ndeval}, if $(U, V)$ is an admissible pair that gives rise to a $\lambda$-fold near-factorization of $J(n, k)$, then $\mu_1(S_U T_V)$ must be negative. This means that exactly one of $S_U$ or $T_V$ has a negative eigenvalue on the $1^{st}$-eigenspace. Without loss of generality, assume that $k \in U$, and thus, we have $\mu_1(S_U)<0$. Using the same logic as in Corollary~\ref{cor:lower1}, this implies if $i \not\in U$, then $| \mu_1(S_U) | \geqslant p_i(1)$.

\begin{thm}
\label{thm:nonJohn}
If $n > k^2+k$ then the Johnson scheme $J(n, k)$ does not admit a nontrivial $\lambda$-fold near-factorization.
\end{thm}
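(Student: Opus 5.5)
We follow the template of Theorem~\ref{thm:Hamminnon}. Suppose $(U,V)$ is an admissible pair giving a nontrivial $\lambda$-fold near-factorization of $J(n,k)$. By Corollary~\ref{cor:2ndeval} and Theorem~\ref{thm:char2}, $\mu_1(S_UT_V)=\mu_k(S_UT_V)=-\lambda$. By Lemma~\ref{lem:pos} (valid since $n>k^2+k\geqslant k^2$), $k$ lies in exactly one of $U,V$; say $k\in U$, so $\mu_1(S_U)<0<\mu_1(T_V)$. As $\sum_i p_i(1)=0$, we get $|\mu_1(S_U)|\geqslant p_i(1)$ for every $i\notin U$ with $1\leqslant i<k$. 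Also $p_0(1)=1$, and by Theorem~\ref{thm:UV} the set $V$ is disjoint from $U$. Since the near-factorization is nontrivial, $V\neq\{0\}$, so $V$ contains some $j$ with $1\leqslant j\leqslant k-1$.

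\textbf{Upper bound on the $k$-th eigenspace.} By (\ref{eq:eigen2}), $p_i(k)=(-1)^i\binom{k}{i}$. The alternating-sign argument of Lemma~\ref{lem:maxn} then gives $|\mu_k(S_U)|,|\mu_k(T_V)|\leqslant 2^{k-1}$. Hence $\lambda=|\mu_k(S_UT_V)|\leqslant 2^{2k-2}$. One can sharpen this: let $r=\max\{i: i\notin U,\ i\geqslant 1\}$. Then the identity (\ref{eq:partial}) bounds the contribution of the tail $\{i>r\}\subseteq U$ by $\binom{k-1}{r}$, as in Proposition~\ref{prop:uppern}.

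\textbf{Lower bound on the first eigenspace.} We want
\[
|\mu_1(S_U)|\,\mu_1(T_V)\;\geqslant\; p_r(1)\cdot \min_{1\leqslant j\leqslant k-1} p_j(1),
\]
using $r\notin U$ and some $j\in V$ with $j\geqslant 1$. From (\ref{eq:eigen1}),
\[
p_i(1)=\binom{k}{i}\binom{n-k}{i}\,\frac{k(n-k)-in}{k(n-k)}.
\]
For $i\leqslant k-1$ we have $k(n-k)-in\geqslant n-k^2$, and $n>k^2+k$ gives $n-k^2>k$. This yields roughly $p_i(1)\geqslant \binom{k}{i}\binom{n-k}{i}\frac{1}{n-k}$, which is at least on the order of $\binom{n-k}{i}/(n-k)$. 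For $i\geqslant 2$ this already exceeds $2^{2k-2}$ once $n$ is moderately large, and then $\mu_1(S_UT_V)\neq\mu_k(S_UT_V)$. The case $r=1$ (all of $2,\dots,k$ in $U$) is handled separately. There $V\subseteq\{0,1\}$ forces $V=\{0,1\}$ or $\{1\}$, so $\mu_1(T_V)$ and $\mu_k(T_V)$ are explicit: $p_1(1)+\epsilon$ versus $-k+\epsilon$ with $\epsilon\in\{0,1\}$. Likewise $\mu_k(S_U)=-\mu_k(T_V)-\mu_k(\text{rest})$ is explicit via $\sum_i p_i(k)=0$. Equating the two products then gives a direct contradiction for large $n$. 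We also dispose of the case $r=0$, i.e.\ $U=\{1,\dots,k\}$, which forces $T=I$ and so is not admissible.

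\textbf{Main obstacle.} The main difficulty is making the first-eigenspace lower bound strong enough under only the hypothesis $n>k^2+k$, rather than for $n$ huge. The factor $(k(n-k)-in)/(k(n-k))$ can be as small as about $1/k$ when $i=k-1$. So we will bound $|\mu_1(S_U)|\,\mu_1(T_V)\geqslant p_r(1)\,p_j(1)$ against the sharper $k$-eigenspace bound of the form $\binom{k-1}{r}\cdot 2^{k-1}$ or smaller, rather than the crude $2^{2k-2}$. The plan is to compare term by term: $\binom{k}{r}\binom{n-k}{r}\frac{n-k^2}{k(n-k)}$ versus $\binom{k-1}{r}$. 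Their ratio is at least $\binom{n-k}{r}\cdot\frac{k}{k-r}\cdot\frac{n-k^2}{k(n-k)}$, which exceeds $1$ precisely because $n-k^2>k$. The remaining factor $p_j(1)\geqslant 1$ (indeed much larger) must then absorb the contribution of $\mu_k(T_V)$. If this is too tight for small $k$, we fall back on a finite computer check, as was done for the Hamming case.
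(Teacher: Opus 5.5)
Your strategy (contradict $\mu_1(S_UT_V)=\mu_k(S_UT_V)$ by playing $p_r(1)$, with $r=\max\{i\geqslant 1: i\notin U\}$, against the $k$-th eigenspace) is the paper's. However, the decisive inequality is never established, and the route you sketch for it fails.

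\begin{itemize}
\item The crude bound $\lambda\leqslant 2^{2k-2}$ is not beaten by $p_r(1)$, nor by $p_r(1)p_1(1)$, under the bare hypothesis $n>k^2+k$. For $r=2$ and $n$ near $k^2+k$ these quantities are only polynomial in $k$, as you partly acknowledge.
\item Your repair pairs $p_r(1)$ with $\binom{k-1}{r}$, but $\binom{k-1}{r}$ only controls the tail $\{r+1,\dots,k\}$ of $U$. The set $U$ may also contain indices below $r$, so $|\mu_k(S_U)|\leqslant\binom{k-1}{r}$ is false. For example, with $k=3$, $U=\{1,3\}$, $r=2$, one has $|\mu_3(S_U)|=4>\binom{2}{2}$. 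Hence $\binom{k-1}{r}2^{k-1}$ is not an upper bound for $|\mu_k(S_UT_V)|$.
\item You replace $\mu_1(T_V)$ by the single value $\min_j p_j(1)=p_1(1)$, which is of order $k^3$ when $n$ is close to $k^2+k$. Nothing then lets it ``absorb'' $|\mu_k(T_V)|$, which you only bound by $2^{k-1}$.
\item Appeals to ``$n$ large'' or to a computer check cannot cover the infinitely many pairs with $k$ large and $n$ just above $k^2+k$.
\end{itemize}
For comparison, the paper compares $p_r(1)$ with a bound $\frac{kr}{2}\binom{k}{r}\binom{k-1}{r}$ on $|\mu_k(S_UT_V)|$, derived as in Proposition~\ref{prop:uppern}. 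It then checks $r\leqslant 3$ directly and estimates the ratio for $r\geqslant 4$.

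The missing idea is to do your term-by-term comparison over \emph{all} indices outside $U$ and all of $V$, indexwise against $\binom{k}{i}$ rather than $\binom{k-1}{r}$. Your own ratio computation is essentially this. For $1\leqslant i\leqslant k-1$ the hypothesis gives $n(k-i)-k^2\geqslant n-k^2>k$, so
\[
p_i(1)=\binom{k}{i}\binom{n-k}{i}\frac{n(k-i)-k^2}{k(n-k)}>\binom{k}{i}\frac{\binom{n-k}{i}}{n-k}\geqslant\binom{k}{i},
\]
while $p_0(1)=1=\binom{k}{0}$. Now use $\sum_i p_i(1)=\sum_i p_i(k)=0$ and the fact that every index outside $U$ lies in $\{0,\dots,k-1\}$. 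This gives
\[
|\mu_k(S_U)|\leqslant\sum_{i\notin U}\binom{k}{i}\leqslant\sum_{i\notin U}p_i(1)=|\mu_1(S_U)|.
\]
Since $V\subseteq\{0,\dots,k-1\}$ contains some $j\geqslant 1$, also $|\mu_k(T_V)|\leqslant\sum_{j\in V}\binom{k}{j}<\mu_1(T_V)$. Multiplying, using $|\mu_1(S_U)|>0$, gives $|\mu_k(S_UT_V)|<|\mu_1(S_UT_V)|$. This contradicts Corollary~\ref{cor:2ndeval} for every admissible pair at once, with no case split on $r$ and no computer check.
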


\begin{proof}
Let $(U,V)$ be a nontrivial admissible pair such that $k \in U$ and define $r=\max\{i \mid i \not\in U \}$ (note that $1\leqslant r \leqslant k-1$). Then 
\begin{equation} \label{eq:lowerlambda1}
 | \mu_1(S_U T_V) | 
 > 
 | p_r(1) | 
=
\binom{k}{r}\binom{n-k}{r} 
    \left(\frac{n-k-r}{n-k}-\frac{r}{k}\right) 
=
\binom{k}{r} \binom{n-k}{r} \frac{n(k-r)-k^2}{k(n-k)}.
\end{equation}
By a similar reasoning as that given in the proof of Proposition \ref{prop:uppern}, it can also be shown that $\mu_k(S_U T_V)\leqslant \frac{kr}{2}\binom{k}{r}\binom{k-1}{r}$. We will show that $\mu_k(S_U T_V) < \mu_1(S_U T_V)$ by proving this upper bound on
$\mu_k(S_U T_V)$ is smaller than the lower bound in Equation~(\ref{eq:lowerlambda1}). First, direct calculation shows this holds for $r=1,2,3$. For larger $r$ we use the approximation 
$\frac{n(k-r)-k^2}{k(n-k)} \geqslant \frac{1}{n-k}$, then 
\begin{align}
\label{eq:ratio}
\frac{p_r(1)}{\mu_k(S_U T_V)}
 \geqslant  
\frac{\binom{k}{r} \binom{n-k}{r} \frac{1}{n-k} }{\frac{kr}{2}\binom{k}{r}\binom{k-1}{r}} 
 =   \frac{ 2 (n-k-1)(n-k-2) \cdots (n-k-r+1) }{ kr(k-1)(k-2) \cdots (k-r) }.
\end{align}
A lower bound for the numerator is given by 
$$
(n-k-r+1)^{r-1} \geqslant (k^2-k+2)^{r-1} > (k-1)^{2(r-1)},
$$
and the denominator is at most $k(k-1)^{r+1}$. Thus Equation~(\ref{eq:ratio}) is at least $2(k-1)^{r-3}/k$. If $r \geqslant 4$, then this implies Equation~(\ref{eq:ratio}) is at least 1, therefore $|\mu_1(S_U T_V)|>|\mu_k(S_U T_V)|$, and the pair $(U,V)$ is not a near-factorization. Hence, $|\mu_1(S_U T_V)|>|\mu_k(S_U T_V)|$ for all admissible pairs $(U,V)$ and so, by Corollary~\ref{cor:2ndeval}, $(U, V)$ does not give rise to a near-factorization of $J(n, k)$. By Theorem~\ref{thm:char2}, under the given hypothesis $J(n, k)$ has no nontrivial near-factorization. 
\end{proof}

\section{Non-existence of $\lambda$-fold near-factorizations in Grassmann schemes}\label{sec:grassman}

We will now consider a scheme closely related to the Johnson scheme known as the Grassmann scheme.  \emph{The Grassmann scheme}, denoted $J_q(n, d)$, is the association scheme whose set of points is the set of  $d$-dimensional subspaces of a vector space of dimension $n$ over finite field $\mathbb{F}_q$, where $q$ is necessarily a prime power. Two subspaces are $i$ related if the subspace that arises from their intersection is of dimension $d-i$. The scheme $J_q(n,d)$ has $d$ classes and is both $P$-polynomial and $Q$-polynomial. Namely, $J_q(n,d)$ is metric and cometric relative to the Grassmann graph, which is the graph in which two subspaces are adjacent if their intersection has dimension $d-1$, and this imposes natural orderings of relations and eigenspaces of the Grassmann scheme.  

Just as we saw in the Johnson scheme, the scheme $J_q(n, n-d)$ and $J_q(n, d)$ are isomorphic. Therefore, we will assume that $n \geqslant 2d$. 

The number of $d$-dimensional subspaces of an $n$-dimensional vector spaces over $\mathbb{F}_q$ is given by the \emph{$q$-binomial coefficient}, or \emph{Gaussian binomial coefficient}, denoted by $\qbinom{n}{d}$ and computed as follows:
$$
\qbinom{n}{d}=\prod_{i=0}^{d-1}\frac{q^{n-i}-1}{q^{d-i}-1}.
$$

It can be shown by induction on $n$ that $\qbinom{n}{d}$ is a polynomial in $q$.  Several identities that apply to binomial coefficients have a $q$-analogue. We will introduce these required identities as needed. 

With respect to the natural orderings of relations and eigenspaces, the eigenvalue
realized $X_i$ on the $j^{th}$-eigenspace is $p_{j,i} = \chi_j(A_i)$ of $J_q(n,d)$ are (see \cite{CioabaGupta21})

\begin{equation}
\label{eq:specGras1}
    p_i(j)=\sum_{h=0}^j (-1)^{j-h}q^{i(i-j+h)+\binom{j-h}{2}} \qbinom{j}{h} \qbinom{d-h}{i} \qbinom{n-d-j+h}{n-d-i}.
\end{equation}
We will also need a second equivalent formula, also in~\cite{CioabaGupta21}, 
\begin{equation}
\label{eq:specGras2}
      p_i(j)=  \sum_{h=0}^{i} (-1)^{i-h}  q^{hj+\binom{i-h}{2}}    \qbinom{d-h}{i-h}  \qbinom{d-j}{h}  \qbinom{n-d-j+h}{h}.
\end{equation}

As in the Hamming and Johnson schemes, we will derive a contradiction using the eigenvalues realized by the $1^{st}$ and the $d^{th}$-eigenspaces. 
\begin{lem}
\label{lem:grassmann}
\begin{enumerate}[(i)]
\item The only negative eigenvalue in the $1^{st}$ eigenspace is $P_d(1)= - q^{d^2-d} \qbinom{n-d-1}{d-1}$.\label{grassmann1}
\item If $n > 2d$, $min\{ |P_i(1) | : i=1,\dots,d \} = p_1(1)$.  
\label{grassmann2}
\item The eigenvalues $p_i(d)$ in the $d^{th}$ eigenspace alternate in sign.
\label{grassmann3}
\end{enumerate}
\end{lem}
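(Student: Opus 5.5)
The three parts all come from specialising the two eigenvalue formulas (\ref{eq:specGras1}) and (\ref{eq:specGras2}) at $j=1$ and $j=d$, where almost all terms vanish. I will prove (iii) first, since it is immediate.

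\textbf{Part (iii).} Put $j=d$ in (\ref{eq:specGras2}). The factor $\qbinom{d-j}{h}=\qbinom{0}{h}$ vanishes unless $h=0$. This leaves
\[
p_i(d)=(-1)^i q^{\binom{i}{2}}\qbinom{d}{i}.
\]
This is the $q$-analogue of (\ref{eq:eigen2}), and its sign is visibly $(-1)^i$.

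\textbf{Part (i).} Put $j=1$ in (\ref{eq:specGras1}). Only $h=0,1$ contribute, giving
\[
p_i(1)=q^{i^2}\qbinom{d-1}{i}\qbinom{n-d}{i}-q^{i(i-1)}\qbinom{d}{i}\qbinom{n-d-1}{i-1},
\]
where I use the symmetry $\qbinom{m}{m-i}=\qbinom{m}{i}$.

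For $i=d$ the first term vanishes, because $\qbinom{d-1}{d}=0$. This gives exactly $p_d(1)=-q^{d^2-d}\qbinom{n-d-1}{d-1}<0$.

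For $1\le i\le d-1$ both terms are positive. I will rewrite them using
\[
\qbinom{d}{i}=\qbinom{d-1}{i}\frac{q^d-1}{q^{d-i}-1},\qquad
\qbinom{n-d}{i}=\qbinom{n-d-1}{i-1}\frac{q^{n-d}-1}{q^i-1}.
\]
The ratio of the first term to the second is then
\[
\frac{(q^d-q^i)(q^{n-d}-1)}{(q^i-1)(q^d-1)}.
\]
Since $n\ge 2d$, we have $q^{n-d}-1\ge q^d-1$, so it suffices to show $q^d-q^i>q^i-1$. This holds because $q^d\ge 2q^i$ when $q\ge 2$ and $i<d$. Hence $p_i(1)>0$ for $i<d$, and $p_0(1)=1>0$ as well.

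\textbf{Part (ii).} For $d=1$ there is nothing to prove. For $d\ge 2$, the computation above gives a closed form
\[
p_i(1)=q^{i(i-1)}\qbinom{d}{i}\qbinom{n-d-1}{i-1}\,(R_i-1)\quad\text{for } 1\le i<d,
\]
where $R_i$ is the ratio displayed in part (i). In particular
\[
p_1(1)=q\,\qbinom{d-1}{1}\qbinom{n-d}{1}-\qbinom{d}{1},
\]
which is roughly $q^{n-1}$.

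I plan to show $p_i(1)\ge p_1(1)$ for $2\le i\le d-1$. The approach is to prove that the consecutive ratio $p_{i+1}(1)/p_i(1)$ is at least $1$ for $1\le i\le d-2$. The factor $q^{2i}\,\qbinom{n-d-1}{i}/\qbinom{n-d-1}{i-1}$, which is of order $q^{n-d}$, grows fast enough to dominate the ratio of the $\qbinom{d}{i}$ factors (at most of order $q^{d}$) and the bounded change in $R_i-1$. The strict inequality $n>2d$ is used here, to keep $R_{i+1}-1$ bounded away from $0$ relative to $R_i-1$.

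I then compare $|p_d(1)|=q^{d^2-d}\qbinom{n-d-1}{d-1}$ with $p_1(1)$ directly. For $d\ge 2$ the left side is at least $q^{2}\,\qbinom{n-d-1}{1}\approx q^{n-d+1}$ times further positive powers of $q$. The crude bound $\qbinom{m}{k}\ge q^{k(m-k)}$ should then suffice.

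I expect the main obstacle to be the monotonicity estimate in part (ii). Bounding the ratio of $(R_{i+1}-1)$ to $(R_i-1)$ uniformly in $q$ and $i$ is messy. If it resists, I will fall back on a cruder route: bound each $p_i(1)$ below by half of its first term, using $R_i\ge 2$ for $n>2d$, and bound $p_1(1)$ above by $q\,\qbinom{d-1}{1}\qbinom{n-d}{1}$. The resulting comparison of products of Gaussian binomials is then a routine degree-in-$q$ comparison.
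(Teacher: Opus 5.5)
Your part (iii) is exactly the paper's argument. Your part (i) is correct: the $j=1$ specialisation of (\ref{eq:specGras1}), the vanishing of $\qbinom{d-1}{d}$, and the bound $\frac{(q^d-q^i)(q^{n-d}-1)}{(q^i-1)(q^d-1)}>1$ for $1\le i<d$, $n\ge 2d$ all check out. The paper proves neither (i) nor (ii) itself; it quotes Lemma 5.4(i),(iii) of \cite{MR3862932}. So your self-contained computation of (i) is a genuinely different route, and it also supplies the closed form you need for (ii).

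Part (ii), however, is only sketched, and one step you indicate fails.

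\emph{The case $i=d$.} Comparing $|p_d(1)|$ with $p_1(1)$ via $\qbinom{m}{k}\ge q^{k(m-k)}$ cannot work for $d=2$. There $|p_2(1)|=q^2\qbinom{n-3}{1}=p_1(1)+1$, whereas the crude bound only gives $|p_2(1)|\ge q^{n-2}<p_1(1)$. The missing observation handles $i=d$ for every $d$ with no estimates. The eigenvalues on a nontrivial eigenspace sum to $0$, so by (i), $|p_d(1)|=1+\sum_{i=1}^{d-1}p_i(1)>p_1(1)$.

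\emph{The cases $2\le i\le d-1$.} Here your monotonicity claim is only asserted, and heuristics cannot settle it. Statement (ii) is false at $n=2d$: for $q=2$, $d=3$, $n=6$ one gets $p_1(1)=35>p_2(1)=28$. So $n\ge 2d+1$ must enter quantitatively.

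Your fallback does close this case, and you should write it out instead of the monotonicity plan. Put $F_i=q^{i^2}\qbinom{d-1}{i}\qbinom{n-d}{i}$ and assume $n\ge 2d+1$.
\begin{enumerate}[(a)]
\item In $R_{d-1}$ the factor $\frac{q^{d-1}(q-1)}{q^{d-1}-1}$ exceeds $q-1$ and the factor $\frac{q^{n-d}-1}{q^d-1}$ exceeds $q$. Hence $R_i\ge R_{d-1}>q(q-1)\ge 2$, and so $p_i(1)=F_i(1-1/R_i)>\tfrac12F_i$.
\item We have $\qbinom{n-d}{i}\ge\qbinom{n-d}{1}$ and $q^{i^2}\qbinom{d-1}{i}\ge q^{i(d-1)}\ge q^{2(d-1)}$.
\item Also $p_1(1)<F_1=q\qbinom{d-1}{1}\qbinom{n-d}{1}<2q^{d-1}\qbinom{n-d}{1}$.
\end{enumerate}
Together these give $\tfrac12F_i>\tfrac{q^{d-1}}{4}F_1\ge F_1>p_1(1)$ whenever $d\ge 3$.
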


\begin{proof} 
(\ref{grassmann1}) is \cite[Lemma 5.4(i)]{MR3862932}. 

Since we are assuming $n<2d$, (\ref{grassmann2}) follows from \cite[Lemma 5.4(iii)]{MR3862932}. 

For (\ref{grassmann3}), all but one term of Equation~(\ref{eq:specGras2}) involves a $q$-binomial coefficient equal to zero. Therefore, we can simply the sum to  

$$ p_i(d) = (-1)^i q^{\binom{i}{2}} \qbinom{d}{i}. $$

This alternates in sign, and thus, statement (\ref{grassmann3}) follows.
\end{proof}

\begin{thm} [$q$-Binomial Theorem]
\label{thm:qbin}
For any $n \in \mathbb{N}$:
$$  
   \prod_{i=0}^{n-1} (1+q^iz)=\sum_{i=0}^nq^{\binom{i}{2}}\qbinom{n}{i}z^i. 
$$   
\end{thm}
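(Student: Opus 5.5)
We prove the $q$-Binomial Theorem (Theorem~\ref{thm:qbin}) by induction on $n$, using the $q$-analogue of Pascal's rule. Write $P_n(z)=\prod_{i=0}^{n-1}(1+q^iz)$ and $R_n(z)=\sum_{i=0}^n q^{\binom{i}{2}}\qbinom{n}{i}z^i$. The base case $n=0$ is the identity $1=1$ (empty product, and $\qbinom{0}{0}=1$); $n=1$ is also immediate, since $1+z=\qbinom{1}{0}+\qbinom{1}{1}z$.

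First, we record the Pascal-type identity
\[
\qbinom{n+1}{i}=\qbinom{n}{i}+q^{\,n+1-i}\qbinom{n}{i-1},
\]
for $1\leqslant i\leqslant n$, with the conventions $\qbinom{n}{-1}=\qbinom{n}{n+1}=0$ extending it to all $0\leqslant i\leqslant n+1$. This follows directly from the product formula for $\qbinom{n}{d}$. Factor out $\qbinom{n}{i-1}$ (or equivalently compare with $\qbinom{n+1}{i}\frac{q^{i}-1}{q^{n+1}-1}$), and the identity reduces to the elementary relation $(q^{n+1}-q^{i})+q^{n+1-i}(q^{i}-1)\cdot q^{0}\cdots$. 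More cleanly, after clearing the common factor $\prod_{m=0}^{i-2}\frac{q^{n-m}-1}{q^{i-1-m}-1}$, it reduces to $\frac{q^{n-i+1}-1}{q^i-1}+q^{n+1-i}=\frac{q^{n+1}-1}{q^i-1}$, which is true.

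For the inductive step, note that $P_{n+1}(z)=P_n(z)(1+q^nz)$. It is more convenient to use the factorization $P_{n+1}(z)=(1+z)P_n(qz)$, which holds because $\prod_{i=0}^{n}(1+q^iz)=(1+z)\prod_{i=0}^{n-1}(1+q^{i}(qz))$. By the induction hypothesis, $P_n(qz)=\sum_i q^{\binom{i}{2}+i}\qbinom{n}{i}z^i$. Multiplying by $(1+z)$, the coefficient of $z^i$ is
\[
q^{\binom{i}{2}+i}\qbinom{n}{i}+q^{\binom{i-1}{2}+i-1}\qbinom{n}{i-1}
=q^{\binom{i}{2}}\Bigl(q^{i}\qbinom{n}{i}+\qbinom{n}{i-1}\Bigr),
\]
using $\binom{i-1}{2}+i-1=\binom{i}{2}$. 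We therefore need the companion Pascal rule $\qbinom{n+1}{i}=\qbinom{n}{i-1}+q^{i}\qbinom{n}{i}$. It follows from the first one by the symmetry $\qbinom{n}{i}=\qbinom{n}{n-i}$, or it can be verified directly from the product formula in the same way. This gives $P_{n+1}=R_{n+1}$ and completes the induction.

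The only real obstacle is bookkeeping: choosing which of the two $q$-Pascal identities matches the chosen factorization of $P_{n+1}$, and getting the power of $q$ right. Using $P_{n+1}(z)=(1+z)P_n(qz)$ pairs naturally with $\qbinom{n+1}{i}=\qbinom{n}{i-1}+q^{i}\qbinom{n}{i}$, and we verify that identity directly from the definition of $\qbinom{n}{d}$ to avoid any sign or exponent slip. Equality of polynomials in $z$ then holds identically in $q$, in particular for prime powers $q$ as used in Section~\ref{sec:grassman}.
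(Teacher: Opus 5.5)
Your proof is correct. The paper does not prove this statement. It quotes the classical $q$-Binomial Theorem as a known fact and uses it only to derive Lemma~\ref{lem:sum0q}, so your induction supplies an argument the paper omits rather than competing with one.

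The key steps all check out:
\begin{enumerate}
\item the factorization $P_{n+1}(z)=(1+z)P_n(qz)$;
\item the exponent identity $\binom{i-1}{2}+i-1=\binom{i}{2}$;
\item the boundary coefficients $i=0$ and $i=n+1$, using $\qbinom{n}{-1}=\qbinom{n}{n+1}=0$ (the latter is also forced by the product formula);
\item the $q$-Pascal rule $\qbinom{n+1}{i}=\qbinom{n}{i-1}+q^{i}\qbinom{n}{i}$.
\end{enumerate}

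Two presentational fixes are worth making.

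First, the sentence ending in ``$\cdot q^{0}\cdots$'' is a broken fragment and should be deleted. The ``more cleanly'' computation after it is the correct one.

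Second, the detour through the other Pascal rule plus symmetry is unnecessary. You have two simpler options:
\begin{enumerate}
\item Verify the rule you actually use directly. Since $\qbinom{n+1}{i}=\frac{q^{n+1}-1}{q^{i}-1}\qbinom{n}{i-1}$ and $\qbinom{n}{i}=\frac{q^{n+1-i}-1}{q^{i}-1}\qbinom{n}{i-1}$, the rule reduces to $q^{n+1}-1=(q^{i}-1)+q^{i}(q^{n+1-i}-1)$.
\item Keep the first rule and use the factorization $P_{n+1}(z)=P_n(z)(1+q^{n}z)$ instead. Its $z^i$-coefficient is $q^{\binom{i}{2}}\bigl(\qbinom{n}{i}+q^{n+1-i}\qbinom{n}{i-1}\bigr)$, because $\binom{i-1}{2}+n=\binom{i}{2}+n+1-i$.
\end{enumerate}
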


Using the $q$-Binomial Theorem, we obtain the following binomial identity.

\begin{lem}
\label{lem:sum0q}
Let $d \in \mathbb{N}$. Then 
$$
\sum_{i \ \textrm{even}} q^{\binom{i}{2}}\qbinom{d}{i} = \sum_{i \ \textrm{odd}}q^{\binom{i}{2}}\qbinom{d}{i}=\frac{1}{2}\prod_{i=1}^d(1+q^i).
$$
\end{lem}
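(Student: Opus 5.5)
The plan is to specialise the $q$-Binomial Theorem (Theorem~\ref{thm:qbin}, with $n$ replaced by $d$) at $z=-1$ and at $z=1$. Write
\[
E=\sum_{i \ \textrm{even}} q^{\binom{i}{2}}\qbinom{d}{i}, \qquad O=\sum_{i \ \textrm{odd}} q^{\binom{i}{2}}\qbinom{d}{i}.
\]
Putting $z=-1$ gives
\[
\prod_{i=0}^{d-1}(1-q^{i}) = \sum_{i=0}^{d} (-1)^i q^{\binom{i}{2}}\qbinom{d}{i} = E-O .
\]
For $d\geqslant 1$ the left-hand side vanishes because of its $i=0$ factor $1-q^0=0$. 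This proves the first claimed equality $E=O$.

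To find the common value I would put $z=1$, which gives
\[
E+O=\prod_{i=0}^{d-1}(1+q^{i}).
\]
Combining this with $E=O$ yields
\[
E=O=\tfrac12\prod_{i=0}^{d-1}(1+q^{i})=\prod_{i=1}^{d-1}(1+q^{i}),
\]
where the last step uses that the $i=0$ factor equals $2$. Matching this with the right-hand side printed in the statement, $\tfrac12\prod_{i=1}^{d}(1+q^i)$, is the step I expect to be the main obstacle. The two expressions differ by an index shift in the product, and they disagree already at $d=1$: there $E=O=1$, while $\tfrac12(1+q)\neq 1$ for every prime power $q$. So the argument above can only confirm the statement under the reading that the product runs over $i=0,\dots,d-1$, and I would flag the printed index range as needing that correction.

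I would then check how the lemma is used afterwards, namely in bounding $|\mu_d(S_U)|$ through the alternating values $p_i(d)=(-1)^i q^{\binom{i}{2}}\qbinom{d}{i}$ of Lemma~\ref{lem:grassmann}(\ref{grassmann3}). The aim is to confirm that only the equality $E=O$ and an upper bound of order $\prod_{i=1}^{d-1}(1+q^i)$ are needed there. Any later estimate that relies on the larger printed value $\tfrac12\prod_{i=1}^{d}(1+q^i)$ would remain valid, since that value exceeds the true one. Finally, I would record the small cases as sanity checks. For $d=2$, direct computation gives $E=1+q\qbinom{2}{2}=1+q$ and $O=\qbinom{2}{1}=1+q$, in agreement with $\prod_{i=1}^{1}(1+q^i)=1+q$.
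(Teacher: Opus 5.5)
Your proof is correct and follows the route the paper intends: the paper's only justification is a citation of the $q$-Binomial Theorem, i.e.\ specializing at $z=\pm 1$. Your correction of the printed right-hand side is also right: for $d\geqslant 1$ the common value is $\prod_{i=1}^{d-1}(1+q^i)=\tfrac12\prod_{i=0}^{d-1}(1+q^i)$, and the printed $\tfrac12\prod_{i=1}^{d}(1+q^i)$ already fails at $d=1$ and $d=2$.

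On the later use in Theorem~\ref{thm:nonGrass}: it is the corrected value that gives the bound stated there, since $\prod_{i=1}^{d-1}(1+q^i)^2\leqslant 4^{d-1}q^{d(d-1)}\leqslant q^{2d(d-1)}$. The paper's own chain, $\tfrac14\prod_{i=1}^{d}(2q^i)^2=q^{2d(d-1)}$, is false, because the left side equals $4^{d-1}q^{d(d+1)}$.
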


The reasoning in the proof of Theorem~\ref{thm:nonGrass} is identical to that of the proof of Theorem~\ref{thm:nonJohn}. This time, we will use the following lower bound on $\qbinom{d}{r}$ that can be found in~\cite{lowq}:

\begin{equation}
\label{eq:lo}
\qbinom{d}{r}\geqslant  q^{d(d-r)}.
\end{equation}

\begin{thm}
\label{thm:nonGrass}
When $n > 2d^2-2d+3$, the Grassmann scheme $J_q(n,d)$ does not admit a nontrivial $\lambda$-fold near-factorization. 
\end{thm}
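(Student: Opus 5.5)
We follow the template of Theorem~\ref{thm:nonJohn}. Suppose $(U,V)$ is a nontrivial admissible pair for $J_q(n,d)$ giving a near-factorization. By Corollary~\ref{cor:2ndeval} and Theorem~\ref{thm:char2}, $\mu_1(S_UT_V)=\mu_d(S_UT_V)=-\lambda<0$. Lemma~\ref{lem:grassmann}(\ref{grassmann1}) says $p_d(1)$ is the only negative eigenvalue on the $1^{st}$-eigenspace. So exactly one of $U,V$ contains $d$; we may assume $d\in U$, so $\mu_1(S_U)<0$ and $\mu_1(T_V)>0$.

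Set $r=\max\{i\mid i\notin U\}$, so $1\le r\le d-1$ and $\{r+1,\dots,d\}\subseteq U$. Since the eigenvalues on the $1^{st}$-eigenspace sum to zero, we get $|\mu_1(S_U)|\ge p_r(1)$, exactly as in Corollary~\ref{cor:lower1}. Also $V$ contains some $j\ge1$, because $V\ne\{0\}$. By Lemma~\ref{lem:grassmann}(\ref{grassmann2}), $\mu_1(T_V)\ge p_1(1)$, and together these give
$$|\mu_1(S_UT_V)|\ge p_r(1)\,p_1(1).$$
Both factors should be computed from Equation~(\ref{eq:specGras2}) with $j=1$. Only the top term $h=i$ matters to leading order, giving $p_i(1)\approx q^{i+\binom{i}{2}}\cdot(\text{binomials in }d,n)$; more carefully, $p_r(1)\ge c\,q^{\binom{r}{2}}\qbinom{d-1}{r}\qbinom{n-d-1}{r}q^{r}$ up to a small correction. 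We will bound this using $n-d-1\ge 2d^2-3d+3$ together with Equation~(\ref{eq:lo}), or the simpler bound $\qbinom{m}{r}\ge q^{r(m-r)}$.

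For the $d^{th}$-eigenspace we use $p_i(d)=(-1)^iq^{\binom{i}{2}}\qbinom{d}{i}$ from the proof of Lemma~\ref{lem:grassmann}(\ref{grassmann3}). We mimic Proposition~\ref{prop:uppern} to bound $|\mu_d(S_U)|$ by splitting it into a tail and a head. The tail is $\sum_{i>r}p_i(d)=-\sum_{i\le r}p_i(d)$, and a $q$-analogue of Equation~(\ref{eq:partial}), namely $\sum_{i\le r}(-1)^iq^{\binom i2}\qbinom{d}{i}=(-1)^rq^{\binom{r+1}{2}}\qbinom{d-1}{r}$, obtained from the $q$-Pascal rule, gives its exact value. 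The head consists of same-sign terms with $i\le r$ and is at most $r\,q^{\binom r2}\qbinom{d}{r}$. For $|\mu_d(T_V)|$, note that $V\subseteq\{0,\dots,r\}$, so it is at most $r\,q^{\binom r2}\qbinom dr$, or in any case at most $\frac12\prod_{i=1}^d(1+q^i)$ by Lemma~\ref{lem:sum0q}. Multiplying gives an explicit bound $|\mu_d(S_UT_V)|\le B(d,r,q)$ that does not depend on $n$.

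The main obstacle is the final comparison $p_r(1)p_1(1)>B(d,r,q)$, which must hold uniformly in $1\le r\le d-1$ and in $q$, under the hypothesis $n>2d^2-2d+3$. The plan is to compare powers of $q$. The $1^{st}$-eigenspace side has exponent roughly $r(n-d-1-r)+\binom{r}{2}+(n-2)$, because $\qbinom{n-d-1}{r}\ge q^{r(n-d-1-r)}$ and $p_1(1)\ge q^{n-d-1}$-ish. The $d^{th}$-eigenspace side has exponent at most about $2\binom r2+2r(d-r)+2\log_q(2r)$, together with a factor like $q^{\binom{r+1}2}$. Since $n-d\gtrsim 2d^2$, the linear-in-$n$ gain dominates whenever $r\ge1$. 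The cases $r=1$ (and possibly $r=2$) should be checked directly, as was done for the Johnson scheme, since the crude bounds are weakest there. The threshold $2d^2-2d+3$ is presumably exactly what makes the exponent inequality work at the extreme $r=d-1$, so I would verify that case carefully first. The conclusion is that $\mu_1\ne\mu_d$, which contradicts Theorem~\ref{thm:char2}.
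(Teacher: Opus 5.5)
Your reductions are correct: $d\in U$, $|\mu_1(S_U)|\ge p_r(1)$, $\mu_1(T_V)\ge p_1(1)$, the formula $p_i(d)=(-1)^iq^{\binom i2}\qbinom{d}{i}$, and the $q$-alternating-sum identity. The gap is that the proof stops where the real content begins: you never establish the inequality $p_r(1)\,p_1(1)>|\mu_d(S_UT_V)|$. In its place you offer three things:
\begin{itemize}
\item a heuristic exponent count;
\item a lower bound on the alternating sum $p_r(1)$ that holds only ``up to a small correction,'' which is exactly where a rigorous argument would need care;
\item a promise to check small $r$ by hand, plus a guess that the threshold $2d^2-2d+3$ is forced by the case $r=d-1$.
\end{itemize}
That guess is wrong, and the $r$-by-$r$ analysis you are preparing for is unnecessary.

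What closes the argument is that neither side needs to depend on $r$.

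On the $d$th eigenspace, each of $|\mu_d(S_U)|$ and $|\mu_d(T_V)|$ is at most the total of the $p_i(d)$ of one sign. By the $q$-binomial theorem at $z=\pm1$, both totals equal $\prod_{i=1}^{d-1}(1+q^i)\le 2^{d-1}q^{\binom d2}$. This is the exact value; the constant printed in Lemma~\ref{lem:sum0q} is larger, and for small $d$ it would not give the next bound. Hence $|\mu_d(S_UT_V)|\le 4^{d-1}q^{d(d-1)}\le q^{2d^2-2d}$, independent of $r$ and $n$, so your head/tail splitting is superfluous.

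On the first eigenspace, the paper uses only two facts:
\begin{itemize}
\item $\mu_1(S_U)$ is a nonzero integer, so $|\mu_1(S_U)|\ge 1$;
\item $\mu_1(T_V)\ge p_1(1)=q\qbinom{d-1}{1}\qbinom{n-d}{1}-\qbinom{d}{1}\ge q^{n-3}$, read off from (\ref{eq:specGras2}).
\end{itemize}
The hypothesis $n-3>2d^2-2d$ then gives $|\mu_1(S_UT_V)|>|\mu_d(S_UT_V)|$, contradicting Theorem~\ref{thm:char2}. So the threshold comes from this crude comparison, not from the case $r=d-1$.

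Your own lower bound finishes even more easily. Apply Lemma~\ref{lem:grassmann}(\ref{grassmann2}) a second time to get $p_r(1)\ge p_1(1)$. Then $|\mu_1(S_UT_V)|\ge p_1(1)^2\ge q^{2(n-3)}$, which already exceeds $q^{2d^2-2d}$ once $n>d^2-d+3$. Completed this way, your route proves a stronger statement than the theorem.
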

\begin{proof}
Let $U,V$ be an admissible pair so that $S_U, T_V$ forms a near-factorization. We assume that $\chi_1(S_U) <0$, so Lemma~\ref{lem:grassmann}~(\ref{grassmann1}) implies $d \in U$. Similar to the Johnson and Hamming scheme, we show that $\chi_d(S_U T_V) < \chi_1(S_U T_V)$.

As $d \in U$, $\chi_1(T_V)$ is a sum of positive eigenvalues. Thus $\chi_1(T_V)> P_{1,1}$, and so
\begin{align*} 
| \mu_1(S_U)\mu_1(T_V) | &>  | p_1(1) | \\
         &= \left|  -q \qbinom{d-1}{1} \qbinom{n-d}{1} + \qbinom{d}{1}  \right| \\
         &\geqslant q (q^{(d-2)})(q^{(n-d-1)}) -  \qbinom{d}{1} \\
         &\geqslant q^{(n-2)} - (q^{(d-1)}+ \dots + q + 1) \\
         &\geqslant q^{(n-3)}.
\end{align*}

On the other hand, 
$$
|\mu_d(S_U) \mu_d(T_V)| 
  \le \left( \sum_{i \; even} p_i(d) \right)^2 
   = \frac{1}{4} \prod_{i=1}^d (1+q^i)^2 
   \le \frac{1}{4} \prod_{i=1}^d (2q^i)^2 
   = q^{2d(d-1)} 
   \le q^{2d^2-2d}.
$$  
If $n > 2d^2-2d+3$, then $-\lambda \le -q^{(n-3)} < -q^{2d^2-2d} \le \mu_d(S_U) \mu_d(T_V)$, a contradiction.  This proves the theorem. 
\end{proof}

We conjecture that there does not exist a Grassmann scheme that admits a $\lambda$-fold near-factorization. We have verified this statement for all $(n,d)$ with $d \leqslant 9$ and $\qbinom{n}{d} \leqslant 289$.


\section{Cyclotomic schemes and orbit fusions of thin schemes}\label{sec:cyclo}

Any group can be realized as a thin association scheme; each class in this association scheme corresponds to a single element of the group when realized by the permutation matrices of its Cayley representation. This is a thin scheme, since each class has valency one, and, unless every non-identity element of the group has order 2, this is an asymmetric association scheme. Near-factorizations in this thin association scheme are exactly equivalent to near-factorizations in the group. In this section, we will consider association schemes that are fusions of thin association schemes.
Throughout this section, $\mathbb{F}_q$ will be a finite field of prime power order $q=p^f$ and $\mathbb{F}_q^{\times}$ will denote the multiplicative subgroup in $\mathbb{F}_q$.

In his thesis, Delsarte noted that it is possible to form association schemes over a finite field by using cyclotomy; these are called ``cyclotomic schemes" \cite{BCN}. Cyclotomic schemes are a rich source of association schemes with near-factorizations. As we will see, the near-factorizations in cyclotomic schemes can be discovered by establishing identities satisfied by Gaussian periods.  

\begin{defn}
For $d$ a divisor of $|\mathbb{F}_q^{\times}|$, let $H_d$ be the unique multiplicative subgroup of index $d$ in $\mathbb{F}_q^{\times}$ and partition $\mathbb{F}_q$ by the $d$ cosets of $H_d$ in $\mathbb{F}_q^{\times}$. The \emph{cyclotomic association scheme} $\cyc(q, d)$
is the $d$ class association scheme with vertices the elements in $\mathbb{F}_q^{\times}$ and two elements $g, h$ are adjacent in class $i$ if and only if $g-h$ is in the $i^{th}$ coset. 
 \end{defn}
 

Cyclotomic schemes are part of the larger class of \emph{orbit fusion schemes}.  If $G$ is an abelian group of order $n$ in its Cayley representation, and $A$ is a subgroup of $\aut(G)$, then by fusing the $n \times n$ matrices in each $A$-orbit on $G$, we obtain the matrices of an association scheme with point set $G$. We will denote this orbit fusion scheme by $(G,G^A)$, also known as the \emph{orbit Schur ring on the group $G$}. As we will see, many of the Schurian association schemes exhibiting near-factorizations in Section~\ref{sec:schurian} turn out to be orbit fusion schemes on a group $G$ that is either cyclic or elementary abelian. 

\subsection{Cyclotomic schemes of prime order}

Let $p$ be a prime and $\zeta_p$  a primitive $p$-th root of unity in $\mathbb{C}$. The irreducible characters of $C_p = \langle g \rangle$ are the functions $\chi_k(g^i) = \zeta_p^{ki}$ for $k=0,1, \dots, p-1$.  Since $\aut(C_p)$ is cyclic of order $p-1$, its subgroup $H_d$ of index $d$ is generated by $g^a$, where $a$ is congruent modulo $p$ to $\frac{p-1}{d}$. This subgroup $H_d$ acts on the elements of $\mathbb{F}_p$ and the orbits of this action correspond to the classes in the cyclotomic scheme; the zero of $\mathbb{F}_p$ stays fixed, so there is an orbit of size 1. All other orbits correspond to a coset of $H_d$ in $\mathbb{F}_p^{\times}$. This implies the irreducible characters of $\cyc(p, d)$ are the restrictions of the $\chi_k$ to the subalgebra of $\mathbb{C}G$ generated by the sums of the orbits of $H_d$ on $C_p$. It follows that the value of $\chi_k$ on the orbit sum 
$$
	\displaystyle{ \sum_{r=0}^{\frac{p-1}{d} - 1 } g^{ia^r} }
$$
is the Gaussian period 
$$
	\eta_{p, d, ik} = \sum_{r=0}^{\frac{p-1}{d} - 1 }  \zeta_p^{ika^r}.
$$  
The trivial character $\chi_0$ takes the value $|H_d|$ on non-identity elements of $\cyc(q, d)$, and the value of 1 on the identity element. This further implies all of the nontrivial irreducible characters of $\cyc(p, d)$ will be Galois conjugate, the Galois group being $Gal(\mathbb{Q}(\eta_{p, d, 1}) / \mathbb{Q}) \simeq \aut(C_p)/H_d$, with order $d$.

It follows that every near-factorization of a cyclotomic scheme implies and is implied by a certain identity on the Gaussian periods appearing in its character table.  For example, if $\lambda$ is a positive integer for which $d^2=\lambda(p-1)$ and the Gaussian period identity $\eta_{p,d,1} \eta_{p, d, k} = - \lambda$ holds, then it follows that $A_1 A_k = \lambda(J-I)$, because the Gaussian period identity implies the values on all irreducible characters of both sides of the matrix equation will match. Use of such Gaussian period identities in design theory and coding theory has a long history---see \cite{FMX2016} and \cite{MR363657}, for example.

\subsection{Symmetrizations of thin schemes}\label{subsec:SymofThin}

When $G$ is an abelian group, the map $g \mapsto g^{-1}$, $g \in G$, is an automorphism of $G$ which generates a subgroup $S$ of order $2$.  The corresponding orbit fusion scheme $(G,G^S)$ is called the {\it symmetrization} of the thin scheme on $G$. The near-factorizations of the symmetrization $(G,G^S)$ of $G$ correspond to \emph{symmetric} near-factorizations of the group $G$.  For an example of this, consider $G$, the additive group of $\mathbb{F}_q$ where $q$ is odd, and $H$ the subgroup of $\mathbb{F}_q^{\times}$ of order $2$ generated by $-1$. Then the symmetrization $(\mathbb{F}_q, \mathbb{F}_q^S)$ agrees with the cyclotomic scheme $\cyc(q,\frac{q-1}{2})$. In \cite{kreher2025lambda}, it was noted that every $1$-fold near-factorization of an abelian group is (translation) equivalent to a symmetric one, (and this is not always the case when $\lambda > 1$). They also noted that, when $G=\langle g \rangle$ is cyclic of order $n$, for any factorization $n-1 = s t$, there is an easy $1$-fold $(s, t)$ near-factorization of $G$ given by 

\begin{equation*}
	(g + g^2 + \dots + g^s)(e + g^{s} + g^{2s} + \cdots + g^{(t-1)s}) = G - e. \label{eq:TrivalNFGroups}
\end{equation*}

This near-factorization is not symmetric; to obtain a symmetric near-factorization, simply translate by an appropriate power of $g$.  If $s = 2k+1$, multiply the first factor by $g^{-(k+1)}$ to make it symmetric, and to compensate, multiply the second factor by $g^{(k+1)}$. The second factor becomes symmetric by Corollary~\ref{cor:prop}(\ref{symmetry}). On the other hand, if $s=2k$, multiply the first factor by $g^{k(t-1)}$, and then to compensate, multiply the second factor by $g^{-k(t-1)}$.

In the next section, we will encounter several examples of $1$-fold near-factorizations of symmetrization schemes of cyclic groups of odd order. The next lemma gives several precise near-factorizations in generalizations of these examples. 

\begin{lem}\label{lem:cyclicGroup}
Suppose $G$ is a cyclic group of odd order $n=2d+1$ generated by $\langle g \rangle$.  Let 
$$
	A_i = g^i + g^{-i}, \quad i\in \{1,\dots, d\}, 
$$ 
be the non-identity adjacency matrices of of its symmetrization scheme $(G,G^S)$.
\begin{enumerate}[(i)]
 	\item If $s=2$, then $(G,G^S)$ has the following $1$-fold $(2,d)$ near-factorization depending on $n$:
	\begin{enumerate}
	 \item  $n=8k+1$, $d=4k$, and  $A_1(A_2+A_3+A_6+A_7+\dots+A_{4k-2}+A_{4k-1}) = J-I$.

	\item $n=8k+3$, $d=4k+1$, and $A_1(A_0+A_3+A_4+A_7+A_8+\dots+A_{4k-1}+A_{4k}) = J-I$.

	\item $n=8k+5$, $d=4k+2$, and  $A_1(A_2+A_3+A_6+A_7+\dots+A_{4k-2}+A_{4k-1}+A_{4k+2}) = J-I$.

	\item $n=8k+7$, $d=4k+3$, and $A_1(A_0+A_3+A_4+A_7+A_8+\dots+A_{4k-1}+A_{4k}+A_{4k+3}) = J-I$.
\end{enumerate}

\item If $s=3$ and $n=6k+1$, then $(G,G^S)$ has a $1$-fold $(3,2k)$ near-factorization $(A_0 + A_1) (A_2+A_5+A_8+\dots+A_{3k-1}) = J-I$. 

\item If $s=4$ and $n=16k+1$, then $(G,G^S)$ has a $1$-fold $(4,4k)$ near-factorization $(A_1+A_3)(A_4+A_5+A_{12}+A_{13}+\dots+A_{8k-4}+A_{8k-3}) = J-I$.
\end{enumerate}
\end{lem}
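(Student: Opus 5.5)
The plan is to verify each identity directly in the group ring $\mathbb{Z}G$, where $A_i=g^i+g^{-i}$ and $A_0=e$ (as a matrix $A_0=I$). By Theorem~\ref{thm:char1}, and because the classes of $(G,G^S)$ are linearly independent, it is enough to show that the product on the left, expanded in $\mathbb{Z}G$, equals $G-e=\sum_{i=1}^{d}A_i$. The tool throughout is the product rule $A_iA_j=A_{i+j}+A_{|i-j|}$, with two conventions: $A_m$ is read as $A_{n-m}$ when $m>d$, and $A_0A_j=A_j$ but $A_iA_i=A_{2i}+2e$. I will also use that the factors commute (Theorem~\ref{thm:char1}(1)).

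For part (i), I would first treat the blocks $A_1(A_{4m+2}+A_{4m+3})=A_{4m+1}+A_{4m+2}+A_{4m+3}+A_{4m+4}$, which give all four indices $\equiv 1,2,3,0\pmod 4$ in a window of length four. In case (a), the blocks $m=0,\dots,k-1$ give $A_1+A_2+\dots+A_{4k}=\sum_{i=1}^d A_i$ at once. For (b) and (d), I would use $A_1(A_{4m-1}+A_{4m})=A_{4m-2}+A_{4m-1}+A_{4m}+A_{4m+1}$ together with $A_1A_0=A_1$; this covers $A_1,\dots,A_{4k+1}$. In (d) the extra term is $A_1A_{4k+3}=A_{4k+2}+A_{4k+4}$, and $A_{4k+4}=A_{n-(4k+4)}=A_{4k+3}$, which fills out up to $d=4k+3$. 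In (c) the extra term is $A_1A_{4k+2}=A_{4k+1}+A_{4k+3}$, and $A_{4k+3}=A_{8k+5-4k-3}=A_{4k+2}$. The only care needed is at the top index, where the reflection $m\mapsto n-m$ has to land exactly on the missing index. One should also check that the reflected index never coincides with an index already produced, since that would create a coefficient $2$.

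Parts (ii) and (iii) follow the same pattern. For (ii), $(e+A_1)A_{3m+2}=A_{3m+1}+A_{3m+2}+A_{3m+3}$, so $m=0,\dots,k-1$ covers $A_1,\dots,A_{3k}$ with $3k=d$. For (iii), I would compute
\[
(A_1+A_3)(A_{8m+4}+A_{8m+5})=A_{8m+1}+A_{8m+2}+\cdots+A_{8m+8},
\]
which gives eight consecutive indices. The blocks $m=0,\dots,k-1$ then yield $A_1,\dots,A_{8k}$ with $d=8k$, and no wraparound occurs because $8m+8\le d$.

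The main obstacle is not any single computation but the boundary bookkeeping in cases (c) and (d). There I must confirm that the wrapped indices exceeding $d$ fold back onto precisely the uncovered index, and that the terms $A_{|i-j|}$ never produce $e$. The latter holds because no factor shares an index with $A_1$ (respectively with $A_1$ or $A_3$); this is consistent with Theorem~\ref{thm:UV}. As a final sanity check, the valencies satisfy $st=n-1$ in each case.
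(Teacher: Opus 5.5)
Your proposal is correct and complete. The paper states this lemma without proof, and the direct group-ring expansion via $A_iA_j=A_{i+j}+A_{|i-j|}$ (folding $A_m=A_{n-m}$ for $m>d$) is the routine verification it calls for. All your block identities check out. The boundary check you flag is immediate: the blocks in (c) and (d) stop at indices $4k$ and $4k+1$ respectively, below both indices contributed by the extra factor ($4k+1,4k+2$ and $4k+2,4k+3$), so no coefficient $2$ and no copy of $e$ can arise.
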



\section{Near-factorizations in small Schurian schemes}\label{sec:schurian}
 
There is a wealth of examples of association schemes known as \emph{Schurian association schemes}. These arise from the action of a group $G$ on the set of cosets of a subgroup $H \leqslant G$, with very specific properties. Clearly the group $G$ is transitive on the set of cosets, but it will not be transitive on the ordered pairs of cosets; the orbits on this second action are called \emph{orbitals} and they can be used to define an association scheme. The points of these scheme are the cosets of $H$ in $G$, and two cosets $xH, yH$ are $i$-related if $(xH,yH)$ belongs to the $i$-th orbital. Equivalently, we obtain an orbital configuration $(G/H, G /\!\!/H)$ whose set of vertices is $G/H$ and whose relations correspond to the $H$-$H$-double cosets of $G$.
This will be an association scheme when the trivial character of $H$ induces to a multiplicity-free character of $G$~\cite[Section~13.4]{MR3497070}.  An association scheme which corresponds to an orbital configuration is called \emph{Schurian}.   

We searched the GAP~\cite{GAP4} library of primitive groups looking for near-factorizations occurring in (non-thin) Schurian schemes. A primitive group $G$ of degree $n$ in the GAP library is presented as a subgroup of $\sym(n)$ whose action on $\{1,\dots, n\}$ is primitive.  We set $H$ to be the stabilizer of a point in $G$. These groups are expressed in GAP as {\tt PrimitiveGroup(n,i)}. However, we will simply denote the group by the GAP library ID $(n, i)$. 

In searching the GAP library we find several small Schurian schemes that admit near-factorizations.  Many of these near-factorizations are realized in schemes with $2$ classes, which we have already encountered in Section~\ref{sec:2class}.  Table~\ref{Table:Schurian} in the Appendix lists the GAP library IDs of the primitive groups of degree up to $289$, of order less than $10^7$, whose associated Schurian scheme has between $3$ and $9$ classes, and which admit a near-factorization that is not realized in a fusion of the scheme.  

Many of the small Schurian schemes with near-factorizations listed in Table~\ref{Table:Schurian} turn out to be orbit fusion schemes on an abelian regular normal subgroup of the corresponding primitive group.  The following lemma helps us identify these schemes from the structure of the primitive group.  

\begin{lem}\label{lem:orbitfusions}
Suppose $P$ is a primitive group of degree $n$ that is isomorphic to $G \rtimes H$, where $G$ is a normal abelian subgroup of order $n$ acting regularly on $\{1,\dots,n\}$ and $H$ is the stabilizer of $1$ in $P$. Then the Schurian scheme $(P/H, P/\!\!/H)$ is isomorphic to the orbit fusion scheme $(G,G^H)$.  
\end{lem}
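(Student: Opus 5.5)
The plan is to identify the point set $P/H$ with $G$ via the regular action, and then match the double cosets $H\backslash P/H$ with the $H$-orbits on $G$, where $H$ acts on $G$ by conjugation. Since $G$ is regular and normal, $P=GH$ with $G\cap H=1$. So every coset of $H$ has a unique representative in $G$, and the map $\phi:G\to P/H$, $g\mapsto gH$, is a bijection. I will transport the orbitals of $P$ on $P/H$ through $\phi$ and show that they coincide with the relations of the orbit fusion scheme $(G,G^H)$. In that scheme, $g_1$ and $g_2$ are related by the $H$-orbit of $g_1^{-1}g_2$; equivalently, the matrices are the fused Cayley permutation matrices.

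First I compute how $P$ acts under this identification. Translation by $x\in G$ sends $gH$ to $xgH$, which is left multiplication on $G$. For $h\in H$ we have $hgH=(hgh^{-1})hH=(hgh^{-1})H$, so $h$ acts on $G$ by the automorphism $g\mapsto hgh^{-1}$. Primitivity is not needed for this step. Thus $P$ acts on $G$ as the holomorph-type group $G\rtimes \iota(H)$, where $\iota(H)\leqslant\aut(G)$; this map may fail to be injective, but that does not matter.

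Next I determine the orbitals. Two ordered pairs $(g_1,g_2)$ and $(g_1',g_2')$ lie in the same $P$-orbital if and only if some $xh$ maps one to the other. Applying the translation by $g_1^{-1}$, every pair is equivalent to one of the form $(e,g_1^{-1}g_2)$. The stabilizer of $e$ in $P$ is exactly $H$, acting by conjugation. Hence $(e,u)$ and $(e,u')$ are in the same orbital if and only if $u'=huh^{-1}$ for some $h\in H$. So the orbitals are $R_O=\{(g_1,g_2): g_1^{-1}g_2\in O\}$, where $O$ runs over the $H$-orbits on $G$. The adjacency matrix of $R_O$ is $\sum_{u\in O}M(u)$, where $M(u)$ is the permutation matrix of right multiplication by $u$, that is, the Cayley matrix. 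These are precisely the basis matrices of $(G,G^H)$, so $\phi$ is an isomorphism of schemes.

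The only delicate point is bookkeeping rather than a real obstacle. One must check that $G\cap H=1$, which follows because regularity forces the stabilizer of $1$ in $G$ to be trivial. One must also keep the left/right conventions for the Cayley representation consistent; since $G$ is abelian, left and right translations agree, so the identification is canonical. The hypothesis that $G$ is abelian also guarantees that $(G,G^H)$ is a commutative scheme, consistent with Definition~\ref{defn:AS}.
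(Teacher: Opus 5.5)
Your proof is correct and follows essentially the same route as the paper: identify $P/H$ with $G$ via $g\mapsto gH$, reduce a pair $(xH,yH)$ to $(H,x^{-1}yH)$, and observe that $H$ acts on $G$ by conjugation, so the relation is determined by the $H$-orbit of $x^{-1}y$. The paper phrases the last step with double cosets (showing that $h^{-1}g_ik\in G$ forces $h=k$ because $G\cap H=\{e\}$), and this is equivalent to your point-stabilizer computation of the orbitals.
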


\begin{proof} 
Since $P = G \rtimes H$, the set of left cosets of $H$ in $P$ is $\{gH: g \in G\}$.  It follows that the set of $H$-$H$-double cosets of $H$ in $P$ is of the form $\{H, Hg_1H, \dots, Hg_dH \}$ for $d$ distinct non-identity elements $g_1, \dots, g_d \in G$.  Suppose $(xH, yH) \in P/H \times P/H$ for some choice of $x,y \in G$.  Then $(xH,yH)$ lies in the same relation in $P/\!\!/H$ as $(H,x^{-1}yH)$, and this relation corresponds to the double coset $Hg_iH$ if and only if $H x^{-1}y H = Hg_iH$.  It follows that there exists $h, k \in H$ such that $x^{-1}y = h^{-1} g_i k$.  But the latter is equal to $(g_i)^h h^{-1}k$, and for this to be an element of $G$, it must be the case that $h^{-1}k \in G \cap H = \{e\}$, so we can conclude that $h=k$.  Therefore, $x^{-1}y$ lies in the $H$-conjugacy class of $g_i$ in $G$, which implies $(x,y)$ lies in the relation determined by the $H$-orbit of $g_i$ in the orbit fusion scheme $(G,G^H)$.  We can conclude that the $n \times n$ adjacency matrices of these relations are exactly the same.   
\end{proof}

In the following sections, we group the examples in Table~\ref{Table:Schurian} into families and try to derive general results about when Schurian schemes of these types have near-factorizations.
  
\subsection{Schurian schemes isomorphic to symmetrizations of thin schemes of prime order}
\label{subsec:isothinprime}

From Lemma~\ref{lem:orbitfusions}, we can see from the structure of the primitive group that all the examples in Table~\ref{Table:Schurian} with odd prime order $p$ are orbit fusion schemes on $\mathbb{F}_p$ whose point stabilizer $H$ acts as a subgroup index $d$ in $\mathbb{F}_p^{\times}$, so these are cyclotomic schemes of type $\cyc(p,d)$. In particular, when the point stabilizer has order $2$, the cyclotomic scheme $\cyc(p,\frac{p-1}{2})$ is equal to the symmetrization of a cyclic group of order $p$. These schemes admit a $1$-fold $(2,\frac{p-1}{2})$ near-factorization which is given in Equation~(\ref{eq:TrivalNFGroups}, and corresponds to a symmetric near-factorization of the cyclic group of odd order $p$.   

\begin{table}[h]
\begin{center}
\begin{tabular}{| l| c|c|c|l|} \hline
\mbox{Group:} & classes & $\lambda$ & (s,t) & \mbox{Near-factorization} \\ \hline
$(7,2)$ &  3 & 1&(2,3) &      $A_2 (I+A_1)  $ \\ 
$(11,2)$ &  5 & 1 & (2,  5) & $A_3 (I+A_1+A_2) $ \\ 
$(13,2)$ &  6  & 1 & (2,6) & $A_1 (A_2+A_3+A_7) $ \\
$(13,2)$ &  6 & 1 & (3,4) & $(I+A_1)(A_2+A_6) $ \\ 
$(17,2)$ &  8 & 1 &(2,8) &  $A_1 (A_2+A_3+A_6+A_7) $ \\
$(17,2)$ & 8 & 1&(4,4) &   $(A_1+A_3)(A_4+A_5) $ \\
$(19,2)$ & 9  & 1 & (2,9) & $A_5 (I+A_1+A_2+A_3+A_4) $ \\ 
$(19,2)$ &  9 & 1 & (3,6) & $(I+A_1) (A_2+A_5+A_6) $ \\ \hline
$(37,4)$ & 9 & 1& (4,9) & $A_1(I+A_6+A_8)$ \\   
\hline
\end{tabular}
\end{center}
\caption{$1$-fold near-factorizations in symmetrization schemes of cyclic groups of odd prime order.}\label{Table:syms}
\vspace{-1.75em}
\end{table}

For the first eight rows of Table~\ref{Table:syms}, the near-factorizations follow the patterns in Lemma~\ref{lem:cyclicGroup}.  The last scheme in Table~\ref{Table:syms} is a $1$-fold $(4,9)$ near-factorization in $\cyc(37,4)$, which is a fusion of the symmetrization of the cyclic group of order $37$.  If $g$ is a generator of $C_{37}$,   $g \mapsto g^6$ defines an automorphism of of order $4$, so $A_1$ corresponds to the element $g+g^6+g^{-1}+g^{-6}$ in the group algebra $\mathbb{C} \langle g \rangle$.  The other factor in this near-factorization turns out to be 
$$e \ + \ (g^4+g^{-13}+g^{-4}+g^{13})\ + (g^{10}+g^{-14}+g^{-10}+g^{14}).$$  Several times in this section we will encounter a near-factorization in a group that is  equivalent to one realized in an orbit fusion scheme of the group.  

\subsection{4-class cyclotomic schemes of prime order}

Using Lemma~\ref{lem:orbitfusions}, we can see that the $4$-class Schurian schemes of prime order $p$ in Table~\ref{Table:Schurian} are cyclotomic schemes of the form $\cyc(p,4)$.  As these are orbit fusions of $\mathbb{F}_p$ corresponding to the index-4 subgroup of $\mathbb{F}_p^{\times}$, they are fissions of the orbit fusion of $\mathbb{F}_p$ on the index-2 subgroup of $\mathbb{F}_p^{\times}$. Further, since $\mathbb{F}_p^{\times}$ is cyclic, $\cyc(p, d)$ will be a fission of $\cyc(p,2)$ whenever $d$ is even. In particular, 
$\cyc(p,4)$ is a fission of $\cyc(p,2)$, and we claim $\cyc(p,2)$ is isomorphic to a Paley scheme of order $p$.  To see this, consider the non-zero elements of $\mathbb{F}_p$ as the $p-1$ distinct powers of a primitive element $\alpha$. The subgroup of $\mathbb{F}_p^{\times}$ with index $2$ is generated by $\alpha^2$, so the orbits of this subgroup on $\mathbb{F}_p$ are $\{0\}$, the set of quadratic residues, and the set of quadratic non-residues.  So for all odd primes $p$, $\cyc(p,2)$ is the Paley scheme of order $p$.  By the results in Subsection~\ref{subsec:Paley}, the Paley scheme have near-factorizations, and these are inherited by their fission schemes. However, the near-factorizations in Table~\ref{Table:Schurian} are not realized in a fusion of $\cyc(p,d)$.  

\begin{table}[h]
\begin{center}
\begin{tabular}{| l| c|c|c|l|} \hline
\mbox{Group:} & classes & $\lambda$ & (s,t) & \mbox{Near-factorization} \\ \hline
$(37,6)^a$ & 4 & 7&(9,28) & $ A_1(J-A_1^{\top})$ \\     
$(101,7)^a$ & 4   & 19&(25,76) & $A_1(J-A_1^{\top})$ \\
$(197,7)^a$ & 4   & 37&(49,148) & $A_3(J-A_3^{\top})$ \\   \hline
$(13,3)^a$ & 4 & 21 &(4,9) & $(I+A_1)(J-I-A_1^{\top}) $ \\ 
$(109,9)^a$ & 4  & 21&(28,81) & $(I+A_1)(J-I-A_1^{\top} )$ \\   \hline
$(17,3)$ & 4 & 1 & (4,4) & $A_1 A_2 $ \\ 
$(257,13)$ & 4 & 16&(64,64) & $A_1A_4$ \\  \hline
\end{tabular}
\end{center}
\caption{Near-factorizations in 4-class cyclotomic schemes.}\label{Table:PaleyFission}
\vspace{-1.75em}
\end{table}

As mentioned in Section~\ref{sec:cyclo}, near-factorizations of cyclotomic schemes can be determined using Gaussian period identities. Originally discovered by Gauss, there are formulas for biquadratic Gaussian periods $\eta_{p,4,1}$; we will use these formulas to establish nontrivial near-factorizations of $\cyc(p,4)$. We will use that fact that the value of the representation $\chi_k$ on the orbit sum $\sum_{r=0}^{-1+(p-1)/d} g^{ia^r}$ is the Gaussian period 
$$
\eta_{p, d, ik} = \sum_{r=0}^{-1+(p-1)/d} \zeta_p^{ika^r}.
$$  
Using this, we can understand, and generalize, all the examples in Table~\ref{Table:PaleyFission} in three propositions.

\begin{prop} 
\label{prop:4ksquared1}
If $p$ is a prime of the form $p = 4 k^2  +1 $ for odd $k$, then $\cyc(p,4)$ has a nontrivial $\big(\frac{3p+1}{16}\big)$-fold $\big(\frac{p-1}{4}, \frac{3p+1}{4}\big)$ near-factorization realized by $S=A_1$ and $T=J-A_1^{\top}$.  
\end{prop}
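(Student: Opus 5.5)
The plan is to verify the eigenvalue criterion of Theorem~\ref{thm:char2} directly, using Gauss's explicit formulas for the biquadratic Gaussian periods.

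\textbf{Setup.} Write $f=(p-1)/4=k^2$. Since $k$ is odd, $k^2\equiv 1 \pmod 8$, so $p=4k^2+1\equiv 5\pmod 8$ and $f$ is odd. Let $\alpha$ be a primitive element of $\mathbb{F}_p$. Because $f$ is odd, $-1=\alpha^{(p-1)/2}=\alpha^{2f}$ lies in the coset $\alpha^2 H_4$. Hence negation sends the coset indexing $A_1$ to the coset two steps along, and $A_1^\top=A_3\neq A_1$. All four classes have valency $k^2$. So $S=A_1$ and $T=J-A_1^\top=A_0+A_1+A_2+A_4$ are $01$-matrices in the adjacency algebra with valencies $s=k^2=\frac{p-1}{4}$ and $t=p-k^2=3k^2+1=\frac{3p+1}{4}$.

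\textbf{Valency condition.} We have
\[
st=k^2(3k^2+1)=\frac{3p+1}{16}\cdot 4k^2=\lambda(p-1),
\]
which is condition (1) of Theorem~\ref{thm:char2}.

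\textbf{Nontrivial characters.} For a nontrivial irreducible character $\chi$ we have $\chi(J)=0$, and $\chi(A_1^\top)=\overline{\chi(A_1)}$ because the character values are sums of roots of unity over the coset and its negative. Hence
\[
\chi(S)\chi(T)=-\chi(A_1)\overline{\chi(A_1)}=-|\eta_{p,4,j}|^2 .
\]
So condition (2) of Theorem~\ref{thm:char2} reduces to the single identity
\[
|\eta_{p,4,j}|^2=\frac{3p+1}{16}\qquad\text{for } j=0,1,2,3,
\]
that is, for all four biquadratic Gaussian periods.

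\textbf{The periods.} This is the main step. I would invoke Gauss's classical determination of the quartic periods. For $p\equiv 5\pmod 8$, write $p=a^2+4b^2$ with $a\equiv 1\pmod 4$; this representation is unique up to the sign of $b$. Since $f$ is odd, the periods are non-real and come in two complex-conjugate pairs:
\[
\eta=\frac{-1\pm\sqrt p \pm i\sqrt{2(p\pm a\sqrt p)}}{4},
\]
where the sign inside the square root matches the sign in front of $\sqrt p$. A direct computation gives
\[
|\eta|^2=\frac{(\sqrt p\mp 1)^2+2(p\pm a\sqrt p)}{16}=\frac{3p+1\pm 2(a-1)\sqrt p}{16}.
\]
Now $p=1+4k^2$ is such a representation with $a=1\equiv 1\pmod 4$ and $b=k$. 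By uniqueness, $a=1$, so the $\sqrt p$ term vanishes and every period satisfies $|\eta|^2=\frac{3p+1}{16}$, as required.

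\textbf{Main obstacle.} The delicate point is getting Gauss's formula with the correct signs and normalization for the case $p\equiv 5\pmod 8$. In particular one must confirm:
\begin{enumerate}[(a)]
\item the real part is $\frac{-1\pm\sqrt p}{4}$, which follows since $\eta_0+\eta_2$ and $\eta_1+\eta_3$ are the quadratic periods $\frac{-1\pm\sqrt p}{2}$;
\item the imaginary part squared is $\frac{2(p\pm a\sqrt p)}{16}$.
\end{enumerate}
To be safe, I would derive (b) independently from $\eta_0\overline{\eta_0}=\eta_0\eta_2$ and the cyclotomic numbers of order $4$. For $p\equiv 5 \pmod 8$ these are expressed in terms of $a$ and $b$, and they give $\eta_0\eta_2+\eta_1\eta_3$ and $\eta_0\eta_2\cdot\eta_1\eta_3$ explicitly, from which the value of $|\eta|^2$ can be read off.

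Once all four periods have modulus squared $\lambda$, Theorem~\ref{thm:char2} gives $ST=\lambda(J-I)$. The factorization is nontrivial because $s=k^2\geq 9$, since $p=5$ (the case $k=1$) is excluded: there $f=1$, $s=1$, and the factorization would be trivial.
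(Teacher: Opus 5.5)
Your proposal is correct and follows essentially the paper's route: the spectral criterion reduces everything to $\eta\bar\eta=\frac{3p+1}{16}$ for the quartic Gaussian periods. The paper takes this value directly from Thas--Zagier, while you read it off Gauss's formula using the fact that $p=1^2+4k^2$ forces $a=1$; your cyclotomic-number fallback also works, since with $a=1$ one has $(2,0)=(2,2)=(2,1)=(2,3)=\frac{p-5}{16}$, giving $\eta_0\eta_2=f-\frac{p-5}{16}=\frac{3p+1}{16}$. Your remark that $k=1$, i.e.\ $p=5$, gives only the trivial $(1,4)$ factorization is a correct caveat that the stated proposition overlooks.
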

\begin{proof}
We show that $A_1 ( J-A_1^{\top} ) = \frac{3p+1}{16} (J-I)$. We can do this by showing the spectra of the two matrices are equal.  Since the valency of $A_1$ is $\frac{p-1}{4}$, the valency of the left hand side must be $\big(\frac{p-1}{4}\big) \big(p - \frac{p-1}{4}\big) = \frac{(p-1)(3p+1)}{16},$ 
which matches the valency of the right hand side. 
Let $\chi_1$ be the irreducible character of $\cyc(p,4)$ with $\chi_1(A_1)=\eta$, where $\eta = \eta_{p,4,1}$. Then $\chi_1(A_1^{\top}) = \bar{\eta}$.  If $\chi_j$, $j>1$ is any another non-valency irreducible character, then $\chi_j(A_1)$ will be $\eta_{p,4,j}$, a Galois conjugate of $\eta$. Since $\chi_j(J)=0$ for all $j>0$, we see that the values of all irreducible characters on both sides of our matrix equation will match as long as $\eta$ satisfies the identity $\eta \bar{\eta} = \frac{3p+1}{16}.$ 

A special case of a recent result of Thas and Zagier~\cite[Theorem 4.2(c)]{MR2349900} proves that Gaussian periods $\eta=\eta_{p,4,j}$, where $j \in \mathbb{Z}^+$ satisfy $\eta \bar{\eta} \in \mathbb{Z}^+$ if and only if 
$p=4k^2+1$ with $k$ odd, and in this case $\eta \bar{\eta} = \frac{3p+1}{16}$.  Thus we conclude that $(A_1, J-A_1^{\top})$ gives a near-factorization in $\cyc(p,4)$ with $\lambda = \frac{3p+1}{16}$ whenever $p$ for which $\frac{p-1}{4}$ is an odd perfect square.   
\end{proof}

The set of primes above are precisely those lying in the quadratic progression $16n^2+16n+5$, so its infiniteness would be a special case of either the Hardy-Littlewood or Bunyakowsky conjectures.   

Next we consider the case where $p = 4k^2 + 9$ for odd $k$, the examples from the table that fall in this case are (13,3) and (109,9).

\begin{prop} 
\label{prop:4ksquare9}
If $p$ is a prime of the form $p = 4k^2 + 9$ for odd $k$, then $\cyc(p,4)$ has a nontrivial $\big(\frac{3p + 9}{16}\big)$-fold $\big(\frac{p+3}{4}, \frac{3(p-1)}{4}\big)$ near-factorization realized by $S=I+A_1$ and $T=J-I-A_1^{\top}$.  
\end{prop}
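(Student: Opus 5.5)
I will follow the template of the proof of Proposition~\ref{prop:4ksquared1} and compare spectra. The valencies come first. $A_1$ has valency $\frac{p-1}{4}$, so $S=I+A_1$ has valency $\frac{p+3}{4}$. Also $T=J-I-A_1^{\top}$ has valency $p-1-\frac{p-1}{4}=\frac{3(p-1)}{4}$. Hence
\[
st=\frac{(p+3)}{4}\cdot\frac{3(p-1)}{4}=\frac{3p+9}{16}(p-1)=\lambda(p-1),
\]
which is Condition (1) of Theorem~\ref{thm:char2}. Note that $\lambda=\frac{3p+9}{16}$ is an integer: $p=4k^2+9$ with $k$ odd gives $3p+9=12k^2+36\equiv 48\equiv 0 \pmod{16}$, since $k^2\equiv 1\pmod 8$. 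Nontriviality is immediate since $s,t\geq 2$.

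For the nontrivial characters, let $\chi_j$ with $j\geq 1$ satisfy $\chi_j(A_1)=\eta_j$, a Galois conjugate of $\eta=\eta_{p,4,1}$. Then $\chi_j(A_1^{\top})=\overline{\eta_j}$ and $\chi_j(J)=0$. So
\[
\chi_j(S)\chi_j(T)=(1+\eta_j)(-1-\overline{\eta_j})=-|1+\eta_j|^2 .
\]
By Theorem~\ref{thm:char2}(2), it therefore suffices to prove the Gaussian period identity $(1+\eta)(1+\bar\eta)=\frac{3p+9}{16}$. Since the Galois group permutes the $\eta_j$ and commutes with complex conjugation (the field is abelian), this identity for $\eta$ transfers to every conjugate $\eta_j$. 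Here $p\equiv 1\pmod 4$ but $p\equiv 13 \pmod{16}$, so $\frac{p-1}{4}$ is odd, $-1\notin H_4$, and $\bar\eta\neq\eta$. This is consistent with $A_1$ being non-symmetric.

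The main step is establishing $|1+\eta|^2=\frac{3p+9}{16}$. I would first try to cite the same Thas--Zagier result~\cite[Theorem 4.2]{MR2349900}, presumably another case of it, which classifies when $a+\eta$ has integral norm-squared. Failing that, I would use Gauss's explicit biquadratic periods. Write $p=a^2+4b^2$ with $a\equiv 1 \pmod 4$; here $a=\pm 3$ and $b=\pm k$, so that $a\equiv1 \pmod 4$ forces $a=-3$. When $\frac{p-1}{4}$ is odd, the periods are
\[
\eta=\frac{-1\pm\sqrt{p}}{4}\pm\frac{i}{4}\sqrt{2p\pm 2a\sqrt p}
\]
(with appropriate sign choices), so $\mathrm{Re}\,\eta=\frac{-1+\varepsilon\sqrt p}{4}$. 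Then
\[
|1+\eta|^2=|\eta|^2+2\,\mathrm{Re}\,\eta+1 .
\]
From the explicit form, $|\eta|^2=\frac{(1-\varepsilon\sqrt p)^2+2p+2\varepsilon' a\sqrt p}{16}$, and the $\sqrt p$ terms must cancel against $2\,\mathrm{Re}\,\eta$. The constant term then comes out as $\frac{1+p+2p-8+16}{16}=\frac{3p+9}{16}$. This holds exactly when $|a|=3$ with the correct correlated signs, namely $-2\varepsilon+2\varepsilon' a+8\varepsilon=0$, i.e.\ $\varepsilon' a=-3\varepsilon$. The delicate point, and the obstacle I expect, is verifying this sign correlation between the real and imaginary parts of the period. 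This is exactly the content of Gauss's determination of the biquadratic periods (the relation $\eta_0\eta_2+\eta_1\eta_3$ in terms of $a$). If it comes out with the wrong sign for $\eta$, I would pass to a conjugate, i.e.\ relabel $A_1$ as $A_3$, which gives the same statement up to the scheme automorphism.
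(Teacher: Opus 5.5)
Your argument follows the paper's proof: reduce, via the characters of $\cyc(p,4)$, to the period identity $\eta+\bar\eta+\eta\bar\eta=\lambda-1$ (your $|1+\eta|^2=\lambda$), then evaluate it with Gauss's formula for the biquadratic periods. The one thing you leave unresolved is the sign correlation, and it is exactly what the paper takes from the normalized formula in Thas--Zagier:
$\eta=\frac{\sqrt p-1}{4}+i\sqrt{\frac{p-B\sqrt p}{8}}$, with $p=A^2+B^2$, $A=2k$ and $B\equiv 3 \pmod 4$.
Here $B=3$, which in your notation is $a=-3$ with $\varepsilon'=\varepsilon$, so the $\sqrt p$-terms cancel.

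You can also check the sign directly without tracking conventions. Let $\chi$ be a quartic character. Since $\chi(-1)=(-1)^{(p-1)/4}=-1$, we have $G(\chi^3)=-\overline{G(\chi)}$, and hence $\eta_0=\frac14\bigl(-1+G(\chi^2)+2i\,\mathrm{Im}\,G(\chi)\bigr)$. Next, $G(\chi)^2=J(\chi,\chi)G(\chi^2)$. Write $J(\chi,\chi)=a+bi$ with $a^2+b^2=p$; it is primary because $\chi(-1)=-1$. Since $b=\pm 2k\equiv 2 \pmod 4$, this forces $a=3$. It follows that $(\mathrm{Im}\,G(\chi))^2=\frac{p-3G(\chi^2)}{2}$, and so $|1+\eta_0|^2=\frac{3p+9}{16}$ whichever sign $G(\chi^2)=\pm\sqrt p$ has. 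Your Galois argument then transfers this to all conjugates.

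Drop the fallback of relabeling $A_1$ as $A_3$: it could not repair a wrong sign. Relabeling only permutes which of the four conjugate periods is attached to which character. The near-factorization needs $|1+\eta_j|^2=\lambda$ for all four $j$ at once, so the sign has to be determined, not chosen. As shown above, it comes out the right way.
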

\begin{proof}
A near-factorization of the form $(I+A_1)(J-I-A_1^\top) = \lambda(J-I)$ is equivalent to 
$(\frac{p+3}{4} -\lambda)J+(\lambda-1)I = A_1+A_1^{\top}+A_1A_1^{\top}$.  

When we apply any nontrivial irreducible character of the scheme (or, equivalently, any nontrivial irreducible character of $C_p$) to this identity, it will hold when the (imaginary) Gaussian period $\eta = \eta_{p,4,j}$ satisfies $\eta + \bar{\eta} + \eta \bar{\eta} = (\lambda - 1) \in \mathbb{N}.$
From~\cite[Theorem 4.2(c)]{MR2349900}, the general formula (originally due to Gauss) gives
$\eta = \frac{\sqrt{p}-1}{4} + i \sqrt{\frac{p-B\sqrt{p}}{8}}$, where $p = A^2+B^2$, $A =2k$ with $k$ odd, and $B \equiv 3 \pmod{4}$.

Working with this, we find the necessary and sufficient condition for $\eta + \bar{\eta} + \eta \bar{\eta} \in \mathbb{Q}$ is $B = 3$, and in this case $\eta + \bar{\eta} + \eta \bar{\eta} = \frac{3p-7}{16}$, which implies $\lambda = \frac{3p+9}{16}$.  Since $p=4k^2+9$ with $k$ odd, this $\lambda$ will be a nonnegative integer.   
\end{proof}

The family of primes for which $\eta_{p,4,1}$ has this property is conjectured to be infinite. The primes less than $10000$ of the form $4k^2 + 9$ for odd $k$ are exactly $13$, $109$, $1453$, $3373$, $3853$, $4909$, $6733$, and $9613$. 

The last two $4$-class cyclotomic schemes arise from primitive groups of the form $C_{(4) 2^\ell + 1} \rtimes C_{2^\ell}$, the near-factorization we detected results from a product of two adjacency matrices, so the corresponding Gaussian period identity has the form $\eta_{p,4,1} \eta_{p,4,j} = - \lambda$
for some $j > 1$ and $\lambda \in \mathbb{Z}^+$. 
Again, we can use Gauss's formula to generalize these examples. 

\begin{prop}
\label{prop:16ksquared1}
Suppose $p$ is a prime of the form $p=16k^2+1$ for some positive integer $k$.  Then there exists a unit $j \pmod{p}$ for which $\eta_{p,4,1} \eta_{p,4,j} = - k^2. $
\end{prop}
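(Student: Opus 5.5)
The plan is to take $j=\alpha^2$, where $\alpha$ is a primitive root mod $p$, so that $\eta_{p,4,j}$ is the period of the coset of $H_4$ two steps away from $H_4$. The product $\eta_{p,4,1}\eta_{p,4,j}$ will be computed with the cyclotomic numbers of order $4$, rather than with the radical formula for the periods.

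Write $C_0=H_4=\langle \alpha^4\rangle$ and $C_i=\alpha^iC_0$ for $i=0,1,2,3$. Let $\eta_i=\sum_{x\in C_i}\zeta_p^x$, so $\eta_{p,4,1}=\eta_0$ and $\eta_{p,4,\alpha^2}=\eta_2$. Here $f=(p-1)/4=4k^2$ is even, so $-1\in C_0$ and every $C_i$ is symmetric. The standard product rule is
$$\eta_a\eta_b=\sum_{i=0}^{3}(b-a,\,i-a)\,\eta_i+f\,\delta_{a,b},$$
where $(u,v)$ counts the $x\in C_u$ with $1+x\in C_v$. I will derive it by expanding $\sum_{x\in C_a,\,y\in C_b}\zeta_p^{x+y}$ and substituting $y=xz$. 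The term $x+y=0$ can occur only when $a=b$, because $-1\in C_0$. Taking $a=0$ and $b=2$ gives
$$\eta_0\eta_2=(2,0)\eta_0+(2,1)\eta_1+(2,2)\eta_2+(2,3)\eta_3 .$$

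Next I will invoke Gauss's classical evaluation of the order-$4$ cyclotomic numbers in the case $f$ even. Write $p=s^2+4t^2$ with $s\equiv 1\pmod 4$. In the usual notation $(0,2)=(2,0)=(2,2)=C=\frac{p-3+2s}{16}$, and the remaining classes $(1,2)$, $(2,1)$, $(2,3)$, $(3,2)$, $\ldots$ equal $E=\frac{p+1-2s}{16}$. For $p=16k^2+1$ the representation is forced, since $p=1+4(2k)^2$ and $s^2\le p$ with $s\equiv1\pmod 4$ give $s=1$ and $t=\pm 2k$. Uniqueness of the representation of a prime as a sum of two squares justifies this.

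With $s=1$ we get $C=E=\frac{p-1}{16}=k^2$. Hence every coefficient $(2,i)$ in the expansion equals $k^2$. Using $\eta_0+\eta_1+\eta_2+\eta_3=\sum_{x\ne0}\zeta_p^x=-1$, this yields
$$\eta_{p,4,1}\,\eta_{p,4,\alpha^2}=k^2(\eta_0+\eta_1+\eta_2+\eta_3)=-k^2,$$
as claimed.

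The main obstacle is bookkeeping. I must make sure that the indices in the product rule match the convention for $(u,v)$ used in the table of cyclotomic numbers, and that $(2,0),(2,1),(2,2),(2,3)$ fall into the classes $C,E,C,E$. Conventions differ in the literature between $(i,j)$ and $(j,i)$ and in the sign of $t$. I will check this with the symmetry relations $(i,j)=(-i,j-i)=(j,i)$, which are valid for $f$ even. Since only $s$, and not $t$, enters $C$ and $E$, the sign ambiguity in $t$ is harmless. As a sanity check I will verify $p=17$ ($k=1$) against the entry $(17,3)$ of Table~\ref{Table:PaleyFission}, where $A_1A_2$ realizes a $1$-fold near-factorization. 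As a cross-check, Gauss's radical formula gives $\eta_0+\eta_2=\frac{-1+\sqrt p}{2}$, and the product then equals $\frac{1-p-(2\pm 2)\sqrt p}{16}$. This product is rational exactly when the sign choice kills $\sqrt p$, and then it equals $\frac{1-p}{16}=-k^2$, which is consistent with the computation above.
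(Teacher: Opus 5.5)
Your argument is correct, but it takes a different route from the paper.

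\textbf{The paper's route.} It quotes Gauss's radical expression $\eta_{p,4,1}=\frac{\sqrt p-1}{4}+\sqrt{\frac{p-\sqrt p}{8}}$. It then picks $j$ so that $\zeta_p\mapsto\zeta_p^j$ fixes $\sqrt p$ but moves $\eta_{p,4,1}$. This forces $j\in\alpha^2H_4$, the same choice as your $j=\alpha^2$. That automorphism negates the inner radical, so the product is the difference of squares $\bigl(\frac{\sqrt p-1}{4}\bigr)^2-\frac{p-\sqrt p}{8}=\frac{1-p}{16}$.

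\textbf{Your route.} You use Gauss's order-$4$ cyclotomic numbers. Since $p=1^2+4(2k)^2$ forces $s=1$, the classes $C$ and $E$ both equal $k^2$. The product therefore collapses to $k^2\sum_i\eta_i=-k^2$. Your index bookkeeping is right: for $f$ even, row $2$ of the table is $C,E,C,E$. For $p=17$, a direct count gives $(2,i)=1$ for every $i$.

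\textbf{What each approach buys.} Your product rule already holds in the group ring: $\underline{C_0}\,\underline{C_2}=\sum_i(2,i)\,\underline{C_i}=k^2(\mathbb{F}_p\setminus\{0\})$. So you obtain $A_{C_0}A_{C_2}=k^2(J-I)$ directly. This is the intersection-number criterion of Theorem~\ref{thm:char1}, since the cyclotomic numbers are, up to relabelling, the intersection numbers of $\cyc(p,4)$. You do not need to transport a single-character identity to the other characters by Galois conjugacy, as the paper's eigenvalue route must. Your route also replaces the delicate sign inside the nested radical, which the paper simply quotes, with the normalization $s\equiv1\pmod 4$ of a standard table, where the sign of $t$ does not matter. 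The paper's route is shorter and shows transparently why the product is rational.

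\textbf{One caveat.} Your closing cross-check is not independent. Saying the product is rational exactly when the sign choice kills $\sqrt p$ presupposes the answer. Fixing that sign (it is $\sqrt{2(p-s\sqrt p)}$ with $s=1$) is precisely the content the paper takes from Gauss. Either drop the cross-check or state the formula with that sign determined.
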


\begin{proof}
As $p \equiv 1 \pmod{4}$, the unique quadratic subfield of the cyclotomic field $\mathbb{Q}(\zeta_p)$ is $\mathbb{Q}(\sqrt{p})$.  Since $\mathbb{Q}(\eta_{p,4,1})$ is a cyclic degree $4$ extension of $\mathbb{Q}$, $\mathbb{Q}(\sqrt{p})$ is its unique degree $2$ subfield.  Let $j$ be a unit $\pmod{p}$ whose corresponding Galois automorphism fixes $\sqrt{p}$ and moves $\eta_{p,4,1}$ to $\eta_{p,4,j} \ne \eta_{p,4,1}$.  The Gauss's formula for $\eta_{p,4,1}$ in these cases is $\eta_{p,4,1} = \frac{\sqrt{p}-1}{4} + \sqrt{\frac{p-\sqrt{p}}{8}}.$
Since the Galois automorphism $\zeta_p \mapsto \zeta_p^j$ fixes $\sqrt{p}$ and maps $\sqrt{\frac{p-\sqrt{p}}{8}}$ to its negative, we get $\eta_{p,4,1} \eta_{p,4,j} = (\frac{\sqrt{p}-1}{4})^2-\frac{p-\sqrt{p}}{8} = \frac{-p+1}{16} = - k^2.$
\end{proof}

The primes less than $10000$ of the form $16k^2+1$ for some positive integer $k$ are $17$, $257$, $401$, $577$, $1297$, $1601$, $3137$, and $7057$.  

The near-factorization of $\cyc(17,4)$ results from the product of $\eta_{17,4,1} = \zeta + \zeta^4 + \zeta^{-1} + \zeta^{-4}$ and $\eta_{17,4,2} = \zeta^2 + \zeta^8 + \zeta^{-2} + \zeta^{-8}$ being equal to $-1$, (it is the sum of all the nontrivial seventeenth roots of $1$).  This is also a near-factorization in the symmetrization of $C_{17}$, but not one that arises directly from the formula in Subsection~\ref{subsec:SymofThin}.  

The near-factorization of $\cyc(257,4)$ results from the product of $\eta_{257,4,1}$ and $\eta_{257,4,9}$ (and follows since $3$ is a primitive element of the field $\mathbb{F}_{257}$).  Note that in this case, $\eta_{257,4,1}$ is the sum of the $64$ roots of unity in the orbit of $\zeta_{257}$ under the subgroup of automorphisms generated by $\zeta_{257} \mapsto \zeta_{257}^{81}$. 
 
\subsection{Cyclotomic schemes of prime order with more than 4 classes}
\label{subsec:more4}

There are several more Schurian schemes of prime order in Table~\ref{Table:Schurian}.  Again these are cyclotomic schemes of prime order, so the ones with an even number of classes are fissions a Paley scheme.   

As we can see, the parameters of the near-factorizations in Table~\ref{Table:6or8classCycpd} match the parameters of the near-factorization known to exist in the $2$-class Paley scheme fusion of these cyclotomic schemes. But these near-factorizations are not equivalent in the thin scheme of the cyclic group to one realized in its' Payley scheme fusion, and this is why we see them being minimally realized in a higher-rank fusion of the thin scheme. We demonstrate this with the $(31,4)^a$ example above. Let $C_{31} = \langle g \rangle$.  Since $2$ has multiplicative order $5 \pmod{31}$, the standard basis of the $\cyc(31,6)$ scheme consists of $A_1 = g+g^2+g^4+g^8+g^{16}$, $A_2 = g^5+g^9+g^{10}+g^{18}+g^{20}$, and $A_3 = g^7+g^{14}+g^{19}+g^{25}+g^{28}$, plus their three transposes obtained by acting linearly by group inversion. Then $\cyc(p,2)$ is a fusion of $\cyc(31,6)$ with basis $\{A_0,  A_1+A_2+A_3,  A_1^{\top}+A_2^{\top}+A_3^{\top}\}$. The near-factorization in the Paley scheme fusion results from $(A_0+A_1+A_2+A_3)(A_1+A_2+A_3) = 8(J-I)$; this not equivalent by group automorphisms or translation to the near-factorization in Table~\ref{Table:6or8classCycpd}. 
So these near-factorizations are also realized in the cyclic group of order $p$, have the same parameters as the near-factorization realized in the Payley scheme fusion of its' thin scheme, but are not equivalent in the group to that one.  The minimal fusion of the thin scheme realizing these inequivalent near-factorizations lies strictly between the thin scheme and the Payley scheme, this is the one where our procedure detects them.  It is well-known that a cyclic group of odd prime order can have several $\lambda$-fold near-factorizations with the same parameters that are not equivalent under automorphisms, translation, or inversion in the group (see \cite{kreher2025lambda}).

\begin{table}[h]
\begin{center}
\begin{tabular}{| l| c|c|c|l|} \hline
\mbox{Group:} & classes & $\lambda$ & (s,t) & \mbox{Near-factorization} \\ \hline
$(31,4)^a$ &  6 & 8 & $(15,16)$ &   $(I+A_1+A_2+A_2^{\top})(A_1+A_3+A_3^{\top})$ \\    
$(127,9)^a$& 6 & 32 & (63,64) &      $(I+A_1+A_2+A_5) (A_3+A_4+A_5) $ \\    
$(223,5)^a$ & 6 & 56 & (111,112) & $(I+A_1+A_2+A_5) (A_3+A_4+A_5) $ \\      \hline
$(109,8)$   & 6 & 27 & $(54,54)$ & $ (A_1+A_2+A_5)( A_3 +A_4+A_6)$ \\  
$(113,6)$   & 8 & 28 & $(56,56)$ & $(A_1+A_3+A_7+A_8)(A_2+A_4+A_5+A_6)$  \\ 
\hline
\end{tabular}
\end{center}
\caption{Near-factorizations of small cyclotomic schemes of prime order having 6 or 8 classes.}\label{Table:6or8classCycpd}
\vspace{-1.75em}
\end{table}

We also obtain three additional $6$- or $8$-class cyclotomic schemes of prime order in Table~\ref{Table:Schurian} that have nontrivial near-factorizations. We were unable to generalize the second near-factorization of $\cyc(31,4)$ in Table~\ref{Table:additional6or8classcyclotomic}. The two other near-factorizations in Table~\ref{Table:additional6or8classcyclotomic} are small cases of families of strong external difference families in the cyclic group of prime order that have recently been established in~\cite{BaoEtAl2018} and \cite{li-momihara2025}.  The $3$-fold $(18,18)$ near-factorization of $\cyc(109,6)$ is the smallest in the family of $\frac{p-1}{36}$-fold $(\frac{p-1}{6},\frac{p-1}{6})$ near-factorizations of the cyclic group of prime order $p=108n^2+1$ discovered in~\cite{BaoEtAl2018}.  The $7$-fold $(28,28)$ near-factorization of $\cyc(113,8)$ is the $v=2$ case of the $\frac{p-1}{16}$-fold $(\frac{p-1}{4},\frac{p-1}{4})$ near-factorization of the cyclic group of prime order $p=4v^4+12v^2+1$ discovered in~\cite{li-momihara2025}.  

\begin{table}[h]
\begin{center}
\begin{tabular}{| l| c|c|c|l|} \hline
\mbox{Group:} & classes & $\lambda$ & (s,t) & \mbox{Near-factorization} \\ \hline
$(31,4)^a$ & 6 & 4&(6,20) & $(I+A_1)(A_1+A_2+A_2^{\top}+A_3^{\top})$ \\  \hline
$(109,8)$   & 6 & 3&(18,18) & $ A_1 A_3  $ \\    
$(113,6)$   & 8 & 7&(28,28) & $(A_1+A_2)(A_3+A_4)$ \\   
\hline
\end{tabular}
\end{center}
\caption{Additional nontrivial near-factorizations in $\cyc(p,d)$ with $d=6,8$.} \label{Table:additional6or8classcyclotomic}
\vspace{-1.75em}
\end{table}

Our search found one more asymmetric $8$-class cyclotomic scheme whose near-factorization does generalize to a larger family of such association schemes.

\begin{table}[h]
\begin{center}
\begin{tabular}{| l| c|c|c|l|} \hline
\mbox{Group:} & classes & $\lambda$ & (s,t) & \mbox{Near-factorization} \\ \hline
$(73,7)^a$ & 8 & 8& $(9,64)$ & $A_1(J-A_1^{\top}) $\\
\hline
\end{tabular}
\end{center}
\caption{An asymmetric class 8 cyclotomic scheme whose near-factorization does generalize} 
\end{table}

\begin{lem} 
\label{lem:64bsquared9}
If $p$ is a prime of the form $p=64b^2+9=8d^2+1$ where $b, d \in \mathbb{Z}^+$, then $\cyc(p,8)$ has a $\big(\frac{7p+1}{64}\big)$-fold $\big(\frac{p-1}{8},\frac{7p+1}{8}\big)$ near-factorization.  
\end{lem}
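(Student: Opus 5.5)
The natural candidate, following the pattern of Proposition~\ref{prop:4ksquared1} and the $(73,7)^a$ row of the table, is $S=A_1$ and $T=J-A_1^{\top}$ in $\cyc(p,8)$. Here $A_1$ is the orbit sum of the index-$8$ subgroup $H_8$ of $\mathbb{F}_p^{\times}$. I would prove $A_1(J-A_1^{\top})=\frac{7p+1}{64}(J-I)$ by comparing spectra, via Theorem~\ref{thm:char2}.

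\textbf{Valency check.} The valency of $A_1$ is $s=\frac{p-1}{8}$, and $T$ has valency $p-\frac{p-1}{8}=\frac{7p+1}{8}$. The product of valencies is $\frac{(p-1)(7p+1)}{64}$, which is $\lambda(p-1)$ with $\lambda=\frac{7p+1}{64}$. Integrality of $\lambda$ follows from $p=64b^2+9$: indeed $7p+1=448b^2+64$, so $\lambda=7b^2+1$. We also need $\frac{p-1}{8}=d^2$ to be an integer, which the second form $p=8d^2+1$ gives. Since $d\geq 2$ and $T\neq J-I$, the near-factorization is nontrivial.

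\textbf{Nontrivial eigenspaces.} For a nontrivial character $\chi_j$ we have $\chi_j(J)=0$, $\chi_j(A_1)=\eta_{p,8,j}=:\eta$, and $\chi_j(A_1^{\top})=\bar\eta$. So the condition becomes $-\eta\bar\eta=-\lambda$, that is, $|\eta_{p,8,j}|^2=7b^2+1$ for every $j$. All the $\eta_{p,8,j}$ are Galois conjugate, and complex conjugation (inversion) commutes with the abelian Galois action. Hence $|\eta|^2$ is rational (and therefore equal to $7b^2+1$ for all $j$) precisely when it is fixed by the Galois group. So it suffices to show $\eta\bar\eta=\frac{7p+1}{64}$ for the single period $\eta=\eta_{p,8,1}$. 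Note that $-1\notin H_8$ is needed for $\bar\eta\neq\eta$. Since $|H_8|=d^2$, this requires $d$ odd; that holds because $p\equiv 9\pmod{16}$ forces $d$ odd, so the scheme is asymmetric as in the table.

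\textbf{Main obstacle: evaluating $|\eta_{p,8,1}|^2$.} The natural route is the analogue of the Thas--Zagier result \cite{MR2349900} for octic periods. I would try first to compute via cyclotomic numbers of order $8$. We have $\eta\bar\eta=\sum_i (1,i)_8$-type counts times periods, so $\eta\bar\eta=\sum_{k=0}^{7} c_k\eta_{p,8,k}+\delta$, where the $c_k$ are the cyclotomic numbers $(i,j)_8$ and $\delta=f$ if $-1\in H_8$ and $0$ otherwise. Rationality requires that the coefficients $c_k$ be constant in $k$, in which case $\eta\bar\eta=\frac{f-c}{1}$ with $\sum\eta_k=-1$. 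The cyclotomic numbers of order $8$ are classically expressed (Lehmer, Whiteman) in terms of the representations $p=x^2+4y^2=a^2+2b'^2$. I expect the hypotheses $p=64b^2+9$ (i.e. $x=\pm3$, $y=4b$) and $p=8d^2+1$ (i.e. $a=\pm1$, $b'=2d$) to be exactly what forces the eight relevant cyclotomic numbers to coincide. This is analogous to how $B=3$ appeared in Proposition~\ref{prop:4ksquare9} and $A$ in Proposition~\ref{prop:4ksquared1}. Plugging these into Whiteman's table, with the correct sign normalisation for $p\equiv 9\pmod{16}$ and $f=d^2$ odd, should give the common value $c$ and hence $\eta\bar\eta=d^2-c=\frac{7p+1}{64}$.

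An alternative route is to write $\eta_{p,8,1}=\eta_{p,4,1}$-halves via Gauss-sum factorisation, $|\eta|^2=\frac{1}{64}\bigl|\sum_{\chi^8=1}G(\chi)\bigr|^2$. Expanding with $G(\chi)\overline{G(\chi)}=p$ and the Jacobi sums $J(\chi,\psi)$ of orders $4$ and $8$, the cross terms are governed by $x+2yi$ and $a+b'\sqrt{-2}$. The hypotheses should make them collapse to the constant $\frac{7p+1}{64}$. The bookkeeping of signs in either route is the delicate part. I would sanity-check the result against $p=73$, where $b=1$, $d=3$ and $\lambda=8$, matching the table.
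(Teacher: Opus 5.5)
Your setup is the same as the paper's. You take $S=A_1$ and $T=J-A_1^{\top}$, and your valency count gives $\lambda=\frac{7p+1}{64}=7b^2+1$. You then use $\chi_j(A_1^{\top})=\overline{\chi_j(A_1)}$ and Galois conjugacy of the periods to reduce everything to the single identity $\eta_{p,8,1}\overline{\eta_{p,8,1}}=\frac{7p+1}{64}$. All of that is correct.

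The gap is that this identity, which is the entire nontrivial content of the lemma, is never established. You \emph{expect} the hypotheses to force the relevant order-$8$ cyclotomic numbers to coincide, and say Whiteman's table \emph{should} produce the right constant. But you carry out neither that computation (including the sign normalisations you yourself flag as delicate) nor the Gauss/Jacobi-sum alternative. As written, the proposal is a correct reduction, not a proof.

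The paper closes this step by citation, and you can do the same. \cite[Theorem 4.2(d)]{MR2349900} is not an ``analogue'' still to be developed; it is precisely the octic statement, with hypothesis $p=64b^2+9=8d^2+1$ and conclusion $\eta\bar{\eta}=\frac{7p+1}{64}$. Combined with your spectral comparison, that is the whole proof.

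Equivalently, your criterion that the $c_k$ be constant says that $H_8$ is a $(p,d^2,b^2)$ difference set in $(\mathbb{F}_p,+)$. The common value is $c=d^2-\lambda=b^2$, and for $p=73$ this is the planar $(73,9,1)$ set. That is E.~Lehmer's classical theorem on octic residue difference sets, valid for $p=8a^2+1=64b^2+9$ with $a,b$ odd. The parity conditions are automatic here. You already showed $d$ is odd. If $b$ were even, then $d=2d_0^2-1$ for a smaller solution $(d_0,b_0)$ of $d^2-8b^2=1$; hence $3\nmid d$, so $3\mid 8d^2+1=p$, contradicting primality.

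One small slip: the diagonal terms $x=x'$ in $\eta\bar{\eta}=\sum_{x,x'\in H_8}\zeta_p^{x-x'}$ contribute $f$ unconditionally, not only when $-1\in H_8$. Your later formula $\eta\bar{\eta}=d^2-c$ already uses the correct version.
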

\begin{proof}
From~\cite[Theorem 4.2(d)]{MR2349900}, there is a (conjecturally infinite) family of class $8$ cyclotomic schemes on $C_p$ with a near-factorization of the form $A_1(J-A_1^{\top})$. These primes $p$ must satisfy $p = 64b^2+9 = 8 d^2 +1$, for $b, d \in \mathbb{Z}^+$, and the Gaussian period $\eta=\eta_{p,8,k}$ satisfies $\eta \bar{\eta} = \frac{7p+1}{64}$.   
\end{proof}

These primes in the above lemma are determined by the parameters $b$ and $d$, which correspond to the solutions of the Pell equation $d^2-8b^2=1$.  This equation has infinitely many solutions, but whether there are infinitely many primes $p$ determined by these solutions is open. 
 

\subsection{Schurian schemes algebraically isomorphic to Hamming schemes}
\label{subsec:SchurianHamming}

In the next subsections we encounter families of Schurian schemes isomorphic to orbit fusions on an elementary abelian group of prime power order, or the direct product of a cyclic group of some composite order with itself. The first of these come in the form of examples where the association scheme is algebraically isomorphic to the Hamming scheme, in Section~\ref{sec:Hamming} we have shown that these association schemes have near-factorizations, so their isomorphic Schurian association schemes have the same near-factorizations. Each of these primitive groups is of form $G = (C_3)^d \rtimes H$ where $H$ is the stabilizer of a point, and the corresponding Schurian scheme is an orbit fusion scheme on $\mathbb{F}_{3^{d}}$ with $d$ classes that agrees with 
$H(d,3)$.   

\begin{table}[h]
\begin{center}
\begin{tabular}{| l| c|c|c|l|} \hline
\mbox{Group:} & classes & $\lambda$ & (s,t) & \mbox{Near-factorization} \\ \hline
$(27,[3,4,8])$ & 3 & 4&(8,13) & $A_1(I+A_3) $ \\
$(81, [72,73,91] )$ & 4 & 8&(16,40) & $ A_1(A_2+A_4) $ \\
$(243, [13,15,21,22,23] )$ & 5 & 16&(32,121) & $(I+A_2+A_4)A_5$ \\  
\hline
\end{tabular}
\end{center}
\caption{Small schurian schemes algebraically isomorphic to Hamming schemes}
\label{table9}
\vspace{-1.75em}
\end{table}

The group indexed by $(243, 23)$ in Table~\ref{table9} is the full automorphism group of $H(5,3)$. The other groups in this table with degree $243$ are proper subgroups of the automorphism group, these subgroup produce the same orbit fusion since their point stabilizers all have the same orbits on $\mathbb{F}_{3^5}$.


\subsection{Products from half-set near-factorizations}
\label{subsec:schurianProduct}

Table~\ref{Table:OAFission} lists primitive groups where the Schurian association schemes are fusions of product graphs with nontrivial half-set near-factorizations. The examples in Table~\ref{Table:OAFission} arise from the product construction in Theorem~\ref{thm:halfsetProds} where $\mathcal{A}$ is a strongly-regular graph or tournament with a nontrivial half-set near-factorization. The half-set near-factorization in $\mathcal{A} \otimes \mathcal{A}$ is realized in a $5$-class fusion of the $8$-class tensor product scheme. 

\begin{table}[h]
\begin{center}
\begin{tabular}{| l| c|c|c|l|} \hline
\mbox{Group:} & classes & $\lambda$ & (s,t) & \mbox{Near-factorization} \\ \hline
$(25,5)$ & 5 & 2 & (4,12) &       $ A_1(A_3+A_4) $ \\          
$(169,75)$ & 5 & 18 & (36,84) & $ A_1(A_4+A_5) $ \\     
$(225,[2,3,4,5])$ & 5 & 32 & (64,112) & $A_1(A_2+A_4) $ \\
$(289,76)$ & 5 &32& (64,144) & $A_5 (A_1+A_4) $ \\    \hline
$(49,12)^a$ & 5 & 8 & $(16,24)$ & $(I+A_1+A_2)(A_1+A_4) $ \\ 
$(121,29)^a$  & 5 & 18 & (36,60) & $(I+A_1+A_5)(A_4+A_5) $ \\ 
\hline
\end{tabular}
\end{center}
\caption{5 class association scheme with degree a perfect square.}\label{Table:OAFission}
\vspace{-1.75em}
\end{table}

For the groups in this table with degree a square of a prime, the near-factorization is the nontrivial 
half-set near-factorization arising either from the Paley graph $P(q)$ or the Paley tournament $T(q)$ (depending on whether $q\equiv 1$ or $q\equiv 3$ modulo 4). The only degree of this form is $n=225$, in this case the near-factorization arises as the product construction with the strongly-regular graph with parameters $(15,6 ; 1,3)$---the Johnson scheme $J(6,2)$. This is an example of a near-factorization realized in an association scheme, but not in a group. This follows since the automorphism group of this scheme does not have a subgroup of order 225, which means the scheme is not a fusion of a thin scheme of a group of order 225.

There are two primitive groups whose Schurian association scheme is a $9$-class fission of a Hamming scheme that admit a nontrivial half-set near-factorization; these are recorded in Table~\ref{Table:9classFission}. The first of these is a $2$-fold $(4,24)$ near-factorization of $C_7 \times C_7$, which results from applying the half-set product construction to the product of two copies of the symmetric $1$-fold $(2,3)$ near-factorization of $\cyc(7,2)$.  This will produce a $2$-fold near-factorization with $s=4$ in the tensor square scheme $\cyc(7,2) \otimes \cyc(7,2)$. The second $4$-fold near-factorization in this table results from the half-set product construction using half-set $1$-fold near-factorization of $C_5$ and the nontrivial half-set $2$-fold near-factorization of the fusion of $C_5 \times C_5$ appearing in the first row of the previous table.

\begin{table}[h]
\begin{center}
\begin{tabular}{| l| c|c|c|l|} \hline
\mbox{Group:} & classes & $\lambda$ & (s,t) & \mbox{Near-factorization} \\ \hline
$(49,3)$ & 9 & 2 & (4,24) &       $A_1(A_3+A_5+A_6+A_9)=2(J-I)$ \\
$(125,[3,9])$ & 9 & 4 & (8,62) &  $A_1(A_3+A_4+A_6+A_9)=4(J-I)$ \\ 
\hline
\end{tabular}
\end{center}
\caption{9 class fissions of the Hamming scheme that admit a nontrivial half-set near-factorization} \label{Table:9classFission}
\vspace{-1.75em}
\end{table}

Table~\ref{Table:AdditionalFission} has two more examples of near-factorizations of a Schurian scheme corresponding to an orbit fusion of a finite field of prime-square order.   The Schurian scheme in these cases is not cyclotomic.  The primitive groups involved have are of the form $C_{17}^2 \rtimes H$, with non-abelian point stablizer $H$.  The scheme is a $4$-class orbit fusion of $C_{17}^2 \simeq (\mathbb{F}_{289},+)$ with valencies $1$, $48$, $96$, $96$, and $48$.  This scheme has a fusion $\{ \{A_0\}, \{A_1+A_4\}, \{A_2+A_3\} \}$ which matches the $2$-class scheme corresponding to the orthogonal array graph $OA(17,6)$.  We have been unable to generalize this example to a larger family.

\begin{table}[h]
\begin{center}
\begin{tabular}{| l| c|c|c|l|} \hline
\mbox{Group:} & classes & $\lambda$ & (s,t) & \mbox{Near-factorization} \\ \hline
$(289,[70,81])$ & 4 & 8 & (48,48) & $A_1A_4$ \\   
$(289,[70,81])$ & 4 & 32 & (96,96) & $A_2A_3$ \\ 
\hline
\end{tabular}
\end{center}
\caption{Additional orbit fusions of an $\mathbb{F}_p^2$.} \label{Table:AdditionalFission}
\vspace{-1.75em}
\end{table}


\subsection{Schurian association schemes that are not fusions of the thin scheme of a finite group}
\label{subsec:notFusions}

While most of the small Schurian schemes of more than $2$ classes admitting near-factorizations in Table~\ref{Table:Schurian} correspond to near-factorizations in groups, this is not the case for all of them.  The first Schurian scheme in Table~\ref{Table:Schurian}  whose near-factorization does not occur in a group is that of {\tt PrimitiveGroup(28,1)}.  We know this because no group of order $28$ admits a nontrivial $2$-fold $(3,18)$ near-factorization.   The Schurian scheme in this case corresponds to the Coxeter graph of order $28$, which is a distance-regular graph with intersection array $[3,2,2,1;1,1,1,2]$. It is possible to write out the intersection matrices, and it is straightforward to verify that this association scheme admits these two different near-factorizations. These near-factorizations are in Table \ref{tab:4class}.

\begin{table}[h]
\begin{center}
\begin{tabular}{| l| c|c|c|l|} \hline
\mbox{Group:} & classes & $\lambda$ & (s,t) & \mbox{Near-factorization} \\ \hline
$(28,1)$ & 4 & 2& $(18,3)$ & $ (A_1+A_2)A_3$ \\ 
$(28,1)$ & 4 & 4& $(18,6)$ & $ (A_1+A_4)A_2$ \\
\hline
\end{tabular}
\end{center}
\caption{Minimal realizations of near-factorizations in small 4-class Schurian schemes with order not a prime power.}\label{tab:4class}
\vspace{-1.75em}
\end{table}

\subsection{Projective general and special linear groups}
\label{subsec:ProjectiveGroups}

The other Schurian schemes in Table~\ref{Table:Schurian} that arise from primitive groups are isomorphic to projective general linear or special linear groups over finite fields.  We were not able to find generalizations of these three examples.  First, the group indexed by $(21,1)$ is isomorphic to $\pgl(2,7)$. This is the automorphism group of the line graph of the Heawood graph, a distance-regular graph of diameter $3$ whose intersection array is $[4,2,2;1,1,2]$.  The near-factorization $(I+A_1)A_2 = 2(J-I)$ in this association scheme can be seen immediately from the first intersection matrix of this scheme.  Second, $(55,2)$ is isomorphic to $\pgl(2,11)$ and is the automorphism group of the $5$-class symmetric association scheme defined on pairs of points of the projective line $\pg(1,11)$.  This is a symmetric scheme whose relations have valency $1$, $12$, $12$, $12$, $6$, and $12$, the near-factorization arises from the product of an element of valency $12$ with the sum of the element of valency $6$ and another of valency $12$.  Thirdly $(203,1)$ is isomorphic to $\psl(2,29)$, this group produces a $7$-class symmetric association scheme from its action on the cosets of a self-normalizing subgroup $H \simeq A_5$.  The near-factorization is produced from the product of the unique element of valency $12$ with the sum of the identity plus elements of valency $30$, $30$, $20$ and $20$.  The near-factorizations of the Schurian schemes in Table~\ref{Table:PGLorPSL} are not directly related to one realized in a group of the same order, because the corresponding primitive group has no normal subgroup of order $n$ that acts regularly.

\begin{table}[h]
\begin{center}
\begin{tabular}{| l| c|c|c|l|} \hline
\mbox{Group:} & classes & $\lambda$ & (s,t) & \mbox{Near-factorization} \\ \hline
(21,1)   &  3 & 2 & (5,8) & $(I+A_1)A_2 $ \\  
(55,2)   &  5 & 4 & (12,18) & $A_1(A_3+A_4) $ \\ 
(203,1) &  7 & 6 & (12,101) & $A_6(I+A_2+A_4+A_5+A_7) $ \\
\hline
\end{tabular}
\label{Table:PGLorPSL}
\caption{Minimal realizations of near-factorizations in small Schurian schemes with group isomorphic to $\pgl(2,p)$ or $\psl(2,p)$.} 
\end{center}
\vspace{-1.75em}
\end{table}

\section{Further work}\label{sec:further}
 
 This paper is a first consideration of near-factorizations of association schemes, we encountered several problems that we have not been able to resolve. In this chapter we collect some of the problems.
 
\subsection{2-class association schemes}

All the examples in Table~\ref{Table:SRGcase1}, have $\lambda$ a perfect square. However, it is open which perfect squares can serve as the $\lambda$ in a near-factorization.

\begin{quest}
\label{Q1}
For which values $\theta$ is there is a strongly-regular graph with a $\theta^2$-fold near-factorization?
\end{quest}

Similarly, all the examples in Table~\ref{Table:SRGcase3}, have $\lambda = \theta^2+ \theta$.
We know if $4a+5$ is a prime then there is a symmetric $2$-class association scheme admitting a near-factorization of the form $A_1 A_2 = \lambda(J-I)$, since a conference graph of order $4a +5$ exists.  However, in general, for the non-conference graphs satisfying Case~\ref{SRGcase3}, we do not know how to guarantee for a given $\theta$ that there is a strongly-regular graph admitting a solution with $\lambda = \theta^2+ \theta$. This leads to the following question, similar to Question~\ref{Q1}.
\begin{quest}
For which values $\theta$ is there a strongly-regular graph with a $(\theta^2 + \theta)$-fold near-factorization?
\end{quest}

\subsection{Conjugacy class association schemes}
\label{subsec:CCscheme}
 
One class of association schemes that we did not consider are known as the \emph{group association scheme} or the \emph{conjugacy class scheme}.

\begin{defn}
Let $G$ be a group with conjugacy classes $C_i$ where $C_0 = \{e\}$. For each conjugacy class $C_i$ define a matrix $A_i$ with rows and columns indexed by $G$ and the $(g, h)$ entry equals 1 if $hg^{-1} \in C_i$ and 0 otherwise. Then
$$
\mathcal{A} =\{ A_i \, | \, \textrm{ $C_i$ is a conjugacy class of $G$} \}
$$ 
is an association scheme called the \emph{conjugacy class scheme}.
\end{defn}

If $G$ is abelian, the group association scheme is equivalent to the group, so a near-factorization in the group association scheme is equivalent to a factorization of the group. We will also note another trivial case.

\begin{lem}
Let $G$ be a group that has a conjugacy class $C_d$ that contains exactly one element of order two. Then the conjugacy class association scheme of $G$ has a near-factorization.
\end{lem}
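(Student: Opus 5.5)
If the conjugacy class $C_d$ is a single element $z$, then $z$ is central. The idea is to show $A_d$ has valency one, so that the trivial near-factorization from Section~\ref{sec:NF} applies. The statement only asks for \emph{a} near-factorization, and a trivial one counts. Under the literal reading, that $C_d$ contains exactly one element and that element has order two, the proof is short.

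First, since $|C_d|=1$, the matrix $A_d$ has exactly one nonzero entry in each row. For fixed $g$, the unique $h$ with $hg^{-1}\in C_d$ is $h=zg$. So $A_d$ has valency one, and it is the permutation matrix of the map $g\mapsto zg$.

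Next, by the remark following the definition of a near-factorization, any class of valency one gives the pair $S=A_d$, $T=J-A_d$. As a check, $A_d J = J$ and $A_d^2=I$, the latter because $z^2=e$ means left multiplication by $z$ is an involution. Hence
\[
A_d(J-A_d)=J-I,
\]
a $1$-fold $(1,n-1)$ near-factorization. Theorem~\ref{thm:char1} confirms it lies in the adjacency algebra, since $J-A_d$ is the sum of all classes other than $A_d$.

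The main obstacle is interpretive. The phrase ``contains exactly one element of order two'' might mean that $C_d$ is a larger class containing a unique involution. That cannot happen for $|C_d|>1$: conjugation preserves order, so every element of $C_d$ would be an involution, and uniqueness would force $|C_d|=1$. So the hypothesis reduces to $C_d=\{z\}$ with $z$ a central involution, and the argument above covers it. If $z$ had order greater than two, the same argument would still work using $T=J-A_{d'}$, where $C_{d'}=\{z^{-1}\}$, because $A_dA_{d'}=I$. The order-two hypothesis is what makes $S$ and $T$ symmetric.
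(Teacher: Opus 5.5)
Your proof is correct and is essentially the paper's argument: the hypothesis forces $C_d=\{z\}$ with $z$ a central involution, so $A_d$ is a thin symmetric class (an involutive permutation matrix) and $A_d(J-A_d)=J-A_d^2=J-I$. Your extra observations make explicit what the paper's one-line proof leaves implicit: a conjugacy class containing a unique involution must be a singleton, and for a central element of larger order one should pair $A_d$ with $J-A_d^{\top}$ rather than $J-A_d$.
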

\begin{proof}
Since the class $A_d$ is thin and symmetric, we have $A_d (J-A_d) = J - I.$
\end{proof}

It seems that nontrivial near-factorizations in conjugacy class schemes are very rare. As noted in Section \ref{S:intro}, Gow and Quinlan~\cite{MR2216464} found central difference sets in certain Suzuki $2$-groups, and some central partial difference sets have been found in certain nonabelian $p$-groups~\cite{MR4903728}.  Although we found a $16$-fold $(28,68)$ near-factorization in the conjugacy class scheme of the group $C_2 \times A_5$ of order $120$, we were unable to generalize this example. This leads to the following question.


\begin{quest}
When does the conjugacy class scheme for a group admit a nontrivial near-factorization?
\end{quest}

\subsection{$P$-polynomial association schemes}  

$P$-polynomial association schemes are the association schemes related to distance-regular graphs. The Hamming, Johnson, and Grassmann schemes are all $P$-polynomial, and we have discussed the existence of near-factorizations in these schemes.  The folded cube and halved cube are both distance-regular graph and we considered near-factorization is the corresponding association schemes. We also found that the schemes corresponding to generalized hexagons of order $(q, q)$ always admit a nontrivial near-factorization. Apart from these, we found two sporadic near-factorizations in $P$-polynomial association schemes of diameter more than $2$: one in the Johnson scheme $J(7,3)$ and the other in in the association scheme corresponding to the Coxeter graph of order 28. Since the adjacency algebras of $P$-polynomial association schemes are generated by the adjacency matrices of distance-regular graphs, the intersection matrices of these schemes can be deduced directly from the intersection array of a distance-regular graph, which makes near-factorizations relatively easy to detect in specific cases. Therefore, we propose the following question for further work.

\begin{quest}
What conditions on a distance-regular graph $\Gamma$ imply the (non)existence of a nontrivial near-factorization in the $P$-polynomial association scheme arising from the distance graphs of $\Gamma$?
\end{quest}

\subsection{Schurian association schemes}  

In Subsection~\ref{subsec:SchurianHamming} we saw several examples of groups with the property that the Schurian association scheme is algebraically isomorphic to the Hamming scheme. In the next question we ask if we can characterize when this occurs.

\begin{quest}
For which subgroups $H$ of $GL(d,p)$ is the Schurian association scheme arising from the primitive group $\mathbb{F}_{p^d} \rtimes H$ algebraically isomorphic to the Hamming scheme $H(d, p)$?
\end{quest}

\bigskip
\noindent\textbf{Acknowledgment} Allen Herman is supported by the Natural Sciences and Engineering Research Council of Canada (NSERC) Discovery Grant RGPIN-2026-06558. Karen Meagher is supported by NSERC Discovery Grant RGPIN-2025-04101. Alice Lacaze-Masmonteil and Hermie Monterde are supported by the Pacific Institute for the Mathematical Sciences (PIMS) Post-Doctoral Fellowship.

\appendix

\section{List of examples of strongly-regular graphs with near-factorizations}
\label{app-SRG}

We can use Brouwer's database of parameters for strongly-regular graphs to give the list of strongly-regular graphs of order up to $100$ and the corresponding $\lambda$ satisfying the conditions of Theorem~\ref{thm:SRGs}(\ref{SRGcase1}). The cases where it is unknown if a strongly-regular graph with the given parameters exists are accented with a ${}^*$.

\begin{table}[h]
\begin{center}
\begin{tabular}{lll}
$[ \, (4, 2 ; 0,2) , 1 \, ]$, &  $[ \, (15, 6 ; 1,3), 4 \,  ]$, &  $[ \,  (16, 5 ; 0,2), 4 \,  ]$,  \\ 
$[ \, (16,9 ; 4,6), 4 \,  ]$ &       $[\,  (35,16 ; 6,8), 9 \,  ]$, & $[ \, (36,14; 4,6), 9 \,  ]$, \\
$[ \, (36,20 ; 10,12), 9 \,  ]$ , & $[\,  (40,12 ; 2,4), 9 \,  ]$, & $[ \,  (45,32 ; 22,24), 9 \,  ]$, \\
$[ \, (56,10; 0,2), 9 \,  ]$, & $[\,  (63,30; 13,15), 16 \,  ]$, & $[ \, (64,27 ; 10,12), 16 \,  ]$, \\
$[\,  (64,35 ; 18,20), 16 \,  ]$, &  $[\,  (96,19; 2,4), 16 \,  ]$, &  $[ \,  (96,75 ; 58, 60), 16 \,  ]$, \\ 
$[ \, (99,48 ; 22, 24), 25 ]\,  $, & $[ \, (100,44 ; 18,20), 25 \,  ]$, &  $[ \,  (100,54 ; 28,30), 25 \,  ]$ . 
\end{tabular}
\end{center}
\caption{Parameters of strongly-regular graphs with near-factorization where $\lambda = k-a-1$.
\label{Table:SRGcase1}} 
\vspace{-1.75em}
\end{table}

\begin{table}[h]
\begin{center}
\begin{tabular}{lll}
$ [ \,  (5,2 \,  ; \,   0,1) , 1 \,  ]$, &  $[ \,  (9,4  \,  ; \,  1,2), 2  \,  ]$, &   $[ \,  (13,6  \,  ; \,  2,3), 4  \,  ]$ , \\
 $[ \, (17,8  \,  ; \,  3,4), 4 \,  ] $, &  $ [ \, (25,12  \,  ; \,  5,6), 6  \,  ]$, & $ [ \, (29,14  \,  ; \,  6,7), 7  \,  ]$, \\
 $ [ \, (37,18  \,  ; \,  8, 9), 9   ] $, & $  [ \, (41,20  \,  ; \,  9,10), 10 \,  ] $, & $ [ \, (45,22  \,  ; \,  10,11), 11  \,  ]$, \\
 $ [ \, (49,24  \,  ; \,  11,12) , 11 \,  ]$, & $ [ \, (53,26  \,  ; \,  12, 13), 13  \,  ]$, & $ [ \, (61,30  \,  ; \,  14,15), 15  \,  ]$, \\
 $ [ \, (65,32  \,  ; \,  15,16), 15 \,  ]$ , & $ [ \, (73,36  \,  ; \,  17,18), 18  \,  ]$, & $ [ \,  (81,40  \,  ; \,  19, 20), 20  \,  ]$, \\
  $[ \,  (85,42  \,  ; \, 20,21)^*, 21 \,  ]$, & $ [ \, (89,44   \,  ; \,  21,22) , 22 \,  ]$ , & $ [ \, (97,48  \,  ; \,  23,24), 24  \,  ]$. \\
\end{tabular}
\end{center}
\caption{Parameters of strongly-regular graphs with near-factorization where $\lambda = a+1$.} 
\vspace{-1.75em}
\end{table}

\begin{table}[h]
\begin{center}
\begin{tabular}{ll}
$ (10, 3 ; 0,1), \,  (10,6 ; 3,4)$ &  $\lambda =2$ \\
$ (26,10 ; 3,4), \,  (26,15 ; 8,9)$ &  $\lambda = 6$ \\
$ (50, 7 ; 0,1), \,   (50,42 ; 35,36)$ &  $\lambda = 6 $ \\
$ (50,21 ; 8,9), \,   (50,28 ; 15,16)$  &  $\lambda =  12$  \\
 $ (82, 36 ; 15,16), \,   (82, 45 ; 24, 25)$ &  $\lambda = 20$ \\
 $ (99,14 ; 1,2)^*, \,   (99,84 ; 71,72)^*$  &  $\lambda = 12$
\end{tabular}
\end{center}
\caption{Complementary pairs of parameters for strongly-regular graphs with near-factorization.} \label{Table:SRGcase3}
\vspace{-1.75em}
\end{table}

\section{Calculations from Section~\ref{sec:Hamming}} 
\label{app:calc}

\subsection{Proof of Lemma~\ref{lem:sums}.} 


\begin{proof}
First, suppose $n=8k$ for some positive integer $k$. If $r \in \{0,2\}$, then Lemma~\ref{lem:sum0} implies
$$
\sum_{i~ \equiv~ r \pmod{4}} \binom{n}{i} 
=2^{n-2}+(-1)^{\frac{r}{2}}(2^{\frac{n}{2}-1})\cos(2k\pi)
=2^{n-2}+(-1)^{\frac{r}{2}}(2^{\frac{n}{2}-1}).
$$
If $n=8k$ and $r \in \{1,3\}$, then Lemma~\ref{lem:sum0} implies
$$
\sum_{i ~ \equiv ~ r \pmod{4}} \binom{n}{i}
=2^{n-2}+2(-1)^{\frac{r-1}{2}}(2^{\frac{n}{2}-1})\sin(2k \pi)
=2^{n-2}.
$$

\noindent Therefore, \begin{align*}
\sum_{i \equiv 1,2 \pmod{4}} \binom{n}{i}
-\sum_{i \equiv 0, 3 \pmod{4}} \binom{n}{i}
 &=\left(2^{n-2}+ ( 2^{n-2}-2^{\frac{n}{2}-1}) \right)- \left(   (2^{n-2} + 2^{\frac{n}{2}-1}) + 2^{n-2}  \right)\\ 
 &= 2(-2^{\frac{n}{2}-1})=-2^{\frac{n}{2}}.
\end{align*}
A similar reasoning applies to all remaining cases. 
\end{proof}

\subsection{Proof of Lemma~\ref{lem:jeig}.}

Recall in Lemma~\ref{lem:jeig} we have assume that $n$ is even
\jeig*

\begin{proof}
\allowdisplaybreaks

Starting with Equation~(\ref{equ:eigS}), we have that:
\begin{align}
\nonumber
\sum_{i \equiv 0,3 \Mod{4}} p_i(j) 
&=  \sum_{i \equiv 0,3 \Mod{4}} \left( \sum_{r=0}^j  (-1)^r  \binom{n-j}{i-r} \binom{j}{r}\right)=   \sum_{r=0}^j \left( \sum_{i \equiv 0,3 \Mod{4}}  (-1)^r \binom{n-j}{i-r} \binom{j}{r}\right) \\
\nonumber
&=   \sum_{i \equiv 0,3 \Mod{4}} \binom{n-j}{i} \binom{j}{0}  
- \sum_{i ~\equiv~ 0,3  \Mod{4}} \binom{n-j}{i-1} \binom{j}{1} + \sum_{i ~ \equiv ~ 0,3 \Mod{4}} \binom{n-j}{i-2} \binom{j}{2} \\
\nonumber
&
\quad + \dots + \sum_{i ~ \equiv ~ 0,3  \Mod{4}}  (-1)^j \binom{n-j}{i-j} \binom{j}{j}\\
\nonumber
&=\binom{j}{0} \sum_{k \equiv ~ 0,3 \Mod{4}} \binom{n-j}{k}  
-\binom{j}{1}  \sum_{k \equiv ~ 2,3 \Mod{4}} \binom{n-j}{k} +   \binom{j}{2}  \sum_{k ~ \equiv ~ 1,2 \Mod{4}} \binom{n-j}{k}   \\
&\quad -\binom{j}{3}  \sum_{k ~ \equiv ~ 0,1 \Mod{4}} \binom{n-j}{k}  + \cdots +  (-1)^j \binom{j}{j} \sum_{k~\equiv~ -j,~3-j \Mod{4}} \binom{n-j}{k}.\label{eq:last}
\end{align}

Next, we simplify the above expression and show that $|\lambda(S_U)|=2^{\frac{n-1}{2}}$. To do so, we consider different congruences modulo 8 of $n-j$. Computations for $n-j \equiv 0 ~ \pmod{8}$ are given below. All other cases follow a similar reasoning.  

If $n-j \equiv 0 ~ \pmod{8}$, then $j$ is also even since $n$ is even. We then have $ \mu_j(S_U)$ equal Equation~(\ref{eq:last}), which by Lemma~\ref{lem:sum0} equals

\begin{align*}
&\binom{j}{0} ( 2^{n-j-1} + 2^{(n-j)/2-1})   
-\binom{j}{1}  ( 2^{n-j-1} - 2^{(n-j)/2-1})   
 +   \binom{j}{2}  ( 2^{n-j-1} - 2^{(n-j)/2-1})  \\ 
&\quad    -  \binom{j}{3}  ( 2^{n-j-1} + 2^{(n-j)/2-1})  +
\dots
+  (-1)^j \binom{j}{j} ( 2^{n-j-1} \pm 2^{(n-j)/2-1})   \\
&= 2^{n-j-1}\left(\sum_{k'~\equiv ~ 0,2~\Mod{4}} \binom{j}{k'}-\sum_{k'~\equiv~1,3~\Mod{4}} \binom{j}{k'} \right)\\ \qquad 
& \quad - 2^{(n-j)/2 - 1} \left( \sum_{k'~\equiv ~ 0,1~\Mod{4}} \binom{j}{k'}-\sum_{k'~\equiv~2,3~\Mod{4}} \binom{j}{k'} \right)\\
&=2^{n-j-1} (0) -2^{(n-j)/2 - 1} \left( \sum_{k'~\equiv ~ 0,1~\Mod{4}} \binom{j}{k'}-\sum_{k'~\equiv~2,3~\Mod{4}} \binom{j}{k'} \right)\\
&= -2^{\frac{(n-j)}{2} - 1}(\pm 2^{\frac{j}{2}}) = 
\begin{cases}
2^{(n-1)/2}, & \textrm{ if } j \equiv 0, 6 \pmod{8}, \\
-2^{(n-1)/2}, & \textrm{ if }  j \equiv 2,4  \pmod{8}. 
\end{cases}
\end{align*}

Similar computations imply that $|\mu_j(S_U)|=2^{\frac{n-1}{2}}$ for the other congruency classes modulo 8 of $n-j$ and thus, it follows that
$$
\mu_j(S_U)=\sum_{i ~ \equiv~ 0,3 \Mod{4}} p_i(j) = \pm 2^{n/2-1}.
$$
As $\sum_{i} p_i(j) =0 $ and $U$ and $V$ partitions the set of relations of $H(n,2)$, then $\mu_j(T_V)=-\mu_j(S_U)$. The second equality then follows. 
\end{proof}

\subsection{Proof of Lemma~\ref{lem:valXY}.}

\valXY*

\begin{proof}
The valency $\mu_0(X)$ is the sum of the valencies of the graphs $X_i$ in $H(n,q)$ where $i$ is even.  If $n$ is even then this equals
\begin{align*}
\binom{n}{0}(q-1)^0  + \binom{n}{2}(q-1)^2 + \binom{n}{4 }(q-1)^4 + \dots + \binom{n}{n}(q-1)^n 
   = \frac{q^n + (q-2)^n}{2}.
\end{align*}
If $n$ is odd then it equals
\begin{align*}
\binom{n}{0}(q-1)^0  + \binom{n}{2}(q-1)^2 + \binom{n}{4 }(q-1)^4 + \dots + \binom{n}{n-1}(q-1)^{n-1} 
= \frac{q^n - (q-2)^n}{2}.
\end{align*}
Similarly, $\mu_0(Y)$ is be the sum of the valencies of the graphs $X_i$ for $i$ odd.
If $n$ is even then this is
\begin{align*}
\binom{n}{1}(q-1)^1  + \binom{n}{3}(q-1)^3 + \binom{n}{5}(q-1)^5 + \dots + \binom{n}{n-1}(q-1)^{n-1}
 =  \frac{q^n - (q-2)^n}{2},
\end{align*}
and if $n$ is odd this is
\begin{align*}
\binom{n}{1}(q-1)^1  + \binom{n}{3}(q-1)^3 + \binom{n}{5}(q-1)^5 + \dots + \binom{n}{n}(q-1)^{n} 
=  \frac{q^n + (q-2)^n}{2}.
\end{align*}
To verify the right-hand side of the above equations, use the binomial theorem twice:
\begin{align*}
\sum_{i=0}^n \binom{n}{i} (q-1)^i 1^n &= q^n, \\
 \sum_{i=0}^n \binom{n}{i} (q-1)^i (-1)^{n-i} &= ((q-1) - 1)^n = (q-2)^n.
\end{align*}

Summing these equations gives twice the sum of the binomial coefficients in which the lower number is even number; taking the difference gives twice the sum with the lower numbers is odd.
\end{proof}

\subsection{Proof of Lemma~\ref{lem:otherXYeigs}}

\otherXYeigs*

\begin{proof}
Consider
$$
	\sum_{i \textrm{ even}}p_i(j)= \sum_{i \textrm{ even}} \sum_{r=0}^{j} (-1)^r (q-1)^{i-r}\binom{n-j}{i-r}\binom{j}{r}.
$$
If $i<r$ then $\binom{n-j}{i-r} = 0$ so we can replace $i-r$ with $k$, where $k$ goes from 0 (with $i=r$) to $j$ (with $i=j$ and $r=0$). Since the term $\binom{n-j}{r-i} = \binom{n-j}{k}$ is equal to 0 whenever $k > n-j$, so we can assume that $k \leqslant  n-j$. Further, since $ \binom{j}{i-k} = 0$ if $4j < i-k$, we can consider $i-k$ going from $0$ to $j$.  It is known that $\sum_{i \textrm{ even}} \binom{j}{i-k} = 2^{j-1}$. With this we can rearrange the terms in this sum
\begin{align*}
	\sum_{i \textrm{ even}}p_i(j)
	&= \sum_{i \textrm{ even}} \sum_{r=0}^{j} (-1)^r (q-1)^{i-r}\binom{n-j}{i-r} \binom{j}{r} \\
	& = \sum_{i \textrm{ even}} \sum_{k=0}^{j} (-1)^{i-k} (q-1)^{k}\binom{n-j}{k} \binom{j}{i-k} \\
	& =  \sum_{k=0}^{n-j} (-1)^{-k} (q-1)^{k}  \binom{n-j}{k} \sum_{i \textrm{ even}}   \binom{j}{i-k} \\
	& =  \sum_{k=0}^{n-j} (-1)^{-k} (q-1)^{k}  \binom{n-j}{k} 2^{j-1} \\
	& =  2^{j-1}  (-1)^{n-j} \sum_{k=0}^{n-j}   \binom{n-j}{k} (-1)^{n-j-k} (q-1)^{k}.
\end{align*}
By the binomial theorem, we have
\[
 \sum_{k=0}^{n-j}   \binom{n-j}{k} (-1)^{n-j -k} (q-1)^{k}  = ( (q-1) + (-1) )^{n-j} = (q-2)^{n-j}.
 \]
Putting this together, we obtain
\begin{align*}
	\sum_{i \textrm{ even}}p_i(j) = 2^{j-1} (-1)^{n-j} (q-2)^{n-j}.\qedhere
\end{align*}
\end{proof}

\subsection{Proof of Lemma~\ref{lem:foldedEvalues}.}

\foldedEvalues*

\begin{proof}
\allowdisplaybreaks

Starting with Corollary~\ref{cor:Kraw2}, we have that:
\begin{align*}
\mu_i(S_U)
&=   \sum_{r=0}^j   \left( \sum_{i \equiv 0 \Mod{4}}  (-1)^r \binom{n-j}{i-r} \binom{j}{r}\right) \\
&=   \sum_{i \equiv 0 \Mod{4}} \binom{n-j}{i} \binom{j}{0}  
- \sum_{i \equiv  0 \Mod{4}} \binom{n-j}{i-1} \binom{j}{1} 
 + \sum_{i  \equiv 0 \Mod{4}} \binom{n-j}{i-2} \binom{j}{2} \\
& \quad + \dots 
+ \sum_{i ~ \equiv ~ 0  \Mod{4}}  (-1)^j \binom{n-j}{i-j} \binom{j}{j}\\
&=   \sum_{k \equiv 0 \Mod{4}} \binom{n-j}{k} \binom{j}{0}  
- \sum_{k \equiv  3 \Mod{4}} \binom{n-j}{k} \binom{j}{1} 
 + \sum_{k  \equiv 2 \Mod{4}} \binom{n-j}{k} \binom{j}{2} \\
& \quad + \dots 
+ \sum_{k \equiv -j \Mod{4}}  (-1)^j \binom{n-j}{k} \binom{j}{j} \\
&=
\begin{cases}
  2^{(n-j-1)/2-1} (  \binom{j}{0}     + \binom{j}{1}    -   \binom{j}{2}  - \binom{j}{3} +\dots  ) 
  & \textrm{ if } n \equiv 1 \pmod{8}, \\
   2^{(n-j-1)/2-1}  ( -  \binom{j}{0}   + \binom{j}{1}    +  \binom{j}{2}   - \binom{j}{3} - \dots )
   & \textrm{ if } n \equiv 3 \pmod{8}, \\
   2^{(n-j-1)/2-1}  ( -  \binom{j}{0}   - \binom{j}{1}    +  \binom{j}{2}   + \binom{j}{3} - \dots )
   & \textrm{ if } n \equiv 5 \pmod{8}, \\
   2^{(n-j-1)/2-1}  ( + \binom{j}{0}   - \binom{j}{1}    -  \binom{j}{2}   + \binom{j}{3} - \dots )
   & \textrm{ if } n \equiv 7 \pmod{8}, \\
 \end{cases}  
 \\
&=
\begin{cases}
2^{(n-3)/2}, & \textrm{ if } n-j \equiv 0, 2 \pmod{8}, \\
-2^{(n-3)/2}, & \textrm{ if }  n-j \equiv 4, 6  \pmod{8}.\\
\end{cases}
\end{align*}
The second to last equality follows from Lemma~\ref{lem:sum0}.
\end{proof}


\section{Table of results for Schurian association schemes}

\begin{longtable}{lcccl}
\mbox{Primitive Group:} & $d$ & $\lambda$ & $(s,t)$ & \mbox{Result or Section} \\ 
$(7,2)$ & 3 & 1& $(2,3)$ & Lemma~\ref{lem:cyclicGroup} \\ 
$(11,2)$ & 5 & 1& $(5,2)$ & Lemma~\ref{lem:cyclicGroup} \\ 
$(13,2)$ & 6 & 1& $(3,4)$ & Lemma~\ref{lem:cyclicGroup} \\ 
$(13,2)$  & 6  & 1 & $(2,6)$ & Lemma~\ref{lem:cyclicGroup} \\ 
$(13,3)$ & 4 & 3& $(4,9)$ & Proposition~\ref{prop:4ksquare9} \\
$(17,2)$ & 8 & 1& $(2,8)$ & Lemma~\ref{lem:cyclicGroup} \\ 
$(17,2)$ & 8 & 1& $(4,4)$ & Lemma~\ref{lem:cyclicGroup} \\ 
$(17,3)$ & 4 & 1& $(4,4)$ & Proposition~\ref{prop:16ksquared1}  \\ 
$(19,2)$ & 9 & 1 & $(3,6)$ & Lemma~\ref{lem:cyclicGroup} \\ 
$(19,2)$ & 9  & 1 & $(2,9)$ & Lemma~\ref{lem:cyclicGroup} \\ 
$(21,1)$ & 3 & 2& $(5,8)$ & see \S~\ref{subsec:ProjectiveGroups} \\
$(25,5)$ & 5 & 2& $(4,12)$ & see \S~\ref{subsec:schurianProduct}  and Theorem~\ref{thm:halfsetProds}  \\
$(27, [3,4,8] )$ & 3 & 4& $(8,13)$ & see \S~\ref{subsec:SchurianHamming} and \S~\ref{sec:Hamming} \\ 
$(28,1)$ & 4 & 2& $(18,3)$ & see \S~\ref{subsec:notFusions}  \\
$(28,1)$ & 4 & 4& $(18,6)$ & see \S~\ref{subsec:notFusions}  \\ 
$(31,4)^a$ & 6 & 4& $(6,20)$ & see \S~\ref{subsec:more4}  \\ 
$(31,4)^a$ & 6 & 8& $(16,15)$ & see \S~\ref{subsec:more4}  \\ 
$(37,4)$ & 9 & 1& $(4,9)$ & Lemma~\ref{lem:cyclicGroup}, see \S~\ref{subsec:isothinprime}  \\ 
$(37,6)^a$ & 4 & 7& $(9,28)$ & Proposition~\ref{prop:4ksquared1} \\ 
$(49,3)$ & 9 & 2& $(4,24)$ & see \S~\ref{subsec:schurianProduct} and Theorem~\ref{thm:halfsetProds} \\ 
$(49,12)^a$ & 5 & 8& $(16,24)$ & see \S~\ref{subsec:schurianProduct} and Theorem~\ref{thm:halfsetProds} \\ 
$(55,2)$ & 5 & 4& $(12,18)$ & see \S~\ref{subsec:ProjectiveGroups}\\ 
$(73,7)^a$ & 8 & 8& $(9,64)$ & Lemma~\ref{lem:64bsquared9}\\ 
$(81, [72,73,91] )$  & 4 & 8& $(16,40)$ & see \S~\ref{subsec:SchurianHamming} and  \S~\ref{sec:Hamming} \\ 
$(101,7)^a$  & 4 & 19 & $(25,76)$ & Corollary~\ref{prop:4ksquared1} \\ 
$(109,8)$ & 6 & 3 & $(18,18)$ & see \S~\ref{subsec:more4} and \cite{BaoEtAl2018} \\ 
$(109,8)$ & 6 & 27 & $(54,54)$ & see \S~\ref{subsec:more4}  \\ 
$(109,9)^a$ & 4 & 21 & $(28,81)$ & Proposition~\ref{prop:4ksquare9} \\ 
$(113,6)$ & 8 & 7 & $(28,28)$ & see \S~\ref{subsec:more4}  and \cite{li-momihara2025} \\ 
$(113,6)$ & 8 & 28 & $(56,56)$ & see \S~\ref{subsec:more4}  \\ 
$(121,29)^a$ & 5 & 18 & $(36,60)$ & see \S~\ref{subsec:schurianProduct} and Theorem~\ref{thm:halfsetProds}\\ 
$(125,[3,9])$ & 9 & 4 & $(8,62)$ & see \S~\ref{subsec:schurianProduct} and Theorem~\ref{thm:halfsetProds} \\ 
$(127,9)^a$  & 6 & 32 & $(65,42)$ & see \S~\ref{subsec:more4} \\ 
$(169,75)$ & 5 & 18 & $(36,84)$ & see \S~\ref{subsec:schurianProduct} and Theorem~\ref{thm:halfsetProds} \\ 
$(197,7)^a$ & 4 & 37 & $(49,148)$ & Proposition~\ref{prop:4ksquared1} \\ 
$(203,1)$ & 7 & 6 & $(12,101)$ & see \S~\ref{subsec:ProjectiveGroups}  \\ 
$(223,5)$ & 6 & 56 & $(111,112)$ & see \S~\ref{subsec:more4}  \\ 
$(225,[2,3,4,5])$ & 5 & 32 & $(64,112)$ & see \S~\ref{subsec:schurianProduct} and Theorem~\ref{thm:halfsetProds} \\ 
$(243, [13,15,21,22,23] )$ & 5 & 16 & $(32,121)$ & see \S~\ref{subsec:SchurianHamming} and \S~\ref{sec:Hamming} \\ 
$(257,13)$ & 4 & 16 & (64,64) & Proposition~\ref{prop:16ksquared1}  \\ 
$(289, [70,81] )$ & 4 & 8 & (48,48) & see \S~\ref{subsec:schurianProduct} \\ 
$(289, [70,81] )$ & 4 & 32 & (96,96) & see \S~\ref{subsec:schurianProduct}  \\ 
$(289,76)$ & 5 & 32 & (64,144) & see \S~\ref{subsec:schurianProduct} and Theorem~\ref{thm:halfsetProds}\\  

\caption{Minimal realizations of near-factorizations in small Schurian schemes of class $3 \leqslant d \leqslant 9$} \label{Table:Schurian}
\end{longtable}

\bibliographystyle{plain}
\bibliography{mybibfile}
\end{document}